\numberwithin{equation}{section}
\newtheorem{theorem}{Theorem}[section]
\newtheorem{corollary}[theorem]{Corollary}
\newtheorem{lemma}[theorem]{Lemma}
\newtheorem{proposition}[theorem]{Proposition}
\newtheorem{remark}[theorem]{Remark}
\newcounter{as}[section]
\newtheorem{asser}[as]{Assertion}
\numberwithin{equation}{section}
\newcommand{\mc}[1]{{\mathcal #1}}
\newcommand{\mf}[1]{{\mathfrak #1}}
\newcommand{\bb}[1]{{\mathbb #1}}
\newcommand{\bs}[1]{{\boldsymbol #1}}
\newcommand{\ms}[1]{{\mathscr #1}}
\newcommand{\<}{\langle}
\renewcommand{\>}{\rangle}
\renewcommand{\>}{\rangle}
\definecolor{bblue}{rgb}{.2,0.2,.8}
\title{A one-dimensional non-local singular SPDE}
\author {L. Chiarini, C. Landim}
\address{\noindent \noindent IMPA, Estrada Dona Castorina 110, CEP 22460 Rio de
  Janeiro, Brasil and Utrecht University, Heidelberglaan 8, 3584 CS Utrecht,
  Netherlands.  \newline e-mail: \rm
  \texttt{chiarini@impa.br} }
\address{\noindent IMPA, Estrada Dona Castorina 110, CEP 22460 Rio de
  Janeiro, Brasil and CNRS UMR 6085, Universit\'e de Rouen, Avenue de
  l'Universit\'e, BP.12, Technop\^ole du Madril\-let, F76801
  Saint-\'Etienne-du-Rouvray, France.  \newline e-mail: \rm
  \texttt{landim@impa.br} }
\begin{document}
%

\begin{abstract}
We examine in this article the one-dimensional, non-local, singular SPDE
\begin{equation*}
\partial_t \mf u \;=\;
-\, (-\Delta)^{1/2} \mf u \,-\,  \sinh(\gamma \mf u) \,+\,
\xi\; ,
\end{equation*}
where $\gamma\in \bb R$, $(-\Delta)^{1/2}$ is the fractional Laplacian
of order $1/2$, $\xi$ the space-time white noise in
$\mathbb{R} \times \mathbb{T}$, and $\bb T$ the one-dimensional
torus. We show that for $0<\gamma^2<\pi/7$ the Da Prato--Debussche
method \cite{DaPrato2003} applies. One of the main difficulties lies
in the derivation of a Schauder estimate for the semi-group associated
to the fractional Laplacian due to the lack of smoothness resulting
from the long range interaction.
\end{abstract}

\keywords{SPDE,  Gaussian Multiplicative Chaos}


\maketitle

\hfill \emph{Dedicated to Stefano Olla on his sixtieth birthday}

\section{Introduction}
\label{sec00}

In this article, we study the the local well-posedness of the following
SPDE
\begin{equation}
\label{int01}
\partial_t \mf u = -(-\Delta)^{1/2}\mf u - \sinh(\gamma \mf u) + \xi
\quad \text{ in } \bb{R}_+ \times \bb{T}
\end{equation}
in which $\bb T$ is the one-dimensional torus, $\xi$ is the space-time
white-noise, $(-\Delta)^{1/2}$ is the half-Laplacian operator,
$\sinh(\gamma \mf u)$ is taken in a Gaussian Multiplicative Chaos (GMC)
sense and $\gamma \in \bb R$ belongs to a small interval around the
origin.  This equation is a long-range counter part to the equation
\begin{equation*}
\partial_t \mf u = \Delta \mf u - \sinh(\gamma \mf u) + \xi
\quad \text{ in } \bb{R}_+ \times \bb{T}^2
\end{equation*}
which appears in the context of Liouville quantum gravity \cite{Garban18}
and is related to the $\cosh$-interaction, and in Quantum Field Theory
\cite{HaiShe16} when $\sinh$ is replaced by $\sin$. We will refer to
it as $\sinh$-Gordon equation.

The lack of regularity of the white-noise $\xi$ prevents the existence
of function-valued solutions. In consequence, the meaning of the
non-linear terms of the equation is not clear. Several approaches to
circumvent this problem were proposed. The first one, the so-called da
Prato-Debussche perturbative method \cite{DaPrato2003}, provides local
existence in time of a certain class of equations.

More recently, Gubinelli and co-authors \cite{Gub04, Gub15}
introduced an approach to study singular SPDEs based on techniques
from paradifferential calculus and controlled rough paths, and
Hairer \cite{Hai14} proposed a theory for studying
a large class, so-called \textit{subcritical}, of non-linear
SPDE's by using \textit{regularity structures}.

The equation \eqref{int01} studied in this paper falls short of the
scope of the theory of regularity structures for two reasons. The
first one, discussed here in the regime $\gamma$ small, is the
non-locality of the operator $(-\Delta)^{1/2}$ responsible for the
lack of smoothness of the semigroup. In consequence, to apply the
theory of regularity structures, one needs new methods to prove the
well-posedness of the operator (denoted by $\mc K_\gamma$ in
\cite{Hai14}) which works as the abstract counterpart of the
integration against the kernel for regularity structures.  This
question has been adressed in \cite{BergKue17} in the context of
polynomial non-linearity.  It is also not clear if the lack of
regularity of the semigroup is not an obstacle to estimate the
remainder of the Taylor expansions appearing in the BPHZ
renormalisation presented in \cite{ChaHai16}.

The second problem is related to the lack of regularity of the
exponential non-linearity. In \cite{ChaHai16}, the authors require the
noise (or the non-linearity) to have finite cumulants of all
orders. However, as the classical theory indicates, the GMC does not
have finite moments of order larger than $4\pi/\gamma^2$. Notice that
in \cite{Hai19}, by changing the perspective from renormalisation of
regularity structures to renormalisation of graphs, the author does
not need to refer to cumulants with regards to \textit{negative
  renormalisation}. However, one would require \textit{positive
  renormalisation} in order to construct a local solution of
\eqref{int01} when leaving the da Prato-Debussche regime.

Consider, for example, the second of these problems in relation to the
sine-Gordon equation \cite{HaiShe16, CHS18}
\begin{equation}
\label{int02}
\partial_t \mf u = \Delta \mf u + \sin(\gamma \mf u) + \xi
\quad \text{ in } \bb{R}_+ \times \bb{T}^2\;,
\end{equation}
Although this equation also comes from a GMC type of process (by
seeing it as the real part of $\exp(i \beta \mf u)$), the boundedness of
the $\sin$ function implies the finiteness of all moments. Moreover,
the moments only grow in a linear fashion. This makes the equations
\eqref{int01}, \eqref{int02} very different from the perpective of
regularity structures.

With regard to the second problem, an exponential non-linearity has
been examined before in the da Prato-Debussche regime by Garban
\cite{Garban18} for the equation
\begin{equation}
\label{int01b}
\partial_t \mf u = \Delta \mf u + \exp(\gamma \mf u) + \xi
\quad \text{ in } \bb{R}_+ \times \bb{T^2}
\end{equation}
in dimension $2$. There is no diference between proving local
well-posedness between the non-linearities $\sinh(\gamma \mf u)$ and
$\exp(\gamma \mf u)$ when restricted to small values of $\gamma$. However,
the symmetries of the latter make it more likely to have global
well-posedness and for it to appear as a scaling limit of some class
of particle systems.

One of the main contributions of this paper, presented in
Section~\ref{sec04}, is a Schauder estimates for the half-Laplacian on
the Torus in the context of negative Besov spaces. The equivalent
result has already been proven for H\"older spaces (or equivalently,
positive Besov spaces) in \cite{DonKim13}. The starting point of our
proof is the strategy proposed in \cite{Hai14} of writing the
semigroup as an infinite, taking advantage of its scalar
invariance. However, as the semigroup is not smooth, extra arguments
are required to define its convolution with arbitrary
distributions in $C^\alpha$ for $\alpha >-1$.

The same strategy presented here can be used to prove Schauder
estimates in Besov space for any $\alpha < 0$.  This result could be
extended to other fractional powers of the Laplacian, provided that
one is able to derive the necessary bounds for the right-side
derivatives of the semigroup at $t=0$. Finally, we believe that the
strategy presented here could also be used to prove well-posedness of
the operator $\mc K_\gamma$, mentioned above, in the context of
regularity structures, an important step in the use of regularity
structures for non-local equations.

A good motivation to consider non-local SPDE's in one dimension is the
study of scaling limits in statistical mechanics. Although, long-range
models, for some $d$-dimensional lattice, often share similarities
with their short-range counterparts, their finer details might behave
very differently. For instance, take the one-dimensional Ising Model
with interaction of type $J_{x,y} \propto \|x-y\|^{-1-\alpha}$ for
$\alpha \in (0,1]$, also known as the Dyson Model. Here, we will focus
on the critical case $\alpha=1$, closely related to the operator
$(-\Delta)^{1/2}$. For this case, at some critical temperature the
model displays phase transition \cite{frohlich1982phase}, just like its
planar (and nearest-neighbour) counterpart. On the other hand, this
transition is neither sharp \cite{imbrie1988intermediate} nor
continuous \cite{aizenman1988discontinuity}.  In
\cite{mourrat2017convergence} the authors prove that the scaling limit
of the magnetization field of the Glauber dynamics for a class of
Ising-Kac models in dimension 2 is described by the solutions of the
$\Phi^4_2$ equation.  However, the different behaviour at criticality
makes it unclear whether the result of \cite{mourrat2017convergence}
can be proven in dimension 1 for an interaction of type
$J_{x,y} \propto \|x-y\|^{-2}$. Furthermore, we could not find in the
literature even if the one-dimensional $\Phi^4_1$ model with
interaction $J_{x,y} \propto \|x-y\|^{-2}$ and its scaling limit share
the phase diagram of the Dyson model, which is the first indication of
whether a theorem like \cite{mourrat2017convergence} can take place or
not.

\section{Notation and results}
\label{sec01}

We state in this section the main results of the article.  We start
introducing some spaces of continuous functions. Denote by
$\bb T = [-1/2,1/2)$ the one-dimensional torus. Elements of
$\mathbb{R}\times \mathbb{T}$ are represented by the letter $z=(t,x)$.
Denote by $d$ the distance on $\mathbb{R}\times \mathbb{T}$ given by
\begin{equation*}
d(z,z') \;=\;
d \big( (t,x) \,,\,(t^\prime,x^\prime) \big) \;=\;
|t - t^\prime| \;+\; d_{\mathbb{T}}(x,x^\prime) \;,
\end{equation*}
where $d_{\mathbb{T}}(\cdot,\cdot)$ stands for the standard metric in
$\mathbb{T}$. We will often use the notation $\|z\|$ to denote
$d(0,z)$ even though it is not a norm.

For a continuous function $f\colon \bb R\times \bb T \to \bb R$, let
$\Vert\, f\,\Vert_{L^\infty(\bb R\times \bb T)}$ be the norm given by
\begin{equation*}
\Vert\,f\,\Vert_{L^\infty(\bb R\times \bb T)} \;=\;
\sup_{ z \in \mathbb{R}\times \bb  T} \big |\, f(z) \,\big|\;.
\end{equation*}
Denote by $\color{bblue} C^\beta = C^\beta (\bb R \times \bb T)$,
$\beta \in (0,1)$, the set of continuous functions $f: \bb R \times
\bb T \to \bb R$ such that for all $S$, $T\in \bb R$, $S<T$,
\begin{equation}
\label{13}
\|f\|_{C^\beta ([S,T] \times \bb T)}
\; := \; \sup_{ z, z' \in [S,T] \times \bb T } \;
\frac {|f(z)-f(z')|}{d(z,z')^\beta} \;<\; \infty \;.
\end{equation}
Let $\color{bblue} C^\beta_+ = C^\beta_+ (\bb R \times \bb T)$ be the
subset of functions $f$ in $C^\beta (\bb R \times \bb T)$ such that
$f(t,x) =0$ for all $t\le 0$. The space $C^\beta_+ ([0,T] \times \bb
T)$ is defined analogously. \smallskip

Let $\color{bblue} \bb N = \{1, 2, \dots \}$, $\color{bblue} \bb N_0 =
\bb N \cup \{0\}$, and denote by $\color{bblue} C^m(\bb R \times \bb
T)$, $m\in \bb N_0 \cup \{\infty\}$, the set of $m$-times continuously
differentiable functions. $\color{bblue} C^m_c(\bb R \times \bb T)$
represents the subset of functions in $ C^m(\bb R \times \bb T)$ which
have compact support.

For a function $f$ in $ C^m(\bb R\times \bb T)$, $m\in \bb N_0$, let
\begin{equation*}
\|f\|_{C^m(\bb R\times \bb T)} \;:=\; \sum_{ i+j \le m }
\ \Vert \, \partial^i_t \, \partial^j_x \, f \,
\Vert_{L^\infty(\bb R\times \bb T)} \;.
\end{equation*}
In this formula, the sum is carried over all integers $i$, $j$ in
$\bb N_0$ such that $i+j\le m$, and
$\partial^i_t \, \partial^j_x \, f$ stands for the $i$-th derivative
in time of the $j$-th derivative in space of $f$.

\smallskip\noindent{\bf Besov Spaces.} Let
$\color{bblue} \mf C^m_c(\bb R\times \bb T)$, $m\in\bb N_0$, be the dual
of $C^m(\bb R\times \bb T)$. Elements of $\mf C^m(\bb R\times \bb T)$
are denoted by $X$, and by the gothic characters $\mf u$, $\mf v$.  We
represent by $\color{bblue} X (f)$ or $\color{bblue} \< X \,,\, f\>$
the value at $f\in C^m_c(\bb R\times \bb T)$ of the bounded linear
functional $X$.

For $z\in \mathbb{R}\times \mathbb{T}$, denote by $\color{bblue} \bb
B(z,a)$ the open ball in $\bb R \times \bb T$ of radius $a>0$ centered
at $z$.  Let $\color{bblue} B_m$, $m \in \bb N_0$, be the set of all
functions $g$ in $C^m_c(\mathbb{R}\times \bb T)$ whose support is
contained in the ball $\bb B(0,1/4)$ and such that $\|g\|_{C^m(\bb
  R\times \bb T)} \le 1$.

For $0<\delta \le 1$, $z \in \bb R \times \bb T$, and a continuous
function $g\colon \mathbb{R}\times \bb T \to \bb R$ whose support is
contained in $\bb B(0,1/4)$, denote by $S^\delta_z g$ the function
defined by
\begin{equation}
\label{35}
(S^\delta_z g)\,  (w)
\;:=\;
\begin{cases}
\delta^{-2} \,g \big(\, [w-z]/\delta\, \big)\;,
& w \in \bb B(z,\delta) \;, \\
0\;, & \text{otherwise}\;.
\end{cases}
\end{equation}

Fix $\alpha<0$ and set $m= -\, \lfloor \alpha \rfloor$, where, for
$r\in \bb R$,
$\color{bblue} \lfloor r \rfloor \,=\, \max \big\{\, k\in \bb Z : k
\le r\, \big\}$.  For $S<T$, and an element $X$ in
$\mf C^m(\bb R \times \bb T)$, let
$\|\,\cdot\, \|_{C^\alpha([S,T]\times \bb T)}$ be the semi-norm
defined by
\begin{equation}
\label{71}
\|X \|_{C^\alpha([S,T]\times \bb T)} \; := \;
\sup_{\delta \in (0,1] } \,  \sup_{z\in [S,T]\times \bb T} \,
\sup_{g \in B_m} \frac{1}{\delta^\alpha} \,
\big|\, \< X \,,\, S^\delta_z g \> \,\big| \;.
\end{equation}
Denote by $\color{bblue} C^\alpha = C^\alpha (\bb R \times \bb T)$ the
subspace of $\mf C^m(\bb R \times \bb T)$ of all elements $X$ such
that $\| X \|_{C^\alpha([S,T]\times \bb T)} <\infty$ for all $S<T$.

\smallskip\noindent{\bf White noise.}
Let $\xi$ be a white-noise on $\bb R \times \bb T$ defined on some
probability space $\color{bblue} (\Omega, \mc F, \bb P)$. Expectation
with respect to $\bb P$ is represented by $\color{bblue} \bb
E$. Hence, $\xi$ is a centered Gaussian field on $\bb R \times \bb T$
whose covariance is given by $\bb E[\, \xi(t,x)\, \xi(t',x')\,] =
\delta_{x,x'}\, \delta_{t,t'}$, where $\delta_{a,b}=1$ if $a=b$ and
$0$ otherwise.

An elementary computation shows that for every $N\ge 1$, there exists
a finite constant $C_N$ such that
\begin{equation*}
\bb E\big[\, \Big(\, \int (S^\delta_z f)(w)\, \xi(w)\, dw
\Big)^{2N}\Big] \;\le\; C_N \delta^{-2N}
\end{equation*}
for all $z\in \bb R \times \bb T$, $0<\delta \le 1$ and continuous
function $f: \bb R \times \bb T \to \bb R$ whose absolute value is
bounded by $1$ and support is contained in $[-1,1]\times \bb T$, By
\cite[Theorem 2.7]{ChaWeb15}, for every $\alpha<-1$, there is a
version of $\xi$ which belongs to $C^{\alpha} (\bb R \times \bb
T)$. More precisely, there exists a centered Gaussian field
$\widehat \xi$ such that $\< \widehat \xi\,,\, f\> = \< \xi\,,\, f\>$
almost surely for all continuous function $f$ with bounded support,
and
$\bb E[ \, \|\, \widehat \xi \, \|_{C^{\alpha} ([-T,T]\times \bb T)} \,]
<\infty$ for all $T>0$.

Denote by $\color{bblue} \varrho : \bb R^2 \to \bb R_+$ a nonnegative,
symmetric, smooth mollifier whose support is contained in
$[-s_0,s_0]\times (\, -1/4 \,,\, 1/4\, )$, for some $s_0>0$, and which
integrates to $1$:
\begin{equation}
\label{s23}
\varrho(z) \;\ge\; 0 \;, \quad
\varrho (-z) \;=\; \varrho (z) \;, \quad
\int_{\bb R^2} \varrho (z)\, dz \;=\; 1\;.
\end{equation}
As the support in contained in $[-s_0,s_0]\times (-1/4,1/4)$, we also
consider $\varrho$ as a mollifier acting on $\bb R \times \bb T$. For
$\varepsilon>0$, let
\begin{equation}
\label{14}
\varrho_\varepsilon \;=\; S^\varepsilon_0
\varrho\;, \quad \xi_\varepsilon \;=\;  \varrho_\varepsilon \,*\, \xi \;.
\end{equation}
By \cite[Theorem 1.4.2]{AdlTay07}, for every $\varepsilon>0$, almost
surely, the mollified field $\xi_\varepsilon$ is smooth in the sense
that it has derivatives of all orders.

\smallskip\noindent{\bf Fractional Laplacian.}  Denote by
$(-\Delta)^{1/2}$ the fractional Laplacian operator defined on smooth
functions $f: \bb T\to \bb R$ by
\begin{equation}
\label{01}
\big[\, (-\Delta)^{1/2}\, f\, \big]\, (x)
\;:=\;
\frac 1{2\,\pi}\; \mf P \int_{\mathbb{R}}
\frac{2\, \widehat f(x) - \widehat f(x+y) - \widehat f(x-y)}{y^2}
\; dy \; ,
\end{equation}
where $\widehat f:\bb R\to \bb R$ represents the periodic function which
coincides with $f$ on $[-1/2,1/2)$, and $\mf P$ stands for principal value
of the integral. The operator $-\, (-\, \Delta)^{1/2}$ corresponds to
the generator of the Cauchy process. Some properties of this operator
and its semigroup are reviewed in Section \ref{sec03}.

Let $\color{bblue} (P_t: t\ge 0)$ be the associated semigroup, which
acts on continuous functions, and let $\color{bblue} p(t,x)$ be its
density, so that $(P_t f)(x) = \int_{\bb T} p(t,y-x) \, f(y)\, dy$ for
all continuous function $f:\bb T\to \bb R$, $t\ge 0$. We present in
\eqref{04} and explicit formula for $p(t,x)$.

Fix $T_0\ge 1$.  Denote by $\ms H : \bb R \to [0,1]$ a smooth
functions such that $\ms H(t) = 1 $ for $t\le 0$ and $\ms H(t) =0$ for
$t\ge 1$.  Let $\color{bblue} q_{T_0}: \bb R \times \bb T \to \bb R_+$
be given by
\begin{equation*}
q_{T_0}(t,x) \;=\; p(t,x) \, \ms H(t-T_0)\;.
\end{equation*}
We often omit $T_0$ from the notation. Clearly, $q_{T_0}$ coincides
with $p$ on $(-\infty, T_0] \times \bb T$ and it has support contained in
$[0,T_0+1]\times \bb T$.

Let $\mf v_\varepsilon := q * \xi_\varepsilon$, $\varepsilon\ge 0$, be
the centered Gaussian random field on $\bb R \times \bb T$ defined by:
\begin{equation}
\label{08b}
\mf v_\varepsilon (t,x)
\;:=\;
\int_{\bb R} ds\, \int_{\bb T} dy\; q(t-s, x-y)\,
\xi_{\varepsilon}(s,y)\;.
\end{equation}
Here, $\mf v_0$, also denoted by $\mf v$, is the Gaussian random field
given by the previous formula with $\xi_0 = \xi$.

Denote by $Q_\varepsilon$, $\varepsilon\ge 0$, the covariances of the
fields $\mf v_\varepsilon$: For $z$, $z'$ in $\bb R \times \bb T$,
\begin{equation*}
Q_\varepsilon (z,z^\prime) \;=\;
\bb E \big[\, \mf v_\varepsilon (z)\; \mf v_\varepsilon (z')\,\big] \;.
\end{equation*}
A change of variables yields that
$Q_\varepsilon (z,z^\prime) = Q_\varepsilon (0,z^\prime-z)$. Denote
this later quantity by $Q_\varepsilon (z^\prime-z)$.  According to
Lemma \ref{s22}, $\{\mf v (z) : z\in \bb R\times \bb T\}$ is a
log-correlated Gaussian field. More precisely, there exists a exist a
continuous, bounded function $R : \bb R \times \bb T \to \bb R$, such
that
\begin{equation}
\label{58}
Q (z) \; = \;
\frac{1}{2\pi} \ln^+ \frac 1{4 \, \Vert z\Vert} \;+\;  \; R(z) \;,
\quad z\,\in\, \bb R \times \bb T\;,
\end{equation}
where $Q(z) = Q_0(z)$ and $\color{bblue} \ln^+ t = \max \{\ln t , 0\}$.

\smallskip\noindent{\bf Gaussian multiplicative chaos.}  Let
$X_{\gamma, \varepsilon}$, $\varepsilon>0$, $\gamma\in \bb R$, be the
random field defined by
\begin{equation}
\label{30}
\< X_{\gamma, \varepsilon} \,,\, f \> \; :=\;  \int f (z)\,
e^{\gamma \, \mf v_\varepsilon (z) \,-\,
  (\gamma^2/2)\,  E[\mf v_\varepsilon(0)^2]} \, dz\;,
\quad f\in C^\infty_c(\bb R \times \bb T)\;.
\end{equation}
It follows from \eqref{58}, as stated in Lemma \ref{s19c}, that
there exists a finite constant $C(\varrho)$ such that
\begin{equation*}
E[\mf v_\varepsilon(0)^2] \;=\;
\frac 1{2\pi} \, \ln \, \frac 1\varepsilon
\;+\; C(\varrho) \;+\; R (\varrho, \varepsilon)
\;,
\end{equation*}
where $R (\varrho, \varepsilon)$ represents a remainder whose absolute
value is bounded by $C_0(\varrho) \varepsilon^2$. Hence,
\begin{equation*}
\< X_{\gamma, \varepsilon} \,,\, f \>  \;=\;
[1 \,+\, o(\varepsilon)]\,
\int f (z)\, A(\varrho)\, \varepsilon^{\gamma^2/4\pi} \,
e^{\gamma \, \mf v_\varepsilon (z)}\, dz\;,
\end{equation*}
where $A(\varrho) \,=\, \exp\{ -\, (\gamma^2/2)\, C(\rho)\,\}$.

\begin{theorem}
\label{mt1}
Fix $0<\gamma^2 \,< \, (4/15) (5 - \sqrt{10})\, 4\pi$,
$\alpha< \alpha_\gamma := b\, \min\{\, -3 \,,\, - \, \big(\, \sqrt{1 +
  (8/b)} \,-\, 1\,\big) \,\}$, where $b= \gamma^2/4\pi$.  Then, as
$\varepsilon \to 0$, $X_{\gamma,\varepsilon}$ converges in probability
in $C^\alpha$ to a random field, denoted by $X_\gamma$. The limit does
not depend on the mollifier $\varrho$. Moreover, for each
$p\in \bb N$, $1\le p < 8\pi/\gamma^2$, there exists a finite constant
$C(p, \gamma)$ such that
\begin{equation*}
\mathbb{E} \, \big[\, |\,
\langle X_{\gamma} \,,\,
S^\delta_z f \rangle \, |^p \, \big]
\; \le \; C(p,\gamma) \, \Vert f\Vert^p_\infty\,
\delta^{-p(p-1)(\gamma^2/4\pi)}
\end{equation*}
for every $\delta$ in $(0, 1)$, $z\in \bb R \times \bb T$ and
continuous function $f: \bb R \times \bb T \to \bb R$ whose support is
contained in $\bb B (0,1/4)$
\end{theorem}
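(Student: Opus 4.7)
The plan is a direct Gaussian moment computation followed by a Besov-type Kolmogorov criterion. Since $(\mf v_\varepsilon(z_i))_{i=1}^p$ is jointly centred Gaussian, the exponential moment formula together with Fubini gives
\begin{equation*}
\bb E\big[\,\<X_{\gamma,\varepsilon},S^\delta_z f\>^p\,\big]
\;=\; \int \prod_{i=1}^p (S^\delta_z f)(z_i)\,
\exp\Big(\gamma^2 \sum_{i<j} Q_\varepsilon(z_i-z_j)\Big)\,dz_1\cdots dz_p.
\end{equation*}
Using \eqref{58} in the form $e^{\gamma^2 Q_\varepsilon(w)} \le C(1 \vee \|w\|^{-2b})$ with $b = \gamma^2/(4\pi)$, and rescaling $z_i = z + \delta u_i$, the contributions $\delta^{-2p}$ (from the $\delta^{-2}$-prefactor in \eqref{35}), $\delta^{2p}$ (from the Jacobian) and $\delta^{-2b\binom{p}{2}} = \delta^{-p(p-1)b}$ (from the singular kernel) combine to give
\begin{equation*}
\bb E\big[\,|\<X_{\gamma,\varepsilon},S^\delta_z f\>|^p\,\big]
\;\le\; C\,\|f\|^p_\infty\,\delta^{-p(p-1)b}
\int_{\bb B(0,1)^p}\prod_{i<j}\|u_i-u_j\|^{-2b}\,du .
\end{equation*}
The finiteness of this Selberg-type integral is the analytic heart of the moment bound; a standard Kahane-type induction shows it holds iff every subset $S \subset \{1,\ldots,p\}$ satisfies $\gamma^2 \binom{|S|}{2}/(2\pi) < d(|S|-1)$, which for $d = 2$ reduces to $p < 2/b = 8\pi/\gamma^2$.

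Convergence in probability is obtained by proving that $\{X_{\gamma,\varepsilon}\}_{\varepsilon>0}$ is Cauchy in $L^p(\Omega;C^\alpha)$. Repeating the identical Gaussian computation on $\<X_{\gamma,\varepsilon_1}-X_{\gamma,\varepsilon_2},S^\delta_z f\>$ expands its $p$-th moment into $2^p$ mixed integrals whose integrand involves pairwise exponentials of $Q_{\varepsilon_1}$, $Q_{\varepsilon_2}$ or the cross-covariance $\bb E[\mf v_{\varepsilon_1}(\cdot)\mf v_{\varepsilon_2}(\cdot)]$. Outside the diagonal region $\{\|z_i-z_j\| \le \max(\varepsilon_1,\varepsilon_2)^\theta\}$ the rate of convergence of the mollified covariances yields a polynomial gain $\max(\varepsilon_1,\varepsilon_2)^\eta$, whereas the diagonal region contributes a small volume; the net effect is an estimate
\begin{equation*}
\bb E\big[\,|\<X_{\gamma,\varepsilon_1}-X_{\gamma,\varepsilon_2},S^\delta_z f\>|^p\,\big]
\;\le\; C\,\max(\varepsilon_1,\varepsilon_2)^\mu\,\|f\|^p_\infty\,\delta^{-p(p-1)b}
\end{equation*}
for some $\mu = \mu(p,\gamma) > 0$. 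Mollifier-independence follows from the same computation with $\varrho$ replaced by another admissible mollifier, and the moment bound for $X_\gamma$ then follows from Fatou's lemma.

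The regularity exponent $\alpha_\gamma$ comes from combining the moment bound with the Besov-Kolmogorov criterion \cite[Theorem 2.7]{ChaWeb15}: a dyadic chaining over $\delta_k = 2^{-k}$ and a $\delta_k$-grid of base points turns it into $\|X_{\gamma,\varepsilon}\|_{C^\alpha} \in L^p$ whenever $\alpha < -(p-1)b - 2/p$. Viewing $\alpha = -(p-1)b - 2/p$ as a quadratic equation in $p$, the two real roots $p_\pm(\alpha)$ differ by exactly $1$ at the threshold $\alpha = b - \sqrt{b^2+8b} = -b(\sqrt{1+8/b}-1)$; below this threshold an integer $p$ can always be chosen inside $[p_-(\alpha),p_+(\alpha)]$, producing the stated $\alpha_\gamma$ (the alternative $-3b$ is the binding value only when $b > 8/15$, outside the regime considered here). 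The additional numerical constraint $\gamma^2 < (4/15)(5-\sqrt{10})\,4\pi$ is what makes $\mu > 0$ in the Cauchy estimate for the integer $p$ dictated by this optimisation. I expect the Cauchy step to be the main technical obstacle, since it requires simultaneously controlling the on-diagonal contribution (where no $\varepsilon$-gain is available) and the off-diagonal polynomial gain in a way that stays compatible with the strict inequality $p < 8\pi/\gamma^2$ required by the moment bound.
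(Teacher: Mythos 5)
Your first step (the $p$-th moment bound via the Gaussian exponential formula, rescaling, and the finiteness of the Selberg-type integral for $p<8\pi/\gamma^2$) matches the paper's Lemma \ref{s2} and Corollary \ref{s12}. The genuine gap is in the convergence step: you assert, without proof, an $L^p$ Cauchy estimate
$\mathbb{E}\big[\,|\langle X_{\gamma,\varepsilon_1}-X_{\gamma,\varepsilon_2},S^\delta_z f\rangle|^p\big]\le C\,\max(\varepsilon_1,\varepsilon_2)^\mu\,\Vert f\Vert_\infty^p\,\delta^{-p(p-1)b}$
for the integer $p\ge 3$ dictated by your optimisation. This is precisely the estimate the paper does not have and whose absence shapes its entire architecture: the difference estimate with an $\varepsilon$-gain is proved only for $p=2$ (Lemma \ref{s1}, and Lemma \ref{s1b} for mollifier-independence), and higher "moments'' of the difference are manufactured by Chebyshev plus two layers of H\"older interpolation (Lemma \ref{s26b} and Corollary \ref{s3}), at the price of the restrictions $s_0\ge 2$ and $s_0<8\pi/\gamma^2-2$ in \eqref{19}. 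For general $p$ the expansion of the $p$-th moment of the difference produces an alternating sum over $2^p$ assignments of $\varepsilon_1/\varepsilon_2$ to the points $z_1,\dots,z_p$, and one must extract the $\varepsilon$-gain from the cancellation of the leading terms off the diagonal while controlling the near-diagonal region (where no gain is available) by a quantitative, strictly subcritical version of the Selberg bound; none of this is routine, and Remark \ref{rm2} of the paper states explicitly that having such estimates beyond $p=2$ would already improve the theorem. So the heart of your argument is missing, and it is not a step one can wave through.

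A second, related problem is your explanation of the hypothesis $\gamma^2<(4/15)(5-\sqrt{10})\,4\pi$: you claim it "makes $\mu>0$'' in your Cauchy estimate, but you derive neither $\mu$ nor this implication. In the paper this constant has a completely different origin: it is equivalent (via \eqref{19} and \eqref{19b}) to $5b-4<-b(\sqrt{1+8/b}-1)$, i.e.\ to the non-emptiness of the window $\alpha\in(5b-4,\alpha_\gamma)$ forced by the interpolation step, where $5b-4$ encodes the requirement $s_0<8\pi/\gamma^2-2$ (one loses two units of $p$ because the difference estimate exists only at $p=2$ and Corollary \ref{s3} interpolates between consecutive integers up to $p_0$). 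If your direct $L^p$ route worked, these particular constants would not be expected to reappear, so asserting that your scheme reproduces them exactly is unjustified. On the positive side, your observation that optimising $\alpha<-(p-1)b-2/p$ over integers $p$ (an interval of roots of $bp^2+(\alpha-b)p+2$ of length $>1$) yields the same threshold $-b(\sqrt{1+8/b}-1)$ as the paper's real-parameter optimisation of $2+\alpha s+b(s-\tfrac12)^2$ is correct and is a nice consistency check; and your Kolmogorov--Besov chaining is a legitimate substitute for the paper's wavelet characterisation (Lemma \ref{l01}) once the missing moment estimates are in hand.
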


As $\mf v$ is a log-correlated Gaussian field, $X_\gamma$ is the
so-called Gaussian multiplicative chaos (GMC), introduced by Kahane in
\cite{Kah85}.

\smallskip\noindent{\bf A long-range Sine-Gordon equation.}  Fix 
$u_0$ in $C^{\beta_0}(\bb T)$ for some $\beta_0>0$, $\gamma\in \bb R$,
and denote by $\mf u_\varepsilon$, $0< \varepsilon <1$, the solution
of
\begin{equation}
\label{09}
\begin{cases}
\partial_t \mf u_\varepsilon \;=\; - \,  (-\Delta)^{1/2} \mf u_\varepsilon
\;-\; A(\varrho) \, \, \varepsilon^{\gamma^2/4\pi} 
\sinh(\gamma \mf  u_\varepsilon) \,+\, \xi_\varepsilon \\
\mf  u_\varepsilon(0) \,=\, u_0 \,+\, \mf v_\varepsilon(0)\;,
\end{cases}
\end{equation}
where $\mf v_\varepsilon(0)$ is given by \eqref{08b}.

Let
$\color{bblue} \alpha_\gamma := b\, \min\{\, -3 \,,\, - \, \big(\,
\sqrt{1 + (8/b)} \,-\, 1\,\big) \,\}$, where $b= \gamma^2/4\pi$.
By definition, $\alpha_\gamma<0$ and, according to Remark
\ref{rm3} below, $\alpha_\gamma>-1/2$ if $0<\gamma^2 < \pi/7$.

Fix $\alpha \in (-1/2, \alpha_\gamma)$ and choose $\kappa$ small
enough for $0<2\kappa<1+2\alpha$. Let $\beta= \alpha + 1 - 2\kappa$.
Note that $0< \beta <1$ and $\alpha + \beta>0$.

\begin{theorem}
\label{mt2}
Fix $0<\gamma^2 < \pi/7$, $\alpha \in (-1/2, \alpha_\gamma)$ and $u_0$
in $C^\beta(\bb T)$. There exists an almost surely, strictly positive
random variable $\tau$, $\bb P[\tau>0]=1$, with the following
property.

For each $0< \varepsilon < 1$, there exists a unique solution in
$C^\beta([0,\tau] \times \bb T)$ of the equation \eqref{09}, denoted
by $\mf u_\varepsilon$. As $\varepsilon\to 0$, the sequence
$\mf u_\varepsilon$ converges in probability in
$C^\beta([0,\tau] \times \bb T)$ to a random field $\mf u$
which does not depend on the mollifier $\varrho$.
\end{theorem}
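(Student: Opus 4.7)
The strategy is the Da Prato--Debussche perturbative decomposition. Writing $\mf u_\varepsilon = \mf v_\varepsilon + \mf w_\varepsilon$, with $\mf v_\varepsilon$ the stochastic convolution from \eqref{08b}, the stochastic part of the initial condition is absorbed into $\mf v_\varepsilon$ and the remainder satisfies $\mf w_\varepsilon(0) = u_0$ together with
\begin{equation*}
\partial_t \mf w_\varepsilon \;=\; -\,(-\Delta)^{1/2} \mf w_\varepsilon
\;-\; A(\varrho)\, \varepsilon^{\gamma^2/4\pi}\, \sinh\big(\gamma(\mf w_\varepsilon + \mf v_\varepsilon)\big)\;.
\end{equation*}
The hyperbolic identity $2\,\sinh(a+b) = e^{a}e^{b} - e^{-a}e^{-b}$ together with the definition \eqref{30} of $X_{\pm\gamma,\varepsilon}$ factor the non-linear term as
\begin{equation*}
A(\varrho)\, \varepsilon^{\gamma^2/4\pi}\, \sinh\big(\gamma(\mf w_\varepsilon + \mf v_\varepsilon)\big)
\;=\; \tfrac12\, X_{\gamma,\varepsilon}\, e^{\gamma \mf w_\varepsilon}
\;-\; \tfrac12\, X_{-\gamma,\varepsilon}\, e^{-\gamma \mf w_\varepsilon}\;,
\end{equation*}
leading to the mild formulation
\begin{equation*}
\mf w_\varepsilon(t) \;=\; P_t u_0 \;-\; \tfrac12 \int_0^t P_{t-s}\Big[ X_{\gamma,\varepsilon}(s)\, e^{\gamma \mf w_\varepsilon(s)}
\,-\, X_{-\gamma,\varepsilon}(s)\, e^{-\gamma \mf w_\varepsilon(s)}\Big]\, ds\;.
\end{equation*}

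The plan is to solve this equation by a Banach fixed-point argument in a closed ball of $C^\beta_+([0,\tau]\times \bb T)$, where $\tau$ is an almost surely positive random time. Two ingredients are essential. First, since $\alpha + \beta > 0$ by choice of the parameters, Bony's paraproduct decomposition makes sense of the product of $F\in C^\beta$ with $X\in C^\alpha$ as an element of $C^\alpha$, with the continuity bound $\|F \cdot X\|_{C^\alpha} \le C\,\|F\|_{C^\beta}\, \|X\|_{C^\alpha}$. Second, the Schauder estimate for the half-Laplacian semigroup proved in Section~\ref{sec04}, combined with the identity $\beta = \alpha + 1 - 2\kappa$, yields a short-time bound of the form
\begin{equation*}
\Big\|\int_0^{\cdot} P_{\cdot - s}\, f(s)\, ds\Big\|_{C^\beta([0,T]\times \bb T)}
\;\le\; C\, T^{\eta}\, \|f\|_{C^\alpha([0,T]\times \bb T)}\;,
\end{equation*}
for some $\eta>0$ depending on $\kappa$; the small prefactor $T^{\eta}$ is what powers the contraction.

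Since $\mf w \mapsto e^{\pm \gamma \mf w}$ is locally Lipschitz from $C^\beta$ into itself on bounded subsets, the two ingredients combine to show that the map $\Phi_\varepsilon$ defined by the right-hand side of the mild equation contracts a suitable ball in $C^\beta_+([0,\tau]\times \bb T)$, provided $\tau$ is chosen small enough in terms of $\|u_0\|_{C^\beta}$ and of the random quantity $M := \sup_{0<\varepsilon'\le 1} \max\{\|X_{\gamma,\varepsilon'}\|_{C^\alpha}, \|X_{-\gamma,\varepsilon'}\|_{C^\alpha}\}$. The moment bound in Theorem~\ref{mt1} together with a standard Kolmogorov-type argument gives $M<\infty$ almost surely, so $\tau$ may be taken a.s.\ strictly positive and uniform in $\varepsilon$. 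The unique fixed point $\mf w_\varepsilon$ defines $\mf u_\varepsilon := \mf v_\varepsilon + \mf w_\varepsilon$. Convergence as $\varepsilon\to 0$ follows from applying the same contraction estimate to $\mf w_{\varepsilon_1} - \mf w_{\varepsilon_2}$: the difference is controlled in $C^\beta([0,\tau]\times \bb T)$ by $\|X_{\pm\gamma,\varepsilon_1} - X_{\pm\gamma,\varepsilon_2}\|_{C^\alpha}$, which tends to zero in probability by Theorem~\ref{mt1}. The mollifier-independence of the limit $\mf u$ is inherited in the same way from that of $X_{\pm\gamma}$.

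The main technical obstacle is the Schauder estimate with the positive power of $T$ used above. Because $(-\Delta)^{1/2}$ generates a non-smooth semigroup at $t=0$---a direct consequence of the long-range interaction---the classical proofs based on heat-kernel smoothness do not apply, and an independent derivation, carried out in Section~\ref{sec04}, is required. The quantitative control of the paraproduct constants and of the Lipschitz seminorm of $\mf w \mapsto e^{\pm\gamma\mf w}$ on large balls, both of which enter the choice of $\tau$, is delicate but conceptually routine once the Schauder estimate is in place.
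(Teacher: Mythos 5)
Your overall architecture is the paper's: Da Prato--Debussche decomposition $\mf u_\varepsilon=\mf v_\varepsilon+\mf w_\varepsilon$, the Bony product (Proposition \ref{s07}, using $\alpha+\beta>0$), the Schauder estimate of Section \ref{sec04} with the $T^\kappa$ gain, a contraction on a ball of $C^\beta([0,\tau]\times\bb T)$, and stability of the fixed point in $X_{\pm\gamma,\varepsilon}$ to pass to the limit. There is, however, a genuine gap at the very first step: with $\mf v_\varepsilon$ defined by \eqref{08b}, i.e.\ as the convolution of $\xi_\varepsilon$ with the \emph{truncated} kernel $q_{T_0}$ over all past times, one does \emph{not} have $\partial_t\mf v_\varepsilon+(-\Delta)^{1/2}\mf v_\varepsilon=\xi_\varepsilon$; instead $\partial_t\mf v_\varepsilon+(-\Delta)^{1/2}\mf v_\varepsilon-\xi_\varepsilon=R_\varepsilon$, where $R_\varepsilon$ is a smooth but nonzero random field (see \eqref{78} and \eqref{84}), produced by the cut-off $\ms H(t-T_0)$ and the extension of the stochastic convolution to negative times. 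Consequently the equation you write for $\mf w_\varepsilon$ is missing the term $-R_\varepsilon$ (compare \eqref{61}), your mild formulation is missing $\int_0^t P_{t-s}R_\varepsilon(s)\,ds$, and the fixed point you construct does not reconstruct a solution of \eqref{09} through $\mf u_\varepsilon=\mf v_\varepsilon+\mf w_\varepsilon$. You cannot simply redefine $\mf v_\varepsilon$ as the exact solution of the linear equation either: the Gaussian multiplicative chaos $X_{\pm\gamma,\varepsilon}$ of \eqref{30}, whose convergence is what Theorem \ref{mt1} supplies, is built from the stationary, log-correlated field \eqref{08b}, so your factorisation of the $\sinh$ term would then no longer match the object controlled by Theorem \ref{mt1}. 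The paper carries $R_\varepsilon$ through the whole argument (Proposition \ref{s06}, Lemma \ref{s09}) and needs the separate probabilistic input that $R_\varepsilon\to R$ in $C^1$ in probability (Proposition \ref{s13}); this input does not follow from Theorem \ref{mt1} and is absent from your sketch.

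Two secondary points. First, your random time $\tau$ is built from $M:=\sup_{0<\varepsilon\le 1}\|X_{\pm\gamma,\varepsilon}\|_{C^\alpha}$, claimed to be a.s.\ finite by ``Theorem \ref{mt1} plus a Kolmogorov-type argument''; Theorem \ref{mt1} gives convergence in probability and moment bounds for the \emph{limit}, not an almost sure bound uniform over the continuum of $\varepsilon$, and this step is not justified as stated. The paper avoids it altogether: for each $\eta>0$ it chooses $K$ so that the events $\{\|X_\gamma\|_{C^\alpha}>K\}$, $\{\|X_{\gamma,\varepsilon}-X_\gamma\|_{C^\alpha}>\zeta\}$ (and the analogous events for $R$, $R_\varepsilon$) have probability at most $\eta$, and runs the contraction with a deterministic $\tau=\tau(K)$ on the complementary event, which yields exactly the ``convergence in probability'' statement of Theorem \ref{mt2}. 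Second, since $\mf u_\varepsilon=\mf v_\varepsilon+\mf w_\varepsilon$, the conclusion also requires the convergence $\mf v_\varepsilon\to\mf v$ in $C^\beta([0,\tau]\times\bb T)$ in probability and the mollifier-independence of $\mf v$; your argument only addresses $\mf w_\varepsilon$.
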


The proof of Theorem \ref{mt2} follow the approach proposed by
\cite{HaiShe16} in the context of sine-Gordon equations, and
\cite{Garban18} for dynamical Liouville equation. It relies on a
Schauder estimate.

\begin{remark}
\label{rm9}
By extending to $\bb R \times \bb T$ the theory of Gaussian
multiplicative chaos, along the lines of \cite{RobVar10}, one can
extend the validity of Theorems \ref{mt1} and \ref{mt2} to a larger
range of $\gamma$. We leave this for a future work in which regularity
structures will be used to extend the range up to criticality.
\end{remark}

\smallskip\noindent{\bf A Schauder estimate for the fractional
  Laplacian.} Let $q_{T_0,z} : \bb R \times \bb T \to \bb R_+$,
$z=(t,x) \in (0,\infty)\times \bb R$, be given by
$\color{bblue} q_{T_0,z}(w) = q_{T_0}(z-w)$. 

\begin{theorem}
\label{mt3}
Fix $-1<\alpha<0$, $0<\kappa<1+\alpha$. Then, there exists a finite
constant $C(\kappa, T_0)$ such that
\begin{equation*}
\big\vert \, X(q_{T_0,z}) \,-\, X(q_{T_0,z'}) \, \big\vert
\;\le\; C(\kappa,T_0) \,
\big\Vert \, z\,-\, z' \, \big\Vert^{1+\alpha-\kappa}\,
\big\Vert \, X \, \big\Vert_{C^{\alpha}([S-T_0-3,T+4]\times  \bb T)}
\;.
\end{equation*}
for all $X\in C^\alpha(\bb R \times \bb T)$, $S<T$, $z=(t,x)$,
$z'=(t',x') \in [S,T] \times \bb T$ such that
$\Vert z- z'\Vert\ \le 1/4$.
\end{theorem}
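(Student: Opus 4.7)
The plan is to adapt the scale-invariance strategy of \cite{Hai14}, exploiting the $(-1)$-homogeneity $p(\lambda t, \lambda x) = \lambda^{-1} p(t,x)$ of the Cauchy density to decompose $q_{T_0}$ into dyadic pieces. Fix a smooth partition of unity $1 = \sum_{n \ge 0} \chi_n$ on $\bb R \times \bb T$ with $\chi_n$ supported in the annulus $\{\Vert w \Vert \sim 2^{-n}\}$ for $n \ge 1$ and $\chi_0$ accounting for the unit-scale piece, and set $q_n := q_{T_0}\, \chi_n$. Using the scaling of $p$ and its derivatives in the interior $\{t > 0\}$, each $q_n$ should admit a representation $q_n = C \cdot 2^{-n} \cdot S^{2^{-n}}_0 g_n$ with $g_n \in B_m$: a direct computation shows that $g_n(y) := C^{-1} \cdot 2^{-n}\, q_n(2^{-n} y)$ has $\Vert g_n \Vert_{C^m}$ bounded uniformly in $n$, because $|\partial^k q_n| \lesssim 2^{n(1+k)}$ on the annulus. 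The definition \eqref{71} then yields $|X(q_n)| \lesssim 2^{-n(1+\alpha)} \Vert X \Vert_{C^\alpha}$, which is summable since $1+\alpha > 0$.

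For the H\"older difference, set $h := \Vert z - z' \Vert$ and split $X(q_{T_0, z}) - X(q_{T_0, z'}) = \sum_n [X(q_{n, z}) - X(q_{n, z'})]$ where $q_{n, z}(w) := q_n(z - w)$. For small scales $2^{-n} \le h$, I would bound each piece separately by the pointwise estimate, so that the geometric sum yields $\lesssim h^{1+\alpha}\, \Vert X \Vert_{C^\alpha}$. For large scales $2^{-n} > h$, I would use a first-order Taylor expansion $q_{n, z}(w) - q_{n, z'}(w) = \int_0^1 (z' - z) \cdot \nabla q_n(w - z_\theta)\, d\theta$; since $|\nabla q_n|$ scales as $2^{2n}$ on the annulus and the support has radius $\sim 2^{-n}$, this difference can be rewritten as $h$ times a scaled test function at scale $2^{-n}$, producing $|X(q_{n,z}) - X(q_{n,z'})| \lesssim h \cdot 2^{-n\alpha} \Vert X \Vert_{C^\alpha}$; summing over $2^{-n} > h$ yields $h \cdot h^\alpha = h^{1+\alpha}$, using $\alpha < 0$.

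The main obstacle, and the origin of the $\kappa$ loss, is that $q_{T_0}$, extended by zero for $t \le 0$, is only continuous (and not $C^1$) across the hyperplane $\{t = 0\}$: for $x \ne 0$, the right derivative $\partial_t p(0^+, x) = 1/(\pi x^2)$ fails to match the left value $0$. The pieces $q_n$ intersecting $\{t = 0\}$ are therefore not in $C^m$, and the identification with elements of $B_m$ in the first paragraph does not hold in the strict sense required by \eqref{71}. To resolve this, I would approximate each $q_n$ by $q_n^{(\eta)} := q_n \cdot \phi(t/\eta)$ for a smooth cutoff $\phi$ vanishing to sufficient order at the origin; the smoothed piece obeys the scaling bounds of the first paragraph, while the removed boundary layer of thickness $\eta$ would be estimated separately by expressing it as a scaled test function at scale $\eta$. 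Optimizing $\eta$ to balance the $C^m$ bound on the rescaled $g_n^{(\eta)}$ against the boundary contribution — and, in the difference step, against $h$ — produces the arbitrarily small loss $\kappa > 0$ in the H\"older exponent.
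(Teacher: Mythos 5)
Your skeleton coincides with the paper's: a dyadic annular decomposition of $q_{T_0}$ around its singularity exploiting the scaling of the Cauchy kernel, the per-scale bound $|X(q_{n,z})|\lesssim 2^{-n(1+\alpha)}$, and a split of the H\"older difference into small scales (triangle inequality) and large scales (gradient bound), exactly as in Lemmata \ref{s14}, \ref{s17} and \ref{s01}; you also correctly identify the central obstruction, the jump of $\partial_t p$ across $\{t=0\}$. The genuinely different ingredient is your device for that jump: the paper extends the domain of $X$ to functions that are only piecewise $C^1$ across one or two time-hyperplanes by a dyadic decomposition in the distance to the hyperplane combined with a spatial partition of unity (Lemma \ref{s15}, Corollary \ref{s18}, Lemma \ref{s02}), obtaining bounds in terms of one-sided $C^1$/$C^2$ norms; you propose a single smoothing scale $\eta$ with a separately estimated boundary layer. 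For the pointwise bound this can be made to work, with two caveats: the removed layer is an anisotropic slab (thickness $\eta$ in time, width $\sim 2^{-n}$, or order $T_0$ for the far-field piece), so it is not ``a scaled test function at scale $\eta$'' but a sum of about $2^{-n}/\eta$ of them (this is precisely why the paper needs the spatial partition $\psi_n$, cf.\ Remark \ref{rm8}); and no optimization in $\eta$ is actually needed there, since $q_n=O(\eta 2^{2n})$ on the layer makes the cutoff term $q_n\varphi'(t/\eta)\eta^{-1}=O(2^{2n})$ uniformly in $\eta$, while the layer contribution vanishes as $\eta\to 0$.

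The genuine gap is in the large-scale part of the difference estimate. Write $h=\Vert z-z'\Vert$ and take $t<t'$. The Taylor identity and the claimed bound $\Vert g_n\Vert_{C^m}\lesssim h$ for the rescaled difference fail on the slab $\{t<s<t'\}$ swept between the two discontinuity hyperplanes: there $q_{n,z}$ vanishes while $|\nabla q_{n,z'}|\sim 2^{2n}$, so the gradient of the difference is of order $2^{2n}$, not $h\,2^{3n}$, and the $\eta$-smoothing does not repair this (for $\eta\lesssim h$ the modified region still carries a gradient of order $2^{2n}$; for $\eta\gtrsim 2^{-n}$ the smoothing destroys the annular scaling). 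What saves the argument is not the gradient but the smallness of the function on the slab, $|q_{n,z}-q_{n,z'}|\lesssim h\,2^{2n}$, tested at scale comparable to $h$: covering the slab by $\sim 2^{-n}/h$ balls of radius $h$ gives a contribution $\lesssim h^{2+\alpha}2^{n}$ per scale, which sums over $2^{-n}>h$ to $h^{1+\alpha}$. This switch from derivative bounds to sup bounds near the hyperplanes is exactly what the three-region decomposition in Lemma \ref{s02} and the $\Omega$-splitting in the proof of Lemma \ref{s17} implement, and it is absent from your sketch; ``optimizing $\eta$'' is not the mechanism. Relatedly, the $\kappa$ in the statement is not produced by any such optimization: in the paper it comes from summing the roughly $\log(1/h)$ large scales with the uniform bound $h^{1+\alpha}$ per scale, whereas your sharper per-scale bound $h\,2^{-n\alpha}$ (legitimate once the slab is treated as above) makes that sum geometric, so your route, properly completed, would even avoid the logarithmic loss rather than generate it.
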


This result is one of the main novelties of this article. One of the
major difficulties of the proof lies on the fact that the transition
density $p(t,x)$ of the fractional Laplacian does not belong to $C^1$
due to the long jumps. One needs, in particular, to provide a meaning
to $X(q_{T_0,z})$, approximating $q_{T_0,z}$ by smooth functions.

\smallskip\noindent{\bf Sketch of the proof.}  Following DaPrato and
Debussche \cite{DaPrato2003}, we expand the solution
$\mf u_\varepsilon(t,x)$ around the solution of the linear equation
\begin{equation}
\label{62}
\begin{cases}
\displaystyle \vphantom{\Big\{}
\partial_t \mf f_\varepsilon \;=\;
-\, (-\, \Delta)^{1/2}\, \mf f_\varepsilon \,+\;  \xi_\varepsilon \;,
\quad (t,x) \,\in\,  (0, \infty) \times \bb T\;, \\
\displaystyle  \vphantom{\Big\{}
\mf f_\varepsilon (0,x) \,=\,f(x) \;, \quad x\in \bb T \;.
\end{cases}
\end{equation}
The solution of \eqref{62} can be represented in terms of the
semigroup $(P_t : t\ge 0)$ of the Cauchy process as
\begin{equation*}
\mf f_\varepsilon(t) \;=\; \int_0^t P_{t-s} \, \xi_\varepsilon(s)\;
ds\; +\; P_t f\;.
\end{equation*}
Recall the definition of the Gaussian field $\mf v_\varepsilon$
introduced on \eqref{08b}. Comparing $\mf v_\varepsilon$ to
$\mf f_\varepsilon$ and choosing an appropriate initial condition $f$
yields that
\begin{equation*}
\partial_t \mf v_\varepsilon \;+\;
(-\, \Delta)^{1/2}\, \mf v_\varepsilon \,-\;  \xi_\varepsilon
\;=\; R_\varepsilon\;,
\end{equation*}
where $R_\varepsilon$ is a smooth function with nice asymptotic
properties. 

By writing the solution $\mf u_\varepsilon$ of equation \eqref{09} as
$\mf v_\varepsilon + \mf w_\varepsilon$ yields that
$\mf w_\varepsilon$ solves the equation 
\begin{equation}
  \label{61}
\left\{
\begin{aligned}
& \partial_t \mf w_\varepsilon \;=\;
- \,  (-\Delta)^{1/2} \mf w_\varepsilon
\,-\,  \frac 12\, X_{\gamma, \varepsilon} \, e^{\gamma \mf w_\varepsilon}
\,+\,  \frac 12\, X_{-\gamma, \varepsilon} \, e^{-\gamma \mf w_\varepsilon}
\;-\; R_\varepsilon \;, \\
& \mf w_\varepsilon(0) \,=\, u_0  \;,
\end{aligned}
\right.
\end{equation}
where $X_{\gamma, \varepsilon} $ has been introduced in \eqref{30}.

It is not difficult to show that the sequence of random fields
$\mf v_\varepsilon$ converges as $\varepsilon \to 0$. The proof of the
convergence of $\mf u_\varepsilon$ is thus reduced to the one of
$\mf w_\varepsilon$.

The proof of local existence and uniqueness of solutions to \eqref{61}
is divided in two steps. we first show that the sequence
$X_{\gamma, \varepsilon}$ converges in probability in $C^\alpha$ to a
random field, represented by $X_{\gamma,0}$. This is the content of
Theorem \ref{mt1}. Then, writing the solution of \eqref{61} as
\begin{equation*}
\begin{aligned}
\mf w_\varepsilon (t) \;& =\; -\, \frac 12\,
\int_0^t P_{t-s} \Big\{ e^{\gamma \mf w_\varepsilon (s)} \,
X_{\gamma, \varepsilon}  (s)
\;-\, e^{- \gamma \mf w_\varepsilon (s)} \,
X_{\gamma, \varepsilon} (s)  \Big\}\; ds \\
&-\; \int_0^t P_{t-s} R_\varepsilon (s)  \; ds
\;+\; P_t u_0\;,
\end{aligned}
\end{equation*}
we prove the existence and uniqueness of a fixed point for this
equation, including the case $\varepsilon =0$, in an appropriate Besov
space. Moreover, we show that $\mf w_\varepsilon$ converges to $\mf
w_0$ as $\varepsilon \to$ in some H\"older space. The Schauder
estimate is one of main tools here.

\smallskip\noindent{\bf Organization of the paper.}  The paper is
organised as follows: In Section \ref{sec03}, we present the main
properties of the fractional Laplacian needed in the article. In
Section \ref{sec04}, we prove Theorem \ref{mt3}.  In Section
\ref{sec07}, we examine the properties of the log-correlated Gaussian
random field $\mf v_\varepsilon$, introduced in \eqref{08b}. In
Section \ref{sec05}, we prove Theorem \ref{mt1} and, in Section
\ref{sec06}, Theorem \ref{mt2}.

\section{The Cauchy process}
\label{sec03}

We present in this section properties of the solutions of the
fractional heat equation needed in article. We start examining the
fractional Laplacian operator. Denote by $L^2(\bb T)$ the space of
complex valued functions $f: \bb T\to \bb C$ endowed with the scalar
product given by
\begin{equation*}
\< f\,,\, g\> \;=\; \int_{\bb T} f(x)\, \overline{g(x)}\, dx\;,
\end{equation*}
where $\overline{a}$ represents the complex conjugate of $a\in \bb C$.

Denote by $\{e_k : k\in \bb Z\}$ the orthonormal basis of $L^2(\bb T)$ given
by $\color{bblue} e_k(x) = e^{2\pi i k x}$. An elementary computation shows
that the functions $e_k$ are eigenvectors of the operator $(-\Delta)^{1/2}$:
\begin{equation}
\label{05}
(-\Delta)^{1/2} e_k \;=\; 2\, \pi \, |k| \, e_k\;, \quad k\in \bb Z\;.
\end{equation}

Denote by $G: \bb T \to \bb R$ the Green function associated to the
operator $(-\Delta)^{1/2}$:
\begin{equation*}
G (x) \;=\;
\sum_{k \in \mathbb{Z}\setminus\{0\}} \frac{e_k(x)}{2\pi |k|} \;.
\end{equation*}
A straightforward computation on the Fourier space yields that for any
function $f$ in the domain of the operator $(-\Delta)^{1/2}$ and orthogonal to
$e_0$ [that is, such that $\int_{\bb T} f(x)\, dx = 0$]
\begin{equation}
\label{27}
[(-\Delta)^{1/2} f] \,*\, G \;=\; f\;,
\end{equation}
where $f*g$ stands for the convolution of two functions $f$, $g$ in
$L^2(\bb T)$:
\begin{equation*}
(f*g)(x) \;=\; \int_{\bb T} f(x-y)\, g(y)\, dy\;.
\end{equation*}
The Green function can also be computed, it is given by
\begin{equation}
\label{12}
G (x) \;=\; \frac 1{\pi}\sum_{k\ge 1}
\frac{\cos (2\pi k x)}{k}
\;=\; -\, \frac {1}{\pi}\, \ln \big[\, 2 \, \sin (\pi x)\, \big]\;.
\end{equation}

The semigroup $p(t,x)$ associated to the fractional Laplacian
$(-\Delta)^{1/2}$ can be computed explicitly. Denote by $\bs p(t,x)$ the
solution on $\bb R$ of the differential equation
\begin{equation*}
\left\{
\begin{aligned}
& \partial_t p \;=\; \ms A \, p \\
& p(0,x) \,=\, \delta_0 (x) \;,
\end{aligned}
\right.
\end{equation*}
where $\ms A$ stands for the fractional Laplacian $-\,
(-\Delta)^{1/2}$ defined on $\bb R$ (instead of $\bb T$).  In Fourier
coordinates, it is given by
\begin{equation*}
\widehat{\bs p} (t,\theta) \;=\; e^{-\, |\theta|\, t}\;.
\end{equation*}
Inverting the Fourier transform yields that
\begin{equation}
\label{07}
\bs p (t,x) \;=\; \frac 1{\pi}\, \frac{t}{x^2+ t^2}\;,\quad
x\,\in\, \bb R\;,\;\; t\,>\, 0\;,
\end{equation}
and $\bs p (t,x) =0$ if $t<0$.

Let $p: (0,\infty) \times [-1/2, 1/2) \to \bb R_+$ be the projection
of the transition probability $\bs p$ on the torus:
\begin{equation}
\label{04}
p (t,x) \;=\; \bs p(t,x) \;+\; p_\star (t,x)
\;: =\; \bs p(t,x) \;+\;
\sum_{k\not =0} \bs p(t,x+k)\;, \quad x\in  [-1/2, 1/2) \;,
\end{equation}
where the last sum is performed over all intergers $k\in \bb Z$
different from $0$. An elementary computation shows that the function
$p$ is smooth in its domain of definition
$(0,\infty) \times [-1/2, 1/2)$, where $[-1/2, 1/2)$ represents the
torus.

Although $p$ is smooth as a function defined on the torus, this is not
the case of $p_\star$. However, if we assume that $p_\star$ is defined
on $(0,\infty) \times (-1/2 - \kappa, 1/2)$ for some $0<\kappa<1/4$,
it is not difficult to show that this function is smooth on
$(0,\infty) \times (-1/2 - \kappa, 1/2)$ and that it is uniformly
bounded, as well as its derivatives: For all $j$, $k\ge 0$, there
exists a finite constant $C_{j,k}$ such that
\begin{equation*}
\sup_{z\in (0,\infty) \times (-1/2 - \kappa, 1/2)}
\big|\, \partial^j_t\, \partial^k_x \, p_\star (z) \, \big|
\;\le\; C_{j,k}\;.
\end{equation*}

Let $\ms A$ be the annulus on $\bb R^2$ given by
$\color{bblue} \ms A = \{(t,x)\in \bb R^2 : 1/2 < t^2 + x^2 < 2 \}$,
and set
$\color{bblue} \ms A_n = \{ (t,x)\in \bb R^2 : (2^n t, 2^n x) \in \ms
A\}$,
$\color{bblue} \ms A^+_n = \{ (t,x)\in (0,\infty) \times \bb R : (t,
x) \in \ms A_n\}$.
It follows from the previous estimates on $p$ and elementary
computations that for all $j\ge 0$, $k\ge 0$ there exists a finite
constant $C_{j,k}$ such that for all $n\ge 2$,
\begin{equation}
\label{23}
\sup_{z\in \ms A^+_n}
\big|\, \partial^j_t\, \partial^k_x \, p (z) \, \big|
\;\le\; C_{j,k} \, 2^{(1+j+k)n}\;.
\end{equation}

It is also not difficult to show from \eqref{04} that there exists a
finite constant $C_0$ such that
\begin{equation}
\label{23b}
\big|\, \partial_t p \, (z) \, \big|
\;\le\; \frac {C_0}{\Vert z\Vert} \;, \quad
\big|\, \partial_x p \, (z) \, \big|
\;\le\; \frac {C_0}{\Vert z\Vert} \;, \quad
p \, (z) \;\le\; C_0 \Big\{ \, 1 \,+\, 
\frac {1}{\Vert z\Vert} \, \Big\}
\end{equation}
for all $z=(t,x)$ such that $t >0$.

Recall from Section \ref{sec01} that we denote by $(P_t : t \ge 0)$
the semigroup associated to the generator $-(-\Delta)^{1/2}$: $P_t$
acts on continuous functions $f: \bb T \to \bb R$ as
$(P_t f)(x) = \int_{\bb T} p(t,y-x)\, f(y)\, dy$, where $p$ is the
transition density introduced in \eqref{04}.

Denote by $(Z_t:t\ge 0)$ the Cauchy process. This is the Markov
process on $\bb R$ which starts from the origin and whose semigroup is
given by $\bs p$, introduced in \eqref{07}.

\begin{lemma}
\label{s03}
Fix $\beta \in (0,1)$. There exists a constant $C_0 = C_0 (\beta)$,
such that for all $T>0$, $u \in C^{\beta}(\mathbb{T})$,
\begin{equation*}
\| \, P_t \, u \, \|_{C^{\beta}([0,T] \times
\mathbb{T})} \le C_0 \, \| \, u\, \|_{C^{\beta}(\mathbb{T})} \;.
\end{equation*}
\end{lemma}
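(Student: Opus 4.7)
The plan is to split the parabolic increment $(P_tu)(z)-(P_{t'}u)(z')$ with $z=(t,x)$, $z'=(t',x')$ into a purely spatial and a purely temporal part, and to bound each one by $\|u\|_{C^\beta(\bb T)}$ times the appropriate power of the parabolic distance $d(z,z')$. For the spatial part, I would use the translation invariance of the kernel and the fact that $p(t,\cdot)$ is a probability density on $\bb T$:
\[
(P_tu)(x)-(P_tu)(x') \;=\; \int_{\bb T} p(t,y)\,\big[u(x+y)-u(x'+y)\big]\,dy,
\]
so the integrand is pointwise controlled by $\|u\|_{C^\beta(\bb T)}\,d_{\bb T}(x,x')^\beta$. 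This in particular shows that $x\mapsto(P_tu)(x)$ has spatial Hölder seminorm bounded by that of $u$, uniformly in $t\ge 0$.

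For the temporal part, assume $0\le t'<t$ and set $h=t-t'$. The semigroup identity $P_tu=P_h(P_{t'}u)$ gives
\[
(P_tu)(x)-(P_{t'}u)(x) \;=\; \int_{\bb T} p(h,y)\,\big[(P_{t'}u)(x+y)-(P_{t'}u)(x)\big]\,dy,
\]
and by the spatial estimate applied to $P_{t'}u$ this is bounded by $\|u\|_{C^\beta(\bb T)}\,I(h)$, where
\[
I(h)\;:=\;\int_{\bb T} p(h,y)\,d_{\bb T}(y,0)^\beta\,dy.
\]
The heart of the argument is then to establish $I(h)\le C h^\beta$ for every $h>0$, with $C$ depending only on $\beta$.

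To prove this, I would use the decomposition $p=\bs p+p_\star$ from \eqref{04}. For $\bs p$, the substitution $y=hs$ and the explicit formula \eqref{07} give
\[
\int_{-1/2}^{1/2}\bs p(h,y)\,|y|^\beta\,dy \;=\; \frac{h^\beta}{\pi}\int_{-1/(2h)}^{1/(2h)}\frac{|s|^\beta}{s^2+1}\,ds,
\]
and the hypothesis $\beta<1$ makes the last integrand integrable over $\bb R$, yielding a bound of order $h^\beta$. For $p_\star$, since $|y+k|\ge 1/2$ for $y\in[-1/2,1/2)$ and $k\ne 0$, each summand is dominated by $\frac{h}{\pi(y+k)^2}$, producing $p_\star(h,y)\le C_1 h$ when $h\le 1$; combined with the fact that $p_\star$ is uniformly bounded for all $h>0$ (it converges to the constant $1$ as $h\to\infty$), the contribution to $I(h)$ is $O(h)\le O(h^\beta)$ for $h\le 1$ and $O(1)\le Ch^\beta$ for $h\ge 1$.

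Finally I would combine the spatial and temporal estimates through the triangle inequality and the trivial bounds $d_{\bb T}(x,x')^\beta,|t-t'|^\beta\le d(z,z')^\beta$ to conclude. The main obstacle is precisely the moment bound on $I(h)$: in contrast with the heat kernel, the Cauchy density has infinite first moment on $\bb R$, so one must simultaneously exploit the scale invariance of $\bs p$ on a window of size $h$ (where the integrability threshold $\beta<1$ is essential) and the smoothness and uniform boundedness of the periodic correction $p_\star$ in order to obtain a constant that is independent of both $h$ and $T$.
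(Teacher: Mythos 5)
Your argument is correct, but it takes a different route from the paper. The paper's proof is probabilistic: writing $(P_tu)(x)-(P_su)(y)=\mathbb{E}[u(x+\bb Z_t)-u(y+\bb Z_s)]$ for the projection $\bb Z$ of the Cauchy process onto $\bb T$, it bounds the increment by $\|u\|_{C^\beta(\bb T)}\{|x-y|^\beta+\mathbb{E}[|\bb Z_t-\bb Z_s|^\beta]\}$ via subadditivity of $r\mapsto r^\beta$, and then uses $|\bb Z_t-\bb Z_s|\le|Z_t-Z_s|$ together with stationarity of increments and self-similarity to reduce everything to $\mathbb{E}[|Z_{t-s}|^\beta]=(t-s)^\beta\,\mathbb{E}[|Z_1|^\beta]$, which is finite precisely because $\beta<1$; no decomposition of the kernel is needed, since the projection onto the torus only contracts distances. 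You instead argue analytically: spatial Hölder regularity by translation invariance, temporal regularity through the semigroup identity and the moment bound $I(h)\le Ch^\beta$, which you obtain by splitting $p=\bs p+p_\star$ as in \eqref{04}, using the scaling of $\bs p$ (where $\beta<1$ gives integrability of $|s|^\beta/(1+s^2)$) and the bounds $p_\star(h,\cdot)\le C h$ for $h\le 1$ plus uniform boundedness of $p_\star$. Your $I(h)$ is of course just $\mathbb{E}[d_{\bb T}(\bb Z_h,0)^\beta]$ in disguise, so the same moment condition $\beta<1$ is the crux in both proofs; the paper's version is shorter and avoids handling the periodic correction $p_\star$ explicitly, while yours is self-contained at the level of the kernel and does not invoke the process, its stationary increments, or self-similarity. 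Both are complete and yield a constant independent of $T$.
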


\begin{proof}
Fix $u \in C^\beta (\mathbb{T})$, $T>0$, $x$, $y \in \mathbb{T}$ and
$0\le s < t \le T$. Denote by $\bb Z_t$ the projection of the Cauchy
process on the torus $[-1/2,1/2)$ so that 
\begin{align*}
\big |\, (P_{t}u) (x) \,-\,  (P_{s}u)(y)\, \big| \;=\;
\big|\,\mathbb{E}\, [\, u(x + \bb Z_t) 
\,-\,  u(y + \bb Z_s)\, ]\, \big|\;,
\end{align*}
where $\bb E$ represents the expectation with respect to the Cauchy
process $Z_t$. Recall that we represent by $|\,\cdot\,|$ the distance
on the torus, although is not a norm. The previous expression is
bounded by
\begin{equation*}
\| u \|_{C^\beta(\mathbb{T})} \,
\mathbb{E}\, \big [\, |\, x + \bb Z_t - y - \bb Z_s\, |^\beta \,\big]
\; \le \;\|u\|_{C^\beta(\mathbb{T})} \, \Big\{\, |\, x-y \, |^\beta
\,+\, \mathbb{E}\, \big[ \, |\, \bb Z_{t} \,-\,
\bb Z_{s} \, |^\beta \,\big] \, \Big\}
\end{equation*}
because $(a+b)^\theta \le a^\theta + b^\theta$ for $a$, $b>0$,
$0<\theta<1$. Since
$|\, \bb Z_{t} \,-\, \bb Z_{s} \, | \,\le\, |\, Z_{t} \,-\, Z_{s} \,
|$, the increments are stationary and the process is self-similar,
\begin{equation*}
\mathbb{E}\, \big[ \, |\, \bb Z_{t} \,-\, \bb Z_{s} \, |^\beta \,\big]
\;\le\; 
\mathbb{E}\, \big[ \, |\, Z_{t} \,-\, Z_{s} \, |^\beta \,\big]
\;=\; 
\mathbb{E}\, \big[ \, |\, Z_{t-s} \, |^\beta \,\big]
\;=\; 
\mathbb{E}\, \big[ \, |\, (t-s) Z_{1} \, |^\beta \,\big]\;.
\end{equation*}
The right-hand side of the penultimate formula is thus bounded above
by  
\begin{equation*}
\;\|u\|_{C^\beta(\mathbb{T})} \, \Big\{\, |\, x-y \, |^\beta
\,+\, C_0\, (t-s)^\beta \, \Big\}\;,
\end{equation*}
where $C_0 = \mathbb{E}\, [ \, |\, Z_{1} \, |^\beta \,]$.
This completes the proof of the lemma.
\end{proof}

\section{A Schauder estimate}
\label{sec04}

We prove in this section a Schauder estimate for the kernel $p (t,x)$
of the fractional Laplacian on the torus. We follow the approach based
on the homogeneity of the kernel under scaling, in the sense that
$\bs p (t/\delta , x/\delta ) = \delta\, \bs p (t,x)$ for all
$(t,x) \in \bb R^2 \setminus \{0\}$, $\delta>0$, cf.  \cite{Sim97,
  Hai14}. However, on the torus, the kernel is not homogeneous, and,
more importantly, due to the non-locality of the generator, the
transition density $p(t,x)$ is not $C^1$ at $t=0$. In particular, it
does not belong to the domain of the distributions in $C^\alpha$, and
a plethora of arguments and bounds are needed to define and bound the
main quantities such as $X(p)$.

Fix $T_0\ge 1$.  Denote by $\ms H : \bb R \to [0,1]$ a smooth
functions such that $\ms H(t) = 1 $ for $t\le 0$ and $\ms H(t) =0$ for
$t\ge 1$.  Let $\color{bblue} q_{T_0}: \bb R \times \bb T \to \bb R_+$
be given by
\begin{equation}
\label{06}
q_{T_0}(t,x) \;=\; p(t,x) \, \ms H(t-T_0)\;.
\end{equation}
We often omit $T_0$ from the notation. Clearly, $q_{T_0}$ coincides
with $p$ on $(-\infty, T_0] \times \bb T$, it has support contained in
$[0,T_0+1]\times \bb T$, it belongs to $C^2(\Omega_{0 \bullet })$ and
for every $t_0>0$, $m\ge 1$, there exists a finite constant $C_m$ such
that
\begin{equation*}
\Vert \, q_{T_0}\, \Vert_{C^m(\Omega_{t_0 \bullet })} \;\le\;
C_m\, \Vert \, p\, \Vert_{C^m(\Omega_{t_0 \bullet})} \;.
\end{equation*}
Here and below, for $s<t$,
$\color{bblue} \Omega_{s,t} = (s,t) \times \bb T$,
$\color{bblue} \Omega_{\bullet t} =(-\infty,t) \times \bb T$,
$\color{bblue} \Omega_{t \bullet } =(t,\infty) \times \bb T$.

Let $q_{T_0,z} : \bb R \times \bb T \to \bb R_+$,
$z=(t,x) \in \Omega_{0\bullet}$, be given by
$\color{bblue} q_{T_0,z}(w) = q_{T_0}(z-w)$. The main result of this
section reads as follows.

\begin{theorem}
\label{t01}
Fix $-1<\alpha<0$, $0<\kappa<1+\alpha$. Then, there exists a finite
constant $C(\kappa, T_0)$ such that
\begin{equation*}
\big\vert \, X(q_{T_0,z}) \,-\, X(q_{T_0,z'}) \, \big\vert
\;\le\; C(\kappa,T_0) \,
\big\Vert \, z\,-\, z' \, \big\Vert^{1+\alpha-\kappa}\,
\big\Vert \, X \, \big\Vert_{C^{\alpha}([S-T_0-3,T+4]\times  \bb T)}
\;.
\end{equation*}
for all $X\in C^\alpha(\bb R \times \bb T)$, $S<T$, $z=(t,x)$,
$z'=(t',x') \in [S,T] \times \bb T$ such that
$\Vert z- z'\Vert\ \le 1/4$.
\end{theorem}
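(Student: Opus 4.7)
My plan is to follow the scaling-based approach of Hairer \cite{Hai14} indicated in the introduction of the section, with the central difficulty being that $p(t,x)$ fails to be $C^1$ at $t=0$, so that $X(q_{T_0,z})$ must first be \emph{defined} via a dyadic decomposition before any estimate on the increment $X(q_{T_0,z})-X(q_{T_0,z'})$ makes sense. Throughout, $\alpha$ is fixed in $(-1,0)$, so the exponent $m=-\lfloor\alpha\rfloor=1$ relevant to the $C^\alpha$ norm \eqref{71} and the test functions live in $B_1$.

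First, I would fix a smooth partition of unity $\{\chi_n\}_{n\ge 0}$ of $(0,\infty)\times \bb T$ such that $\chi_n$ is supported in $\ms A^+_n$ for $n\ge 2$ and $\chi_0+\chi_1$ carries everything outside $B(0,1/4)$. Set $q_n := q_{T_0}\,\chi_n$, each of which is smooth and compactly supported, and observe that the pointwise bound \eqref{23} gives $\Vert q_n\Vert_{C^m}\le C_m\, 2^{(1+m)n}$. A direct rescaling shows that the shifted function $w\mapsto q_n(z-w)$ can be written as $c_n\, S^{\delta_n}_z g_n$ with $\delta_n\sim 2^{-n}$, $c_n\sim 2^{-n}$ and $g_n\in B_1$, the constant factor in $\delta_n$ being chosen large enough that the support of $g_n$ lies inside $B(0,1/4)$. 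Substituting into \eqref{71} yields
\begin{equation*}
\big|\, X(q_n(z-\,\cdot\,))\,\big|\;\le\; C\, 2^{-n(1+\alpha)}\,
\Vert X\Vert_{C^\alpha([S-T_0-3,T+4]\times \bb T)}\,,
\end{equation*}
a summable series since $1+\alpha>0$. This lets me \emph{define} $X(q_{T_0,z}) := \sum_n X(q_n(z-\,\cdot\,))$, and a standard argument shows the value is independent of the chosen partition.

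For the increment estimate, I would write the telescoping sum $X(q_{T_0,z}) - X(q_{T_0,z'}) = \sum_{n\ge 0} [X(q_n(z-\,\cdot\,)) - X(q_n(z'-\,\cdot\,))]$ and split it at the dyadic scale $N$ defined by $2^{-N-1}\le \Vert z-z'\Vert\le 2^{-N}$. On the small-scale tail $n>N$, using the loose bound $|X(q_n(z-\,\cdot\,))|\le C\, 2^{-n(1+\alpha-\kappa)}$ and summing gives a contribution of order $\Vert z-z'\Vert^{1+\alpha-\kappa}$; the $\kappa$ has no geometric role but is there to keep constants under control near the threshold $\alpha=-1$. On the large-scale head $n\le N$, where $\Vert z-z'\Vert\le 2^{-n}$, I would use the Taylor identity
\begin{equation*}
q_n(z-w)\,-\,q_n(z'-w) \;=\;
(z'-z)\,\cdot\, \int_0^1 \nabla q_n\big(z+s(z'-z)-w\big)\,ds\,,
\end{equation*}
and exploit the fact that $\nabla q_n$ satisfies the estimate \eqref{23} with one extra power of $2^n$. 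The same rescaling then produces a \emph{uniform} $B_1$ representative of $\partial_k q_n(\,\cdot\,-y)$, now with coefficient of order $1$ rather than $O(2^{-n})$, so that $|X(\partial_k q_n(\,\cdot\,-y))|\le C\, 2^{-n\alpha}\, \Vert X\Vert_{C^\alpha}$ uniformly in $y\in\bb R\times \bb T$. Summing yields $\Vert z-z'\Vert\sum_{n\le N}2^{-n\alpha}\le C\, \Vert z-z'\Vert\, 2^{-N\alpha}\le C\, \Vert z-z'\Vert^{1+\alpha}$, completing the estimate.

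The principal technical obstacle will be the careful treatment around the definition of $X(q_{T_0,z})$: showing that the series does not depend on the partition, verifying that one may legitimately split $X(q_{T_0,z})-X(q_{T_0,z'})$ across the sum and commute $X$ with the $s$-integral in the Taylor identity, and handling the exceptional large-scale terms $n=0,1$ (where $\ms A^+_n$ is not small) in an ad hoc manner. A secondary bookkeeping issue is that $q_{T_0,z}$ has support contained in the strip $[t-T_0-1,t]\times \bb T$ due to the cut-off $\ms H(t-T_0)$, which is the source of the enlarged interval $[S-T_0-3,T+4]$ over which the $C^\alpha$ norm of $X$ must be controlled.
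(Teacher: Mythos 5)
Your skeleton (annular dyadic decomposition of the kernel, rescaling into the seminorm \eqref{71}, splitting the sum at the scale $\Vert z-z'\Vert$) is the right one and matches the paper's treatment of the near-origin part, but the proposal has a genuine gap at its foundation: the claim that each piece $q_n=q_{T_0}\,\chi_n$ is ``smooth and compactly supported'' is false, and with it the direct application of \eqref{71}. Every half-annulus $\ms A^+_n$ abuts the hyperplane $\{t=0\}$, and there the kernel, extended by zero to $t\le 0$, is continuous but not $C^1$: from \eqref{07}, $\partial_t \bs p(t,x)\to 1/(\pi x^2)\neq 0$ as $t\to 0^+$ for $x\neq 0$, so $q_n(z-\cdot)$ has a kink along $\{s=t\}$ and is not an admissible test function for $X\in C^\alpha$, $-1<\alpha<0$, which is a priori only a functional on $C^1$ (here $m=1$); its rescaled representative is not in $B_1$. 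The defect is even more serious in your head estimate, where you test $X$ against $\partial_k q_n(\cdot-y)$, a function that is not even continuous across $\{t=0\}$, and you also need to commute $X$ with the $s$-integral. This failure of $C^1$ regularity across the \emph{whole} hyperplane $\{t=0\}$ (a consequence of the long jumps; it does not occur for the heat kernel) is precisely the novel difficulty the paper's Section~\ref{sec04} is organized around: one needs a further dyadic decomposition \emph{in time} near the singular hyperplane together with a spatial periodization (Lemma~\ref{s15} and Remark~\ref{rm8}), the resulting definition and bounds of Corollaries~\ref{s15c} and \ref{s18} (which is how Lemma~\ref{s14} makes sense of and bounds $X(q_{n,z})$), and, for translated differences, $C^2$ rather than $C^1$ control of the kernel (Lemma~\ref{s02}). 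Likewise, your ``$\chi_0+\chi_1$ carries everything outside $\bb B(0,1/4)$'' piece is not $C^1$ across $\{t=0\}$ for $|x|$ of order one (the heavy tail), and handling it is exactly the content of the decomposition $q=\Phi q+\Psi q$ and Lemma~\ref{s01}; it cannot be absorbed into routine bookkeeping.

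A secondary inaccuracy: the exponent loss $\kappa$ is not a matter of ``keeping constants under control near $\alpha=-1$.'' In the paper's argument the per-term bound for the head scales is $\Vert z-z'\Vert^{1+\alpha}$ (see \eqref{56}), the number of such terms is $n_1\sim\ln(1/\Vert z-z'\Vert)$, and $\kappa$ is there to absorb this logarithm; your log-free head bound would indeed avoid this, but it rests on the inadmissible gradient test functions above. If you repair the definition of $X(q_{n,z})$ along the lines of Lemma~\ref{s15}--Corollary~\ref{s18} and estimate differences as in \eqref{56}, you recover the paper's proof, logarithm and all.
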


\begin{corollary}
\label{c-t01}
Fix $-1<\alpha<0$, $0<\kappa<1+\alpha$. Then, there exists a finite
constant $C(\kappa,T_0)$ such that
\begin{equation*}
\Vert u \Vert_{C^{1+\alpha-\kappa}([S,T] \times \bb T)} \;\le\;
C(\kappa,T_0)
\big\Vert \, X \, \big\Vert_{C^{\alpha}([S-T_0-3,T+4]\times  \bb T)}
\;.
\end{equation*}
for all $X\in C^\alpha(\bb R \times \bb T)$, $S<T$, where
$u = u_{T_0} : \bb R \times \bb T \to \bb R$ is given by
$u(z) = X(q_{T_0,z})$.
\end{corollary}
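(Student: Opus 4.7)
\textbf{Proof plan for Corollary~\ref{c-t01}.}
The plan is to deduce the seminorm bound from Theorem~\ref{t01} by a case split on $\|z-z'\|$. By the definition \eqref{13}, $\|u\|_{C^{1+\alpha-\kappa}([S,T]\times\bb T)}$ is the H\"older seminorm, so one needs to control the difference quotient $|u(z)-u(z')|/\|z-z'\|^{1+\alpha-\kappa}$ uniformly over all pairs $z, z' \in [S,T]\times \bb T$. When $\|z-z'\| \le 1/4$, Theorem~\ref{t01} applies directly and yields the desired bound after dividing by $\|z-z'\|^{1+\alpha-\kappa}$.

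For $\|z-z'\| > 1/4$ one uses that $1+\alpha-\kappa > 0$, hence $\|z-z'\|^{1+\alpha-\kappa} \ge (1/4)^{1+\alpha-\kappa}$, so it suffices to establish the uniform bound
\begin{equation*}
\sup_{z\in [S,T]\times \bb T}\, \big|\,u(z)\,\big| \;\le\; C(\kappa, T_0)\, \big\|\,X\,\big\|_{C^\alpha([S-T_0-3,T+4]\times \bb T)}\;,
\end{equation*}
and then combine it with $|u(z)-u(z')|\le 2\sup|u|$. This pointwise estimate is obtained by the same dyadic decomposition that drives the proof of Theorem~\ref{t01}: writing $q_{T_0,z}(w)=q_{T_0}(z-w)$, one decomposes $q_{T_0,z}=\sum_{n\ge 0} q_{T_0,z}^{(n)}$ by means of a smooth partition of unity adapted to the annular regions $\ms A^+_n$ around the singularity of $p$, so that $q_{T_0,z}^{(n)}$ is supported in a shell of scale $2^{-n}$ around $z$. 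The derivative bounds \eqref{23} and an explicit rescaling allow one to write each piece in the form $c_n\, S_z^{\delta_n} g_n$ with $\delta_n \sim 2^{-n}$, $|c_n| \lesssim 2^{-n}$, and $g_n \in B_m$ for $m=-\lfloor \alpha\rfloor = 1$. The definition \eqref{71} of $\|X\|_{C^\alpha}$ then yields $|X(q_{T_0,z}^{(n)})| \lesssim 2^{-n(1+\alpha)}\|X\|_{C^\alpha}$, a geometric series in $n$ that converges since $1+\alpha>0$. The finitely many coarsest pieces, where $q_{T_0,z}$ is smoothly cut off by $\ms H$ and only the regular part $p_\star$ contributes, are bounded uniformly by the same method; the enlargement of the H\"older interval to $[S-T_0-3, T+4]$ exactly accommodates the supports of all the rescaled test functions involved.

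The principal obstacle, as already flagged in the preamble to Theorem~\ref{t01}, is not any single calculation but rather the task of assigning a meaning to the pairing $X(q_{T_0,z})$: the kernel $q_{T_0,z}$ is only $C^2$ away from $z$ and fails to be $C^1$ globally because of the long range of the Cauchy kernel, so it is not admissible a priori as a test function for an element of $\mf C^m$. The dyadic decomposition above simultaneously defines $X(q_{T_0,z})$ as the absolutely convergent sum of its pieces and delivers the summable bound required here. Once this technology is developed for Theorem~\ref{t01}, the corollary follows at once from the two-case argument above.
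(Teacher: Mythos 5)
Your proposal is correct and follows the route the paper itself takes: the paper states the corollary without a separate proof, treating it as an immediate consequence of Theorem~\ref{t01} for pairs with $\Vert z-z'\Vert\le 1/4$, and the two-case split you use (difference bound for close pairs, a uniform bound on $|u(z)|$ for distant pairs) is exactly the argument the authors spell out for the analogous Corollary~\ref{s04}. The pointwise bound $\sup_z|X(q_{T_0,z})|\le C(T_0)\Vert X\Vert_{C^\alpha([S-T_0-3,T+4]\times\bb T)}$ you invoke is indeed available from the paper's machinery — Lemma~\ref{s14} summed over $n$ for the singular part $Q_z$ and Corollary~\ref{s15c} with Remark~\ref{rm6} for the regular part $R_z$ — the only caveat being that your rescaled pieces $g_n$ are not genuinely $C^1$ across the hyperplane where $q$ vanishes, which is precisely what Lemma~\ref{s15} and Corollary~\ref{s18} are there to handle, as you acknowledge.
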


Part of the proof of Theorem \ref{t01} consists in giving a meaning to
$X(q_{T_0,z})$ since, as pointed our earlier, $q_{T_0,z}$ does not
belong to the domain of a distribution in $C^\alpha$.  We start with a
simple estimate on $C^\alpha$.

\begin{lemma}
\label{a05}
Fix $\alpha<0$ and let $m= -\, \lfloor \alpha \rfloor$.  There exists
a finite constant $C_0$ such that for all $a<b$, $S<T$,
$0<\delta \le 1$, $z\in \Omega_{S,T}$, and function $g$ in
$C^m(\mathbb{R}\times \bb T)$ whose support is contained in
$\Omega_{a,b}$,
\begin{equation*}
\big|\, \< X \,,\, S^\delta_z g \> \,\big| \;\le\; C_0\, (1+b-a)\;
\delta^\alpha \, \|g\|_{C^m(\mathbb{R}\times \bb T)} \,
\|X \|_{C^\alpha([S+a-1,T+b+1]\times \bb T)} \;.
\end{equation*}
\end{lemma}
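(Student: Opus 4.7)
My plan is to decompose $g$ via a smooth partition of unity into a finite sum of pieces each supported in a ball of radius $1/4$, thereby reducing the estimate to the defining seminorm \eqref{71}. Since the support of $g$ is contained in $\Omega_{a,b}=(a,b)\times \bb T$, and $\bb T$ contributes only a bounded number of balls of radius $1/8$ in the $d$-metric while the time interval $(a,b)$ contributes at most $\lceil 8(b-a)\rceil$ of them, I can cover this support by $N\le C_1(1+b-a)$ open balls $\bb B(w_i,1/8)$, $i=1,\dots,N$, with centres $w_i\in [a,b]\times \bb T$.

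Subordinate to the enlarged cover $\{\bb B(w_i,1/4)\}$ I take a smooth partition of unity $\{\phi_i\}_{i=1}^N$ satisfying $\sum_i \phi_i \equiv 1$ on the support of $g$ and $\|\phi_i\|_{C^m(\bb R\times \bb T)}\le C_2$ uniformly in $i$ and in the covering data; such a partition is obtained by mollifying a grid of indicators, with the uniform $C^m$ bound coming from the fact that at any point only a bounded number of the $\phi_i$ are nonzero. Setting $g_i:=g\,\phi_i$, Leibniz' rule yields $\|g_i\|_{C^m}\le C_3\|g\|_{C^m}$ and $\mathrm{supp}(g_i)\subseteq \bb B(w_i,1/4)$, together with $g=\sum_i g_i$.

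For each $i$ the translate $\tilde g_i(u):=g_i(u+w_i)$ has support in $\bb B(0,1/4)$ and $\|\tilde g_i\|_{C^m}=\|g_i\|_{C^m}$, and a direct substitution in \eqref{35} produces the scaling identity $S^\delta_z g_i = S^\delta_{z+\delta w_i}\,\tilde g_i$. After normalising, $\tilde g_i/\|\tilde g_i\|_{C^m}\in B_m$, so the definition \eqref{71} bounds
\[
\bigl|\langle X,\,S^\delta_z g_i\rangle\bigr|\;\le\;\|g_i\|_{C^m}\,\delta^\alpha\,\|X\|_{C^\alpha([S',T']\times\bb T)}
\]
whenever $z+\delta w_i\in[S',T']\times\bb T$. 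A routine case analysis, using $\delta\in(0,1]$, $t_z\in[S,T]$ and $(w_i)_t\in[a,b]$, shows that the choice $[S',T']=[S+a-1,T+b+1]$ is admissible. Summing over $i$ and combining $N\le C_1(1+b-a)$ with $\|g_i\|_{C^m}\le C_3\|g\|_{C^m}$ gives the claimed inequality.

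The only genuine technical point is the uniform construction of the partition of unity with the stated $C^m$ bound; this is standard once a grid of centres of spacing comparable to $1/16$ is fixed and the indicators are mollified. The rest is bookkeeping: the factor $1+b-a$ counts the number of pieces, $\delta^\alpha$ arises from applying the $C^\alpha$-seminorm to each piece, and $\|g\|_{C^m}$ is transferred to each $g_i$ by Leibniz' rule.
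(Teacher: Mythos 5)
Your argument is correct and follows essentially the same route as the paper: decompose $g$ by a partition of unity into $O(1+b-a)$ pieces supported in balls of radius $1/4$, translate each piece to the origin so it becomes an admissible test function, apply the defining seminorm \eqref{71} together with the composition rule $S^\delta_z S^1_{w}=S^\delta_{z+\delta w}$, and sum. The only cosmetic difference is that the paper uses a fixed translation-invariant family $\varphi_{j,k}$ on a grid of spacing $1/8$ instead of your mollified cover, which makes the uniform $C^m$ bound on the pieces immediate; your final bookkeeping (including the admissibility of the window $[S+a-1,T+b+1]$) matches the paper's.
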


\begin{proof}
It follows from the definition of the seminorms $\|X
\|_{C^\alpha([S,T]\times \bb T)}$, introduced in \eqref{71}, that for
all functions $g$ in $C^m(\mathbb{R}\times \bb T)$ whose support is
contained in $\bb B(0,1/4)$, every $0<\delta \le 1$, $z \in
[S,T]\times \bb T$.
\begin{equation}
\label{59}
\big|\, \< X \,,\, S^\delta_z g \> \,\big| \;\le\;
\delta^\alpha \, \|g\|_{C^m(\bb R\times \bb T)} \,
\|X \|_{C^\alpha([S,T]\times \bb T)} \;.
\end{equation}

For each $p\in \bb N$, there exists a function $\varphi$ in $C^p(\bb
R^2)$ whose support is contained in $\bb B(0,1/4)$ and such that
\begin{equation*}
\sum_{j\in \bb Z} \sum_{k=0}^7 \varphi_{j,k}(t,x) \;=\; 1\quad
\text{for all}\;\; (t,x)\,\in\,\bb R \times \bb T\;,
\end{equation*}
where $\varphi_{j,k}(t,x) = \varphi(\, t-(j/8) \,,\, x-(k/8)\, )$.

Fix $p\ge m$ and write $g$ as
$\sum_{j\in \bb Z} \sum_{0\le k \le 7} g_{j,k}$, where
$g_{j,k} = g\, \varphi_{j,k}$. Since the support of $g$ is contained
in $[a,b] \times \bb T$, in the previous sum there are at most
$B_0 (1+b-a)$ terms which do not vanish, for some finite constant
$B_0$. Moreover, for $0<\delta \le 1$, $z\in [S,T]\times \bb T$,
\begin{equation*}
\big|\, \< X \,,\, S^\delta_z g \> \,\big| \;\le\;
\sum_{j, k} \big|\, \< X \,,\, S^\delta_z g_{j,k} \> \,\big| \;.
\end{equation*}
where the sum is performed over the non-vanishing terms.  We may write
$g_{j,k}$ as $S^1_{z_{j,k}} S^1_{- z_{j,k}} g_{j,k}$, where $z_{j,k} =
(\, j/8 \,,\, k/8\, )$. Let $F_{j,k} = S^1_{- z_{j,k}} g_{j,k}$, and
note that the support of $F_{j,k}$ is contained in $\bb B(0,1/4)$.
Since $S^\delta_z S^1_{z_{j,k}} F_{j,k} = S^\delta_{z + \delta\,
  z_{j,k}} F_{j,k}$, the right-hand side of the previous displayed
equation is equal to
\begin{equation*}
\sum_{j, k} \big|\, \< X \,,\, S^\delta_{z + \delta\,
  z_{j,k}} F_{j,k} \> \,\big| \;.
\end{equation*}
By \eqref{59}, this sum is less than or equal to
\begin{equation*}
\delta^\alpha \, \sum_{j, k}
\|F_{j,k}\|_{C^m(\bb R\times \bb T)} \,
\|X \|_{C^\alpha([S+a-1,T+b+1]\times \bb T)}
\end{equation*}
because $z + \delta\, z_{j,k} \in [S+a-1, T+b+1]$ for all $(j,k)$ for
which $F_{j,k}$ does not vanish. Since there exists a finite constant
$B_1$, depending only on $\varphi$, such that $\|F_{j,k}\|_{C^m(\bb
  R\times \bb T)} \le B_1\, \|g\|_{C^m(\bb R\times \bb T)}$, to
complete the proof of the assertion, it remains to recall that are at
most $B_0 (1+b-a)$ non-vanishing terms in the sum.
\end{proof}

As mentioned in the introduction of this section, the kernel $p(t,x)$
does not belong to $C^1(\bb R \times \bb T)$. In particular, if $P_z$,
$z\in (0,\infty) \times \bb T$, stands for the functions defined by
$\color{bblue} P_z(w) = p(z-w)$, $X(P_z)$ is not defined for
distributions $X$ in $C^\alpha (\bb R \times \bb T)$,
$-1<\alpha<0$. The next lemmata provide sufficient conditions which
permit to define $X(P_z)$ as a limit.


Let $\Omega$ be an open set of $\bb R \times \bb T$ and let
$f: \Omega\to \bb R$ be a continuously differentiable function. We
denote by $\Vert \, f\, \Vert_{C^1(\Omega)}$ the norm defined by
\begin{equation*}
\Vert \, f\, \Vert_{C^1(\Omega)} \;=\;
\sum_{j,k} \Vert \, \partial^j_x \, \partial^k_t \, f \,
\Vert_{L^\infty(\Omega)}\;,
\end{equation*}
where the sum is carried out over all $j$, $k$ in $\bb N_0$ such that
$j+k\le 1$.

Let $\varphi: \bb R_+ \to [0,1]$ be the germ of a dyadic partition of
the unity: $\varphi$ is a smooth function such that
\begin{equation}
\label{26}
\varphi(r) \;=\; 0\;\; \text{if}\;\; r\,\not\in\, (1/16 \,,\, 1/4)\;,
\qquad \sum_{n\in\bb Z} \varphi(2^n r) \;=\; 1 \;\;
\text{for}\;\; r>0\;.
\end{equation}
We refer to \cite[Proposition 2.10]{bcd11} for the existence of
$\varphi$. Let $\color{bblue} \varphi_n (r) = \varphi(2^n r)$.  Note
that the supports of $\varphi_n$ and $\varphi_m$ are disjoints
whenever $|n-m|\ge 2$.

Let $\psi: \bb R \to [0,1]$ be a symmetric, smooth function whose
support is contained in $(- 1/2 \,,\, 1/2)$ and such that
\begin{equation}
\label{28}
\sum_{k\in \bb Z} \psi (x \,-\, k/2) \;=\; 1\;, \quad
x\; \in\;  \bb R\;.
\end{equation}
Let $\color{bblue} \psi_n(x) = \psi(2^n x)$, $n\ge 1$. Consider
$\psi$, $\psi_n$ as defined on $\bb R \times \bb T$ and depending only
on the second coordinate. We abuse of notation below and denote by
$k/2^n$ the element $(0,k/2^n)$ of $\bb R \times \bb T$. Note that
\begin{equation}
\label{29}
1 \;=\;  \sum_{k=-2^n+1}^{2^n} \psi (2^n x \,-\, k/2)
\;=\;  \sum_{k=-2^n+1}^{2^n} \psi_n (x \,-\, k 2^{-(n+1)})
\end{equation}
for all $x\in \bb T = (-1/2, 1/2]$.

\begin{lemma}
\label{s15}
Fix $-1<\alpha<0$ and a continuously differentiable function
$f: (0,\infty) \times \bb T \to \bb R$. Assume that there exists
$T_1<\infty$ such that $f(t,x) =0$ for $t\ge T_1$ and that
$\Vert \, f\, \Vert_{C^1(\Omega_{0\bullet})} < \infty$.  Let
$f_n(t,x) = f(t,x)\, \varphi_n(t)$, $n\in \bb N_0$.  Then, there
exists a finite constant $C_0$, independent of $\alpha$ and $f$, such
that
\begin{equation*}
\big| \, X (\, f_n  \,) \, \big| \; \le\;
C_0 \,  2^{-n(1+\alpha)} \Vert \, f\, \Vert_{C^1(\Omega_{0\bullet})}
\, \big \Vert X \big \Vert_{C^\alpha([-1,2]\times \bb T)}
\end{equation*}
for all $X$ in $C^\alpha$, $n\in \bb N_0$.
\end{lemma}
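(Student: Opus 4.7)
The plan is to decompose $f_n$ as a sum of $O(2^n)$ rescaled bumps of common scale $\delta_n=2^{-n}$, each of the form $M_{n,k}\,S^{\delta_n}_{z_{n,k}}h_{n,k}$ with $h_{n,k}\in B_m$ (note $m=-\lfloor\alpha\rfloor=1$ since $-1<\alpha<0$), and then to apply the defining inequality \eqref{71} of the $C^\alpha$-seminorm termwise. Each bump will contribute at most $2^{-n(2+\alpha)}\|f\|_{C^1}\|X\|_{C^\alpha}$, so that summing over the $O(2^n)$ indices $k$ produces the desired factor $2^{-n(1+\alpha)}$.

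To carry this out I would fix once and for all a sufficiently large $N\in\bb N$ and apply the spatial partition of unity \eqref{29} at level $n+N$: setting $f_{n,k}(t,x):=f_n(t,x)\,\psi_{n+N}(x-k\,2^{-(n+N+1)})$ for $k=-2^{n+N}+1,\dots,2^{n+N}$ gives $f_n=\sum_k f_{n,k}$, with each $f_{n,k}$ supported in a box of $d$-radius at most $C_N\,2^{-n}$ centred at some $z_{n,k}\in[0,1/4]\times\bb T$; for $N$ large enough the support fits inside $\bb B(z_{n,k},\delta_n/4)$. Defining $h_{n,k}(y):=M_{n,k}^{-1}\,\delta_n^{\,2}\,f_{n,k}(z_{n,k}+\delta_n y)$ with $M_{n,k}$ the $C^1$-norm of the unnormalised rescaling then yields $h_{n,k}\in B_1$ and $f_{n,k}=M_{n,k}\,S^{\delta_n}_{z_{n,k}}h_{n,k}$. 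Using the product rule on $f\,\varphi_n\,\psi_{n+N}$ together with $\|\varphi_n'\|_\infty,\,\|\psi_{n+N}'\|_\infty=O(2^n)$, the sup norm of the rescaled function contributes $\delta_n^{\,2}\|f\|_\infty=O(2^{-2n})\|f\|_{C^1}$, while its gradient contributes $\delta_n^{\,3}(O(2^n)\|f\|_\infty+\|\nabla f\|_\infty)=O(2^{-2n})\|f\|_{C^1}$, so $M_{n,k}\lesssim 2^{-2n}\|f\|_{C^1(\Omega_{0\bullet})}$.

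Finally, applying \eqref{71} with $z=z_{n,k}$, $g=h_{n,k}$, $\delta=\delta_n\le 1$, and noting that $\bb B(z_{n,k},\delta_n)\subset[-1,2]\times\bb T$ for every $n\ge 0$, gives $|\langle X,f_{n,k}\rangle|\le M_{n,k}\,\delta_n^{\,\alpha}\,\|X\|_{C^\alpha([-1,2]\times\bb T)}\lesssim 2^{-n(2+\alpha)}\|f\|_{C^1}\|X\|_{C^\alpha}$, and summing over the $O(2^n)$ nonzero indices $k$ completes the proof. The main subtlety to get right is the exact cancellation between the $2^n$ loss from differentiating the mollifiers $\varphi_n,\psi_{n+N}$ and the $\delta_n$ gain from rescaling: the bound $M_{n,k}=O(2^{-2n})\|f\|_{C^1}$ is tight, and anything weaker would break the summation over $k$.
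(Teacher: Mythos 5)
Your proof is correct and follows essentially the same route as the paper: localize $f_n$ in space with the partition of unity \eqref{29}, rewrite each localized piece as a rescaled bump at scale $2^{-n}$ whose $C^1$-norm is $O(2^{-2n})\,\Vert f\Vert_{C^1(\Omega_{0\bullet})}$, apply the $C^\alpha$ bound termwise, and sum the $O(2^n)$ contributions. The only difference is cosmetic: by refining the spatial partition to scale $2^{-(n+N)}$ you make the rescaled bumps land in $B_1$ so that the seminorm definition \eqref{71} applies directly, whereas the paper keeps the spatial scale $2^{-n}$ (so each rescaled piece occupies a full spatial strip rather than a ball of radius $1/4$) and instead routes the termwise estimate through Lemma \ref{a05}.
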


\begin{proof}
For each $n\ge 0$, the function $f_n$ belongs to $C^1_c(\bb R\times
\bb T)$ and its support is contained in $[2^{-(n+4)}, 2^{-(n+2)}]
\times \bb T$. In particular, $X(f_n)$ is well defined.

Recall the definition of $\psi$ introduced in \eqref{28}. By
\eqref{29},
\begin{equation*}
f_n (s,y) \;=\; \sum_{k=-2^n+1}^{2^n}
f(s,y)\, \varphi_n(s) \, \psi_n (x \,-\, k 2^{-(n+1)}) \;.
\end{equation*}

Let $H_n: \bb R \times \bb T \to\bb R$ be given by
\begin{equation*}
H_n(s,y) \;=\; 2^{-2n} \,
f \big(\, 2^{-n} (s,y) \,+\, (0,k 2^{-(n+1)}) \,\big) \,
\varphi(s)\, \psi (y) \;,
\end{equation*}
so that $(S^{2^{-n}}_{(0,k 2^{-(n+1)})} H_n )(s,y) = f(s,y)
\varphi_n(s) \, \psi_n (x \,-\, k 2^{-(n+1)})$. In particular,
\begin{equation}
\label{46}
X(f_n) \;=\;  \sum_{k=-2^n+1}^{2^n}
X\big(\, S^{2^{-n}}_{(0,k 2^{-(n+1)})} H_n \,\big)\;.
\end{equation}
The function $H_n$ belongs to $C^1_c(\bb R \times \bb T)$, it has
support contained in $[1/16,1/4]\times \bb T$, and $\Vert H_n\Vert_{C^1}
\le 2^{-2n} \Vert f\Vert_{C^1(\Omega_{0\bullet})}$. Therefore,
by Lemma \ref{a05},
\begin{equation*}
\big| \, X (\, f_n  \,) \, \big| \; \le\;
C_0 \,  2^n\, 2^{-2n}\, 2^{-n\alpha} \Vert \, f\, \Vert_{C^1(\Omega_{0\bullet})}
\, \big \Vert X \big \Vert_{C^\alpha([-1,2]\times \bb T)}
\end{equation*}
for some finite constant $C_0$. The factor $2^n$ comes from the number
of terms in the sum over $k$.
\end{proof}

\begin{remark}
\label{rm8}
One could be tempted to define $H_n$ as $H_n(s,y) = 2^{-2n}
f(2^{-n} (s,y)) \, \varphi(s)$. But this function is not periodic.
This is the reason for introducing $\psi_n$.
\end{remark}

Let $f: (0,\infty) \times \bb T \to \bb R$ be a function which
satisfies the assumptions of Lemma \ref{s15} and whose support is
contained in $[0,T_1 ]\times \bb T$. Set
\begin{equation*}
\Upsilon (s) \;=\; \sum_{n\ge 0} \varphi_n(s)\;,
\end{equation*}
and write $f$ as $f = f^{(0)} + f^{(1)}$, where $f^{(0)}(t,x) =
f(t,x)\, \Upsilon (t)$, $f^{(1)}(t,x) = f(t,x) \, [1-\Upsilon
(t)]$. In view of Lemma \ref{s15}, we may define $X(f^{(0)})$ as
$\sum_{n\ge 0} X(f_n)$. On the other hand, $f^{(1)}$ belongs to
$C^1_c(\bb R\times \bb T)$ and $X(f^{(1)})$ is well defined. Moreover,
since the support of $f$ is contained in $[0,T_1]\times \bb T$, by Lemma
\ref{a05} with $\delta=1$ and $z=0$,
\begin{equation}
\label{47}
\big|\, X(f^{(1)}) \,\big| \;\le\; C_0\, (1+T_1) \,
\Vert \, f\, \Vert_{C^1(\Omega_{0\bullet})}
\, \big \Vert X \big \Vert_{C^\alpha([-1,T_1+1]\times \bb T)}\;.
\end{equation}
We summarize these observations in the next result.

\begin{corollary}
\label{s15c}
Fix $-1<\alpha<0$.  Let $f: (0,\infty) \times \bb T \to \bb R$ be a
function which satisfies the assumptions of Lemma \ref{s15}. Assume
that the support of $f$ is contained in $[0,T_1]\times \bb T$. Define
$X(f)$ as
\begin{equation*}
X(f) \;=\; \sum_{n\ge 0} X(f_n) \;+\; X(f^{(1)})\;.
\end{equation*}
Then, there exists a finite constant $C_0$, independent of $\alpha$
and $f$, such that
\begin{equation*}
|X(f)| \; \le \, C_0 \, (1+T_1) \, \Vert \, f\, \Vert_{C^1(\Omega_{0\bullet})}
\, \big \Vert X \big \Vert_{C^\alpha([-1,T_1+1] \times \bb T)} \;.
\end{equation*}
\end{corollary}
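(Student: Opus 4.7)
The plan is to combine the dyadic bound of Lemma \ref{s15} with the smooth-compactly-supported bound \eqref{47} via the triangle inequality. First I would verify that the series $\sum_{n\ge 0} X(f_n)$ defining $X(f^{(0)})$ converges absolutely. By Lemma \ref{s15}, each term is bounded by $C_0\, 2^{-n(1+\alpha)} \Vert f\Vert_{C^1(\Omega_{0\bullet})} \, \Vert X\Vert_{C^\alpha([-1,2]\times \bb T)}$, and since $-1<\alpha<0$ forces $1+\alpha\in(0,1)$, the geometric series $\sum_{n\ge 0} 2^{-n(1+\alpha)}$ sums to a finite constant $C_1=C_1(\alpha)$. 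This simultaneously makes the definition of $X(f^{(0)})$ unambiguous and provides the bound
\[
|X(f^{(0)})| \;\le\; C_0\, C_1\, \Vert f\Vert_{C^1(\Omega_{0\bullet})} \, \Vert X\Vert_{C^\alpha([-1,T_1+1]\times \bb T)},
\]
since $[-1,2]\subseteq [-1,T_1+1]$ (we may assume without loss of generality $T_1\ge 1$, as otherwise we enlarge $T_1$).

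Next I would handle $X(f^{(1)})$, where $f^{(1)}(t,x) = f(t,x)[1-\Upsilon(t)]$. Using \eqref{26} together with the fact that the support of $\varphi_n$ is contained in $(2^{-n-4}, 2^{-n-2})\subset (0,1/4)$, the function $1-\Upsilon$ is smooth on $\bb R$, vanishes on $(-\infty, 2^{-4}]$ and equals $1$ on $[1/4,\infty)$, with $C^1$-norm bounded by an absolute constant. Therefore $f^{(1)}\in C^1_c(\bb R\times \bb T)$, its support lies in $[2^{-4}, T_1]\times \bb T$, and $\Vert f^{(1)}\Vert_{C^1(\bb R\times \bb T)} \le B_0\, \Vert f\Vert_{C^1(\Omega_{0\bullet})}$ for some absolute constant $B_0$. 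Then $X(f^{(1)})$ is well defined as a standard pairing, and \eqref{47} produces
\[
|X(f^{(1)})| \;\le\; C_0\, (1+T_1)\, \Vert f\Vert_{C^1(\Omega_{0\bullet})}\, \Vert X\Vert_{C^\alpha([-1,T_1+1]\times \bb T)},
\]
which is exactly the form we want.

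Adding the two estimates and enlarging $C_0$ yields the corollary. I do not foresee a genuine obstacle here, since the non-trivial work has already been packaged: the dyadic decomposition near $t=0$ absorbing the non-smoothness of the kernel (Lemma \ref{s15}) and the partition of a smooth compactly supported function into unit pieces (Lemma \ref{a05}, giving \eqref{47}). What remains is purely a summation, resting on the key structural fact $1+\alpha>0$, which is what allows distributions of negative regularity $\alpha\in(-1,0)$ to be paired against functions with a mild integrable singularity at $t=0$.
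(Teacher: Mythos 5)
Your proposal is correct and follows essentially the same route as the paper: the paper's proof likewise sums the geometric series $\sum_n 2^{-n(1+\alpha)}$ from Lemma \ref{s15} (using $\alpha>-1$) for the $f^{(0)}$ part and invokes \eqref{47} for $f^{(1)}$, then adds the two bounds. The only extra detail you supply — enlarging $T_1$ so that $[-1,2]\subseteq[-1,T_1+1]$ — is harmless bookkeeping that the paper glosses over.
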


\begin{proof}
This result follows from \eqref{47} and from Lemma \ref{s15} which
asserts that
\begin{align*}
\sum_{n\ge 0} \big| \, X (\, f_n  \,) \, \big| \; & \le\;
C_0 \, \sum_{n\ge 0}  2^{-n(1+\alpha)}\,  \Vert \, f\, \Vert_{C^1(\Omega_{0\bullet})}
\, \big \Vert X \big \Vert_{C^\alpha([-1,T_1+1] \times \bb T)} \\
& \le\;
C_0 \,
\Vert \, f\, \Vert_{C^1(\Omega_{0\bullet})}
\, \big \Vert X \big \Vert_{C^\alpha([-1,T_1+1] \times \bb T)}
\end{align*}
for some finite constant $C_0$, where we used the fact that
$\alpha>-1$.
\end{proof}

An elementary computation yields that for all $z$, $z'$ in $\bb R
\times \bb T$, $\delta$, $\delta'$ in $(0,1]$ and continuous functions
$f$,
\begin{equation}
\label{36}
S^\delta_z \, S^{\delta'}_{z'} \, f \;=\; S^{\delta\, \delta'}_{z+
  \delta z'} \, f\;.
\end{equation}

\begin{corollary}
\label{s18}
Fix $-1<\alpha<0$. Let $f: (0,\infty) \times \bb T \to \bb R$ be a
function which satisfies the hypotheses of Lemma \ref{s15}.  Assume
that the support of $f$ is contained in $[0,T_1]\times \bb T$.  Then,
there exists a finite constant $C_0$ such that
\begin{equation*}
\big| \, X\big (\, S^\delta_z f \,\big) \, \big| \; \le\;
C_0 \, (1+T_1) \, \delta^\alpha\, \Vert \, f\, \Vert_{C^1(\Omega_{0\bullet})}
\, \big \Vert X \big \Vert_{C^\alpha([S-1,T+T_1+1]\times \bb T)}
\end{equation*}
for all $S<T$, $X$ in $C^\alpha$, $z\in [S,T]\times \bb T$,
$0<\delta\le 1$.
\end{corollary}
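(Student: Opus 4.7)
The plan is to mimic the decomposition underlying Corollary \ref{s15c}, but applied to $S^\delta_z f$ instead of $f$, exploiting the semigroup identity \eqref{36} for the scaling operators to avoid paying a $\delta^{-3}$ price from naively estimating $\Vert S^\delta_z f\Vert_{C^1}$. Concretely, I would split $f = f^{(0)} + f^{(1)}$ where $f^{(0)} = f\,\Upsilon = \sum_{n\ge 0} f_n$ and $f^{(1)} = f(1-\Upsilon)$, as in the proof of Corollary \ref{s15c}, and treat the two pieces separately, showing that each contributes a bound of the required order $\delta^\alpha$.

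For the piece $f^{(1)}$, the function $1-\Upsilon$ vanishes near $t=0$, so $f^{(1)} \in C^1_c(\bb R \times \bb T)$ with support in $[c_0, T_1] \times \bb T$ for some $c_0 > 0$, and $\Vert f^{(1)}\Vert_{C^1} \le C\, \Vert f\Vert_{C^1(\Omega_{0\bullet})}$. Lemma \ref{a05} then applies directly with $m = 1$, $a = 0$, $b = T_1$, yielding
\begin{equation*}
\big|\,X(S^\delta_z f^{(1)})\,\big| \;\le\; C_0\,(1+T_1)\,\delta^\alpha\,
\Vert f\Vert_{C^1(\Omega_{0\bullet})}\,
\Vert X\Vert_{C^\alpha([S-1,T+T_1+1]\times \bb T)}.
\end{equation*}

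For the piece $f^{(0)}$, I reuse the decomposition from Lemma \ref{s15}: each $f_n$ is written as $\sum_{k=-2^n+1}^{2^n} S^{2^{-n}}_{(0,k 2^{-(n+1)})} H_n^{(k)}$ with $H_n^{(k)}$ supported in $\bb B(0,1/4)$ and $\Vert H_n^{(k)}\Vert_{C^1} \le C\, 2^{-2n}\,\Vert f\Vert_{C^1(\Omega_{0\bullet})}$. Applying \eqref{36},
\begin{equation*}
S^\delta_z f_n \;=\; \sum_{k} S^{\delta\, 2^{-n}}_{z+\delta(0,k 2^{-(n+1)})} H_n^{(k)},
\end{equation*}
so \eqref{59} bounds each summand by $(\delta 2^{-n})^\alpha \cdot C\, 2^{-2n}\,\Vert f\Vert_{C^1(\Omega_{0\bullet})}\,\Vert X\Vert_{C^\alpha([S-1,T+1]\times\bb T)}$; the $2^{n+1}$ values of $k$ then give $|X(S^\delta_z f_n)| \le C\,\delta^\alpha\,2^{-n(1+\alpha)}\,\Vert f\Vert_{C^1(\Omega_{0\bullet})}\,\Vert X\Vert_{C^\alpha}$. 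Since $\alpha > -1$, summing the geometric series over $n\ge 0$ produces the bound $C\,\delta^\alpha\,\Vert f\Vert_{C^1(\Omega_{0\bullet})}\,\Vert X\Vert_{C^\alpha([S-1,T+1]\times\bb T)}$, which is what defines $X(S^\delta_z f^{(0)}) := \sum_n X(S^\delta_z f_n)$.

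Adding the two contributions and using the monotonicity $\Vert X\Vert_{C^\alpha([S-1,T+1]\times \bb T)} \le \Vert X\Vert_{C^\alpha([S-1,T+T_1+1]\times \bb T)}$ yields the claim. The only technical point to verify is that the shifted centers $z + \delta(0, k 2^{-(n+1)})$ and $z + \delta z_{j,k}$ lie in $[S-1,T+T_1+1]\times\bb T$; since $\delta \le 1$ and all the time shifts are at most $T_1$ in absolute value, this is immediate. The main obstacle, conceptually, is recognising that one must defer the rescaling to the very end, piggybacking on the dyadic decomposition of $f$ rather than of $S^\delta_z f$, so that the Lipschitz constants one pays for are those of the universal profiles $H_n^{(k)}$ and not those of $S^\delta_z f$ itself.
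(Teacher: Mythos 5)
Your proof follows essentially the same route as the paper: split $f$ into $f^{(1)}$ plus the dyadic pieces $f_n=\sum_k S^{2^{-n}}_{(0,k2^{-(n+1)})}H_n$, use the composition rule \eqref{36} to push the rescaling onto the universal profiles $H_n$, and sum the geometric series in $n$ using $\alpha>-1$. The only (harmless) inaccuracy is that the supports of the $H_n$ lie in the strip $[1/16,1/4]\times\bb T$ rather than in $\bb B(0,1/4)$, so each summand should be estimated via Lemma \ref{a05} rather than via \eqref{59} directly, which is exactly what the paper does.
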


\begin{proof}
Recall the decomposition of $f$ as $\sum_{n\ge 0} f_n + f^{(1)}$
introduced in Corollary \ref{s15c}. Since $f^{(1)}$ belongs to $C^1_c
(\bb R\times \bb T)$, by Lemma \ref{a05},
\begin{equation*}
\big| \, X\big (\, S^\delta_z f^{(1)} \,\big) \, \big| \; \le\;
C_0 \, (1+T_1) \, \delta^\alpha\, \Vert \, f\, \Vert_{C^1(\Omega_{0\bullet})}
\, \big \Vert X \big \Vert_{C^\alpha([S-1,T+T_1+1]\times \bb T)}
\end{equation*}

We turn to $X (\, S^\delta_z f_n \,)$. By \eqref{46} and \eqref{36},
\begin{equation*}
X (\, S^\delta_z f_n \,) \;=\;   \sum_{k=-2^n+1}^{2^n}
X\big(\, S^\delta_z\, S^{2^{-n}}_{(0,k 2^{-(n+1)})} H_n \,\big) \;=\;
\sum_{k=-2^n+1}^{2^n}
X\big(\, S^{\delta\, 2^{-n}}_{z + \delta (0,k 2^{-(n+1)})} H_n \,\big) \;.
\end{equation*}
The function $H_n$ belongs to $C^1_c(\bb R \times \bb T)$, its support
is contained in $[1/16,1/4]\times \bb T$, and $\Vert H_n\Vert_{C^1}
\le 2^{-2n} \Vert f\Vert_{C^1(\Omega_{0\bullet})}$. Thus, by Lemma
\ref{a05},
\begin{equation*}
\big| \, X (\, S^\delta_z  f_n  \,) \, \big| \; \le\;
C_0 \,  \sum_{k=-2^n+1}^{2^n}
\delta^\alpha \, 2^{-n\alpha} \,2^{-2n}\,
\Vert \, f\, \Vert_{C^1(\Omega_{0\bullet})}
\, \big \Vert X \big \Vert_{C^\alpha([S-1,T+2]\times \bb T)}
\end{equation*}
for some finite constant $C_0$. As $\alpha>-1$, summing over $n$ we
get that
\begin{equation*}
\sum_{n\ge 0} \big| \, X (\, S^\delta_z  f_n  \,) \, \big| \; \le\;
C_0 \, \delta^\alpha \,
\Vert \, f\, \Vert_{C^1(\Omega_{0\bullet})}
\, \big \Vert X \big \Vert_{C^\alpha([S-1,T+2]\times \bb T)} \;,
\end{equation*}
which completes the proof of the corollary.
\end{proof}

\begin{remark}
\label{rm6}
The set $\{(t,x) : t=0\}$ plays no role in the proof of the previous
results. A similar statement holds for functions $f$ which are smooth
on the set $\Omega_{\bullet t_0} \cup \Omega_{t_0 \bullet}$, $t_0\in
\bb R$. The result also applies to functions which are smooth on sets
of the form $\Omega_{\bullet t_0} \cup \Omega_{t_0, t_1} \cup
\Omega_{t_1,\bullet}$.  For example, in the next lemma, for a function
$f$ given by $f(w) = g(w-z) - g(w-z')$, where $g$, fulfills the
assumptions of Lemma \ref{s15}.
\end{remark}

\begin{lemma}
\label{s02}
Fix a function $f$ satisfying the assumptions of Lemma \ref{s15}, and
assume that its support is contained in $[0,T_1]\times \bb T$. Let
$g_z : \bb R \times \bb T \to\bb R$ be the function given by
$g_z(w) = f(w-z) - f(w)$, where
$z=(t_0,x_0) \in (0,\infty) \times \bb T$ is such that
$\Vert z\Vert \le 1$. Then, there exists a finite constant $C_0$,
independent of $f$ and $z$, such that
\begin{equation*}
\big| \, X\big (\, g_z  \,\big) \, \big| \; \le\;
C_0 \, (1+T_1) \, \Vert z \Vert^{1+\alpha}\,
\Vert \, f\, \Vert_{C^2(\Omega_{0 \bullet})}
\, \big \Vert X \big \Vert_{C^\alpha([-2,(T_1\vee 1)+2] \times \bb T)}
\end{equation*}
for all $X$ in $C^\alpha$.
\end{lemma}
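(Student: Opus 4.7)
The plan is to refine the dyadic decomposition from Lemma~\ref{s15}: write $f = \sum_{n\ge 0} f_n + f^{(1)}$ with $f_n(t,x) = f(t,x)\varphi_n(t)$, and correspondingly $g_z = \sum_{n\ge 0} g_{z,n} + g_z^{(1)}$, where $g_{z,n}(w) = f_n(w-z) - f_n(w)$ and $g_z^{(1)}(w) = f^{(1)}(w-z) - f^{(1)}(w)$. We then bound each $X(g_{z,n})$ by two complementary estimates and interpolate at the scale $2^{-n}\simeq \Vert z\Vert$.

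The smooth tail $g_z^{(1)}$ is the increment of a $C^2_c$ function, so the mean value theorem yields $\Vert g_z^{(1)}\Vert_{C^1} \le C\Vert z\Vert\,\Vert f\Vert_{C^2}$, and its support is contained in a time strip of length at most $T_1+2$. Lemma~\ref{a05} with $\delta=1$ therefore gives $|X(g_z^{(1)})| \le C(1+T_1)\Vert z\Vert\,\Vert f\Vert_{C^2}\,\Vert X\Vert_{C^\alpha([-2,T_1+2]\times\bb T)}$, which is dominated by $\Vert z\Vert^{1+\alpha}$ times the same norm since $1+\alpha < 1$ and $\Vert z\Vert\le 1$.

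For $g_{z,n}$ the first (a priori) bound is $|X(g_{z,n})| \le |X(f_n(\cdot-z))| + |X(f_n)| \le C\,2^{-n(1+\alpha)}\,\Vert f\Vert_{C^1}\,\Vert X\Vert_{C^\alpha}$, obtained by applying the proof of Lemma~\ref{s15} to each term (the centres $w_k$ simply become $w_k + z$ for the translate, which does not affect the estimates). For the second (smallness) bound, valid when $2^n\Vert z\Vert \le 1$, we use the factorisation from the proof of Lemma~\ref{s15}, $f_n = \sum_k S^{2^{-n}}_{w_k}H_n^{(k)}$, and rewrite via \eqref{36}
\begin{equation*}
S^{2^{-n}}_{w_k + z}H_n^{(k)} - S^{2^{-n}}_{w_k}H_n^{(k)} \;=\; S^{2^{-n}}_{w_k}\,\tilde H_n^{(k)},\qquad \tilde H_n^{(k)}(u) := H_n^{(k)}(u - 2^n z) - H_n^{(k)}(u).
\end{equation*}
A direct computation gives $\Vert H_n^{(k)}\Vert_{C^2} \le C\,2^{-2n}\Vert f\Vert_{C^2}$, so the mean value theorem produces $\Vert \tilde H_n^{(k)}\Vert_{C^1} \le C\,\Vert z\Vert\,2^{-n}\,\Vert f\Vert_{C^2}$, while the support of $\tilde H_n^{(k)}$ sits in a time strip of bounded length (since $2^n\Vert z\Vert\le 1$). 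Applying Lemma~\ref{a05} with $\delta = 2^{-n}$ and summing over the $O(2^n)$ values of $k$ (which kills one power of $2^{-n}$) yields $|X(g_{z,n})| \le C\,\Vert z\Vert\,2^{-n\alpha}\,\Vert f\Vert_{C^2}\,\Vert X\Vert_{C^\alpha}$.

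To conclude, choose $N$ with $2^{-N}\le \Vert z\Vert < 2^{-N+1}$; the smallness bound applied to $0\le n\le N$ and the a priori bound applied to $n>N$ are geometric sums in the ratios $2^{-\alpha}>1$ and $2^{-(1+\alpha)}<1$ respectively, both dominated by their boundary terms of order $\Vert z\Vert^{1+\alpha}$. Adding the contribution of $g_z^{(1)}$ gives the claimed bound. The crux of the argument — and the reason for the $C^2$ hypothesis on $f$ rather than $C^1$ — is the smallness bound: one cannot use $\Vert\tilde H_n^{(k)}\Vert_{C^1}\le 2\Vert H_n^{(k)}\Vert_{C^1}$ without losing the factor $\Vert z\Vert$, and the natural alternative $\Vert\tilde H_n^{(k)}\Vert_{C^1}\le \Vert 2^n z\Vert\,\Vert H_n^{(k)}\Vert_{C^2}$ costs precisely one derivative.
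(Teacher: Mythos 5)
Your argument is correct, and it reaches the estimate by a genuinely different route than the paper. The paper decomposes space-time into regions determined by $t_0$: on $\Omega_{0,t_0/3}$, the middle strip $\Omega_{t_0/3,2t_0/3}$ and $\Omega_{2t_0/3,t_0}$ it uses that $g_z=-f$ below $t_0$, so no cancellation is sought there at all — the factor $\Vert z\Vert^{1+\alpha}$ comes from summing $2^{-n(1+\alpha)}$ over the scales $n\ge n_1$ finer than $t_0$ (plus one single piece at scale $2^{-n_1}\sim t_0$ in the middle strip, built with the cutoffs $\Upsilon_1,\Upsilon_2$ whose $t_0^{-1}$-size derivatives are compensated by the rescaling) — and the mean-value/increment estimate, which is where $\Vert f\Vert_{C^2(\Omega_{0\bullet})}$ enters, is invoked only once, on $\Omega_{t_0\bullet}$, through Corollary \ref{s15c} anchored at the shifted plane $t=t_0$ (Remark \ref{rm6}). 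You instead keep the single dyadic decomposition of Lemma \ref{s15} anchored at $t=0$, pair each $f_n$ with its translate (whose fine scales automatically hug the plane $t=t_0$), and interpolate scale by scale: triangle inequality for $2^{-n}\lesssim\Vert z\Vert$, bump-level cancellation $\Vert \tilde H^{(k)}_n\Vert_{C^1}\le C\,2^n\Vert z\Vert\,\Vert H^{(k)}_n\Vert_{C^2}$ for the coarse scales, with the crossover at $2^{-N}\sim\Vert z\Vert$ producing the exponent $1+\alpha$; this is legitimate because each $H^{(k)}_n$ samples $f$ only at times in $[2^{-n}/16,2^{-n}/4]$, so it is a globally $C^2$ bump with $\Vert H^{(k)}_n\Vert_{C^2}\le C2^{-2n}\Vert f\Vert_{C^2(\Omega_{0\bullet})}$. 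What each approach buys: yours avoids the middle-strip construction and the extension of Corollary \ref{s15c} to the plane $t=t_0$, at the price of using the $C^2$ norm on every coarse scale rather than once; the paper's version makes transparent that only $\Vert f\Vert_{C^1}$ is needed near the singular planes and that the $C^2$ norm is used solely for the increment on $\Omega_{t_0\bullet}$. Two harmless bookkeeping points you should make explicit: with $2^{-N}\le\Vert z\Vert<2^{-N+1}$ one only has $2^n\Vert z\Vert\le 2$ for $n\le N$ (which suffices for your support and mean-value bounds), and your proof implicitly takes $X(g_z):=\sum_{n}X(g_{z,n})+X(g_z^{(1)})$ as the definition of the left-hand side, which is consistent with the paper's (equally implicit) extension of $X$ to piecewise-smooth functions in Remarks \ref{rm6} and \ref{rm7}.
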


Note that on the right-hand side we have the norm of $f$ in
$C^2(\Omega_{0 \bullet})$. This is not a misprint. It comes from the
fact that we estimate the $L^\infty$ norm of $(\partial_t f)(w-z) -
(\partial_t f)(w)]$ by $\Vert z\Vert \, \{\, \Vert \, \partial^2_t f\,
\Vert_{L^\infty(\Omega_{0 \bullet})} + \Vert \, \partial^2_{t,x} f\,
\Vert_{L^\infty(\Omega_{0 \bullet})}\,\}$.

\begin{proof}[Proof of Lemma \ref{s02}]
Note that $g_z$ is a continuously-differentiable function on
$\Omega_{0, t_0} \cup \Omega_{t_0 \bullet}$ which vanishes on
$\Omega_{\bullet 0}$. It might be discontinuous at $t=0$ and
$t=t_0$. Using the dyadic partition of the unity, we estimate
separately $X (g_z )$ in the regions $\Omega_{kr, (k+1)r}$, $0\le k
\le 2$ and $\Omega_{t_0 \bullet}$, where $r=t_0/3$.

We start with the first region, $\Omega_{0, r}$. The argument is
similar to the one presented in the proof of Lemma \ref{s15}.  Let
$n_1\in \bb Z$ such that $2^{-n_1} \le t_0 < 2^{-n_1+1}$, and denote
by $A(s,y)$ the function given by
\begin{gather*}
A (s,y) \;=\; \sum_{n\ge n_1} \varphi_n(s)\, g_z(s,y) \;.
\end{gather*}
Note that $A(s,y)=0$ if $s\ge t_0/4$.

Recall the definition of the the function $\psi$ introduced in
\eqref{28}. By \eqref{29}, $X(A)$ can be written as
\begin{equation*}
\sum_{n\ge n_1} X\big(\, \varphi_n \, g_z \,\big)
\;=\; \sum_{n\ge n_1}
\sum_{k=-2^n+1}^{2^n}
X\big(\, \psi_n(y-k2^{-(n+1)})\, \varphi_n(s) \, g_z(s,y) \,\big)
\;.
\end{equation*}
Let $H_n: \bb R \times \bb T \to\bb R$ be given by
$H_n(s,y) = 2^{-2n} g_z(2^{-n} (s,y) + (0,k/2^{n+1})) \, \varphi(s)\,
\psi(y)$
so that
$(S^{2^{-n}}_{(0,k/2^{n+1})} H_n )(s,y) = \psi_n(y-k2^{-(n+1)})\,
\varphi_n(s) \, g_z(s,y)$. In particular, the previous sum is equal to
\begin{equation*}
\sum_{n\ge n_1}  \sum_{k=-2^n+1}^{2^n}
X\big(\, S^{2^{-n}}_{(0,k/2^{n+1})} H_n \,\big)\;.
\end{equation*}
The function $H_n$ belongs to $C^1_c(\bb R \times \bb T)$, it has
support contained in $[1/16,1/4]\times \bb T$, and $\Vert H_n\Vert_{C^1}
\le 2^{-2n} \Vert f\Vert_{C^1(\Omega_{0\bullet})}$. Therefore,
by Lemma \ref{a05}, as $\alpha>-1$,
\begin{align*}
\big| \, X (\, A  \,) \, \big| \; & \le\;
C_0 \,  \sum_{n\ge n_1} 2^{-n(1+\alpha)}
\Vert \, f\, \Vert_{C^1(\Omega_{0\bullet})}
\, \big \Vert X \big \Vert_{C^\alpha([-1,2]\times \bb T)} \\
& \le\;
C_0 \,  2^{-n_1(1+\alpha)}
\Vert \, f\, \Vert_{C^1(\Omega_{0\bullet})}
\, \big \Vert X \big \Vert_{C^\alpha([-1,2]\times \bb T)}
\end{align*}
for some finite constant $C_0$. By definition of $n_1$,
$2^{-n_1(1+\alpha)}\le t_0^{1+\alpha} \le \Vert
z\Vert^{1+\alpha}$. This completes the proof of the first estimate.

We turn to the second one, $\Omega_{t_0/3, 2t_0/3}$, squeezed between
the first and the third regions.  Let
$\Upsilon_1 (s) = \sum_{n\ge n_1} \varphi_n(s)$,
$\Upsilon_2 (s) = \sum_{n\ge n_1} \varphi_n(t_0-s)$. We need to
estimate
$\widehat g_z = g_z [1-\Upsilon_1-\Upsilon_2] = - \, f \,
[1-\Upsilon_1-\Upsilon_2]$.

At the beginning of the proof, we pointed out that the support of
$\Upsilon_1$ is contained in $[0,t_0/4]$. On the other hand, since the
supports of $\varphi_n$ and $\varphi_m$ are disjoints whenever
$|n-m|\ge 2$, $\Upsilon_1(s)=1$ for $0<s\le t_0/32$ because $t_0/32
\le (1/16) 2^{-n_1}$. A similar result holds for $\Upsilon_2$, so that
the support of $\widehat g_z$ is contained in $[a,b]\times \bb T$,
where $a=t_0/32$, $b=(31/32) t_0$. In this set the function $f$ is
$C^1$, which implies that $\widehat g_z$ belongs to $C^1_c(\bb R
\times \bb T)$.

Recall that $2^{-n_1} \le t_0 < 2^{-n_1 +1}$ and write $\widehat g_z$
as
\begin{equation*}
\begin{aligned}
\widehat g_z (s,y)  \; &=\; \sum_{k=-2^{n_1}+1}^{2^{n_1}}
\psi_{n_1}(y-k2^{-({n_1}+1)})\, \, \widehat g_z(s,y) \\
\; & := \; \sum_{k=-2^{n_1}+1}^{2^{n_1}}
(S^{2^{-{n_1}}}_{(t_0/2,k/2^{{n_1}+1})} \widehat H_z)(s,y)\;.
\end{aligned}
\end{equation*}
From the last identity we get that
$\widehat H_z(t,x) = 2^{-2{n_1}} \psi(x) \widehat g_z (t_0/2 +
t/2^{n_1} , k/2^{{n_1}+1} + x/2^{n_1})$.
The support of $\widehat H_z$ is thus contained in
$[-1,1]\times \bb T$ and
$\Vert \widehat H_z \Vert_{C^1_c(\bb R \times \bb T)} \le 2^{-2{n_1}}
\Vert f \Vert_{C^1(\Omega_{0\bullet})}$.
In this later estimate, observe that the time derivative of
$\Upsilon_1$ is of order $t^{-1}_0$. These bounds and Lemma
\ref{a05} yield that
\begin{align*}
\big| \, X (\, \widehat g_z  \,) \, \big| \; & \le \;
\sum_{k=-2^{n_1}+1}^{2^{n_1}}
\big| \, X (\, S^{2^{-{n_1}}}_{(t_0/2,k/2^{{n_1}+1})}
\widehat H_z  \,) \, \big| \\
& \le\;
C_0 \,  2^{-n_1 (1+\alpha)}
\Vert \, f\, \Vert_{C^1(\Omega_{0\bullet})}
\, \big \Vert X \big \Vert_{C^\alpha([-2,2+(t_0/2)] \times \bb
  T)}\;.
\end{align*}
This provides a bound for the second region since $2^{-n_1} \le t_0
\le \Vert z\Vert$.

We estimate $X (\, g_z \,)$ in the third region,
$\Omega_{2t_0/3 ,t_0}$, as in the first one. It remains to consider
the set $\Omega_{t_0 \bullet}$. The result follows from Corollary
\ref{s15c}, Remark \ref{rm6} and the fact that the $L^\infty$ norm of
$g_z (w) = f (w-z) - f(w)$ is bounded by
$\{\, \Vert \partial_t f \Vert_{L^\infty(\Omega_{0\bullet})} +
\Vert \partial_x f \Vert_{L^\infty(\Omega_{0\bullet})}\,\}\, \Vert
z\Vert$. A similar inequality holds for the $L^\infty$ norm
of the first derivatives of $g_z$.
This requires $f$ to be in $C^2$ and provides an estimate of
$\Vert g_z \Vert_{C^1(\Omega_{t_0 \bullet})}$ in terms of
$\Vert z\Vert\, \Vert f \Vert_{C^2(\Omega_{0 \bullet})}$.
\end{proof}

We turn to the proof of the Schauder estimate.  Recall the definition
of the kernel $q_{T_0}$ introduced at the beginning of this section.
We omit the subscript $T_0$ from the notation.  The function $q$ is
smooth in $(0,\infty) \times \bb T$, it diverges at the origin and is
not $C^1$ at $t=0$. For $n\ge 0$, let
$q_{n}: \bb R_+ \times \bb T\to\bb R_+$ be given by
$\color{bblue} q_{n} (z) = \phi_n(z) \, q(z)$, where
$\color{bblue} \phi_n (z) = \varphi_n (|z|)$.

Let $q_{n,z}: \bb R \times \bb T\to\bb R$, $z \in \bb R \times \bb T$,
$n\in \bb N_0$, be given by
\begin{equation*}
q_{n,z} (w) \;=\; q_{n} (z-w) \;=\; \phi_n(z-w) \, q(z-w) \;.
\end{equation*}
The function $q_{n,z}$ fulfills the assumptions of Lemma
\ref{s15}. Hence, by Corollary \ref{s15c}, we may define
$X(q_{n,z})$. The next lemma provides a bound for this quantity.

\begin{lemma}
\label{s14}
There exists a finite constant $C_0$ such that for all
$X\in C^\alpha$, $S<T$, $z = (t,x)$ in $[S,T] \times \bb T$ and
$n\ge 0$,
\begin{equation*}
|\, X(q_{n,z})\, | \;\le\; C_0\, 2^{-(1+\alpha)n} \,
\big\Vert \, X \, \big\Vert_{C^\alpha([S-2,T+2] \times \bb T)}\;.
\end{equation*}
\end{lemma}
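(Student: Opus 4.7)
The plan is to rescale $q_{n,z}$ so that it becomes $S^{2^{-n}}_z g_n$ for a function $g_n$ whose support lies in $\bb B(0,1/4)$ and whose $C^m$ norm is of order $2^{-n}$; the bound will then follow from the scaling built into the $C^\alpha$ seminorm, provided we deal with the derivative jump of $g_n$ at the time-zero slice through the machinery of Corollary \ref{s18} and Remark \ref{rm6}.

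First I would set $g_n(v) \,:=\, 2^{-2n}\,q_n(-2^{-n} v)$. Using $(S^\delta_z g)(w) = \delta^{-2} g((w-z)/\delta)$ with $\delta = 2^{-n}$, one checks immediately that $q_{n,z}(w) = (S^{2^{-n}}_z g_n)(w)$. Because $q_n$ is supported in the annulus $\{|z| \in (2^{-n-4}, 2^{-n-2})\} \cap \{t\ge 0\}$, the support of $g_n$ is contained in $\{|v| \in (1/16, 1/4)\} \cap \{s \le 0\} \subset \bb B(0,1/4)$.

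Next I would bound $\|g_n\|_{C^m(\Omega_{\bullet 0})}$. Applying the Leibniz rule to $q_n = \phi_n q$, the elementary estimate $|\partial^\beta \phi_n| = O(2^{|\beta|n})$ together with the Schauder bound \eqref{23} for derivatives of $p$ (which coincides with $q$ on the relevant annulus, since $\ms H(\cdot - T_0)\equiv 1$ there for $n\ge 2$) gives $|\partial^\beta q_n| \le C_\beta\, 2^{(1+|\beta|)n}$ on $\ms A^+_n$. Combined with the chain-rule factor $(-2^{-n})^{|\beta|}\cdot 2^{-2n}$ coming from the rescaling, this yields $\|g_n\|_{C^m(\Omega_{\bullet 0})} \le C_m \, 2^{-n}$ for every $m\ge 0$. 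The small cases $n=0,1$, for which \eqref{23} is not stated, are absorbed into $C_m$ using \eqref{23b}.

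Finally, since $g_n\equiv 0$ on $\Omega_{0\bullet}$ and is smooth on $\Omega_{\bullet 0}$, it fits the framework of Remark \ref{rm6} with $t_0=0$. The time-reversed analogue of Corollary \ref{s18} then produces
\[
|X(q_{n,z})| \;=\; |X(S^{2^{-n}}_z g_n)| \;\le\; C_0 \,(2^{-n})^{\alpha}\, \|g_n\|_{C^1(\Omega_{\bullet 0})}\, \|X\|_{C^\alpha([S-2,T+2]\times \bb T)},
\]
and substituting the bound of the previous paragraph yields the desired estimate. The main obstacle is that $g_n$ is not globally $C^1$: the jump of $\partial_t q$ at $\{t=0\}$ translates into a first-derivative jump of order $2^{-n}$ of $g_n$ across $\{s=0\}$, so $g_n\not\in B_m$ and the defining inequality \eqref{71} cannot be invoked directly. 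This is precisely the situation for which Corollary \ref{s18} and Remark \ref{rm6} were engineered, replacing the global $C^1$ hypothesis by smoothness away from a single time slice and recovering the scaling-invariant bound via dyadic localization around the slice.
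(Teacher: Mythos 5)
Your proposal is correct and follows essentially the same route as the paper: both rewrite $q_{n,z}$ as $S^{2^{-n}}_z$ applied to a rescaled function supported in $\bb B(0,1/4)$ with $C^1$ norm of order $2^{-n}$ (equivalently, the paper keeps the factor $\delta^2$ outside), and both invoke Corollary \ref{s18} together with Remark \ref{rm6} to handle the derivative jump across the time slice, yielding $|X(q_{n,z})|\le C_0\,2^{-(1+\alpha)n}\,\Vert X\Vert_{C^\alpha([S-2,T+2]\times\bb T)}$. Your explicit treatment of $n=0,1$ via \eqref{23b} is a minor, harmless refinement of the paper's citation of \eqref{23}.
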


\begin{proof}
Let $\delta = 2^{-n} \le 1$, and $g: \bb R \times \bb T \to \bb R$ be
given by $g (w) = q_n(-\delta w)$.  Since $\phi$ is symmetric and
$\delta = 2^{-n}$, $g (w) = \phi (w)\, q(-\delta w)$. In
particular, the support of $g$ is contained in $\bb B(0,1/4)$. As $q$ and
$p$ coincide on $\bb B(0,1/4)$, $g (w) = \phi (w)\, p(-\delta w)$.
Hence,
$g(s,y) =0$ for $s\ge 0$, $g$ satisfies the assumptions of Lemma
\ref{s15}, and, by \eqref{23}, there exists a finite constant $C_0$
such that
\begin{equation*}
\Vert g \Vert_{C^1((-\infty,0) \times \bb T)}\;\le\;
C_0\, \delta^{-1} \;.
\end{equation*}

On the other hand, an elementary computation yields that
$\delta^2\, (S^\delta_z g)(w) = q_{n,z}(w)$. Hence, by Corollary
\ref{s18} and Remark \ref{rm6}, there exists a finite constant $C_0$
such that
\begin{align*}
& \big|\, X\big(\, q_{n,z} \,\big) \,\big| \;=\;  \delta^2\,
\big|\, X\big(\, S^\delta_z g\,\big) \,\big| \;\le\;
C_0\,  \delta^{2+\alpha} \, \Vert g\Vert_{C^1((-\infty,0) \times \bb T)}\,
\Vert X \Vert_{C^\alpha([S-2,T+2] \times \bb T)} \\
&\qquad
\;\le\; C_0 \, 2^{-(1+\alpha)n} \, \Vert X \Vert_{C^\alpha([S-2,T+2]
  \times \bb T)}
\end{align*}
for all $n\ge 0$, $S<T$, $z = (t,x)$ in $[S,T] \times \bb T$.
This completes the proof of the lemma.
\end{proof}

Let
\begin{gather*}
\Phi  \;=\; \sum_{k\ge 0} \phi_k \;, \;\;
\Psi  \;=\; \sum_{k< 0} \phi_k \;\;
\text{so that }\; \Phi  \,+\, \Psi \,=\, 1\;, \\
Q  \;=\; \Phi \, q_{T_0} \;, \quad
R \;=\; \Psi \, q_{T_0} \;.
\end{gather*}
Since the supports of $\varphi_n$ and $\varphi_m$ are disjoints
whenever $|n-m|\ge 2$, $\Psi(w) =0$ for $w\in \bb B(0,1/8)$. Let
$Q_{z}: \bb R \times \bb T \to\bb R_+$ be given by
$\color{bblue} Q_{z} (w) = Q (z-w) = \Phi(z-w) \, q_{T_0}(z-w)$.  The
previous lemma permits to define $X(Q_{z})$ as
\begin{equation}
\label{20}
X(Q_{z}) \;=\; \sum_{k\ge 0} X(q_{k,z})\;.
\end{equation}

\begin{lemma}
\label{s17}
Fix $-1<\alpha<0$, $0<\kappa<1+\alpha$.  Then, there exists a finite
constant $C(\kappa)$ such that
\begin{equation*}
\big\vert \, X(Q_{z}) \,-\, X(Q_{z'}) \, \big\vert
\;\le\; C(\kappa) \,
\big\Vert \, z\,-\, z' \, \big\Vert^{1+\alpha-\kappa}\,
\big\Vert \, X \, \big\Vert_{C^{\alpha}([S-4,T+4]\times  \bb T)}
\end{equation*}
for all $X\in C^\alpha(\bb R \times \bb T)$, $S<T$, $z=(t,x)$,
$z'=(t',x') \in [S,T] \times \bb T$ such that
$\Vert z- z'\Vert\ \le 1/4$.
\end{lemma}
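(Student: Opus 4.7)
The plan is to use the dyadic decomposition from \eqref{20} and split the sum at a scale comparable to $\|\eta\|:=\|z-z'\|$, establishing two complementary bounds for each dyadic term and then selecting the better one for each $k$.

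First, I would write
$$
X(Q_z) - X(Q_{z'}) \;=\; \sum_{k \ge 0} \bigl[X(q_{k,z}) - X(q_{k,z'})\bigr]
$$
and set $\delta_k = 2^{-k}$. Applying Lemma \ref{s14} to each summand separately, the triangle inequality gives the basic estimate
$$
\bigl|X(q_{k,z}) - X(q_{k,z'})\bigr| \;\le\; 2C_0\,2^{-(1+\alpha)k}\,\|X\|_{C^\alpha([S-2,T+2]\times \bb T)},
$$
call it (A); this is useful when $\delta_k < \|\eta\|$.

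For the complementary regime $\delta_k \ge \|\eta\|$, I plan to establish the sharper bound
$$
\bigl|X(q_{k,z}) - X(q_{k,z'})\bigr| \;\le\; C\,2^{-\alpha k}\,\|\eta\|\,\|X\|_{C^\alpha([S-3,T+3]\times \bb T)},
$$
call it (B). The derivation follows the rescaling idea of Lemma \ref{s14}: let $g(v) = q_k(-\delta_k v)$, which has support in $\bb B(0,1/4)$, vanishes on $\{t_v \ge 0\}$, is smooth on $\{t_v < 0\}$, and, by \eqref{23}, satisfies $\|g\|_{C^m} \le C_m/\delta_k$ for $m \le 2$. From $q_{k,z} = \delta_k^2\, S^{\delta_k}_z g$ and the integral form of Taylor's theorem applied to $g$ along the segment from $(w-z)/\delta_k$ to $(w-z')/\delta_k$, one obtains the pointwise identity
$$
q_{k,z}(w) - q_{k,z'}(w) \;=\; \delta_k^2\,\int_0^1 \bigl[S^{\delta_k}_{\zeta_s}(\tilde\eta \cdot \nabla g)\bigr](w)\,ds,
$$
with $\tilde\eta = \eta/\delta_k$ and $\zeta_s = sz + (1-s)z'$. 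Applying $X$, interchanging with the integral, and bounding each integrand by Corollary \ref{s18} (in the time-reversed form warranted by Remark \ref{rm6}, since $g$ vanishes on $\{t \ge 0\}$ rather than on $\{t \le 0\}$) yields $|X(S^{\delta_k}_{\zeta_s}(\tilde\eta \cdot \nabla g))| \le C\delta_k^\alpha\|\tilde\eta \cdot \nabla g\|_{C^1}\|X\| \le C\delta_k^{\alpha-2}\|\eta\|\|X\|$, because $\|\tilde\eta \cdot \nabla g\|_{C^1} \le \|\tilde\eta\|\,\|g\|_{C^2} \le C\|\eta\|/\delta_k^2$. Multiplying by $\delta_k^2$ gives (B).

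To close, I would choose $k_0 \ge 2$ with $2^{-k_0-1} < \|\eta\| \le 2^{-k_0}$ (possible since $\|\eta\| \le 1/4$), apply (B) for $k < k_0$ and (A) for $k \ge k_0$. Since $-\alpha > 0$ and $1+\alpha > 0$, both resulting sums are geometric and comparable to their extremal term, giving
$$
\bigl|X(Q_z) - X(Q_{z'})\bigr| \;\le\; C\|\eta\|\,2^{-\alpha k_0} \;+\; C'\,2^{-(1+\alpha)k_0} \;\le\; C''\,\|\eta\|^{1+\alpha},
$$
and since $\|\eta\| \le 1/4$ this is bounded by $C'''\,\|\eta\|^{1+\alpha-\kappa}$ for any $\kappa > 0$, which proves the claim.

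The main obstacle is obtaining (B) with the correct scaling. A direct application of Lemma \ref{s02} to $f = q_k$ would yield $C\|\eta\|^{1+\alpha}\|q_k\|_{C^2(\Omega_{0\bullet})}$, which, since $\|q_k\|_{C^2} \sim 2^{3k}$, gives the prohibitive growth $2^{3k}\|\eta\|^{1+\alpha}$. The resolution is to rescale first to a function on the unit scale, where all relevant $C^m$ norms are of the common size $1/\delta_k$, and expand in $\eta$ only afterwards; this way the Jacobian and derivative scalings align, and one loses only the factor $\delta_k^\alpha$ from the $C^\alpha$-norm estimate. A minor technical point is that the corollaries must be used in their time-reversed form (as $g$ vanishes on $\{t \ge 0\}$), which is justified by Remark \ref{rm6}.
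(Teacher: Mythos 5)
Your overall architecture coincides with the paper's: the decomposition \eqref{20}, the bound of Lemma \ref{s14} plus the triangle inequality for the scales $2^{-k}<\Vert z-z'\Vert$, and a rescaling-plus-Corollary \ref{s18} argument for the coarse scales. Your per-scale estimate (B), $C\,2^{-\alpha k}\Vert \eta\Vert$, is in fact the same as the paper's intermediate estimate in the proof of \eqref{56} (there one gets $C\,\delta^{\alpha}\Vert z-z'\Vert$ with $\delta\ge 2^{-n-1}$, which the paper then deliberately weakens to the $n$-uniform bound $\Vert z-z'\Vert^{1+\alpha}$ and pays the factor $n_1\sim\ln\Vert z-z'\Vert^{-1}$, whence the $\kappa$); keeping the decay in $k$ as you do indeed removes the logarithm and yields the exponent $1+\alpha$, which implies the stated $1+\alpha-\kappa$.

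The genuine gap is in how you derive (B). You write $q_{k,z}-q_{k,z'}$ as $-\delta_k^{2}\int_0^1 S^{\delta_k}_{\zeta_s}(\tilde\eta\cdot\nabla g)\,ds$ and then ``apply $X$ and interchange with the integral''. But $X$ is not given by integration against a function, and $X$ evaluated on each $S^{\delta_k}_{\zeta_s}(\tilde\eta\cdot\nabla g)$ --- a function with a jump across the moving hyperplane $\{w : w_t=\zeta_{s,t}\}$, since $\partial_t g$ does not extend continuously to $\{t=0\}$ --- is itself only \emph{defined} through the limiting dyadic construction of Lemma \ref{s15}/Corollary \ref{s15c}. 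So the identity $X\big(\int_0^1 F_s\,ds\big)=\int_0^1 X(F_s)\,ds$, with both sides understood in this extended sense (and with the extended pairing applied to a difference whose natural decompositions are adapted to the two fixed hyperplanes at times $t,t'$ rather than to the $s$-dependent one), is exactly the kind of statement this section of the paper is built to avoid asserting without proof; the pointwise-in-$w$ identity (which itself only holds with an a.e.-defined derivative, $g$ being merely Lipschitz across the hyperplane) does not by itself justify it. The paper sidesteps all of this by estimating the difference function directly: it sets $g(w)=\delta^2[q_{n,z'-z}(\delta w)-q_{n,0}(\delta w)]$ with $\delta=\tfrac12(\Vert z'-z\Vert+2^{-n})$, uses the finite-difference sup bounds \eqref{48} (consequences of \eqref{23}) to get $\Vert g\Vert_{C^1(\Omega)}\le C\Vert z'-z\Vert$ on the complement of the two singular time slices, and then applies Corollary \ref{s18} with Remark \ref{rm6} --- no differentiation across the singularity and no interchange of the pairing with a parameter integral. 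Your scheme is salvageable (e.g.\ by reverting to the paper's direct treatment of the difference, which gives the same bound), but as written the interchange step is unproven and is the crux of the matter.
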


\begin{proof}
In view of \eqref{20}, we have to estimate
$\vert \, X(q_{n,z}) \,-\, X(q_{n,z'}) \, \big\vert$, $n\ge 0$.  Let
$n_1\in \bb Z$ such that
$2^{-(n_1+1)} < \Vert z- z'\Vert \le 2^{-n_1}$.  We first bound this
expression for $n$ large and then we examine the case of $n$ small.

Note that for $n\ge 0$, $q_{n,z} = p_{n,z}$, where
$p_{n,z}(w) = \phi_n(z-w) \, p(z-w)$, because, in this range of $n$,
$q(z-w) = p(z-w)$ if $\phi_n(z-w) \not =0$. Moreover, by \eqref{23},
there exists a finite constant $C_0$ such that
\begin{equation}
\label{48}
\begin{aligned}
& \sup_{w\in \bb R\times \bb T} \big|\,  p_{n,z} (w) - p_{n,z'}(w) \,\big|
\;\le\; C_0\, 2^{2n}\, \Vert z' - z\Vert\, \;, \\
&\quad
\sup_{w\in \bb R\times \bb T} \big|\,  Dp_{n,z} (w) - Dp_{n,z'} (w)\,\big|
\;\le\; C_0\, 2^{3n}\,  \Vert z' - z\Vert
\end{aligned}
\end{equation}
for all $z$, $z'$ in $\bb R\times \bb T$, $n\in\bb N_0$. In this
formula, $D$ stands for $\partial_t$ or $\partial_x$. As $q_{n,z} =
p_{n,z}$, these bounds hold for $q_{n,z}$ and we keep working with $q$.

Fix $n\ge n_1$. By Lemma \ref{s14}, since $t$, $t'\in [S,T]$, there
exists a finite constant $C_0$ such that
\begin{equation}
\label{75}
\begin{aligned}
& \vert \, X(q_{n,z}) \,-\, X(q_{n,z'}) \, \big\vert
\; \le\; \vert \, X(q_{n,z}) \, \big\vert
\,+\, \vert \,  X(q_{n,z'}) \, \big\vert \\
& \qquad \le\; C_0\, 2^{-(1+\alpha)n} \,
\big\Vert \, X \, \big\Vert_{C^\alpha([S-2,T+2] \times \bb T)} \;.
\end{aligned}
\end{equation}

We turn to small $n$'s. Assume that $0\le n<n_1$.  We claim that there
exists a finite constant $C_0$ such that
\begin{equation}
\label{56}
\big|\, X(q_{n,z}) \,-\, X(q_{n,z'}) \,\big| \;\le\; C_0 \,
\Vert z'-z \Vert^{1+\alpha} \,
\Vert X \Vert_{C^\alpha([-4,t+4] \times \bb T)} \;.
\end{equation}

Let $\delta = (1/2)(\Vert z' - z\Vert + 2^{-n})$. Note that
$\delta\le 1$ because $\Vert z' - z\Vert \le 1/4$ and $n\ge 0$.  Let
$g: \bb R \times \bb T \to \bb R$ be given by
$g (w) = \delta^2 [q_{n,z'-z} (\delta w) \,-\, q_{n,0}(\delta w)]$.

Assume, without loss of generality, that $t'>t$, and set
$\Omega = [(-\infty, 0) \cup(0,a)] \times \bb T$, where
$a=(t'-t)/\delta>0$.  The function $g$ is smooth on $\Omega$, it
vanishes on $\Omega_{a \bullet}$ and its time-derivative is
discontinuous at $s=0$ and $s=a$, where $w=(s,y)$.  Moreover, by
\eqref{48}, there exists a finite constant $C_0$ such that
\begin{equation*}
\Vert g \Vert_{L^\infty (\bb R \times \bb T)} \;\le\; C_0\, \delta^2\, 2^{2n}\,
\Vert z' - z\Vert\, \;, \quad
\Vert D g \Vert_{L^\infty (\bb R \times \bb T)}
\;\le\; C_0\, \delta^3\,  2^{3n}\,  \Vert z' - z\Vert\;,
\end{equation*}
where, as above, $D$ stands either for $\partial_t$ or for
$\partial_x$. By definition of $n_1$, since $n<n_1$,
$2^n \, \Vert z' - z\Vert \le 2^{n_1} \, \Vert z' - z\Vert \le 1$ so
that $\delta 2^n \le 1$.  In particular,
$\Vert g \Vert_{C^1(\Omega)} \le C_0 \, \Vert z' - z\Vert$.

On the other hand, by definition of $\varphi$, introduced in
\eqref{26}, the support of $g$ is contained in
$\bb B(\delta^{-1}\, (z' - z) \,,\, \delta^{-1}\, 2^{-(n+1)})
\cup \bb B(0 \,,\, \delta^{-1}\, 2^{-(n+1)}) $. Since $\delta^{-1}\,
\Vert z' - z\Vert \le 2$ and $\delta^{-1}\, 2^{-(n+1)} \le 1$, the
latter set is contained in $\bb B(0 \,,\, 3)$.

An elementary computation yields that
$(S^\delta_z g)(w) = q_{n,z'}(w )\,-\, q_{n,z} (w)$, so that
\begin{equation*}
\big|\, X(q_{n,z}) \,-\, X(q_{n,z'}) \,\big| \;=\;
\big|\, X(S^\delta_z g) \,\big| \;.
\end{equation*}
Since the support of $g$ is contained in $\bb B(0,3)$, $X$ belongs to
$C^\alpha(\bb R \times \bb T)$ and $z$ to $[S,T]\times \bb T$, by
Corollary \ref{s18} and Remark \ref{rm6},
\begin{equation*}
\big|\, X(S^\delta_z g) \,\big|
\;\le\; C_0\, \delta^\alpha \,
\Vert g\Vert_{C^1 (\Omega)}\,
\Vert X \Vert_{C^\alpha([S-4,T+4] \times \bb T)}
\end{equation*}
for some finite constant $C_0$. As
$\Vert g \Vert_{C^1 (\Omega)} \le C_0 \, \Vert z' -
z\Vert$
and $\delta \ge (1/2) \Vert z'-z \Vert$, the previous expression is
less than or equal to
\begin{equation*}
C_0 \,  \Vert z'-z \Vert^{1+\alpha}
\, \Vert X \Vert_{C^\alpha([S-4,T+4] \times \bb T)} \;.
\end{equation*}
This proves \eqref{56}.

We are now in a position to prove the lemma.  By definition,
\begin{equation}
\label{73}
\big|\, X(Q_{z}) \,-\, X(Q_{z'}) \,\big| \;\le\;
\sum_{n\ge 0} \big|\,  X(q_{n,z}) \,-\, X(q_{n,z'}) \,\big| \;.
\end{equation}
By \eqref{75} and \eqref{56}, the right hand side of this expression
is bounded above by
\begin{equation*}
C_0 \,
\Vert z'-z \Vert^{1+\alpha} \,
\Vert X \Vert_{C^\alpha([S-4,T+4] \times \bb T)}
\; n_1
\;+\; C_0 \,
\big\Vert \, X \, \big\Vert_{C^\alpha([S-2,T+2] \times \bb  T)}
\sum_{n\ge n_1}  2^{-(1+\alpha)n} \;.
\end{equation*}
As $2^{n_1} \le \Vert z'-z \Vert^{-1}$, we may estimate $n_1$ by
$C_0\, \ln \Vert z'-z \Vert^{-1}$. Since $0<\kappa<1+\alpha$ and the
map $t \mapsto t^\kappa [1 + \ln t^{-1}]$ is bounded in the interval
$[0,1]$, the previous expression is less than or equal to
\begin{equation*}
C_0 (\kappa)\,
\Vert z'-z \Vert^{1+\alpha-\kappa} \,
\Vert X \Vert_{C^\alpha([S-4,T+4] \times \bb T)}
\;+\; C_0 \,  \Vert z'-z \Vert^{1+\alpha} \,
\big\Vert \, X \, \big\Vert_{C^\alpha([S-2,T+2] \times \bb  T)}
\;.
\end{equation*}
for some finite constant $C_0(\kappa)$. We used here that
$2^{- n_1} \le 2 \Vert z'-z \Vert$ to bound the second term.
This completes the proof of the lemma.
\end{proof}

Recall the definition of $R$ given just above \eqref{20}.  The
function $R$ fulfills the hypotheses of Lemma \ref{s15} and its
support is contained in $[0,T_0] \times \bb T$.  Let
$R_{z}: \bb R \times \bb T \to\bb R_+$ be given by
$\color{bblue} R_{z} (w) = R (z-w) = \Psi(z-w) \, q_{T_0}(z-w)$.  The
next lemma provides estimates for $X(R_{z})$.

\begin{lemma}
\label{s01}
Fix $-1<\alpha<0$. Then, there exists a finite constant
$C_0 = C_0(T_0)$ such that
\begin{equation*}
\big\vert \, X(R_{z}) \,-\, X(R_{z'}) \, \big\vert
\;\le\; C_0 (T_0)\,
\big\Vert \, z\,-\, z' \, \big\Vert^{1+\alpha} \,
\big\Vert \, X \, \big\Vert_{C^{\alpha}([S-T_0-3,T++2]\times  \bb T)}
\end{equation*}
for all $X\in C^\alpha(\bb R \times \bb T)$, $S<T$, $z=(t,x)$,
$z'=(t',x') \in [S,T] \times \bb T$ such that
$\Vert z- z'\Vert\ \le 1$.
\end{lemma}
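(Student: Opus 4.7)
My plan is to reduce the estimate to Lemma \ref{s02} applied with $f=R$, after a change of variable that recasts $R_z-R_{z'}$ as a translation difference of $R$. The preliminary observation is that $R=\Psi\,q_{T_0}$ satisfies the hypotheses of that lemma with $T_1=T_0+1$: because $\Psi$ vanishes on $\bb B(0,1/8)$, the product $R$ is smooth on $\Omega_{0\bullet}$, vanishes identically on $\Omega_{\bullet 0}$, and is supported in $[0,T_0+1]\times\bb T$. Summing the bound \eqref{23} over the (finitely many) dyadic annuli $\ms A^+_n$ that meet $\{\Vert w\Vert\ge 1/8\}\cap\{0<t\le T_0+1\}$ gives $\Vert R\Vert_{C^2(\Omega_{0\bullet})}\le C(T_0)<\infty$.

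Setting $\hat h:=z-z'$ (so $\Vert\hat h\Vert\le 1$), a direct manipulation yields
\begin{equation*}
R_z(w)-R_{z'}(w)\;=\;R(z-w)-R\bigl((z-w)-\hat h\bigr)\;=\;H(z-w)\;,\qquad H(u):=R(u)-R(u-\hat h)\;.
\end{equation*}
I introduce the reflected-shifted linear functional $X^\ast(\varphi):=X(\varphi(z-\,\cdot\,))$, so that $X(R_z-R_{z'})=X^\ast(H)$, and notice that $H=-g_{\hat h}$ with $g_{\hat h}(u)=R(u-\hat h)-R(u)$ --- precisely the object handled by Lemma \ref{s02} applied to $f=R$ with shift $\hat h$. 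To transfer the $C^\alpha$ control, a short calculation from \eqref{35} shows that $(S^\delta_{z_0}\varphi)(z-\,\cdot\,)=S^\delta_{z-z_0}\widetilde\varphi$ where $\widetilde\varphi(u):=\varphi(-u)$ is again in $B_m$, so the seminorm \eqref{71} satisfies the identity
\begin{equation*}
\Vert X^\ast\Vert_{C^\alpha([A,B]\times\bb T)}\;=\;\Vert X\Vert_{C^\alpha([t-B,\,t-A]\times\bb T)}\;.
\end{equation*}

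Putting these together, Lemma \ref{s02} applied to $X^\ast$ produces
\begin{equation*}
\bigl|X(R_z-R_{z'})\bigr|\;\le\;C_0\,(T_0+2)\,\Vert\hat h\Vert^{1+\alpha}\,\Vert R\Vert_{C^2(\Omega_{0\bullet})}\,\Vert X^\ast\Vert_{C^\alpha([-2,\,T_0+3]\times\bb T)}\;,
\end{equation*}
and the seminorm identity, together with $t\in[S,T]$, turns the right-hand side into the desired expression $C(T_0)\,\Vert z-z'\Vert^{1+\alpha}\,\Vert X\Vert_{C^\alpha([S-T_0-3,\,T+2]\times\bb T)}$. The main obstacle is only a minor bookkeeping issue: Lemma \ref{s02} is phrased for shifts $\hat h\in(0,\infty)\times\bb T$, whereas $\hat h=z-z'$ may have a non-positive time component. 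The case of a negative time component is handled by interchanging the roles of $z$ and $z'$; the borderline case $t=t'$ of a purely spatial shift only simplifies the proof of Lemma \ref{s02}, since the two singular hyperplanes $\{s=0\}$ and $\{s=t_0\}$ collapse into one, and the dyadic argument there yields the same bound.
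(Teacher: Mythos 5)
Your proof is correct and reduces the estimate to the same key ingredients as the paper: Lemma \ref{s02} applied with the kernel $R$, together with the bound $\Vert R\Vert_{C^2(\Omega_{0\bullet})}\le C(T_0)$, which holds because $\Psi$ vanishes on $\bb B(0,1/8)$ and so removes the singularity of $q_{T_0}$ at the origin. The only real difference from the paper is where the reflection $w\mapsto z-w$ is placed: the paper applies Lemma \ref{s02} directly to $f=R_{z'}$, invoking Remark \ref{rm6} to move the singular hyperplane from $\{s=0\}$ to $\{s=t'\}$, while you conjugate the distribution instead, via $X^{\ast}(\varphi)=X(\varphi(z-\cdot))$, verify that the seminorm \eqref{71} is preserved under this reflection (since $B_m$ is invariant under $u\mapsto -u$), and then apply Lemma \ref{s02} verbatim to $f=R$; both devices are legitimate, and yours has the small advantage of not tacitly modifying the geometry in Lemma \ref{s02}, at the price of introducing $X^{\ast}$ and checking it lies in $C^\alpha$. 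Your explicit treatment of the sign of the time component of $z-z'$ and of the degenerate purely spatial case $t=t'$ (where Corollary \ref{s15c} already gives the bound with exponent $1\ge 1+\alpha$ on $\Vert z-z'\Vert\le 1$) covers cases the paper dismisses with a ``without loss of generality''; the only cosmetic slip is citing \eqref{23}, stated for $n\ge 2$, to control $p$ on the whole region $\{\Vert w\Vert\ge 1/8,\ 0<t\le T_0+1\}$, where one should simply use the explicit form \eqref{04}, \eqref{07} of $p$ away from the origin, exactly as the paper does with $\Vert p\Vert_{C^2(\Omega_\star)}$.
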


\begin{proof}
Fix $z=(t,x)$, $z'=(t',x')$ such that $\Vert z'-z\Vert \le 1$.
Without loss of generality, assume that $t<t'$ and let
$f(w) = R_{z'}(w)$. The function $f$ satisfies the assumptions of
Lemma \ref{s15}, with the plane $\{(s,y) : s=t'\}$ replacing
$\{(s,y) : s=0\}$. Its support in contained in $[t'-T_0-1, t']$.
Clearly, $R_{z}(w) - R_{z'}(w) = f(w - \hat z) - f(w)$, where
$\hat z = z-z'$. By Lemma \ref{s02} applied to the function $f$,
\begin{equation*}
\big| \, X(R_{z}) - X(R_{z'}) \, \big| \; \le\;
C_0 \, (1+T_0) \, \Vert \hat z \Vert^{1+\alpha}\,
\Vert \, f\, \Vert_{C^2(\Omega_{\bullet t'})}
\, \big \Vert X \big \Vert_{C^\alpha([t'-T_0-3,t'+2] \times \bb T)}\;.
\end{equation*}
By definition of $f$ and $R$, and since $\Psi(w) = 0$ for
$w\in \bb B(0,1/8)$, there exists a finite constant $C_0$ such that
\begin{equation*}
\Vert \, f\, \Vert_{C^2(\Omega_{\bullet t'})} \;=\;
\Vert \, R\, \Vert_{C^2(\Omega_{0 \bullet})}  \;\le\;
C_0 \Vert \, p\, \Vert_{C^2(\Omega_\star)} \;,
\end{equation*}
where $\Omega_\star = \Omega_{0,T_0+1} \setminus \bb B(0,1/8)$.
To complete the proof, it remains to recall that $z' \in [S,T]\times
\bb T$.
\end{proof}

\begin{remark}
\label{rm7}
Note that the proofs of Lemmata \ref{s15}, \ref{s02}, \ref{s14} and
\ref{s17} permit to extend the domain of definition of a distribution
$X$ in $C^\alpha$ to functions which do not belong to
$C^1(\bb R \times \bb T)$. For instance to functions of the type
$g(t,x)=f(t,x) \chi_{(S,T)}(t)$, where $f$ belongs to
$C^1(\bb R \times \bb T)$ and $\chi_A$ is the indicator of the set
$A$. This property is further exploited below in the definition of the
distribution $X^+$.
\end{remark}

\begin{proof}[Proof of Theorem \ref{t01}]
The proof is a consequence of Lemmata \ref{s17} and \ref{s01}.
\end{proof}

\begin{remark}
\label{rm1}
The proof of Lemma \ref{s17} can be extended to $\alpha<-1$, but this
result will not be needed here.
\end{remark}

\subsection{The distribution $X^+$}

Let $f$ be a function in $C^1_c(\bb R \times \bb T)$ and denote by
$\color{bblue} \chi_A$, $A\subset \bb R \times \bb T$, the indicator
function of the set $A$. Lemma \ref{s15} permits also to define $X(f
\chi_{\bb R_+ \times \bb T})$ as a sum. We denote this quantity by
$X^+(f):$
\begin{equation*}
 X^+(f) \;:=\; X \big(\, f \, \chi_{\bb R_+ \times \bb T} \,\big)\;.
\end{equation*}
Clearly, $X^+(f)=0$ for all $f$ in $C^1_c(\bb R\times \bb T)$ whose
support is contained in $(-\infty,0] \times \bb T$.

Fix $-1<\alpha<0$, $0<\kappa<1+\alpha$. We claim that there exists a
finite constant $C(\kappa, T_0)$ such that
\begin{equation}
\label{80}
\big\vert \, X^+(q_{T_0,(t,x)}) \, \big\vert
\;\le\; C(\kappa,T_0) \, t^{1+\alpha-\kappa}\,
\big\Vert \, X \, \big\Vert_{C^{\alpha}([-T_0-3,5]\times  \bb T)}
\end{equation}
for all $X\in C^\alpha(\bb R \times \bb T)$, $0\le t \le 1/4$.

To prove this claim, fix $z=(t,x)$ and write
$X^+(q_{T_0,(t,x)}) = X\big(\, q_{T_0,(t,x)} \, \chi_{\bb R_+ \times \bb
  T}  \,\big)$ as
\begin{equation}
\label{85}
X\big(\,  q_{T_0,(t,x)} \,-\, q_{T_0,(0,x)} \,\big) \;+\;
X\big(\,  q_{T_0,(0,x)} \,-\, q_{T_0,(t,x)}  \,[ 1- \chi_{\bb R_+
  \times \bb T}  ]  \,\big) \;.
\end{equation}
By Theorem \ref{t01} with $S=0$, $T=1$, the absolute value of the
first term is bounded by
$C_0 \, t^{1+\alpha-\kappa}\, \big\Vert \, X \,
\big\Vert_{C^{\alpha}([-T_0-3,5]\times \bb T)}$ for some finite
constant $C_0 = C_0(\kappa, T_0)$.  On the other hand, since the
support of $q_{T_0,(0,x)} $ is contained in
$(-\infty, 0] \times \bb T$, the function appearing in the second term
can be written as
$ [\, q_{T_0,(0,x)} \,-\, q_{T_0,(t,x)} \,] \,[ 1- \chi_{\bb R_+
  \times \bb T} ] $. In Lemmata \ref{s17} and \ref{s01}, we estimated
$X(q_{T_0,(0,x)} \,-\, q_{T_0,(t,x)} )$ in each region separately. In
particular, it follows from these results that the absolute value of
the second term in \eqref{85} is bounded by
$C_0 \, t^{1+\alpha-\kappa}\, \big\Vert \, X \,
\big\Vert_{C^{\alpha}([-T_0-3,5]\times \bb T)}$ for some finite
constant $C_0 = C_0(\kappa, T_0)$. This proves \eqref{80}.

For similar reasons, the proofs of Lemmata \ref{s17} and \ref{s01}
yield that for fixed $-1<\alpha<0$, $0<\kappa<1+\alpha$, there exists
a finite constant $C(\kappa, T_0)$ such that
\begin{equation}
\label{86}
\big\vert \, X^+(q_{T_0,z}) \, -\,  X^+(q_{T_0,z'})  \,\big\vert
\;\le\; C(\kappa,T_0) \, \Vert z- z'\Vert^ {1+\alpha-\kappa}\,
\big\Vert \, X \, \big\Vert_{C^{\alpha}([-T_0-3,5]\times  \bb T)}
\end{equation}
for all $X\in C^\alpha(\bb R \times \bb T)$, $z$,
$z' \in [0,1] \times \bb T$ such that $\Vert z- z'\Vert\ \le 1/4$.

\begin{corollary}
\label{s04}
Fix $-1<\alpha<0$, $0<2\kappa<1+\alpha$.  Then, there exists a finite
constant $C(\kappa, T_0)$ such that
\begin{equation*}
\Vert u \Vert_{C^{1+\alpha-2\kappa}([0,T] \times \bb T)} \;\le\;
C(\kappa,T_0) \, T^\kappa\, 
\big\Vert \, X \, \big\Vert_{C^{\alpha}([-T_0-3,5]\times  \bb T)}
\;.
\end{equation*}
for all $X\in C^\alpha(\bb R \times \bb T)$, $0<T\le 1/4$, where
$u = u_{T_0} : \bb R \times \bb T \to \bb R$ is given by
$u(z) = X^+(q_{T_0,z})$.
\end{corollary}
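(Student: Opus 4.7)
The plan is to combine the two estimates for $X^+(q_{T_0,z})$ already established in this subsection: the pointwise decay \eqref{80} and the H\"older-type difference bound \eqref{86}. Set $\beta = 1+\alpha-\kappa$ and $\beta_* = 1+\alpha-2\kappa$; the hypothesis $0<2\kappa<1+\alpha$ gives $0<\beta_*<\beta<1$. Write $N = \|X\|_{C^\alpha([-T_0-3,5]\times\bb T)}$. Since \eqref{13} defines $\|\cdot\|_{C^{\beta_*}}$ as a pure H\"older seminorm, it suffices to prove $|u(z)-u(z')|\le C(\kappa,T_0)\, T^\kappa\, \|z-z'\|^{\beta_*}\, N$ uniformly in $z, z' \in [0,T]\times\bb T$, with $0<T\le 1/4$.

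I would fix $z=(t,x)$ and $z'=(t',x')$ in $[0,T]\times\bb T$ and split into two cases according to whether $\|z-z'\|\le T$ or $\|z-z'\|>T$. In the first case, $\|z-z'\|\le T\le 1/4$, so \eqref{86} applies and yields $|u(z)-u(z')|\le C\, \|z-z'\|^\beta\, N = C\, \|z-z'\|^{\beta_*}\, \|z-z'\|^\kappa\, N \le C\, T^\kappa\, \|z-z'\|^{\beta_*}\, N$, where the last inequality uses $\|z-z'\|\le T$. In the second case, I drop the difference by the triangle inequality and apply the pointwise estimate \eqref{80} to each term, which is available because $t,t'\in[0,T]\subset[0,1/4]$: this gives $|u(z)-u(z')|\le |u(z)|+|u(z')|\le C(t^\beta+t'^\beta)\, N \le 2C\, T^\beta\, N = 2C\, T^{\beta_*}\, T^\kappa\, N$. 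Since $\beta_*>0$ and $T<\|z-z'\|$, one has $T^{\beta_*}\le \|z-z'\|^{\beta_*}$, which closes the estimate in this case as well.

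I do not foresee a genuine obstacle: the result is a standard interpolation between the boundary decay $|u(t,x)|\lesssim t^\beta$ near $\{t=0\}$, which reflects that $u$ vanishes identically at $t=0$ (because $q_{T_0,(0,x)}\cdot\chi_{\bb R_+\times\bb T}\equiv 0$ by the support of $p$), and the global H\"older regularity provided by \eqref{86}. The role of the assumption $0<2\kappa<1+\alpha$ is solely to ensure $\beta_*>0$, so that the monotone comparison $T^{\beta_*}\le \|z-z'\|^{\beta_*}$ is valid in the second case; without it, the argument would fail to produce a positive H\"older exponent on $\|z-z'\|$ there.
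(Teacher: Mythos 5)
Your proof is correct and follows essentially the same route as the paper: both arguments interpolate between the pointwise decay \eqref{80} near $t=0$ and the difference estimate \eqref{86}, splitting according to whether the two points are within distance $T$ of each other and extracting the factor $T^\kappa$ in each case. Your split on $\Vert z-z'\Vert$ versus $T$ (instead of the paper's $|x-x'|$ versus $T$) is a harmless variant, and it even keeps the application of \eqref{86} safely within its stated range $\Vert z-z'\Vert\le 1/4$.
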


\begin{proof}
Fix $X\in C^\alpha(\bb R \times \bb T)$, $0<T\le 1/4$, $z=(t,x)$,
$z'=(t',x') \in [0,T]\times \bb T$.
Suppose first that $|x-x'|\le T$. In this case, 
$\Vert \, z\,-\, z' \, \Vert^{\kappa} \le (2T)^\kappa$. Hence, by
\eqref{86},
\begin{equation*}
\big\vert \, X^+(q_{T_0,z}) \,-\, X^+(q_{T_0,z'}) \, \big\vert
\;\le\; C(\kappa,T_0) \,
\big\Vert \, z\,-\, z' \, \big\Vert^{1+\alpha-2\kappa}\, T^\kappa
\big\Vert \, X \, \big\Vert_{C^{\alpha}([-T_0-3,5]\times  \bb T)}
\;.
\end{equation*}

In contrast, if $|x-x'|> T$, $t^{1+\alpha-\kappa} \le 
T^{1+\alpha-2\kappa} \, T^\kappa \le |x-x'|^{1+\alpha-2\kappa}\, T^\kappa 
\le \Vert z -z'\Vert^{1+\alpha-2\kappa}\, T^\kappa $. A similar
inequality holds with $t'$ in place of $t$. Hence, by \eqref{80}, 
\begin{equation*}
\begin{aligned}
& \big\vert \, X^+(q_{T_0,z}) \,-\, X^+(q_{T_0,z'}) \, \big\vert
\;\le\; \big\vert \, X^+(q_{T_0,z}) \, \big\vert
\;+\; \big\vert \, X^+(q_{T_0,z'}) \, \big\vert
\\
&\quad \le\; C(\kappa, T_0) 
\big\Vert \, z\,-\, z' \, \big\Vert^{1+\alpha-2\kappa}\, T^\kappa
\big\Vert \, X \, \big\Vert_{C^{\alpha}([-T_0-3,5]\times  \bb T)} \;.
\end{aligned}
\end{equation*}
The assertion follows from the two previous estimates.
\end{proof}

\section{A log-correlated Gaussian random field}
\label{sec07}


We introduce in this section a Gaussian random field closely related
to the linear SPDE 
\begin{equation}
\label{03}
\left\{
\begin{aligned}
& \partial_t \mf f \,=\,
-\, (-\Delta)^{1/2}\, \mf f \,+\, \xi\;, \\
& \mf f (0,x) \;=\; f(x)  \;,
\end{aligned}
\right.
\end{equation}
where $\xi$ represents the space-time white noise and
$f: \bb T \to \bb R$ a continuous function.  Denote by
$\mf f_\varepsilon$ the solution of the previous equation with $\xi$
replaced by its regularized version $\xi_\varepsilon$ introduced in
\eqref{14}.

The solution of these equations can be expressed in terms of the
semigroup $p(t,x)$ introduced in \eqref{04}:
\begin{equation}
\label{76}
\mf f_\varepsilon (t,x) \;=\;
\int_0^t ds\, \int_{\bb T} dy\; p (t-s, x-y)\,
\xi_{\varepsilon}(s,y)  \;+\; \int_{\bb T} p (t, x-y)\,
f (y) \; dy
\end{equation}
because $p(t,\cdot)$ is symmetric.  As $p(t,x) =0$ for $t<0$, in the
first term we may change the interval of integration from $[0,t]$ to
$[0,\infty)$. Moreover, if we set the initial condition $f$ to be
$f(x) = \int_{-\infty}^0 ds \int_{\bb T} p(-s, x-y)
\xi_\varepsilon(s,y)\, dy$, as $p$ is a semigroup,
\begin{equation*}
\mf f_\varepsilon (t,x) \;=\; 
\int_{-\infty}^\infty ds\, \int_{\bb T} dy\, p (t-s, x-y)\,
\xi_{\varepsilon}(s,y) \;.
\end{equation*}

We replace, in the previous convolution, $p$ by a kernel $q$ with
bounded support to avoid problems of integrability at infinity.  
Fix $T_0>1$ and recall the definition of the function
$q_{T_0}: \bb R \times \bb T \to \bb R_+$ introduced in
\eqref{06}. Note that
\begin{equation}
\label{49}
\text{$q_{T_0}(t, x) \;=\; p(t,x)$ for $t\le T_0$ and that $q_{T_0}(s,
  \cdot)$ is symmetric} 
\end{equation}
for all $s\in \bb R$, $q_{T_0}(s,x) = q_{T_0}(s,-x)$, because so is
$p(s,\cdot)$. As $T_0$ is fixed, we omit it from the notation.

Let $\mf v := q * \xi$, $\mf v_\varepsilon := q * \xi_\varepsilon$ be
the centered Gaussian random fields on $\bb R \times \bb T$ defined
by:
\begin{equation}
\label{08}
\begin{gathered}
\mf v (t,x) \;:=\;
\int_{\bb R} ds\, \int_{\bb T} dy\; q(t-s, x-y)\,
\xi (s,y)\;, \\
\mf v_\varepsilon (t,x)
\;:=\;
\int_{\bb R} ds\, \int_{\bb T} dy\; q(t-s, x-y)\,
\xi_{\varepsilon}(s,y)\;.
\end{gathered}
\end{equation}

Let $\color{bblue} r=q-p$, and note that $r$ is smooth and vanishes in
the time-interval $(-\infty, T_0]$.  As $q$ coincides with $p$ on the
time-interval $(-\infty, T_0]$ and vanishes outside the interval
$[0,T_0+1]$, for $0<t< T_0$, the field $\mf v_\varepsilon(t)$ can be
rewritten as
\begin{equation}
\label{77}
\mf v_\varepsilon (t) \;=\;
\int_{-(T_0+1)}^{T_0} r(t-s)\, \xi_{\varepsilon}(s) \; ds 
\;+\; \mf g_\varepsilon (t)
\end{equation}
where 
\begin{equation*}
\mf g_\varepsilon (t) \; =\;
\int_0^t p(t-s)\, \xi_{\varepsilon}(s) \; ds \;+\;
p(t) \int_{-(T_0+1)}^0  p(-s)\,\xi_{\varepsilon}(s) \; ds \;.   
\end{equation*}
We used in the last step the fact that $p$ is a semigroup so that
$p(t-s) = p(t)p(-s)$ for all $s<0<t$.

Denote by $\mf G_\varepsilon (t)$ the first term on the right-hand
side of \eqref{77}. Since $r$ is a smooth function, by \cite[Theorem
1.4.2]{AdlTay07}, almost surely, the field $\mf G_\varepsilon$ is
$C^\infty$ in the set $\bb R \times \bb T$. On the other hand, by
\eqref{76}, the second term on the right-hand side of \eqref{77},
represented by $\mf g_\varepsilon$, solves \eqref{03} with initial
condition $\mf g_\varepsilon (0) = \int_{[-(T_0+1), 0]}
p(-s)\,\xi_{\varepsilon}(s) \; ds$. Therefore, for $0<t<T_0$,
\begin{equation}
\label{78}
\partial_t \mf v_\varepsilon \,+\, (-\Delta)^{1/2}\, 
\mf v_\varepsilon \,-\, \xi_\varepsilon \;=\;
\partial_t \mf G_\varepsilon \,+\, (-\Delta)^{1/2}\, 
\mf G_\varepsilon \;,
\end{equation}
A similar conclusion holds with $\mf v_\varepsilon$,
$\xi_\varepsilon$, $\mf G_\varepsilon$ replaced by $\mf v$, $\xi$,
$\color{bblue} \mf G =\int_{[-(T_0+1), T_0]} r(t-s)\, \xi(s) \; ds$,
respectively.

\subsection{The correlations of the fields $\mf v_\varepsilon$,
  $\mf v$}

Denote by $Q$, $Q_\varepsilon$ the covariances of the fields $\mf
v$, $\mf v_\varepsilon$, respectively: For $z$, $z'$ in $\bb R \times
\bb T$,
\begin{equation*}
Q (z,z^\prime) \;=\;
\bb E \big[\, \mf v (z)\, \mf v (z')\,\big] \;, \quad
Q_\varepsilon (z,z^\prime) \;=\;
\bb E \big[\, \mf v_\varepsilon (z)\, \mf v_\varepsilon (z')\,\big]
\;.
\end{equation*}
A change of variables yields that
$Q_\varepsilon (z,z^\prime) = Q_\varepsilon (0,z^\prime-z)$. Denote
this later quantity by $Q_\varepsilon (z^\prime-z)$ and define
$Q(\cdot)$ similarly.  

The main result of this section reads as follows.  Recall that $\ln^+
t = \ln t$ if $t\ge 1$ and $\ln^+ t = 0$ if $0<t\le 1$.

\begin{proposition}
\label{s22}
There exist a continuous, bounded function
$R : \bb R \times \bb T \to \bb R$, such that
\begin{equation*}
Q (z) \; = \;
\frac{1}{2\pi} \ln^+ \frac 1{4 \, \Vert z\Vert} \;+\;  \; R(z) \;.
\end{equation*}
\end{proposition}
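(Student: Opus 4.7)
The plan is to diagonalise the covariance via spatial Fourier expansion on the torus, exploiting the explicit spectral formula \eqref{05} for the semigroup. Since $\mf v = q*\xi$ as a space--time convolution, $Q(z) = \int q(w)\, q(w+z)\, dw$. The spatial Fourier coefficients of $q$ are $\hat q(s,k) = e^{-2\pi|k|s}\,\ms H(s-T_0)\,\mathbf{1}_{s>0}$, and the Fourier expansion of $Q$ on $\bb T$ yields
\[
Q(t,x) \;=\; \sum_{k\in\bb Z} c_k(t)\, e^{2\pi i k x}\;, \qquad c_k(t) \;=\; \int_{\bb R} \hat q(s,k)\, \hat q(s+t,k)\, ds\;.
\]

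Next I would isolate a singular leading term. For $k\ne 0$, the change of variables $u = 2s + t$ (handled symmetrically for $t<0$) leads to $c_k(t) = \tfrac{1}{2}\int_{|t|}^\infty e^{-2\pi|k|u}\,\Psi(u,t)\,du$, where $\Psi(u,t) = \ms H\!\bigl(\tfrac{u-t}{2}-T_0\bigr)\,\ms H\!\bigl(\tfrac{u+t}{2}-T_0\bigr)$ equals $1$ for $u \le 2T_0 - |t|$. Replacing $\Psi$ by $1$ produces the main term $e^{-2\pi|k||t|}/(4\pi|k|)$, while the remainder $S_k(t)$ is supported in $u \ge 2T_0 - |t|$ and satisfies $|S_k(t)|+|\partial_t S_k(t)| \le C\, e^{-2\pi|k|}/|k|$ uniformly for $|t|\le 1$ (using $T_0 \ge 1$). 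Combined with the smoothness and compact support in $t$ of $c_0(t)$, the tail $c_0(t) + \sum_{k\ne 0} S_k(t)\,e^{2\pi ikx}$ defines a smooth bounded function on $\bb R \times \bb T$, to be absorbed into $R$.

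The remaining leading series is summed in closed form via the classical identity $\sum_{k\ge 1} r^k \cos(k\theta)/k = -\tfrac{1}{2}\ln(1 - 2r\cos\theta + r^2)$ with $r = e^{-2\pi|t|}$, $\theta = 2\pi x$:
\[
Q_{\mathrm{sing}}(t,x) \;=\; -\,\frac{1}{4\pi}\, \ln\!\Big((1-e^{-2\pi|t|})^2 + 4\, e^{-2\pi|t|}\sin^2(\pi x)\Big)\;.
\]
The argument of the logarithm vanishes only at $(t,x)=(0,0)$ and, by Taylor expansion, equals $4\pi^2(t^2+x^2)(1+O(\|z\|))$ near the origin. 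Since $\sqrt{t^2+x^2}$ is comparable to $\|z\| = |t| + d_{\bb T}(x, 0)$ up to universal constants, $Q_{\mathrm{sing}}(z) = \tfrac{1}{2\pi}\ln(1/\|z\|) + O(1)$ for small $\|z\|$ and is continuous bounded elsewhere in $\bb R \times \bb T$. Writing the leading singular piece as $\tfrac{1}{2\pi}\ln^+(1/(4\|z\|))$ and collecting every bounded continuous contribution into $R$ yields the claim.

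The main technical hurdle will be confirming that $R$ is \emph{continuous}, not merely bounded. Near the origin the divergence of $Q_{\mathrm{sing}}$ must cancel exactly the divergence of $\tfrac{1}{2\pi}\ln^+(1/(4\|z\|))$ up to a continuous bounded term, which is precisely what the Taylor expansion of the logarithm's argument delivers; away from the origin continuity is clear; at $\|z\|=1/4$ the function $\ln^+$ joins continuously with $0$; and the Fourier tail $\sum_{k\ne 0} S_k(t)\,e^{2\pi ikx}$ is even smooth, thanks to the exponential decay of $S_k$.
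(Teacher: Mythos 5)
Your proposal is correct in substance, but it takes a genuinely different route from the paper. The paper never diagonalises in Fourier space: it first shows (Lemma \ref{s20}) that $(-\Delta)^{1/2}Q(z) = \tfrac12[\,q(z)+q(-z)\,] + R_0(z)$ by exploiting the forward equation \eqref{11} for $q$ and an integration by parts, then inverts this relation through the Green function $G(x)=-\tfrac1\pi\ln[2\sin(\pi x)]$ of \eqref{12} (Lemma \ref{s21}), and finally compares the convolution $\int_{\bb T}p(|t|,y)\ln|\sin(\pi[x-y])|\,dy$ with $\ln\Vert z\Vert$ by direct manipulations of the Cauchy kernel (the estimates \eqref{31}, \eqref{18}, \eqref{32}). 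Your spatial Fourier expansion with $\hat q(s,k)=e^{-2\pi|k|s}\,\ms H(s-T_0)\,\mathbf 1_{s>0}$, the splitting $c_k(t)=e^{-2\pi|k||t|}/(4\pi|k|)+S_k(t)$ with exponentially small remainders near the origin, and the closed-form summation $\sum_{k\ge1}r^k\cos(k\theta)/k=-\tfrac12\ln(1-2r\cos\theta+r^2)$ reach the same conclusion more explicitly, avoiding both the Green-function inversion and the delicate kernel estimates; what the paper's PDE/Green-function argument buys is independence from the exact spectral formula, so it transfers more readily to kernels whose Fourier coefficients are not explicit. Two small caveats: $c_0$ is continuous, even in $t$ and compactly supported but only Lipschitz (it has a kink at $t=0$ inherited from the jump of $q$ there), which is all you need; and, exactly as in the paper's own proof, the remainder $R$ is bounded near the origin but cannot be continuous at the origin itself, because $(|t|+|x|)/\sqrt{t^2+x^2}$ has no limit there --- continuity away from $z=0$, which both arguments deliver and which is all that is used in Section \ref{sec05}, is the correct reading of the statement.
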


The proof of Proposition \ref{s22} relies on some lemmata.  An
elementary computation yields that
\begin{equation}
\label{10b}
\begin{gathered}
Q(z) \; =\;
\int_{\bb R\times \bb T}  q(-w)  \, q(z-w) \; dw
\; =\;  \int_{\bb R\times \bb T} q(w) \, q(z+w) \; dw \;,
\\
Q_\varepsilon(z) \;=\;
\int_{\bb R\times \bb T}  (q * \varrho_\varepsilon)  (w)  \,
(q * \varrho_\varepsilon) (z+w) \; dw \;.
\end{gathered}
\end{equation}

A change of variables shows that $Q(-z) = Q(z)$. On the other hand, as
$q(t,-x)= q(t,x)$, it is not difficult to see that $Q(t,-x) = Q(t,x)$
for all $(t,x) \in \bb R \times \bb T$. Moreover, as the support of
$q$ is contained in $[0,T_0+1]\times \bb T$ and $T_0\ge 1$, the one of
$Q$ is contained in $[-2T_0,2T_0]\times \bb T$.

Let $\color{bblue} \mc P = \{(0,x) : x\in \bb T\}$.  It is not
difficult to show that $Q$ is smooth in
$(\bb R \times \bb T) \setminus \{(0,0)\}$, that $(-\Delta)^{1/2} Q$
is well defined in the set $(\bb R \times \bb T) \setminus \mc P$, and
that
\begin{equation}
\label{16}
[(-\Delta)^{1/2} Q] (z)
\; =\; \int_{\bb R \times \bb T}  q(w) \;
[(-\Delta)^{1/2} q]\, (w + z) \; dw \;, \quad
z\,\in\, (\bb R \times \bb T) \setminus \mc P \;.
\end{equation}

Finally, by the definition \eqref{01} of the operator
$(-\Delta)^{1/2}$, as $Q(-z) = Q(z)$, a change of variables yields
that $[(-\Delta)^{1/2} Q] (-z) = [(-\Delta)^{1/2} Q] (z)$. We
summarize these properties in the next formula:
\begin{equation}
\label{24}
\begin{gathered}
Q(-z) \;=\;  Q(z) \;, \;\; Q(t,-x) \;=\; Q(t,x) \;, \\
[(-\Delta)^{1/2} Q]\, (-z) = [(-\Delta)^{1/2} Q] \, (z)
\;, \quad
z\,\in\, (\bb R \times \bb T) \setminus \mc P\;.
\end{gathered}
\end{equation}
These identities extend to $Q_\varepsilon$. Moreover,
\begin{equation}
\label{25}
Q_\varepsilon (z) \;=\; (Q * \bar{\varrho}_\varepsilon) (z)\;,
\end{equation}
where $\bar{\varrho}_\varepsilon$ is the mollifier given by
$\color{bblue} \bar{\varrho}_\varepsilon := \varrho_\varepsilon * (\mc
T \varrho_\varepsilon)$, and $\mc T$ is the operator defined by
$\color{bblue} (\mc T f) (z):= f(-z)$. We define
$\bar{\varrho}$ similarly.

\begin{lemma}
\label{s20}
There exists a continuous, bounded function $R_0: \bb R\times \bb T \to
\bb R$ such that
\begin{equation*}
[(-\Delta)^{1/2} Q] (z) \;=\; (1/2)\,
\big[\, q(-z) \,+\, q(z) \,\big]  \;+\; R_0(z) \;,
\quad z\,\in\, (\bb R \times \bb T) \setminus \mc P \;.
\end{equation*}
\end{lemma}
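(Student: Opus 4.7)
The plan is to interpret $q$ as a perturbed fundamental solution of $\partial_t + L$, where $L = (-\Delta)^{1/2}$. Since $(\partial_t + L)p = \delta_{(0,0)}$ as distributions on $\bb R\times \bb T$ and $q = p\,\ms H(\cdot - T_0)$, the product rule gives
\[
(\partial_t + L)\, q \;=\; \delta_{(0,0)} \,+\, \eta\;, \qquad \eta(t,x) \,:=\, p(t,x)\,\ms H'(t - T_0),
\]
where $\eta$ is smooth, bounded, and supported in $[T_0, T_0+1]\times \bb T$. Equivalently, $L q = -\partial_t q + \delta_{(0,0)} + \eta$ as distributions.

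Using $Q = q * \mc T q$ from \eqref{10b} together with the fact that $L$ commutes with the reflection $\mc T f(w) = f(-w)$ (because $L$ acts only spatially and $p(t,\cdot)$ is even) and the elementary identity $\partial_t \mc T q = -\mc T \partial_t q$, I would compute $LQ$ in two ways. From $L(f*g) = (Lf)*g$ and $(\partial_t q)*\mc T q = \partial_t Q$:
\[
LQ \;=\; (Lq) * \mc T q \;=\; -(\partial_t q) * \mc T q + \mc T q + \eta * \mc T q \;=\; -\partial_t Q + \mc T q + \eta * \mc T q.
\]
From $L(f*g) = f*(Lg)$, $L \mc T q = \mc T L q = -\mc T \partial_t q + \delta_{(0,0)} + \mc T \eta$, and $q * \mc T \partial_t q = -q * \partial_t \mc T q = -\partial_t Q$:
\[
LQ \;=\; q * (L \mc T q) \;=\; -q * \mc T \partial_t q + q + q * \mc T \eta \;=\; \partial_t Q + q + q * \mc T \eta.
\]
Averaging these two identities cancels $\partial_t Q$ and yields
\[
LQ(z) \;=\; \tfrac{1}{2}\bigl(q(z) + q(-z)\bigr) \;+\; R_0(z), \qquad R_0 \,:=\, \tfrac{1}{2}\bigl(\eta * \mc T q + q * \mc T \eta\bigr),
\]
which is the claimed decomposition.

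It remains to verify that $R_0$ is continuous and bounded on $\bb R\times \bb T$. The function $\eta$ is smooth and compactly supported; by \eqref{23b}, $q(w) \le C_0(1 + \Vert w\Vert^{-1})$, and $\Vert w\Vert^{-1}$ is locally integrable in dimension~$2$. Therefore
\[
|\eta * \mc T q(z)| \;\le\; \Vert \eta \Vert_\infty \, C_0 \, \sup_{z\in \bb R\times \bb T} \int_{\mathrm{supp}\,\eta} \bigl(1 + \Vert w - z\Vert^{-1}\bigr)\, dw \;<\; \infty,
\]
uniformly in $z$, and continuity follows by dominated convergence with the same integrable majorant; an identical argument handles $q * \mc T \eta$.

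The main technical delicacy is justifying the distributional manipulations pointwise for $z\notin \mc P$: in particular, $(Lq) * \mc T q$ pairs a $\delta_{(0,0)}$ on the left against a $\mc T q$ that itself has a distributional jump on $\mc P$. I would resolve this by mollification, setting $q^{(\varepsilon)} := q * \varrho_\varepsilon$ and working with $Q^{(\varepsilon)} = q^{(\varepsilon)} * \mc T q^{(\varepsilon)}$, for which every step above is classical; one then passes to the limit $\varepsilon \to 0$ using the uniform integrable majorants just established together with pointwise convergence of $L Q^{(\varepsilon)}$ to $LQ$ off $\mc P$, the latter being a direct consequence of the representation \eqref{16}.
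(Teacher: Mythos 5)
Your computation is, at bottom, the paper's own argument in distributional clothing, and it lands on exactly the paper's remainder: your $\eta$ coincides with the paper's smooth forcing $R=[\partial_t+(-\Delta)^{1/2}](q-p)$ from \eqref{11}, and a change of variables shows $\tfrac12(\eta*\mc Tq+q*\mc T\eta)(z)=\tfrac12\int q(w)\,[R(w+z)+R(w-z)]\,dw$, which is precisely the $R_0$ the paper produces. The paper encodes your ``compute $LQ$ two ways and average'' as follows: it fixes $z=(t,x)$ with $t<0$, starts from the pointwise representation \eqref{16}, substitutes $(-\Delta)^{1/2}q=R-\partial_t q$, integrates by parts in time (the initial condition $q(0,\cdot)=\delta_0$ yielding the boundary term $q(-z)$, your $\delta*\mc Tq$), and then uses the symmetry of $(-\Delta)^{1/2}$ to recognize the remaining term as $-[(-\Delta)^{1/2}Q](z)$ — that symmetry step is your ``second way'' of placing $L$ on the other factor — giving $2[(-\Delta)^{1/2}Q](z)=q(-z)+\ldots$, with $t>0$ handled symmetrically. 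The only substantive difference is that the paper works pointwise at a fixed $z$ off the singular set, so every integral is classical and no passage from a distributional identity to a pointwise one is ever needed; the continuity and boundedness of $R_0$ are then obtained exactly as you do, from smoothness and compact support of $\eta$ together with integrability of $q$.

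The one place where your sketch is thinner than it should be is that final upgrade. The convergence $LQ^{(\varepsilon)}(z)\to[(-\Delta)^{1/2}Q](z)$ off $\mc P$ is not a ``direct consequence of \eqref{16}'': pointwise, $(-\Delta)^{1/2}q=R-\partial_t q$ blows up like $\Vert w\Vert^{-2}$ at the origin (e.g.\ $\partial_t\bs p(t,0)=-1/(\pi t^2)$), so it is not locally absolutely integrable there, the pointwise function does not represent the distribution $Lq$ near the origin, and a dominated-convergence passage inside a representation of the form \eqref{16} is not available. A short correct argument is available, though: by \eqref{25}, $Q^{(\varepsilon)}=Q*\bar\varrho_\varepsilon$, and for fixed $z=(t,x)$ with $t\neq0$ and $\varepsilon$ small the mollification only sees $Q$ on a compact time-slab avoiding $(0,0)$, on which $Q$ is smooth with bounded derivatives; hence $Q^{(\varepsilon)}(t,\cdot)\to Q(t,\cdot)$ in $C^{2}(\bb T)$, and since $(-\Delta)^{1/2}$ is bounded from $C^{2}(\bb T)$ to $C(\bb T)$ this gives the desired pointwise convergence of the left-hand side, while the right-hand side converges at every continuity point of $q$ and $\mc Tq$. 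With that repair (or by simply adopting the paper's pointwise route), your proof is complete.
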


\begin{proof}
By the properties \eqref{49} of the kernel $q$,
\begin{equation}
\label{11}
\left\{
\begin{aligned}
& \partial_t q \;+\; (-\Delta)^{1/2}\, q \;=\; R \;, \quad t>0\;, \\
& q(0,\cdot) \;=\; \delta_0(\cdot)\;, \quad t\,=\,0\;, \\
& q(t,\cdot) \;=\; 0 \;, \quad t\,<\,0\;,
\end{aligned}
\right.
\end{equation}
where $\delta_0$ is the Dirac distribution concentrated at $x=0$ and
$R: \bb R \times \bb T \to \bb R$, given by $R = [\, \partial_t \,+\,
(-\Delta)^{1/2}\,] (q-p)$, is a smooth function with compact
support. Note that $R(s,y)=0$ if $s\le 0$.

Fix $z =(t,x)\in \bb R \times \bb T$ and assume that $t<0$.  By
\eqref{16},
\begin{equation*}
[(-\Delta)^{1/2} Q] (z)
\; =\; \int_{\bb R \times \bb T}  q(w) \;
[(-\Delta)^{1/2} q]\, (w + z) \; dw \;.
\end{equation*}
Since $q(s,y)$ vanishes for $s<0$, we may restrict the integration to
the time-interval $(-t, +\infty)$. By \eqref{11}, on $(0, +\infty)
\times \bb T$, $[(-\Delta)^{1/2} q] (s,y) = R(s,y) - (\partial_s
q)(s,y)$. The previous integral is thus equal to
\begin{equation*}
\int_{-t}^\infty ds \int_{\bb T}  dy\; q(w) \; R (w + z)
\;-\;  \int_{-t}^\infty ds \int_{\bb T}  dy\; q(w) \;
(\partial_s q)\, (w + z) \;,
\end{equation*}
where $w=(s,y)$ As $R(s,y)$ vanishes for $s\le 0$, in the first
integral we may integrate over $\bb R \times \bb T$. After an
integration by parts, as $q(0,\cdot) = \delta_0(\cdot)$, the second
integral becomes
\begin{equation*}
q(-z) \;+\;  \int_{-t}^\infty ds \int_{\bb T}  dy\, (\partial_s q)\, (w) \;
q\, (w + z)\;.
\end{equation*}
Using again the first identity in \eqref{11}, we can write this
expression as
\begin{equation*}
q(-z) \;+\;  \int_{(-t,\infty) \times \bb T}  R\, (w) \;
q\, (w + z) \; dw  \;-\;
\int_{(-t,\infty)\times\bb T}
[(-\Delta)^{1/2} q]\, (w) \; q\, (w + z) \; dw \;.
\end{equation*}
Since $(-\Delta)^{1/2}$ is a symmetric operator, $q(s,y)$ is smooth
away from $\mc P$ and vanishes for $s<0$, the last integral is equal
to
\begin{align*}
& -\;\int_{(-t,\infty) \times\bb T} q\, (w) \;
[(-\Delta)^{1/2} q]\, (w + z) \; dw \; \\
&\quad  =\; -\;\int_{\bb R \times\bb T} q\, (w) \;
[(-\Delta)^{1/2} q]\, (w + z) \; dw \;=\;
-\, [(-\Delta)^{1/2} Q]\, (z)\;.
\end{align*}

Putting together the previous terms yields that for $z=(t,x)$ with
$t<0$,
\begin{equation*}
    2\, [(-\Delta)^{1/2} Q] (z) \;=\; q(-z) \;+\;
    \int_{\bb R\times \bb T} q\, (w) \, \big[\,  R\, (w+z) \, +\,
    R\, (w - z) \,\big] \; dw\;.
\end{equation*}
Since $q(z)=0$, we may add $q(z)$ to the right-hand side to complete
the proof of the lemma in the case $t<0$ with
\begin{equation*}
R_0(z) \;=\; \frac 12
\int_{\bb R\times \bb T} q\, (w) \, \big[\,  R\, (w+z) \, +\,
R\, (w - z) \,\big] \; dw\;.
\end{equation*}
The proof in the case $t>0$ is analogous.

The function $R_0$ is continuous because $R$ is uniformly continuous
and $q$ is integrable. It is bounded because $R$ is uniformly bounded
and $q$ is integrable.
\end{proof}

Recall from \eqref{12} that we denote by $G$ the Green function
associated to $(-\Delta)^{1/2}$. Let
$\color{bblue} q_s(z) = (1/2) (q(-z)+q(z))$.

\begin{lemma}
\label{s21}
There exists a continuous, bounded function $R_1: \bb R\times \bb T
\to \bb R$ such that for all $(t,x)\in \bb R \times \bb T$, $(t,x) \,
\not =\, (0,0)$,
\begin{equation*}
Q (t,x) \;=\;  [\, q_s (t, \cdot) * G\,] (x) \;+\; R_1(t,x) \;.
\end{equation*}
\end{lemma}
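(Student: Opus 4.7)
The strategy is to invert the spatial identity of Lemma \ref{s20} --- namely $[(-\Delta)^{1/2} Q](z) = q_s(z) + R_0(z)$ for $z\notin \mc P$ --- using the Green function $G$ of the fractional Laplacian on $\bb T$. Fix $t\ne 0$. Since $Q$ is smooth on $(\bb R\times \bb T)\setminus\{(0,0)\}$, the function $Q(t,\cdot)$ is smooth on $\bb T$; set $\overline{Q}(t) := \int_{\bb T} Q(t,x)\, dx$. Because $(-\Delta)^{1/2}$ annihilates constants (cf.\ \eqref{05}), the mean-zero function $Q(t,\cdot) - \overline{Q}(t)$ still satisfies $(-\Delta)^{1/2}[Q(t,\cdot) - \overline{Q}(t)] = q_s(t,\cdot) + R_0(t,\cdot)$, and applying the Green function identity \eqref{27} yields
\begin{equation*}
Q(t,x) \;=\; \overline{Q}(t) \;+\; [\, q_s(t,\cdot) * G\,](x) \;+\; [\, R_0(t,\cdot) * G\,](x)
\end{equation*}
for every $t\ne 0$. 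This is the claimed decomposition, with $R_1(t,x) := \overline{Q}(t) + [R_0(t,\cdot) * G](x)$.

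Next I would verify continuity and boundedness of $R_1$ on all of $\bb R\times \bb T$. The spatial mean $\overline{Q}(t)$ admits an explicit formula from \eqref{10b}: by Fubini together with the identity $\int_{\bb T} q(s,y)\, dy = \ms H(s-T_0)\chi_{\{s>0\}}$, one expresses $\overline{Q}(t)$ as a single integral against the compactly-supported function $\ms H$, hence as a continuous and uniformly bounded function of $t$. For the convolution $R_0 * G$, recall that $R_0$ is continuous and bounded on $\bb R\times \bb T$ by Lemma \ref{s20} and that $G\in L^1(\bb T)$ thanks to the logarithmic singularity in \eqref{12}. Standard estimates --- dominated convergence for continuity in $t$ and the $L^1$-continuity of translations for continuity in $x$ --- then give that $(t,x)\mapsto [R_0(t,\cdot)*G](x)$ is continuous on $\bb R\times \bb T$ and bounded by $\|R_0\|_\infty \|G\|_{L^1(\bb T)}$.

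To extend the identity from $\{t\ne 0\}$ to $\{(t,x): t=0,\, x\ne 0\}$, observe that $R_1$ is now continuous on all of $\bb R\times \bb T$, while $Q$ is continuous on $(\bb R\times \bb T)\setminus \{(0,0)\}$; hence $(q_s(t,\cdot)*G)(x) = Q(t,x) - R_1(t,x)$ extends continuously to $(\bb R\times \bb T)\setminus \{(0,0)\}$ and the identity holds there. For $x\ne 0$ this extension coincides with $\tfrac{1}{2} G(x)$: indeed, for small $|t|>0$ one has $q_s(t,\cdot) = \tfrac{1}{2} p(|t|,\cdot)$, and $p(|t|,\cdot) \to \delta_0$ weakly as $t\to 0$, so $(q_s(t,\cdot)*G)(x)\to \tfrac{1}{2} G(x)$ by continuity of $G$ at nonzero points.

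The principal subtlety is the treatment of the exceptional line $\mc P = \{t=0\}$: the pointwise identity of Lemma \ref{s20} breaks down there, and $q_s(0,\cdot)$ is most naturally interpreted as a distribution (essentially $\tfrac{1}{2}\delta_0$) rather than a function. Performing the inversion only for $t\ne 0$ and extending by continuity side-steps this issue, and as a useful byproduct the logarithmic divergence of $Q$ at $(0,0)$ is matched with that of $\tfrac{1}{2} G(x) \sim -\tfrac{1}{2\pi}\log |x|$, which will be the key input for Proposition \ref{s22}.
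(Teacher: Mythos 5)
Your proposal is correct and follows essentially the same route as the paper: subtract the spatial mean $\overline Q(t)$, invert the identity of Lemma \ref{s20} via the Green-function relation \eqref{27} for $t\neq 0$, check that $R_1(t,x)=\overline Q(t)+[R_0(t,\cdot)*G](x)$ is bounded and continuous, and extend the identity to the line $t=0$, $x\neq 0$ by continuity. The only differences are presentational (your explicit formula $\int_{\bb T}q(s,y)\,dy=\ms H(s-T_0)\chi_{\{s>0\}}$ and the remark that the continuous extension equals $\tfrac12 G(x)$), which are consistent with the paper's argument.
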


\begin{proof}
Recall from \eqref{10b} the expression of the function $Q$. An
elementary computation yields that for each $t\in \bb R$,
\begin{equation*}
Q(t) \;:=\; \int_{\bb T} Q(t,x) \, dx  \;=\;
\int_{\bb R} q(s) \, q(t+s) \, ds \;,
\end{equation*}
where $q(t) = \int_{\bb T} q(t,y) \, dy$.  By definition of $q(s,y)$,
the function $q(t)$ is bounded, integrable and discontinuous only at
the origin. Moreover, it vanishes outside a compact set, and it is
equal to $0$, resp. $1$, for $s<0$, resp. $0\le s \le T_0$. In
particular, $Q$ is bounded and continuous.

Let $\bar Q(t,x) = Q(t,x) - Q(t)$.  Clearly, for all
$(t,x) \in (\bb R \times \bb T)\setminus \mc P$,
$[(-\Delta)^{1/2} \bar Q(t,\cdot)](x) = [(-\Delta)^{1/2}
Q(t,\cdot)](x)$. Hence, by the previous lemma,
\begin{equation*}
[(-\Delta)^{1/2} \bar Q(t,\cdot)] \, (x) \;=\; q_s (t,x)
\;+\; R_0(t,x) \;, \quad (t,x) \;\in\;
(\bb R \times \bb T)\setminus \mc P\;.
\end{equation*}
Since, for all $t' \in \bb R$, $\int_{\bb T} \bar Q(t',y) \, dy = 0$,
by \eqref{27}, taking the convolution with respect to $G$ on both
sides of the previous equation yields that
\begin{equation*}
Q (t,x) \;=\; [\, q_s (t, \cdot) * G\,] (x) \;+\; R_1(t,x)
\quad (t,x) \;\in\;
(\bb R \times \bb T)\setminus \mc P\;,
\end{equation*}
with $R_1 (t,x) = [R_0 (t, \cdot) * G] (x) + Q(t)$.

Since $R_0$ is bounded and continuous, and $G$ is integrable,
$[R_0 (t, \cdot) * G] (x)$ is bounded and continuous. As we already
showed that $Q(t)$ is bounded and continuous, the proof is complete
for $(t,x) \,\in\, (\bb R \times \bb T)\setminus \mc P$. Since all
terms are continuous on $(\bb R \times \bb T)\setminus \{(0,0)\}$,
this identity can be extended to $(t,x)\not = (0,0)$.
\end{proof}

We are now in a position to prove Proposition \ref{s22}.

\begin{proof}[Proof of Proposition \ref{s22}]
In view of Lemma \ref{s21}, we need to show that
\begin{equation*}
R_3(t,x) \;:=\; [\, q_s (t, \cdot) * G\,] (x)
\;-\; \frac{1}{2\pi} \ln^+ \frac 1{4 \, \Vert z\Vert}
\end{equation*}
is a continuous, bounded function.

Recall that $q_s (t, x) = (1/2) \, [\, q (t, x) + q(-t,-x)\,]$. Since
$q(s,y)=0$ for $s<0$ and since $q(s, \cdot)$ is symmetric,
$q_s (t, x) = (1/2)\, q (|t|, x)$ for $t\not =0$. By the explicit
expression \eqref{12} of the Green function,
\begin{equation*}
R_3(t,x) \;=\; -\, \frac 1{2 \pi} \, \int_{\bb T} q(|t|, y)\,
\ln \Big\{\, 2 \, \big|\, \sin (\pi |x-y] )\, \big|\, \Big\} \, dy
\;-\; \frac{1}{2\pi} \ln^+ \frac 1{4 \, \Vert z\Vert} \;.
\end{equation*}
By definition of $q$, for any $\delta>0$, $R_3$ is bounded and
continuous on $\bb B(0,\delta)^c$ because the function $\ln t$ is
integrable on the interval $(0,1)$.

We turn to the behavior of $R_3$ on $\bb B(0,\delta)$. Fix $0<\delta <
1/32$ and recall that $T_0>1$.  On the set $\bb B(0,\delta)$, $\ln^+ (\,4
\, \Vert z\Vert\,)^{-1} = \ln (\, 4 \, \Vert z\Vert\,)^{-1}$, and the
right-hand side of the previous equation can be written as
\begin{equation*}
-\, q(|t|) \, \frac 1{2 \, \pi} \, \ln\, 2  \;-\;
\frac 1{2 \, \pi} \, \int_{\bb T} q(|t| , y)\,
\ln \Big\{ \, \big|\, \sin (\pi [x-y])\, \big|\, \Big\}  \, dy \;-\;
\frac{1}{2\pi} \ln \frac 1{4 \, \Vert z\Vert}
\end{equation*}
where $t\mapsto q(|t|)$ is the continuous and bounded function defined
in the previous proof as $q(t) := \int_{\bb T} q(t, y)\, dy$.

On the set $\{(s,y): |s|\le \delta\}$, $q(s, y) = p(s, y)$. We may,
therefore, replace $q$ by $p$ in the previous integral.  At this
point, it remains to show that the function $R_4$ given by
\begin{equation*}
R_4(t,x) \;=\; \int_{\bb T} p(|t|, y)\,
\ln \, \big|\, \sin (\pi [x-y] )\, \big|\, \, dy \;-\;
\ln \Vert z\Vert
\end{equation*}
is bounded and continuous in $\bb B(0,\delta)$.

We prove the boundedness, the continuity being similar. Assume that
$t>0$. Let $F:\bb R\to \bb R$ be the one-periodic function given by
$\color{bblue} F(x) = \ln | \, \sin (\pi x)\, |$. Note that $F$ is
symmetric: $F(-x) = F(x)$.  We claim that there exists a finite
constant $C_0$, depending solely on $\delta$, such that
\begin{equation}
\label{31}
\Big|\, \int_{\bb T} p(t, y)\, F(x-y)  \, dy
\;-\; \frac 1\pi\,
\int_{-\infty}^{\infty}    \frac {\ln \, |\, ty\, |}
{1 + [y +  (x/t)]^2} \; dy  \, \Big| \;\le\; C_0\;.
\end{equation}
Hereafter, without further notice, the value of constants $C_0$'s may
change from line to line.

The proof of \eqref{31} is divided in several steps. Recall the
definition \eqref{07} of $\bs p(t,x)$. By \eqref{04} and a change of
variables, the first term in \eqref{31} is equal to
\begin{equation*}
\sum_{k\in\bb Z} \int_{k-1/2}^{k+1/2} \bs p(t, y)\,
F(x-y)  \, dy \;=\;  \int_{\bb R} \bs p(t, y)\,
F(x-y)  \, dy
\end{equation*}
because $F$ is periodic.

Fix $4\delta <a<1/8$. We claim that there exists a constant $C_0$,
depending only on $\delta$ and $a$, such that
\begin{equation}
\label{18}
\int_{[x-a,x+a]^c} \bs p(t, y)\, |\, F(y-x)\,|  \; dy \;\le\; C_0\;.
\end{equation}
Let $A = \cup_{k\in\bb Z} [k-a,k+a]$. The function $F$ is uniformly
bounded on the complement of $A$: There exists a constant
$C_0=C_0(a)$, such that $|\, F(y)\,| \le C_0$ for all $y\in A^c$. Let
$x+A = \{x+y : y\in A\}$. Since $|\, F(y-x)\,| \le C_0$ for all
$y\in [x+A]^c$, and $\bs p(t, \cdot)$ is a probability density,
\begin{equation*}
\int_{[x+A]^c} \bs p(t, y)\, |\, F(y-x)\,|  \; dy \;\le\; C_0\;.
\end{equation*}
On the other hand, by the explicit form of the density $\bs p(t, y)$,
and a change of variable,
\begin{equation*}
\sum_{k\ge 1} \int_{k+x-a}^{k+x+a} \bs p(t, y)\, |\, F(y-x)\,|  \; dy
\;\le\;
\frac 1\pi\, \sum_{k\ge 1} \frac {t}{t^2 + [k+x-a]^2}
\int_{-a}^{a} |\, F(y)\,|  \; dy \;.
\end{equation*}
Since $F$ is integrable in a neighborhood of the origin, and since
$|x| + a \le 1/4$, the previous sum is bounded by
$C_0\, t\, \sum_{k\ge 1} [k-(1/4)]^{-2} \le C_0\, t$. A similar bound
can be derived for the sum $k\le -1$. This proves \eqref{18}.

By the explicit form of $\bs p(t, \cdot)$ and a change of variables,
\begin{equation*}
\int_{x-a}^{x+a} \bs p(t, y)\, F(y-x)  \; dy \;=\;
\frac 1\pi\, \int_{-a/t}^{a/t} \frac {F(ty)}{1 + [y + (x/t)]^2}
\; dy \;.
\end{equation*}
There exists a finite constant $C_0=C_0(a)$ such that
$| \, \sin (\pi y)\, /y |\,\le\, C_0$ for all $y$ in the interval
$[-a,a]$. Hence, since $\bs p(t, \cdot )$ is a probability density,
\begin{equation*}
\Big|\, \int_{-a/t}^{a/t} \frac {F(ty)}{1 + [y + (x/t)]^2}
\; dy \;-\;
\int_{-a/t}^{a/t} \frac {\ln \, |\, ty \,|}{1 + [y + (x/t)]^2}
\; dy \,\Big|\;\le\; C_0\;.
\end{equation*}

We claim that there exists a finite constant $C_0$ such that
\begin{equation}
\label{32}
\int_{a/t}^{\infty} \frac
{\big|\, \ln \, (\, ty \,) \, \big|}
{1 + [y + (x/t)]^2} \;    \; dy
\;\le\; C_0 \;,
\end{equation}
with a similar bound if the domain of integration is replaced by
$(-\infty, -a/t]$. By a change of variables, this integral is bounded
by
\begin{equation*}
C_0 \, \int_{(a+x)/t}^{2/t} \frac {1}{y^2} \;  dy \;+\;
\int_{2/t}^{\infty} \frac {\ln t + \ln y}{y^2} \;  dy
\;\le\; C_0\, \{t \,\ln t + t^{1/2}\}
\end{equation*}
because $\ln y \le C_0 y^{1/2}$ for $y\ge 1$. This proves
\eqref{32}. Putting together all previous estimates yields
\eqref{31}.

It remains to show that the absolute value of
\begin{equation*}
\frac 1\pi\, \int_{-\infty}^{\infty} \frac {\ln |\, ty\,|}{1 + [y + (x/t)]^2}
\; dy \;-\; \ln \Vert z\Vert
\end{equation*}
is bounded on $\bb B(0,\delta)$.  Since
$\Vert z\Vert = |x| + t = t(1+|x|/t)$ and $\bs p(t,\cdot)$ is a
probability density, this difference can be written as
\begin{equation*}
\frac 1\pi\, \int_{-\infty}^{\infty}
\frac {\ln \,(t \, |y|\,)}{1 + [y + \eta]^2} \, dy \;-\;
\ln t \, (1+\eta) \;=\; \frac 1\pi\,
\int_{-\infty}^{\infty}    \frac {\ln |y|}{1 + [y + \eta]^2} \, dy \;-\;
\ln (1+\eta)\;,
\end{equation*}
where $\eta = |x|/t$.

Fix $K\ge 1$. If $\eta\le K$, it is easy to show that both terms are
bounded separately by a constant which depends on $K$. Assume that
$\eta>K$. We may replace $\ln (1+\eta)$ by $\ln \eta$, paying a
constant. After this replacement, by a change of variables the
previous difference becomes
\begin{equation*}
\frac 1 \pi\,  \int_{-\infty}^{\infty}
\frac {\eta}{1 + \eta^2 [y + 1]^2} \, \ln |y|\; dy \;.
\end{equation*}
Fix $0<c_1<1/2$, $c_2>2$.  On the interval $|y| \le c_1$, the function
$\ln |y|$ is integrable and the ratio is bounded by $C_0/\eta$. On
the interval $|y|\in [c_1,c_2]$, the function $\ln |y|$ is bounded and the
ratio is a probability density. Finally, on the interval
$|y|\in [c_2,\infty)$, as $c_2\ge 2$, the ratio is bounded by
$4/\eta y^2$, and $\ln |y|/y^2$ is integrable in this interval. This
completes the proof of the lemma.
\end{proof}

We conclude this section with some consequences of Proposition
\ref{s22}.  The next two results are Lemmas 3.1 and 3.7 in
\cite{HaiShe16}.  Recall the definition of the mollifier
$\bar{\varrho}_\varepsilon$, $0<\varepsilon<1$, introduced just before
Lemma \ref{s20}, and that the support of $\varrho$ is contained in
$\bb B(0,s_1)$, where $\color{bblue} s_1 = s_0 +1$.


\begin{lemma}
\label{s19}
For every $r>0$, there exists a finite constant $C_0 = C_0(r)$ such that
\begin{equation*}
Q_\varepsilon(z) \; \le \;
- \, \frac{1}{2\pi} \, \ln \,\big(\, \|z\| + \varepsilon \,\big)\,
\;+\; C_0
\end{equation*}
for all $0<\varepsilon<1$, $\Vert z\Vert \le r$.
\end{lemma}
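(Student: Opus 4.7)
The plan is to use the identity $Q_\varepsilon = Q * \bar\varrho_\varepsilon$ from \eqref{25} together with the decomposition of $Q$ provided by Proposition \ref{s22}. Writing $Q = (2\pi)^{-1} \ln^+\!\frac{1}{4\Vert\cdot\Vert} + R$, the convolution of the bounded continuous function $R$ against the probability density $\bar\varrho_\varepsilon$ is uniformly bounded by $\Vert R\Vert_{L^\infty}$, so it suffices to establish
$$
\int \ln^+\!\frac{1}{4\Vert z-w\Vert}\, \bar\varrho_\varepsilon(w)\, dw \;\le\; -\,\ln\big(\Vert z\Vert+\varepsilon\big) \,+\, C(r)
$$
for all $0<\varepsilon<1$ and $\Vert z\Vert\le r$. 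Fix $r_0>0$ so that the support of $\bar\varrho$ is contained in $\{\Vert w\Vert\le r_0\}$; then $\bar\varrho_\varepsilon$ has mass one and support in the ball of radius $r_0\varepsilon$.

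I would split the analysis into two regimes. In the outer regime $\Vert z\Vert \ge 2r_0\varepsilon$, the triangle inequality gives $\Vert z-w\Vert \ge \Vert z\Vert/2$ uniformly on the support of $\bar\varrho_\varepsilon$, so the integral is bounded by $\ln^+\!\frac{1}{2\Vert z\Vert}$; since $\Vert z\Vert + \varepsilon \le (1 + 1/(2r_0))\Vert z\Vert$ in this regime, one obtains $\ln^+\!\frac{1}{2\Vert z\Vert} \le -\ln(\Vert z\Vert+\varepsilon) + C$, the sub-case $\Vert z\Vert \ge 1/2$ being trivial since $\ln^+ = 0$ there.

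In the inner regime $\Vert z\Vert < 2r_0\varepsilon$, I would use the scaling $\bar\varrho_\varepsilon(y) = \varepsilon^{-2}\bar\varrho(y/\varepsilon)$ and the change of variables $u = (z-w)/\varepsilon$ to rewrite the integral as $\int \ln^+\!\frac{1}{4\varepsilon\Vert u\Vert}\, \bar\varrho(\hat z - u)\, du$, with $\hat z := z/\varepsilon$ and $\Vert\hat z\Vert \le 2r_0$. For all $\varepsilon$ below a threshold depending only on $r_0$ and the support of $\bar\varrho$, the integrand is supported where $4\varepsilon\Vert u\Vert \le 1$, so $\ln^+$ equals $-\ln(4\varepsilon) - \ln\Vert u\Vert$; using $\int \bar\varrho = 1$, the integral becomes $-\ln(4\varepsilon) - \int \ln\Vert\hat z-v\Vert\, \bar\varrho(v)\, dv$, and the second term is uniformly bounded on the compact set $\{\Vert\hat z\Vert \le 2r_0\}$ by local integrability of $\ln\Vert\cdot\Vert$ in two dimensions. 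Since $\Vert z\Vert+\varepsilon \le (2r_0+1)\varepsilon$ here, the resulting bound $-\ln\varepsilon + O(1)$ dominates $-\ln(\Vert z\Vert+\varepsilon) + C$.

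The main delicate point is keeping the \emph{exact} coefficient $1$ in front of $-\ln\varepsilon$ in the inner regime: a crude $\bar\varrho_\varepsilon \le C\varepsilon^{-2}$ bound would introduce an unwanted multiplicative constant, so it is essential to leave $\bar\varrho$ inside the integral and exploit $\int \bar\varrho = 1$. The residual range of $\varepsilon$ lying above the threshold used in the inner case is absorbed by enlarging the additive constant, since $\varepsilon$ then belongs to a compact subset of $(0,1)$ and $Q$ is continuous off the origin.
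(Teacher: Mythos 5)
Your proof is correct and follows essentially the same route as the paper's: it starts from $Q_\varepsilon = Q*\bar\varrho_\varepsilon$ and Proposition \ref{s22}, splits at $\Vert z\Vert \asymp \varepsilon$, uses the triangle inequality in the outer regime, and in the inner regime extracts the exact $-\ln\varepsilon$ via the unit mass of $\bar\varrho$ before bounding the remaining logarithmic integral by local integrability. The only quibble is your justification for $\varepsilon$ above the threshold: what you really need there is not continuity of $Q$ off the origin but simply that $\bar\varrho_\varepsilon$ is then uniformly bounded and $\ln^+\frac{1}{4\Vert\cdot\Vert}$ is locally integrable, which gives the uniform bound at once.
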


\begin{proof}
Fix $r>0$ and $z\in \bb R \times \bb T$ such that $\Vert z\Vert \le
r$.  By Proposition \ref{s22} and by \eqref{25}, there exists a finite
constant $C_0$, whose value may change from line to line, such that
\begin{equation}
\label{53}
Q_\varepsilon(z) \; \le \; \frac{1}{2\pi} \, \int_{D_\varepsilon} 
\ln \, \frac 1{ 4\, \|z - \varepsilon
w \| }\; \bar{\varrho} (w)\, dw
\;+\; C_0 \;,
\end{equation}
where the integral is performed over the set $D_\varepsilon =\{ w:
\Vert z - \varepsilon w\Vert \le 1/4\}$.

Let $A= 4s_1$, where $s_1$ has been introduced just before the
statement of the lemma. Assume first that $\Vert z\Vert \le A
\varepsilon$. By extracting the factor $4\varepsilon$ from the
logarithm, we may bound the right-hand side of the previous equation
by
\begin{align*}
& \frac{1}{2\pi} \, \ln \, \frac 1{\varepsilon} \;+\;
\frac{1}{2\pi} \, \int_{D_\varepsilon}  
\ln \, \frac 1{\|z/\varepsilon -  w \| }\;
\bar{\varrho} (w)\, dw \;+\; C_0 \\
& \quad \le\; \frac{1}{2\pi} \, \ln \, \frac 1{\varepsilon} \;+\;
\frac{1}{2\pi} \, \int_{D'_\varepsilon}  
\ln \, \frac 1{\|w \| }\;
\bar{\varrho} (z/\varepsilon -  w)\, dw \;+\; C_0
\;,
\end{align*}
where we performed a change of variables and $D'_\varepsilon =\{ w:
\Vert w\Vert \le 1/4 \varepsilon\}$.  By hypothesis, the support of
$\varrho$ is contained in $\bb B(0,s_1)$, where, recall,
$s_1=s_0+1$. Hence, by definition, the support of $\bar{\varrho}$ is
contained in $\bb B(0,2s_1)$, and we may restrict the previous
integral to points $w$ such that $\Vert w \Vert \le A + 2s_1 = 6s_1$
because $\Vert z \Vert \le A \varepsilon$. The previous expression is
thus bounded by
\begin{equation*}
\frac{1}{2\pi} \, \ln \, \frac 1{\varepsilon} \;+\;
\frac{\Vert \bar{\varrho} \Vert_\infty}{2\pi} \, 
\int_{\Vert w \Vert \le A + 2s_1}  
\Big|\, \ln \, \|w \| \, \Big| \, dw \;+\; C_0
\;\le\;
\frac{1}{2\pi} \, \ln \, \frac 1{\varepsilon} \;+\; C_0
\;.
\end{equation*}
To complete the argument note that $\varepsilon^{-1} \le (A+1)/(\Vert
z\Vert + \varepsilon)$ on the set where $\Vert z\Vert \le A
\varepsilon$. Hence, the previous expression is less than or equal to
\begin{equation*}
\frac{1}{2\pi} \, \ln \, \frac 1{\Vert z\Vert + \varepsilon} \;+\; C_0 
\;.
\end{equation*}

We turn to the case $A \varepsilon < \Vert z\Vert \le r$. In this
case, for $w$ in the support of $\bar{\varrho}$, $\Vert z -
\varepsilon w \Vert \ge \Vert z \Vert - \varepsilon \Vert w \Vert \ge
\Vert z \Vert - 2 s_1 \varepsilon \ge \Vert z \Vert/2$ because $A =
4s_1$ and $\Vert z\Vert > A \varepsilon$. Therefore, the expression on
the right-hand side of \eqref{53} is bounded above by 
\begin{equation*}
\frac{1}{2\pi} \, \ln \, \frac 1{\|z \| }\;  \int_{D_\varepsilon} 
\bar{\varrho} (w)\, dw \;+\; C_0 \;,
\end{equation*}
where we extracted the factor $2$ from the logarithm. Considering
separately the cases $\|z \| <1$ and $1\le \|z \| \le r$, we can bound
the previous expression by 
\begin{equation*}
\frac{1}{2\pi} \, \ln \, \frac 1{\|z \| } \;+\; C_0(r) 
\end{equation*}
for some finite constant $C_0(r)$ depending on $r$. As $\Vert z\Vert >
A \varepsilon$, this term is less than or equal to
\begin{equation*}
\frac{1}{2\pi} \, \ln \, \frac {A+1}{ A\, (\|z \| + \varepsilon)} \;+\; C_0(r)
\; \le\; \frac{1}{2\pi} \, \ln \, \frac1 { \|z \| + \varepsilon} \;+\;
C_0(r)\;,
\end{equation*}
absorbing the $A$'s in the constant $C_0(r)$.  This completes the
proof of the lemma.
\end{proof}


\begin{lemma}
\label{s19b}
For each $r>0$, there exist finite constants $C_0=C_0(r)$ such that
\begin{gather*}
\big| \, Q(z) \,-\, Q_\varepsilon(z)\, \big| \;\le\;
C_0 (r)\, \frac{\varepsilon}{\|z\|} \;\cdot
\end{gather*}
for all $\Vert z \Vert \le r$, $0< \varepsilon \le 1$.
\end{lemma}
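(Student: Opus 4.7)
By \eqref{25}, $Q_\varepsilon = Q \ast \bar\varrho_\varepsilon$, and since $\bar\varrho_\varepsilon$ integrates to $1$ and is supported in $\bb B(0, 2 s_1 \varepsilon)$, one has
\[
Q_\varepsilon(z) \,-\, Q(z) \;=\; \int \bigl[\, Q(z - y) \,-\, Q(z) \,\bigr]\, \bar\varrho_\varepsilon(y)\, dy\,.
\]
Setting $A = 4 s_1$, I would treat the two regimes $\|z\| \ge A\varepsilon$ and $\|z\| < A\varepsilon$ separately, the balance being chosen so that, in the first, the kernel sees only points where $Q$ is smooth, and, in the second, the crude bound $\varepsilon/\|z\| > 1/A$ is available to absorb lower-order errors.

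\emph{Large-scale regime, $\|z\| \ge A\varepsilon$.} For $y$ in the support of $\bar\varrho_\varepsilon$, $\|y\| \le 2s_1\varepsilon \le \|z\|/2$, so the segment from $z$ to $z - y$ lies entirely in $\{\|w\| \ge \|z\|/2\}$, a region where $Q$ is smooth. The mean-value theorem reduces the estimate to the \emph{gradient bound}
\[
|\nabla Q(w)| \;\le\; \frac{C(r)}{\|w\|}\,, \qquad 0 < \|w\| \le r\,,
\]
which I plan to derive by differentiating under the integral in $Q(z) = \int q(u)\, q(z+u)\, du$ from \eqref{10b} and invoking the pointwise estimate $|\nabla q(v)| \le C/\|v\|$ of \eqref{23b}. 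This reduces matters to showing that $\int q(u)\, \|w+u\|^{-1}\, du \le C(r)/\|w\|$, a two-dimensional logarithmic-potential bound: on $\|w+u\| \le \|w\|/2$, the local boundedness of $q$ together with $\int_{\|v\| \le \|w\|/2} \|v\|^{-1}\, dv = O(\|w\|)$ yields $O(1) \le C(r)/\|w\|$ (since $\|w\| \le r$), while on the complement $\|w+u\|^{-1} \le 2/\|w\|$ and $\int q$ is finite. Integrating the MVT bound $|Q(z)-Q(z-y)| \le C\|y\|/\|z\|$ against $\bar\varrho_\varepsilon$ then gives $|Q - Q_\varepsilon|(z) \le C\varepsilon/\|z\|$.

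\emph{Small-scale regime, $\|z\| < A\varepsilon$.} Here the logarithmic singularity is probed sharply. Decompose $Q = L + R$ via Proposition \ref{s22}, with $L(z) = (2\pi)^{-1}\ln^+(1/(4\|z\|))$ and $R$ bounded continuous. The bounded-continuous piece is trivial: $|R - R\ast\bar\varrho_\varepsilon|(z) \le 2\|R\|_\infty \le 2A\|R\|_\infty \cdot \varepsilon/\|z\|$. For the logarithmic piece, after checking that for $\varepsilon$ small enough both $\|z\|$ and $\|z-y\|$ are $\le 1/4$ on the relevant range (the complementary case is handled by the same $\varepsilon/\|z\| > 1/A$ trick), the rescaling $z = \varepsilon u$, $y = \varepsilon w$ gives
\[
L(z) \,-\, (L \ast \bar\varrho_\varepsilon)(z) \;=\; \frac{1}{2\pi}\, \bigl[\, \Phi(u) \,-\, \ln \|u\| \,\bigr]\,, \qquad \Phi(u) \,:=\, \int \ln \|u - w\|\, \bar\varrho(w)\, dw\,,
\]
with $u$ ranging over $\|u\| \le A$. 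On this bounded range $\Phi$ is continuous and bounded, while $|\ln\|u\|| \le 1/(e\|u\|)$ for $0 < \|u\| \le 1$ (as $\sup_{s \in (0,1]}(-s\ln s) = 1/e$), so $|\Phi(u) - \ln\|u\|| \le C(A)/\|u\|$. Unwinding the substitution gives $|L - L\ast\bar\varrho_\varepsilon|(z) \le C\varepsilon/\|z\|$, completing the regime.

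\emph{Main obstacle.} The sole non-routine step is the gradient bound $|\nabla Q(w)| \le C(r)/\|w\|$. The decomposition of Proposition \ref{s22} cannot be differentiated directly, because $R$ is asserted only to be bounded continuous, so this estimate has to be extracted from the self-convolution representation of $Q$ together with \eqref{23b}. Once this is in place, Regime~A is a one-line mean-value argument and Regime~B is a clean computation with the mollified logarithm.
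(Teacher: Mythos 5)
Your argument is correct in substance, but it reaches the estimate by a partially different route than the paper. In the regime $\Vert z\Vert \ge 4s_1\varepsilon$ the two proofs are close in spirit: the paper never forms $\nabla Q$, but instead bounds the increment $\int q(u)\,|q(u+z-\varepsilon w)-q(u+z)|\,du$ directly by the fundamental theorem of calculus along the segment, using the derivative bounds \eqref{54} and the two integral estimates $\int \chi_A(u)\,\Vert u+z'\Vert^{-1}\,du \le C_0$ and $\int \chi_A(u)\,\Vert u\Vert^{-1}\Vert u+z'\Vert^{-1}\,du \le C_0\{1+\ln \Vert z'\Vert^{-1}\}$; your gradient bound $|\nabla Q(w)|\le C(r)/\Vert w\Vert$ plus the mean value theorem is the same computation repackaged (and your bound is generous: the sharp rate is logarithmic, which is all the paper extracts, but $C/\Vert w\Vert$ suffices). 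Two points deserve care. First, to pass the gradient inside $Q(w)=\int q(u)\,q(w+u)\,du$ you cannot simply cite smoothness of the integrand, since $q$ blows up at the origin and $\partial_t q$ jumps across $\{t=0\}$; the clean justification is exactly the paper's device of writing the increment of $q$ as an integral of its derivative along the segment and applying Fubini (or invoking the smoothness of $Q$ off the origin asserted before \eqref{16}). Second, in your small-scale regime the ``complementary case'' where $\Vert z\Vert$ or $\Vert z-y\Vert$ exceeds $1/4$ is not settled by boundedness alone, because the logarithmic piece $L$ is unbounded near the origin; you additionally need the elementary bound $\ln^+\frac 1{4s}\le C/s$ together with $\varepsilon$ bounded below in that case, a one-line fix. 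Where your proof genuinely diverges from the paper is the regime $\Vert z\Vert < 4s_1\varepsilon$: the paper stays with the $q$-representation, splits the $t$-integral at $\Vert z\Vert/4s_1$ and uses integrability of the logarithm, whereas you reuse the log-correlation decomposition of Proposition \ref{s22} and an exact scaling identity for the mollified logarithm. This is legitimate (Proposition \ref{s22} precedes the lemma and its proof does not use it), and it buys a shorter, more structural argument at small scales, at the price of leaning on the already established asymptotics of $Q$ rather than giving a self-contained estimate from the kernel.
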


\begin{proof}
Fix $r>0$.
By \eqref{23b} and the definition of $q=q_{T_0}$, there exists a
finite constant $C_0$ such that
\begin{equation}
\label{54}
\big|\, \partial_t q \, (z) \, \big|
\;\le\; \frac {C_0}{\Vert z\Vert} \;, \quad
\big|\, \partial_x q \, (z) \, \big|
\;\le\; \frac {C_0}{\Vert z\Vert} \;, \quad
q \, (z) \;\le\; C_0 \, \Big\{ \, 1 \,+\, 
\frac {1}{\Vert z\Vert} \, \Big\}
\end{equation}
for all $z=(t,x)$ such that $t \not =0$.

By the definition \eqref{10b} of $Q$ and by \eqref{25}, 
\begin{equation*}
\big|\, Q_\varepsilon (z) \,-\, Q(z)  \, \big|\;\le \; 
\int dw \; \bar{\varrho} (w)
\int du \; q(u)\, \big|\, 
q(u+z-\varepsilon w) \,-\, q(u+z)\,\big|\;.
\end{equation*}
By \eqref{54}, this expression is bounded above by
\begin{equation}
\label{55}
C_0\, \int_0^\varepsilon dt\, \int dw \; \bar{\varrho} (w) 
\int du \;  \Big\{ \, 1 \,+\, 
\frac {1}{\Vert u\Vert} \, \Big\}\, 
\frac {1}{ \big\Vert \, u+z- tw\,\big\Vert} \;
\chi_A (u) \;,
\end{equation}
where $A=[0,T_0] \times \bb T$. Here and below, $C_0$ is a finite
constant which may change from line to line.

Fix $z' \not =0$. Decompose the set $A$ in four pieces:
$A_1 = \bb B(0, \Vert z'\Vert/2)$, $A_2=\bb B(z', \Vert z'\Vert/2)$,
$A_3 = \bb B(0, 4 \Vert z'\Vert) \setminus (A_1\cup A_2)$ and
$A_4 = A\setminus \bb B(0, 4 \Vert z'\Vert)$. Estimating the the
integral below in each of these sets, we show that there
exists a finite constant $C_0=C_0(r)$ such that
\begin{equation*}
\int du \;  \frac {1}{ \big\Vert \, u+z'\,\big\Vert} \;
\chi_A (u) \;\le\; C_0 \;, \quad
\int du \;   \frac {1}{\Vert u\Vert} \, 
\frac {1}{ \big\Vert \, u+z'\,\big\Vert} \;
\chi_A (u) \;\le\; C_0 \Big\{ \, 1 \,+\, 
\ln \frac {1}{\, \Vert z'\,\Vert} \, \Big\}\;.
\end{equation*}
The region $A_4$ is responsible for the log factor.  Hence, \eqref{55}
is less than or equal to
\begin{equation}
\label{57}
C_0\, \varepsilon \;+\; C_0
\int_0^\varepsilon dt\, \int dw \; \bar{\varrho} (w) 
\, \ln \frac {1}{\, \Vert z - t w \,\Vert} \;.
\end{equation}

Assume that $\Vert z \Vert \ge 4s_1 \varepsilon$, where, recall, the
ball of radius $2 s_1$ contains the support of $\bar{\varrho}$. In
this case
$\Vert \, z - t w \,\Vert \ge \Vert \, z\,\Vert - 2 \, \varepsilon s_1
\ge (1/2) \Vert \, z\,\Vert$.
The previous expression is thus bounded above by
\begin{equation*}
C_0 \, \varepsilon \, \Big\{ \, 1 \,+\, \ln \frac 1{\Vert \,z\,\Vert}
\Big\} \;\le\;
C_0(r) \, \frac{\varepsilon}{\Vert \,z\,\Vert}  
\end{equation*}
because $\Vert z\Vert \le r$.

Assume, now, that $\Vert z \Vert \le 4s_1 \varepsilon$ and consider
the second integral in \eqref{57}. We first examine the integral in
the interval $0\le t\le \Vert z\Vert/4s_1$ [note that
$\Vert z\Vert/4s_1 \le \varepsilon$]. In this case, as the support of
$\bar{\varrho}$ is contained in $\bb B(0,2 s_1)$,
$\Vert \,z -tw \,\Vert \ge \Vert \,z \,\Vert/2$. Hence,
\begin{equation*}
\int_0^{\Vert z\Vert/4s_1} dt\, \int dw \; \bar{\varrho} (w) 
\, \ln \frac {1}{\, \Vert z - t w \,\Vert} \;\le\;
\frac {\Vert z\Vert}{4s_1} \, \ln \frac {2}{\Vert z\Vert}
\;\le\; C_0
\end{equation*}
because $\Vert z \Vert \le 4s_1 \varepsilon \le 4s_1 $.

We turn to the integral on the interval
$\Vert z\Vert/4s_1 \le t\le \varepsilon$. Rewriting
$\Vert z - t w \,\Vert$ as $t\, \Vert (z/t) - w \,\Vert$ and changing
variables as $w' = (z/t) - w$, the corresponding integral becomes
\begin{equation*}
\int_{\Vert z\Vert/4s_1}^\varepsilon dt\, \int dw \; \bar{\varrho} 
((z/t) -w) 
\, \Big\{ \ln \frac 1t \,+\, \ln \frac {1}{\, \Vert w \,\Vert} 
\Big\}\;.
\end{equation*}
As $\ln t^{-1}$ is integrable in the interval $[0,1]$ and since
$\Vert z/t\Vert \le 4s_1$, the previous expression is bounded by
\begin{equation*}
C_0 \;+\; C_0 \int_{\Vert z\Vert/4s_1}^\varepsilon dt\, \int_{\bb B(0,6s_1)}
 dw \, \ln \frac {1}{\Vert\, w \,\Vert} \;\le\; C_0 \;.
\end{equation*}
In conclusion, if $\Vert z \Vert \le 4s_1 \varepsilon$, the sum in
\eqref{57} is bounded above by
$C_0 \le C_0 \varepsilon / \Vert z\Vert$. This completes the proof of
the lemma.
\end{proof}

The proof of the next lemma follows from a straightforward computation
based on the formula for $Q$ presented in Proposition \ref{s22}, and
on the fact that $\varrho$ has compact support.

\begin{lemma}
\label{s19c}
For all $0< \varepsilon \le 1$,
\begin{equation*}
Q_\varepsilon (0) \;=\;
\frac 1{2\pi} \, \ln \, \frac 1\varepsilon
\;+\; \frac 1{2\pi} \, \int_{\bb R\times \bb T} \bar{\varrho}
(w)\, \ln \frac{1}{4\, \Vert w\Vert} \; dw
\;+\; \frac 1{2\pi} \, \int_{\bb R\times \bb T} \bar{\varrho}
(w)\, R(\varepsilon\, w) \, dw \;,
\end{equation*}
where $R$ is the function appearing in the statement of Proposition
\ref{s22}.
\end{lemma}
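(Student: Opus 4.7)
The plan is to view $Q_\varepsilon(0)$ as a convolution of $Q$ with the symmetrized mollifier, rescale, and then substitute the representation from Proposition \ref{s22}. First, I would use \eqref{25} together with the symmetry $Q(-w) = Q(w)$ from \eqref{24} to write
\begin{equation*}
Q_\varepsilon(0) \;=\; (Q * \bar{\varrho}_\varepsilon)(0) \;=\; \int_{\bb R\times \bb T} Q(w)\, \bar{\varrho}_\varepsilon(w)\, dw\;.
\end{equation*}
Since $\bar{\varrho}_\varepsilon(w) = \varepsilon^{-2}\bar{\varrho}(w/\varepsilon)$ by the scaling convention in \eqref{35}, the change of variables $w = \varepsilon u$ converts this to $\int Q(\varepsilon u)\, \bar{\varrho}(u)\, du$; we may keep the integration over $\bb R\times \bb T$ without issue because $\bar{\varrho}$ is compactly supported and, for $\varepsilon\le 1$, the integrand stays within a region on which the torus identification is irrelevant.

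Next I would substitute $Q(\varepsilon u) = (2\pi)^{-1}\ln^+(1/(4\varepsilon\Vert u\Vert)) + R(\varepsilon u)$ from Proposition \ref{s22}. The $R(\varepsilon u)$ piece immediately produces the $\int \bar{\varrho}(w)\, R(\varepsilon w)\,dw$ summand in the lemma. For the logarithmic piece, the compact support of $\bar{\varrho}$ guarantees that $\Vert u\Vert$ is bounded by a fixed constant, so that $4\varepsilon\Vert u\Vert \le 1$ on the whole support, and there
\begin{equation*}
\ln^+ \frac{1}{4\varepsilon\Vert u\Vert} \;=\; \ln\frac{1}{\varepsilon} \;+\; \ln\frac{1}{4\Vert u\Vert}\;.
\end{equation*}
Combining this with $\int\bar{\varrho}\,du = \big(\int \varrho\big)^2 = 1$, which follows from \eqref{s23} and the definition $\bar{\varrho} = \varrho * \mc T\varrho$, the factor $\ln(1/\varepsilon)$ pulls out as the first summand and the remainder is exactly the second summand.

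The only delicate point, and the one I would treat carefully, is the boundary region $\{u : 4\varepsilon\Vert u\Vert > 1\}$ that may intersect the support of $\bar{\varrho}$ when $\varepsilon$ is not sufficiently small. On that slice $\ln^+$ vanishes while $\ln$ is negative, but the discrepancy is a bounded continuous function of $\varepsilon u$ which vanishes in a neighbourhood of the origin, and it can be absorbed into the term $\int \bar{\varrho}(u)\, R(\varepsilon u)\, du$ via a harmless redefinition of $R$ preserving its continuity and boundedness. Integrability of $\ln\Vert u\Vert$ against $\bar{\varrho}$ is immediate from the mild logarithmic singularity at the origin in two dimensions, so I do not anticipate any genuine difficulty beyond this bookkeeping: the whole argument reduces to the explicit logarithmic form of $Q$ furnished by Proposition \ref{s22} and the compactness of the support of the mollifier.
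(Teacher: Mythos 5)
Your computation follows the same route the paper intends (its proof of Lemma \ref{s19c} is only the remark that the identity is a "straightforward computation" based on Proposition \ref{s22} and the compact support of $\varrho$): write $Q_\varepsilon(0)=(Q*\bar\varrho_\varepsilon)(0)$ via \eqref{25} and the symmetry \eqref{24}, rescale using $\bar\varrho_\varepsilon(w)=\varepsilon^{-2}\bar\varrho(w/\varepsilon)$, substitute $Q=\tfrac{1}{2\pi}\ln^+\tfrac{1}{4\Vert\cdot\Vert}+R$, split the logarithm and use $\int\bar\varrho=1$. All of these steps, including the torus bookkeeping for $\varepsilon\le 1$ and the integrability of $\ln\Vert u\Vert$ against $\bar\varrho$, are correct.

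The gap is in your treatment of the region where $\ln^+$ and $\ln$ differ. First, your initial claim that compact support of $\bar\varrho$ "guarantees $4\varepsilon\Vert u\Vert\le 1$ on the whole support" is unjustified: the spatial support of $\varrho$ lies in $(-1/4,1/4)$, but its time support is only assumed to lie in $[-s_0,s_0]$ for an arbitrary $s_0>0$, so on the support of $\bar\varrho$ one only knows $\Vert u\Vert\le 2s_0+1/2$, and for $\varepsilon$ of order one the set $\{4\varepsilon\Vert u\Vert>1\}$ can meet the support. Second, and more seriously, the proposed cure --- absorbing the discrepancy into $\int\bar\varrho(u)\,R(\varepsilon u)\,du$ by "a harmless redefinition of $R$" --- is not available: $R$ is the specific function produced by Proposition \ref{s22} (it must compensate $\ln^+$ globally in $z$, not only along the points $\varepsilon u$ for one fixed $\varepsilon$), and the lemma's right-hand side uses that same $R$; altering it changes both the proposition and the statement you are proving. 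What your computation actually yields is the stated identity plus the explicit nonnegative error $\tfrac{1}{2\pi}\int\bar\varrho(u)\,\ln^+(4\varepsilon\Vert u\Vert)\,du$, which vanishes precisely when $4\varepsilon\Vert u\Vert\le 1$ on the support of $\bar\varrho$, i.e.\ for $\varepsilon$ small compared with $1/s_1$ (or for mollifiers with sufficiently small time support). The legitimate conclusions are either to prove the identity under that smallness/support condition --- which is what the paper's appeal to compact support amounts to, and suffices downstream, since the extra term is bounded and supported in $\varepsilon\gtrsim 1/s_1$, hence absorbable into the remainder $R(\varrho,\varepsilon)$ of the display following the lemma --- or to carry the extra term explicitly; it cannot be hidden inside the third integral while keeping the $R$ of Proposition \ref{s22}.
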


We conclude this section with some result whose proofs are similar to
the previous ones.  For $0<\varepsilon'\,,\, \varepsilon \le 1$, let
$Q_{\varepsilon,\varepsilon'}: \bb R \times \bb T \to \bb R$ be given
by
\begin{equation*}
Q_{\varepsilon,\varepsilon'}(w) \,=\, \bb E[ \, \mf v_{\varepsilon}(0)
\, \mf v_{\varepsilon'}(w)\, ] \;.
\end{equation*}
It follows from the proofs of Lemmata \ref{s19} and \ref{s19b} that
for every $r>0$, there exists a finite constant $C_0 = C_0(r)$ such
that
\begin{equation}
\label{74}
\begin{gathered}
Q_{\varepsilon,\varepsilon'}(z) \; \le \;
- \, \frac{1}{2\pi} \, \ln \,\big(\, \|z\| + \varepsilon' \,\big)\,
\;+\; C_0(r) \\
\big| \, Q(z) \,-\, Q_{\varepsilon,\varepsilon'}(z)\, \big| \;\le\;
C_0 (r)\, \frac{\varepsilon}{\|z\|} 
\end{gathered}
\end{equation}
for all $0<\varepsilon' \le \varepsilon<1$, $\Vert z\Vert \le r$.

Let $\vartheta :\bb R^2 \to \bb R_+$ be a mollifier satisfying the
conditions \eqref{s23}. For $0<\varepsilon \le 1$,
let
$Q^{\varrho,\vartheta}_{\varepsilon}: \bb R \times \bb T \to \bb R$ be
given by
\begin{equation*}
Q^{\varrho,\vartheta}_{\varepsilon}(w) \,=\, \bb E[ \, \mf
v_{\varepsilon}(0) \, \widehat{\mf v}_{\varepsilon}(w) \, ] \;,
\end{equation*}
where
$\widehat{\mf v}_{\varepsilon}(w) := q* \widehat{\xi}_{\varepsilon}$,
$\widehat{\xi}_{\varepsilon} = \vartheta_\varepsilon * \xi$,
$\vartheta_\varepsilon = S^\varepsilon_0 \vartheta$. By the proofs of
Lemmata \ref{s19} and \ref{s19b}, for every $r>0$, there exists a
finite constant $C_0 = C_0(r)$ such that
\begin{equation}
\label{74b}
\begin{gathered}
Q^{\varrho,\vartheta}_{\varepsilon}(z) \; \le \;
- \, \frac{1}{2\pi} \, \ln \,\big(\, \|z\| + \varepsilon \,\big)\,
\;+\; C_0(r) \\
\big| \, Q(z) \,-\, Q^{\varrho,\vartheta}_{\varepsilon}(z)\, \big| \;\le\;
C_0 (r)\, \frac{\varepsilon}{\|z\|} 
\end{gathered}
\end{equation}
for all $0<\varepsilon<1$, $\Vert z\Vert \le r$.

\section{Gaussian multiplicative chaos}
\label{sec05}

Recall the definition of the Gaussian random field $\mf v$,
$\mf v_\varepsilon$, $\varepsilon>0$, introduced in \eqref{08}.  Let
$X_{\gamma, \varepsilon}$, $\gamma\in \bb R$, be the random field
defined by
\begin{equation*}
X_{\gamma, \varepsilon} (f) \;=\;
\< X_{\gamma, \varepsilon} \,,\, f \> \; :=\;  \int f (z)\,
e^{\gamma \, \mf v_\varepsilon (z) \,-\,
(\gamma^2/2)\,  E[\mf v_\varepsilon(z)^2]} \, dz\;,
\end{equation*}
for $f\in C^\infty_c(\bb R \times \bb T)$.

By Lemma \ref{s19c} and since
$\int_{\bb R\times \bb T} \bar{\varrho} (w)\, w \; dw \,=\, 0$,
there exists a finite constant $C(\varrho)$ such that
\begin{equation*}
E[\mf v_\varepsilon(0)^2] \;=\;
\frac 1{2\pi} \, \ln \, \frac 1\varepsilon
\;+\; C(\varrho) \;+\; R (\varrho, \varepsilon)
\;,
\end{equation*}
where $R (\varrho, \varepsilon)$ represents a remainder whose absolute
value is bounded by $C_0(\varrho) \varepsilon^2$. Hence,
\begin{equation*}
\< X_{\gamma, \varepsilon} \,,\, f \>  \;=\;
[1 \,+\, o(\varepsilon)]\,
\int f (z)\, A(\varrho)\, \varepsilon^{\gamma^2/4\pi} \,
e^{\gamma \, \mf v_\varepsilon (z)}\, dz\;,
\end{equation*}
where $A(\varrho) \,=\, \exp\{ -\, (\gamma^2/2)\, C(\rho)\,\}$.

The main result of this section, Theorem \ref{s27} below, states that,
for certain values of $\gamma$, the sequence of random fields
$X_{\gamma, \varepsilon}$ converge in probability, as
$\varepsilon \to 0$, in $C^\alpha$ to a random field $X_{\gamma}$ and
that the limit does not depend on the mollifier $\varrho$ chosen.

Let $\color{bblue} b= \gamma^2/4\pi$ and recall the definition of
$\alpha_\gamma$ introduced just before the statement of Theorem
\ref{mt2}.

\begin{theorem}
\label{s27}
Fix $0<\gamma^2 \,< \, (4/15) (5 - \sqrt{10})\, 4\pi$,
$\alpha< \alpha_\gamma$.  Then, as $\varepsilon \to 0$,
$X_{\gamma,\varepsilon}$ converges in probability in $C^\alpha$ to a
random field, denoted by $X_\gamma$. The limit does not depend on the
mollifier $\varrho$. Moreover, for each $p\in \bb N$,
$1\le p < 8\pi/\gamma^2$, there exists a finite constant
$C(p, \gamma)$ such that
\begin{equation*}
\mathbb{E} \, \big[\, |\,
\langle X_{\gamma} \,,\,
S^\delta_z f \rangle \, |^p \, \big]
\; \le \; C(p,\gamma) \, \Vert f\Vert^p_\infty\,
\delta^{-p(p-1)(\gamma^2/4\pi)}
\end{equation*}
for every $\delta$ in $(0, 1)$, $z\in \bb R \times \bb T$ and
continuous function $f: \bb R \times \bb T \to \bb R$ whose support is
contained in $\bb B (0,1/4)$
\end{theorem}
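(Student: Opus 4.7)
The plan is to adapt the Rhodes--Vargas approach for Gaussian multiplicative chaos to negative Besov/Hölder spaces, along the lines of \cite{HaiShe16,Garban18}. The three ingredients are (i) a uniform $p$-th moment bound on $X_{\gamma,\varepsilon}$, (ii) Cauchy estimates in $L^p$ with explicit $\varepsilon$-decay, and (iii) a Besov--Kolmogorov criterion to promote these into convergence in $C^\alpha$.

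For (i), one starts from the Gaussian identity
\begin{equation*}
\mathbb{E}\bigl[\langle X_{\gamma,\varepsilon},\, S^\delta_z f\rangle^p\bigr]
\,=\, \int \prod_{i=1}^p (S^\delta_z f)(w_i)\,
\exp\!\Bigl(\gamma^2 \!\!\sum_{1\le i<j\le p}\!\! Q_\varepsilon(w_i-w_j)\Bigr)\, dw,
\end{equation*}
which follows from $\mathbb{E}[e^{\gamma\sum_i \mf v_\varepsilon(w_i)}] = \exp\bigl((\gamma^2/2)\sum_{i,j}Q_\varepsilon(w_i-w_j)\bigr)$ after the variance counter-term in \eqref{30} cancels the diagonal contributions. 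Changing variables $w_i=z+\delta u_i$ and applying Lemma~\ref{s19} in the form $\gamma^2 Q_\varepsilon(\delta u)\le -2b\log(\delta\|u\|+\varepsilon)+C$ with $b=\gamma^2/4\pi$, the right hand side is bounded above by
\begin{equation*}
C_p\,\|f\|_\infty^p\,\delta^{-p(p-1)b}\int_{\bb B(0,1/4)^p}\prod_{i<j}\|u_i-u_j\|^{-2b}\,du,
\end{equation*}
and the remaining integral is finite exactly when $d = 2 > bp$, i.e.\ $p < 2/b = 8\pi/\gamma^2$, by the standard worst-case clustering count $r^{d(p-1)-1-bp(p-1)}$. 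Passage to the limit $\varepsilon\to 0$ via Fatou then yields the moment bound in the theorem.

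For (ii) and (iii), the same Gaussian computation with the mixed covariance $Q_{\varepsilon,\varepsilon'}$ of \eqref{74} expresses $\mathbb{E}[(X_{\gamma,\varepsilon}(g)-X_{\gamma,\varepsilon'}(g))^p]$ as an alternating sum of $p$-point integrals in which each pair $(i,j)$ contributes one of $e^{\gamma^2 Q_\varepsilon}$, $e^{\gamma^2 Q_{\varepsilon,\varepsilon'}}$ or $e^{\gamma^2 Q_{\varepsilon'}}$. I would first establish Cauchy in $L^2$, where only three integrals appear and the cancellation of leading terms combined with the pointwise estimate $|Q_\varepsilon-Q_{\varepsilon,\varepsilon'}|(z)\le C\varepsilon/\|z\|$ from Lemma~\ref{s19b} and \eqref{74} (together with $|e^a-e^b|\le\max(e^a,e^b)|a-b|$) produces an $\varepsilon^\eta$-gain; interpolation with the uniform moment bound of (i) via log-convexity of $L^p$ then upgrades this to Cauchy in $L^p$ with an $\varepsilon^{\eta'}$-gain for some $\eta'>0$. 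Convergence in $C^\alpha$ in probability follows from a Besov--Kolmogorov criterion: if $\mathbb{E}|\langle X,S^\delta_z f\rangle|^p\le C\delta^{\beta p}$ uniformly over $\delta\in(0,1]$, $z\in[S,T]\times\bb T$ and $f\in B_m$ supported in $\bb B(0,1/4)$, then $X\in C^{\beta-d/p-\kappa}$ almost surely for any $\kappa>0$, proved by a union bound over a dyadic grid of scale $\delta_n=2^{-n}$ (with $O(\delta_n^{-d})$ centres) combined with Chebyshev and Borel--Cantelli. With $\beta=-b(p-1)$ and $d=2$, choosing the integer $p$ at the plateau value $p=(\sqrt{1+8/b}-1)/2$ where $f(p) := -b(p-1)-2/p$ equals $f(p+1)$ --- an elementary computation shows this common value is exactly $\alpha_\gamma = b(1-\sqrt{1+8/b}) = -8/(1+\sqrt{1+8/b})$ --- yields the threshold in the theorem, while the $-3b$ branch covers the regime $b>8/15$ where $p=2$ is forced. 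Running the same argument with a second mollifier $\vartheta$ and the bounds \eqref{74b} gives independence of the limit from $\varrho$.

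The main technical obstacle is step (ii). A naive expansion of $(X_\varepsilon-X_{\varepsilon'})^p$ produces $2^p$ cross-terms each individually comparable to $\mathbb{E}[X_\varepsilon(g)^p]$, so the $\varepsilon^\eta$-gain must come from a careful bookkeeping of where the differences $Q_\varepsilon-Q_{\varepsilon,\varepsilon'}$ enter, and cannot be extracted from the bare estimate of Lemma~\ref{s19}. Moreover, the extra singularity $1/\|z\|$ supplied by Lemma~\ref{s19b} competes with the covariance singularity $\|z\|^{-2b}$ inside the $p$-point integral, tightening the admissible range for $p$ below the naive $2/b$ and giving rise to the restrictive threshold $\gamma^2 < (4/15)(5-\sqrt{10})\cdot 4\pi$ in the hypothesis.
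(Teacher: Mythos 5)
Your proposal is correct in substance and rests on the same probabilistic core as the paper: the uniform integer-moment bound (Lemma \ref{s2}, via Lemma \ref{s19} and the finiteness of the Coulomb-type integral in Corollary \ref{s12}), the $L^2$ Cauchy estimate with an $\varepsilon$-gain built from Lemma \ref{s19b} and \eqref{74} (Lemma \ref{s1}), the comparison with a second mollifier for independence of the limit (Lemma \ref{s1b}), and a dyadic passage to $C^\alpha$. The difference is in the packaging of the last steps: the paper interpolates probabilities (Lemma \ref{s26b}, Corollary \ref{s3}), paying the price of replacing $s(s-1)$ by $(s-1/2)^2$, and proves a bespoke wavelet characterization of the $C^\alpha$ norm (Lemma \ref{l01}) before the union bound over dyadic scales; you interpolate moments by log-convexity and invoke a standard Kolmogorov--Besov embedding, optimizing over a single integer exponent $p$. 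Your plateau identity $f(p)=f(p+1)$ at $p(p+1)=2/b$ does reproduce $\alpha_\gamma=b(1-\sqrt{1+8/b})$, and for the range of $b$ considered the integer optimization never does worse than the paper's real-$s$ route, so your scheme would in fact dispense with the lower constraint $\alpha>5b-4$ of \eqref{19}; this is essentially the improvement alluded to in Remark \ref{rm2}. Two side remarks are off, though neither affects validity under the stated hypothesis: the restriction $\gamma^2<(4/15)(5-\sqrt{10})\,4\pi$ does not come from the $1/\Vert z\Vert$ singularity of Lemma \ref{s19b} tightening the range of $p$ (the $L^2$ Cauchy bound holds for all $\gamma^2<4\pi$); in the paper it comes from requiring the window $(5b-4,\alpha_\gamma)$ for the interpolation parameter $s_0\in[2,\,8\pi/\gamma^2-2)$ to be nonempty, cf.\ \eqref{19}--\eqref{19b}; and with $p=2$ your criterion yields regularity $-b-1$, not $-3b$, the $-3b$ branch arising from the constraint $s_0\ge 2$ and being inactive here. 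Finally, the reduction of the continuum supremum defining $\Vert\cdot\Vert_{C^\alpha}$ to countably many dyadic evaluations, which you compress into ``a union bound over a dyadic grid,'' is exactly the content of Lemma \ref{l01} and should be proved or cited precisely (e.g.\ as in \cite{ChaWeb15}).
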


Recall from \eqref{10b} that we represent by $Q$ the covariances of
the Gaussian field $\{\mf v(z) : z\in \bb R\times \bb T\}$. The proof
relies essentially on the fact that the field is log-correlated:
According to Proposition \ref{s22},
\begin{equation}
\label{34}
Q(z) \;=\; \frac 1{2\pi} \ln^+ \frac{1}{4\, \Vert z\Vert}
\;+\; R(z)\;,
\end{equation}
where $R$ is a bounded, continuous function.

The proof of the next result is similar to the one of
\cite[Proposition A1]{Garban18}.

\begin{lemma}
\label{s2}
Fix $0<\gamma^2<4\pi$. For each $p\in \bb N$,
$1\le p < 8\pi/\gamma^2$, there exists a finite constant
$C(p, \gamma)$ such that 
\begin{equation*}
\mathbb{E} \, \big[\, |\,
\langle X_{\gamma, \varepsilon} \,,\,
S^\delta_z f \rangle \, |^p \, \big]
\; \le \; C(p,\gamma) \, \Vert f\Vert^p_\infty\,
\delta^{-p(p-1)(\gamma^2/4\pi)} 
\end{equation*}
for all $0<\varepsilon \le 1$, $\delta$ in $(0, 1)$,
$z\in \bb R \times \bb T$ and continuous function
$f: \bb R \times \bb T \to \bb R$ whose support is contained in
$\bb B (0,1/4)$,
\end{lemma}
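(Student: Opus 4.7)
The plan is to reduce the $p$-th moment to a Selberg-type integral by a direct Gaussian computation and then invoke the log-correlation bound of Lemma~\ref{s19}. Writing $b := \gamma^2/(4\pi)$ and using that $e^{\gamma \mf v_\varepsilon(w) - (\gamma^2/2) Q_\varepsilon(0)}$ defines a positive density, Fubini gives
\begin{equation*}
\bb E\big[\, |\langle X_{\gamma,\varepsilon}, S^\delta_z f \rangle|^p\, \big]
\;\le\; \int \!\cdots\! \int \prod_{i=1}^p |(S^\delta_z f)(w_i)|\;
\bb E\Big[\prod_{i=1}^p e^{\gamma \mf v_\varepsilon(w_i) - (\gamma^2/2) Q_\varepsilon(0)}\Big]\, dw_1 \cdots dw_p.
\end{equation*}
Since $\sum_i \mf v_\varepsilon(w_i)$ is a centered Gaussian with variance $p\, Q_\varepsilon(0) + \sum_{i \ne j} Q_\varepsilon(w_i - w_j)$, the standard exponential moment identity collapses the inner expectation exactly to $\exp\{(\gamma^2/2) \sum_{i \ne j} Q_\varepsilon(w_i - w_j)\}$, with the $p$ normalizing factors $e^{-(\gamma^2/2) Q_\varepsilon(0)}$ absorbing the diagonal terms.

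Next, the support of $S^\delta_z f$ lies in $\bb B(z, \delta/4)$, so every pair satisfies $\|w_i - w_j\| \le \delta/2 \le 1/2$, and Lemma~\ref{s19} yields a constant $C_1=C_1(p,\gamma)$ with
\begin{equation*}
\exp\Big\{\frac{\gamma^2}{2} \sum_{i \ne j} Q_\varepsilon(w_i - w_j)\Big\}
\;\le\; C_1 \prod_{i \ne j} (\|w_i - w_j\| + \varepsilon)^{-b}.
\end{equation*}
Performing the change of variables $w_i = z + \delta u_i$, so that $(S^\delta_z f)(w_i)\, dw_i = f(u_i)\, du_i$ and $\|w_i - w_j\| = \delta \|u_i - u_j\|$, and using the crude estimate $(\delta \|u_i - u_j\| + \varepsilon)^{-b} \le \delta^{-b} \|u_i - u_j\|^{-b}$ (which discards the $\varepsilon$-regularization but delivers a bound uniform in $\varepsilon$), one arrives at
\begin{equation*}
\bb E\big[\, |\langle X_{\gamma,\varepsilon}, S^\delta_z f \rangle|^p\, \big]
\;\le\; C(p,\gamma)\, \|f\|_\infty^p\, \delta^{-p(p-1) b}
\int_{\bb B(0,1/4)^p} \prod_{i<j} \|u_i - u_j\|^{-2b}\, du_1 \cdots du_p,
\end{equation*}
which is exactly the claimed bound provided the final integral is finite.

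The main technical step is thus to verify the finiteness of this Selberg-type integral under the hypothesis $p < 2/b = 8\pi/\gamma^2$, a condition which is independent of $\varepsilon$, $z$, $\delta$, and $f$. A standard cluster/scaling analysis near the diagonals shows that, for any subset of $k+1\le p$ points coalescing at scale $r$, the relative-coordinate volume contributes $r^{2k-1}\, dr$ while the pairwise product contributes $r^{-k(k+1)b}$, leading to integrability whenever $k+1 < 2/b$; the binding case $k+1 = p$ is precisely $p < 8\pi/\gamma^2$. A rigorous implementation by dyadic decomposition of the diagonal stratification (or equivalently by iterated integration of $\int \|u\|^{-2b}\, du$ over bounded sets in $\bb R^2$) follows essentially verbatim the argument of Proposition~A.1 in~\cite{Garban18}; this verification of the Selberg integrability is the sole nontrivial point of the proof, the Gaussian reduction above being routine.
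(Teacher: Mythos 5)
Your proposal is correct and takes essentially the same route as the paper: the triangle-inequality/Gaussian-moment reduction, the bound $e^{\gamma^2 Q_\varepsilon(w)}\le C(\Vert w\Vert+\varepsilon)^{-\gamma^2/2\pi}$ from Lemma \ref{s19}, and the scaling change of variables producing the factor $\delta^{-p(p-1)\gamma^2/4\pi}$ are exactly the paper's steps. The Selberg-type integrability that you sketch via a cluster-scaling count and a citation of \cite{Garban18} is precisely what the paper proves in Lemma \ref{s05} and Corollary \ref{s12} (itself modelled on Garban's appendix), so no genuinely different ingredient is involved.
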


\begin{proof}
Consider first the case $p=1$. For each $w\in \bb R \times \bb T$,
$\bb E[ \, \exp\{ \, \gamma \mf v_\varepsilon (w) \,-\, (\gamma^2/2)\,
\bb E[\mf v_\varepsilon(w)^2] \, \}\,]=1$. Hence
\begin{equation*}
\mathbb{E} \, \big[\, |\,
\langle X_{\gamma, \varepsilon} \,,\,
S^\delta_z f \rangle \, | \, \big] \;\le\;
\int_{(\bb R \times \bb T)} dz_1
\,\big|\, (S^\delta_z f) (z_1) \,\big| \;.
\end{equation*}
As this integral is bounded by $\Vert f\Vert_\infty$, the lemma is
proved for $p=1$.

We turn to the case $p\ge 2$.  By definition of
$X_{\gamma, \varepsilon}$ and $Q_\varepsilon$, the expectation
appearing on the left-hand side of the statement is bounded by
\begin{equation*}
\int_{(\bb R \times \bb T)^p} dz_1 \cdots  dz_p\,
\prod_{j=1}^p \,\big|\, (S^\delta_z f) (z_j) \,\big|\,\, 
\exp\Big\{ \gamma^2 \sum_{1\le i<j\le p} Q_\varepsilon (z_j-z_i)\Big\} \;.
\end{equation*}
The change of variables $z'_j = z_j -z$ and the fact that the support
of $f$ is contained in $\bb B(0,1/4)$ permits to bound the previous
expression by
\begin{equation*}
\delta^{-2p} \, \Vert f\Vert^p_\infty\, 
\int_{\bb B(0,\delta/4)^p} dz_1 \cdots  dz_p\,
\exp\Big\{ \gamma^2 \sum_{1\le i<j\le  p} Q_\varepsilon (z_j-z_i)\Big\} \;.
\end{equation*}
By Lemma \ref{s19}, this expression is less than or equal to
\begin{equation*}
C(p, \gamma)\, \delta^{-2p} \, \Vert f\Vert^p_\infty\, 
\int_{\bb B(0,\delta/4)^p} dz_1 \cdots  dz_p\,
\prod_{1\le i<j\le  p} \frac 1{\Vert z_j-z_i \Vert^{\gamma^2/2\pi} }
\end{equation*}
for some finite constant $C(p, \gamma)$. The change of variables
$z'_j = z_j/\delta$ yields that this expression is bounded by
\begin{equation*}
C(p, \gamma)\, \Vert f\Vert^p_\infty\,
\delta^{-p(p-1)(\gamma^2/4\pi)} \, \int_{\bb B(0,1/4)^p} dz_1 \cdots  dz_p\,
\prod_{1\le i<j\le  p} \frac 1{ \Vert z_j-z_i \Vert^{\gamma^2/2\pi} }\;\cdot
\end{equation*}
By Corollary \ref{s12}, as $\gamma^2/2\pi <2$ and $p<8\pi/\gamma^2$, the
previous expression is less than or equal to
$C(p, \gamma)\, \Vert f\Vert^p_\infty\,
\delta^{-p(p-1)(\gamma^2/4\pi)}$.
\end{proof}

The proofs of the next two lemmas are similar to the one of
\cite[Theorem 3.2]{HaiShe16}.

\begin{lemma}
\label{s1}
Fix $0<\gamma^2<4\pi$ and let $a= \gamma^2/2\pi<2$. There exists a
constant $C(\gamma)$ such that 
\begin{equation*}
\mathbb{E} \, \big[\, |\,
\langle X_{\gamma, \varepsilon}
\,-\, X_{\gamma,  \varepsilon'} \,,\,
S^\delta_z f \rangle \, |^2 \, \big]
\; \le \; C(\gamma) \, \Vert f\Vert^2_\infty\,
\varepsilon^{2\kappa}\, \delta^{-a- 2\kappa} 
\end{equation*}
for all $\varepsilon$, $\varepsilon'$, $\delta$ in $(0, 1)$,
$0<2\kappa < 1\wedge (2-a)$, $z\in \bb R \times \bb T$ and continuous
function $f: \bb R \times \bb T \to \bb R$ whose support is contained
in $\bb B (0,1/4)$,
\end{lemma}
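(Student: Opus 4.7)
Assume without loss of generality that $\varepsilon'\le \varepsilon$. For $w\in\bb R\times\bb T$ and $\eta>0$, set $Y_\eta(w):=\exp\{\gamma\mf v_\eta(w)-(\gamma^2/2)\bb E[\mf v_\eta(w)^2]\}$, so that $\langle X_{\gamma,\eta},g\rangle=\int g(w)\,Y_\eta(w)\,dw$. Since $(\mf v_\varepsilon,\mf v_{\varepsilon'})$ is a jointly centred Gaussian field, the Gaussian identity $\bb E[Y_{\varepsilon_1}(z_1)Y_{\varepsilon_2}(z_2)]=\exp\{\gamma^2 Q_{\varepsilon_1,\varepsilon_2}(z_2-z_1)\}$ together with the symmetry $Q_{\varepsilon_1,\varepsilon_2}=Q_{\varepsilon_2,\varepsilon_1}$ yields
\begin{equation*}
\bb E\big[|\langle X_{\gamma,\varepsilon}-X_{\gamma,\varepsilon'},S^\delta_z f\rangle|^2\big] \;=\; \int\!\!\int (S^\delta_z f)(z_1)\,(S^\delta_z f)(z_2)\,K(z_2-z_1)\,dz_1\,dz_2,
\end{equation*}
where
\begin{equation*}
K(z)\;:=\;e^{\gamma^2 Q_\varepsilon(z)}-2\, e^{\gamma^2 Q_{\varepsilon,\varepsilon'}(z)}+e^{\gamma^2 Q_{\varepsilon'}(z)}.
\end{equation*}

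Next I bound $|K(z)|$ in two different ways for $\|z\|\le 1$. On one hand, \eqref{74} gives $e^{\gamma^2 Q_*(z)}\le C(\|z\|+\varepsilon')^{-a}\le C\|z\|^{-a}$ for each of the three covariances appearing in $K$, whence
\begin{equation*}
|K(z)|\;\le\; C\,\|z\|^{-a}. \qquad(\ast)
\end{equation*}
On the other hand, \eqref{74} combined with the triangle inequality (passing through the intermediary $Q$) gives $|Q_\varepsilon(z)-Q_{\varepsilon,\varepsilon'}(z)|\le C\varepsilon/\|z\|$ and $|Q_{\varepsilon'}(z)-Q_{\varepsilon,\varepsilon'}(z)|\le C\varepsilon/\|z\|$. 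Applying the mean-value theorem to $r\mapsto e^{\gamma^2 r}$ and using the uniform bound $C\|z\|^{-a}$ on the exponential at any convex combination of the three covariances, one obtains
\begin{equation*}
|K(z)|\;\le\; C\,\varepsilon\,\|z\|^{-1-a}. \qquad(\ast\ast)
\end{equation*}

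Finally, interpolating $(\ast)$ and $(\ast\ast)$ via $\min(A,B)\le A^{1-\theta}B^\theta$ with $\theta=2\kappa\in[0,1]$ produces $|K(z)|\le C\,\varepsilon^{2\kappa}\|z\|^{-a-2\kappa}$. Inserting this into the double integral and using $|(S^\delta_z f)(w)|\le \delta^{-2}\|f\|_\infty\,\chi_{\bb B(z,\delta/4)}(w)$, translating by $z$ and rescaling by $\delta$, the integral reduces to $C\,\varepsilon^{2\kappa}\|f\|_\infty^2\,\delta^{-a-2\kappa}$ times $\int\!\!\int_{\bb B(0,1/4)^2}\|z_2-z_1\|^{-a-2\kappa}\,dz_1\,dz_2$, which is finite precisely under the hypothesis $a+2\kappa<2$ (integrability of $\|z\|^{-s}$ on a ball of $\bb R^2$ for $s<2$). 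This yields the claimed bound.

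The main obstacle is step $(\ast\ast)$: one must retain the prefactor $\varepsilon$, rather than a mixture in $\varepsilon$ and $\varepsilon'$, so that the interpolation with $(\ast)$ produces the desired $\varepsilon^{2\kappa}$ factor. This is accomplished by chaining each of the differences $Q_\varepsilon-Q_{\varepsilon,\varepsilon'}$ and $Q_{\varepsilon'}-Q_{\varepsilon,\varepsilon'}$ through the common intermediary $Q$ and invoking \eqref{74} with $\varepsilon$ playing the role of the larger scale in both bounds. The two conditions $2\kappa<1$ and $2\kappa<2-a$ correspond respectively to the admissible range of the interpolation parameter and to the two-dimensional integrability of $\|z\|^{-a-2\kappa}$.
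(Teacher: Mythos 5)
Your proposal is correct. It shares the paper's skeleton -- expand the second moment into the double integral of the kernel $R_{\gamma,\varepsilon,\varepsilon'}(w)=e^{\gamma^2 Q_\varepsilon(w)}-2e^{\gamma^2 Q_{\varepsilon,\varepsilon'}(w)}+e^{\gamma^2 Q_{\varepsilon'}(w)}$ and control it through the covariance estimates of Lemma \ref{s19}, Lemma \ref{s19b} and \eqref{74} -- but the way you combine the two kernel bounds is genuinely different. The paper distinguishes the cases $\varepsilon>\delta/2$ and $\varepsilon\le\delta/2$, and in the latter splits the integration domain into $\bb B(0,\varepsilon)$, where only the crude bound $\Vert w\Vert^{-a}$ is used, and the annulus $\bb B(0,\delta/2)\setminus\bb B(0,\varepsilon)$, where the bound $\varepsilon\,\Vert w\Vert^{-1-a}$ is available only because $\varepsilon\le\Vert w\Vert$ keeps the factor $e^{\gamma^2|Q_\varepsilon-Q|}$ bounded; the annulus integral is then evaluated separately in the three regimes $a<1$, $a=1$, $1<a<2$. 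You instead prove the two pointwise bounds $(\ast)$ and $(\ast\ast)$ on the whole ball -- the mean-value estimate $|e^{\gamma^2 A}-e^{\gamma^2 B}|\le\gamma^2|A-B|\,e^{\gamma^2\max(A,B)}$, with $\max(Q_\varepsilon,Q_{\varepsilon,\varepsilon'})\le\tfrac1{2\pi}\ln\Vert z\Vert^{-1}+C$ by \eqref{74}, makes $(\ast\ast)$ valid without any restriction $\varepsilon\le\Vert z\Vert$ -- and then interpolate $\min(A,B)\le A^{1-\theta}B^{\theta}$ with $\theta=2\kappa$ before integrating once. This removes both the case analysis in $\varepsilon$ versus $\delta$ and the three-regime computation, at no cost: the constraints $2\kappa\le 1$ (admissibility of the interpolation exponent, playing the role of the paper's comparison $\varepsilon\le\delta/2$) and $2\kappa<2-a$ (two-dimensional integrability of $\Vert w\Vert^{-a-2\kappa}$, playing the role of the paper's small-ball estimate) are exactly the hypotheses of the lemma, and the rescaling of $S^\delta_z f$ produces the stated factor $\Vert f\Vert_\infty^2\,\varepsilon^{2\kappa}\,\delta^{-a-2\kappa}$.
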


\begin{proof}
Fix $\varepsilon$, $\varepsilon'$, $\delta$ in $(0, 1)$ and assume,
without loss of generality that $\varepsilon' \le \varepsilon$. By
definition of $X_{\gamma, \varepsilon}$, the left-hand side of
the previous formula is equal to
\begin{equation*}
\int_{\bb R \times \bb T} dz_1 \int_{\bb R \times \bb T}  dz_2\,
(S^\delta_z f) (z_1)\, (S^\delta_z f) (z_2) \,
R_{\gamma, \varepsilon, \varepsilon'}(z_2-z_1)\;,
\end{equation*}
where
\begin{equation*}
R_{\gamma, \varepsilon, \varepsilon'}(w) \;=\;
e^{\gamma^2 Q_\varepsilon (w)}
\,-\, 2\, e^{\gamma^2 Q_{\varepsilon, \varepsilon'}(w)}
\,+\, e^{\gamma^2 Q_{\varepsilon'}(w)}\;,
\end{equation*}
and $Q_{\varepsilon,\varepsilon'}(w)$ has been introduced at the end
of Section \ref{sec07}.  By definition of $S^\delta_z f$ and since
the support of $f$ is contained in $\bb B(0,1/4)$, the absolute value
of the last integral is bounded above by
\begin{equation}
\label{37}
C_0\, \frac{\Vert f\Vert^2_\infty}{\delta^2}\,
\int_{\bb B(0,\delta/2)}
\big|\, R_{\gamma, \varepsilon, \varepsilon'}(w) \,\big| \; dw
\end{equation}
for some finite constant $C_0$.

Suppose first that $\varepsilon > \delta/2$.  By Lemma \ref{s19},
there exists a finite constant $C(\gamma)$ such that
$\exp\{\gamma^2 Q_{\varepsilon}(w)\} \,\le\, C(\gamma) \, \Vert
w\Vert^{- a}$
for all $\Vert w \Vert \le 1$. Here, recall, $a= \gamma^2/2\pi<2$. By
\eqref{74}, a similar bound holds for
$Q_{\varepsilon, \varepsilon'}(w)$ and $Q_{\varepsilon'}(w)$ in place
of $Q_{\varepsilon}(w)$.  Hence, \eqref{37} is less than or equal to
\begin{equation*}
C(\gamma) \, \frac{\Vert f\Vert^2_\infty}{\delta^2}\,
\int_{\bb B(0,\delta/2)} \frac 1{\Vert w\Vert^{a}} \; dw
\;\le\; C(\gamma) \, \Vert f\Vert^2_\infty\, \delta^{-a}
\;\le\; C(\gamma) \, \Vert f\Vert^2_\infty\,
\varepsilon^{2\kappa}\, \delta^{-a- 2\kappa}
\end{equation*}
for all $\kappa>0$ because $\delta/2 \le \varepsilon$.

We turn to the case $\varepsilon \le \delta/2$. We first consider the
integral appearing in \eqref{37} on the set $\bb B(0,\varepsilon)$.
By the bounds on $Q_{\varepsilon}$, $Q_{\varepsilon'}$,
$Q_{\varepsilon, \varepsilon'}$ presented above,
\begin{equation}
\label{17}
\frac{1}{\delta^2} \int_{\bb B(0,\varepsilon)}
\big|\, R_{\gamma, \varepsilon, \varepsilon'}(w) \,\big| \; dw\;
\le\; C(\gamma) \, \frac{\varepsilon^{2-a}}{\delta^2}
\le\; C(\gamma) \, \varepsilon^{2\kappa}\, \delta^{-a- 2\kappa}
\end{equation}
provided $2\kappa < 2-a$ because $\varepsilon \le \delta/2$.

We next consider the integral \eqref{37} on
$\bb B(0,\delta/2) \setminus \bb B(0,\varepsilon)$. Note that
\begin{align*}
& \Big|\, e^{\gamma^2 Q_{\varepsilon}(w)} \,-\,
e^{\gamma^2 Q_{\varepsilon, \varepsilon'}(w)} \,\Big| \\
&\qquad \;\le\;
e^{\gamma^2 Q(w)} \, \Big\{\,
\Big|\, e^{\gamma^2 [Q_{\varepsilon}(w) - Q(w)]} \,-\, 1\, \Big|
\,+\, \Big|\, e^{\gamma^2 [Q_{\varepsilon, \varepsilon'}(w) - Q(w)]}
\,-\, 1 \,\Big| \, \Big\}\;.
\end{align*}
By Proposition \ref{s22}, there exists a finite constant $C_0$ such
that $\exp\{\gamma^2 Q(w)\} \le C_0 \Vert w\Vert^{-a}$ for all
$\Vert w\Vert \le 1/2$ [consider, separately, the cases
$\Vert w\Vert \le 1/4$ and $1/4 < \Vert w\Vert \le 1/2$]. Hence, by
Lemma \ref{s19b} and \eqref{74}, as $\varepsilon \le \Vert w\Vert$,
\begin{equation*}
\Big|\, e^{\gamma^2 Q_{\varepsilon}(w)} \,-\,
e^{\gamma^2 Q_{\varepsilon, \varepsilon'}(w)} \,\Big| \;\le\;
C(\gamma) \, \frac {\varepsilon}{\Vert w\Vert^{1+a}}\;\cdot
\end{equation*}
A similar bound holds for $Q_{\varepsilon'}$ instead of
$Q_{\varepsilon}$ [because $\varepsilon' \le \varepsilon$]. Thus,
\begin{equation*}
\frac{1}{\delta^2} \int_{\bb B(0,\delta/2) \setminus \bb B(0,\varepsilon)}
\big|\, R_{\gamma, \varepsilon, \varepsilon'}(w) \,\big| \; dw\;
\le\; \frac{C(\gamma)}{\delta^2} \int_{\bb B(0,\delta/2) \setminus
  \bb B(0,\varepsilon)}
\frac {\varepsilon}{\Vert w\Vert^{1+a}}\; dw\; .
\end{equation*}
This expression  is bounded above by
\begin{equation*}
\frac{C(\gamma)}{\delta^2}\,
\begin{cases}
\displaystyle \varepsilon  \, \delta^{1-a}
& \text{if $a<1$}\;, \\
\displaystyle \varepsilon  \,  \ln
(\delta/\varepsilon) & \text{if $a=1$}\;, \\
\displaystyle \varepsilon^{2-a} & \text{if
  $1<a<2$}\;.
\end{cases}
\end{equation*}
It is easy to check that these expressions are bounded by $C(\gamma)
\, \varepsilon^{2\kappa}\, \delta^{-a- 2\kappa}$  in all three cases
provided $2\kappa < 1 \wedge (2-a)$. In conclusion, under the previous
assumption on $\kappa$,
\begin{equation*}
\frac{1}{\delta^2} \int_{\bb B(0,\delta/2) \setminus \bb B(0,\varepsilon)}
\big|\, R_{\gamma, \varepsilon, \varepsilon'}(w) \,\big| \; dw\;
\le\; C(\gamma) \, \varepsilon^{2\kappa}\, \delta^{-a- 2\kappa}\;.
\end{equation*}
This estimate together with \eqref{17} completes the proof of
the lemma.
\end{proof}

Recall that $\vartheta$ is another mollifier and recall the definition
of the Gaussian random field
$\{\widehat{\mf v}_{\varepsilon}(w) : w\in \bb R \times \bb T\}$
introduced above \eqref{74b}. Let $\widehat X_{\gamma, \varepsilon}$,
$\gamma\in \bb R$, be the random field defined by
\begin{equation*}
\widehat X_{\gamma, \varepsilon} (f) \;=\;
\< \widehat X_{\gamma, \varepsilon} \,,\, f \> \; :=\;  \int f (z)\,
e^{\gamma \, \widehat{\mf v}_\varepsilon (z) \,-\,
(\gamma^2/2)\,  E[\widehat{\mf v}_\varepsilon(z)^2]} \, dz\;,
\end{equation*}
for $f\in C^\infty_c(\bb R \times \bb T)$. The proof of the previous
result and the estimates \eqref{74b} yield the next result.

\begin{lemma}
\label{s1b}
Fix $0<\gamma^2<4\pi$ and let $a= \gamma^2/2\pi<2$. There exists a
constant $C(\gamma)$ such that 
\begin{equation*}
\mathbb{E} \, \big[\, |\,
\langle X_{\gamma, \varepsilon}
\,-\, \widehat X_{\gamma,  \varepsilon} \,,\,
S^\delta_z f \rangle \, |^2 \, \big]
\; \le \; C(\gamma) \, \Vert f\Vert^2_\infty\,
\varepsilon^{2\kappa}\, \delta^{-a- 2\kappa} 
\end{equation*}
for all $\varepsilon$, $\delta$ in $(0, 1)$,
$0<2\kappa < 1\wedge (2-a)$, $z\in \bb R \times \bb T$ and continuous
function $f: \bb R \times \bb T \to \bb R$ whose support is contained
in $\bb B (0,1/4)$,
\end{lemma}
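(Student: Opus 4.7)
The plan is to mimic the proof of Lemma \ref{s1} step by step, replacing the role of $Q_{\varepsilon,\varepsilon'}$ by the mixed covariance $Q^{\varrho,\vartheta}_\varepsilon$ and using the second pair of estimates \eqref{74b} in place of \eqref{74}. Write $\widehat{Q}_\varepsilon(w) := \bb E[\widehat{\mf v}_\varepsilon(0) \widehat{\mf v}_\varepsilon(w)]$ for the covariance of the new field. Since $\vartheta$ is another mollifier satisfying \eqref{s23}, Lemmata \ref{s19} and \ref{s19b} apply verbatim with $\vartheta$ in place of $\varrho$, so that, on $\Vert w\Vert\le 1$,
\begin{equation*}
e^{\gamma^2 Q_\varepsilon(w)}\,,\; e^{\gamma^2 Q^{\varrho,\vartheta}_\varepsilon(w)}\,,\; e^{\gamma^2 \widehat Q_\varepsilon(w)} \;\le\; \frac{C(\gamma)}{\Vert w\Vert^{a}}\;,
\end{equation*}
and $|Q(w)-Q^{\varrho,\vartheta}_\varepsilon(w)|$, $|Q(w)-Q_\varepsilon(w)|$, $|Q(w)-\widehat Q_\varepsilon(w)|$ are all bounded by $C(\gamma)\varepsilon/\Vert w\Vert$ on that region.

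Expanding the $L^2$-expectation and using $\bb E[\mf v_\varepsilon(z)^2]=Q_\varepsilon(0)$, $\bb E[\widehat{\mf v}_\varepsilon(z)^2]=\widehat Q_\varepsilon(0)$, one finds that the left-hand side equals
\begin{equation*}
\int dz_1\, dz_2 \, (S^\delta_z f)(z_1)\, (S^\delta_z f)(z_2)\, R^{\varrho,\vartheta}_{\gamma,\varepsilon}(z_2-z_1)\;,
\end{equation*}
where
\begin{equation*}
R^{\varrho,\vartheta}_{\gamma,\varepsilon}(w) \;=\; e^{\gamma^2 Q_\varepsilon(w)} \,-\, 2\,e^{\gamma^2 Q^{\varrho,\vartheta}_\varepsilon(w)} \,+\, e^{\gamma^2 \widehat Q_\varepsilon(w)}\;.
\end{equation*}
As the support of $f$ lies in $\bb B(0,1/4)$, this integral is bounded by $C_0 \Vert f\Vert^2_\infty \delta^{-2} \int_{\bb B(0,\delta/2)} |R^{\varrho,\vartheta}_{\gamma,\varepsilon}(w)|\, dw$, exactly as in \eqref{37}.

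We then split the integration into the three regimes used in Lemma \ref{s1}. When $\varepsilon>\delta/2$, use the pointwise $\Vert w\Vert^{-a}$ bound on each exponential to get $C(\gamma)\Vert f\Vert^2_\infty \delta^{-a} \le C(\gamma)\Vert f\Vert^2_\infty \varepsilon^{2\kappa}\delta^{-a-2\kappa}$. When $\varepsilon\le \delta/2$, the same pointwise bound on the ball $\bb B(0,\varepsilon)$ produces the contribution $C(\gamma) \varepsilon^{2-a}/\delta^2 \le C(\gamma)\varepsilon^{2\kappa}\delta^{-a-2\kappa}$ provided $2\kappa<2-a$. On the annulus $\bb B(0,\delta/2)\setminus \bb B(0,\varepsilon)$, factor
\begin{equation*}
\big|\,e^{\gamma^2 Q_\varepsilon(w)}-e^{\gamma^2 Q^{\varrho,\vartheta}_\varepsilon(w)}\,\big| \;\le\; e^{\gamma^2 Q(w)}\Big\{\big|e^{\gamma^2[Q_\varepsilon(w)-Q(w)]}-1\big|+\big|e^{\gamma^2[Q^{\varrho,\vartheta}_\varepsilon(w)-Q(w)]}-1\big|\Big\}
\end{equation*}
and likewise with $\widehat Q_\varepsilon$ in place of $Q_\varepsilon$; using the $\varepsilon/\Vert w\Vert$ differences above together with $e^{\gamma^2 Q(w)}\le C_0 \Vert w\Vert^{-a}$ (from Proposition \ref{s22}) yields $|R^{\varrho,\vartheta}_{\gamma,\varepsilon}(w)|\le C(\gamma)\varepsilon/\Vert w\Vert^{1+a}$ on this annulus. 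Integrating and considering separately the cases $a<1$, $a=1$, $1<a<2$ gives the same $C(\gamma)\varepsilon^{2\kappa}\delta^{-a-2\kappa}$ bound whenever $2\kappa<1\wedge(2-a)$, concluding the proof.

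There is no genuine obstacle here; the only thing to verify is that every covariance estimate invoked in the proof of Lemma \ref{s1} has a counterpart in the present two-mollifier setting, which is exactly the content of \eqref{74b} and the immediate extension of Lemmata \ref{s19}, \ref{s19b} to the mollifier $\vartheta$.
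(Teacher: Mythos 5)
Your proposal is correct and is essentially the paper's own argument: the paper proves Lemma \ref{s1b} by simply invoking the proof of Lemma \ref{s1} together with the estimates \eqref{74b}, which is exactly the substitution $Q_{\varepsilon,\varepsilon'} \to Q^{\varrho,\vartheta}_\varepsilon$ (and $Q_{\varepsilon'} \to \widehat Q_\varepsilon$, covered by Lemmata \ref{s19} and \ref{s19b} applied with the mollifier $\vartheta$) that you carry out. Spelling out the three regimes $\varepsilon>\delta/2$, $\bb B(0,\varepsilon)$ and the annulus, as you do, matches the paper's intended proof step for step.
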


Recall from Section \ref{sec01} that we denote by
$\mf C^{n} (\bb R \times \bb T)$, $n \in \bb N$ the dual of
$C^{n}_c(\bb R \times \bb T)$. Consider the random fields
$X_{\gamma, \varepsilon}$ as elements of
$\mf C^{\mf n}(\bb R \times \bb T)$ for some $n \in \bb N$. Next
result follows from Lemmata \ref{s2}, \ref{s1} and \ref{s1b}.

\begin{corollary}
\label{s4}
For every $0<\gamma^2<4\pi$, as $\varepsilon \to 0$, the sequence of
random fields $X_{\gamma, \varepsilon}$ converges in $L^2$ to a random
field represented by $X_{\gamma}$.  The limit does not depend on the
mollifier $\varrho$. Moreover, for each $p\in \bb N$,
$1\le p < 8\pi/\gamma^2$, there exists a finite constant
$C(p, \gamma)$ such that
\begin{equation*}
\mathbb{E} \, \big[\, |\,
\langle X_{\gamma} \,,\,
S^\delta_z f \rangle \, |^p \, \big]
\; \le \; C(p,\gamma) \, \Vert f\Vert^p_\infty\,
\delta^{-p(p-1)(\gamma^2/4\pi)}
\end{equation*}
for every $\delta$ in $(0, 1)$, $z\in \bb R \times \bb T$ and
continuous function $f: \bb R \times \bb T \to \bb R$ whose support is
contained in $\bb B (0,1/4)$
\end{corollary}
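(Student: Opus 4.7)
The plan is to derive all three assertions from Lemmata \ref{s2}, \ref{s1} and \ref{s1b}, using the smooth partition-of-unity technique already established in the proof of Lemma \ref{a05}.

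First I would show that, for every $h \in C^n_c(\bb R \times \bb T)$, the sequence $\langle X_{\gamma,\varepsilon}, h\rangle$ is Cauchy in $L^2(\bb P)$. Following the proof of Lemma \ref{a05}, decompose $h$ as a finite sum $h = \sum_{j,k} S^1_{z_{j,k}} F_{j,k}$, where each $F_{j,k}$ is supported in $\bb B(0,1/4)$, $\|F_{j,k}\|_\infty \le C_0 \|h\|_\infty$, and the number of non-vanishing terms is controlled by the diameter of the support of $h$. By the triangle inequality in $L^2(\bb P)$ together with Lemma \ref{s1} applied with $\delta = 1$, for every $0 < \kappa < (1/2) \wedge (1 - \gamma^2/4\pi)$,
\[
\mathbb{E}\big[\,|\langle X_{\gamma,\varepsilon} - X_{\gamma,\varepsilon'}, h\rangle|^2\,\big]^{1/2} \;\le\; C(\gamma, h)\, (\varepsilon \vee \varepsilon')^{\kappa},
\]
so the sequence is Cauchy in $L^2(\bb P)$. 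Define $\langle X_\gamma, h\rangle$ as its $L^2$ limit; linearity in $h$ is preserved in the limit, hence $X_\gamma$ defines a random element of $\mf C^n(\bb R \times \bb T)$.

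For the mollifier independence, let $\widehat{X}_{\gamma,\varepsilon}$ denote the field constructed from a second admissible mollifier $\vartheta$. Lemma \ref{s1b} gives an $L^2$ bound on $\langle X_{\gamma,\varepsilon} - \widehat{X}_{\gamma,\varepsilon}, S^\delta_z f\rangle$ of order $\varepsilon^\kappa$, which by the same decomposition of $h$ upgrades to
\[
\mathbb{E}\big[\,|\langle X_{\gamma,\varepsilon} - \widehat{X}_{\gamma,\varepsilon}, h\rangle|^2\,\big] \;\xrightarrow[\varepsilon \to 0]{}\; 0.
\]
Combined with the previous step, the two sequences must share the same $L^2$ limit, and in particular $X_\gamma$ does not depend on $\varrho$.

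Finally, the moment bound is inherited from Lemma \ref{s2} via Fatou. The $L^2(\bb P)$ convergence of $\langle X_{\gamma,\varepsilon}, S^\delta_z f\rangle$ to $\langle X_\gamma, S^\delta_z f\rangle$ provides a subsequence converging $\bb P$-almost surely; since Lemma \ref{s2} bounds $\mathbb{E}[|\langle X_{\gamma,\varepsilon}, S^\delta_z f\rangle|^p]$ uniformly in $\varepsilon$, Fatou's lemma yields
\[
\mathbb{E}\big[\,|\langle X_\gamma, S^\delta_z f\rangle|^p\,\big] \;\le\; \liminf_{\varepsilon \to 0}\, \mathbb{E}\big[\,|\langle X_{\gamma,\varepsilon}, S^\delta_z f\rangle|^p\,\big] \;\le\; C(p,\gamma)\, \|f\|_\infty^p\, \delta^{-p(p-1)(\gamma^2/4\pi)}
\]
for every $1 \le p < 8\pi/\gamma^2$. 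The only point demanding care — rather than any genuine obstacle — is the reduction of arbitrary compactly supported test functions to scaled bumps $S^\delta_z f$ via the partition of unity, so that Lemmata \ref{s1}, \ref{s1b}, \ref{s2}, stated only in that restricted class, can be invoked.
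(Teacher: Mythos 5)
Your proposal is correct and takes essentially the same route as the paper, which simply derives the corollary from Lemmata \ref{s2}, \ref{s1} and \ref{s1b}; you fill in exactly the expected routine steps (partition-of-unity reduction of a general test function to scaled bumps as in Lemma \ref{a05}, the Cauchy property in $L^2(\bb P)$ from Lemma \ref{s1}, mollifier independence from Lemma \ref{s1b}, and Fatou combined with Lemma \ref{s2} for the moment bound). The only cosmetic point is that Lemma \ref{s1} is stated for $\delta\in(0,1)$ while you invoke it at $\delta=1$; its proof covers this case without change, so no gap results.
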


\begin{remark}
\label{rm4}
The limit field $X_\gamma$ is called a Gaussian multiplicative chaos
(GMC). It has been introduced by Kahane \cite{Kah85}. We refer to the
reviews \cite{RobVar10, RhoVar14, DupRhoSheVar17} for properties of
these fields.
\end{remark}

\begin{lemma}
\label{s26b}
Fix $0<\gamma^2<4\pi$, $0<\nu<1$ and $p\in \bb N$ such that $2\le p
<8\pi/\gamma^2$.  Let $p_\nu = p - \nu (p-2)$.  Then, there exists a
finite constant $C(p,\gamma)$ such that
\begin{equation*}
\mathbb{P} \, \big[\, |\,
\langle X_{\gamma, \varepsilon}
\,-\, X_{\gamma} \,,\, S^\delta_z f \rangle \, | \,>\, \eta \, \big]
\;\le \; \frac{C(p,\gamma)}{\eta^{p_\nu}} \, \, \Vert f\Vert^{p_\nu}_\infty\,
(\varepsilon/\delta)^{2\kappa\nu}\, \delta^{-(\gamma^2/4\pi) p(p-1)} 
\end{equation*}
for all $\eta>0$, $0 < \varepsilon \le 1$, $0<\delta \le 1$, $0<
2\kappa < 1 \wedge 2a$, $z\in \bb R \times \bb T$ and continuous
function $f: \bb R \times \bb T \to \bb R$ whose support is contained
in $\bb B (0,1/4)$.
\end{lemma}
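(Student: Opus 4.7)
The plan is to combine a Markov inequality at exponent $p_\nu$ with a Hölder interpolation between the two bounds we already have: the refined $L^2$ estimate for the difference coming from Lemma \ref{s1}, and the $L^p$ estimate for each field separately coming from Corollary \ref{s4}. The key observation is the algebraic identity $p_\nu = p - \nu(p-2) = 2\nu + p(1-\nu)$, which is exactly the form that triggers Hölder interpolation between the $L^2$ and $L^p$ norms with weights $\nu$ and $1-\nu$.

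First I would set $Y := \langle X_{\gamma,\varepsilon}-X_\gamma\,,\,S^\delta_z f\rangle$ and, writing $|Y|^{p_\nu}=|Y|^{2\nu}\,|Y|^{p(1-\nu)}$, apply Hölder with conjugate exponents $1/\nu$ and $1/(1-\nu)$ to get
\[
\mathbb{E}\big[|Y|^{p_\nu}\big]\;\le\;
\mathbb{E}\big[|Y|^{2}\big]^{\nu}\,\mathbb{E}\big[|Y|^{p}\big]^{1-\nu}.
\]
For the $L^2$ factor I would invoke Lemma \ref{s1}: it gives a uniform bound for $\mathbb{E}[|\langle X_{\gamma,\varepsilon}-X_{\gamma,\varepsilon'}, S^\delta_z f\rangle|^2]$ by $C(\gamma)\,\|f\|_\infty^2\,\varepsilon^{2\kappa}\delta^{-a-2\kappa}$; letting $\varepsilon'\to 0$ and using the $L^2$-convergence $X_{\gamma,\varepsilon'}\to X_\gamma$ from Corollary \ref{s4} (together with Fatou), I transfer this bound to the difference $X_{\gamma,\varepsilon}-X_\gamma$. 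For the $L^p$ factor I would use the triangle inequality in $L^p$ and apply the $p$-th moment bound from Corollary \ref{s4} to each term separately, yielding $\mathbb{E}[|Y|^p]\le C(p,\gamma)\,\|f\|_\infty^p\,\delta^{-p(p-1)(\gamma^2/4\pi)}$.

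Multiplying the two estimates, I would obtain
\[
\mathbb{E}\big[|Y|^{p_\nu}\big]\;\le\;
C(p,\gamma)\,\|f\|_\infty^{p_\nu}\,\varepsilon^{2\kappa\nu}\,
\delta^{-\nu(a+2\kappa)-(1-\nu)p(p-1)(\gamma^2/4\pi)}.
\]
The bookkeeping step is to show that the exponent of $\delta$ appearing here is no smaller than $-2\kappa\nu-p(p-1)(\gamma^2/4\pi)$. Using $a=2(\gamma^2/4\pi)$, the exponent equals $-2\kappa\nu-(\gamma^2/4\pi)[2\nu+(1-\nu)p(p-1)]$, and since $p\ge 2$ implies $p(p-1)\ge 2$, one has $2\nu+(1-\nu)p(p-1)\le p(p-1)$; because $0<\delta\le 1$, this means $\delta$ raised to this (larger) exponent is bounded above by $\delta^{-2\kappa\nu-p(p-1)(\gamma^2/4\pi)}$. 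Factoring $\varepsilon^{2\kappa\nu}\delta^{-2\kappa\nu}=(\varepsilon/\delta)^{2\kappa\nu}$ puts the bound in the required form.

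Finally I would conclude by Markov's inequality,
\[
\mathbb{P}\big[|Y|>\eta\big]\;\le\;\eta^{-p_\nu}\,\mathbb{E}\big[|Y|^{p_\nu}\big],
\]
which yields the estimate in the statement. There is no genuine obstacle here — the argument is a clean interpolation — but the one point that needs care is the passage $\varepsilon'\to 0$ in the $L^2$ estimate of Lemma \ref{s1}, which has to be justified via Fatou and the $L^2$-convergence provided by Corollary \ref{s4}; and the algebraic check that the Hölder interpolation produces an exponent of $\delta$ that can be absorbed (using $p(p-1)\ge 2$ and $\delta\le 1$) into the simpler exponent $-p(p-1)(\gamma^2/4\pi)-2\kappa\nu$ stated in the conclusion.
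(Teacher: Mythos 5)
Your proof is correct and follows essentially the same route as the paper: the same inputs (the $L^2$ bound of Lemma \ref{s1} after letting $\varepsilon'\to 0$, the uniform $p$-th moment bound — which for the term $X_{\gamma,\varepsilon}$ is Lemma \ref{s2} rather than Corollary \ref{s4}, the latter covering $X_\gamma$ — together with the $\nu$, $1-\nu$ interpolation and the absorption of the surplus $\delta$-power via $p(p-1)\ge 2$ and $\delta\le 1$). The only cosmetic difference is that you interpolate moments by H\"older and then apply Markov once at exponent $p_\nu$, whereas the paper applies Tchebycheff separately to the $L^2$ and $L^p$ bounds and multiplies the resulting probability estimates raised to the powers $\nu$ and $1-\nu$; the exponents obtained are identical.
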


\begin{proof}
Fix $\eta >0$, $0 < \varepsilon \le 1$, $0<\delta \le 1$, $z\in \bb R
\times \bb T$, and a continuous function $f: \bb R \times \bb T \to
\bb R$ whose support is contained in $\bb B (0,1/4)$.

Recall that $a= \gamma^2/2\pi$. By Lemma \ref{s1}, after
sending $\varepsilon'\to 0$, and Tchebycheff inequality, there exists a
constant $C(\gamma)$ such that
\begin{equation*}
\mathbb{P} \, \big[\, |\,
\langle X_{\gamma, \varepsilon}
\,-\, X_{\gamma} \,,\, S^\delta_z f \rangle \, | \,>\, \eta \, \big]
\; \le \;  \frac{C(\gamma)}{\eta^2} \, \Vert f\Vert^2_\infty\,
(\varepsilon/\delta)^{2\kappa}\, \delta^{-a} 
\end{equation*}
for all $0< 2\kappa < 1 \wedge 2a$. 

By Lemma \ref{s2}, Corollary \ref{s4} and Tchebycheff inequality,
there exists a constant $C(\gamma)$ such that
\begin{equation*}
\mathbb{P} \, \big[\, |\,
\langle X_{\gamma, \varepsilon}
\,-\, X_{\gamma} \,,\, S^\delta_z f \rangle \, | \,>\, \eta \, \big]
\; \le \;   \frac{C(p,\gamma)}{\eta^p} \, \Vert f\Vert^p_\infty\,
\delta^{-p(p-1)a/2} \;.
\end{equation*}
We have used above that $(a+b)^p \le 2^p(a^p + b^p)$ and that $a=
\gamma^2/2\pi$.

Fix $0<\nu <1$.  Take the first inequality to the power $\nu$,
the second one to the power $1-\nu$ and multiply them to get that
\begin{equation*}
\mathbb{P} \, \big[\, |\,
\langle X_{\gamma, \varepsilon}
\,-\, X_{\gamma} \,,\, S^\delta_z f \rangle \, | \,>\, \eta \, \big]
\; \le \;   \frac{C(p,\gamma)}{\eta^{p_\nu}} \, \Vert f\Vert^{p_\nu}_\infty\,
(\varepsilon/\delta)^{2\kappa\nu}\, \delta^{-p(p-1)a/2} \,
\delta^{a\nu [p(p-1)-2]/2}\;,
\end{equation*}
where $p_\nu = 2 \nu + (1-\nu)p = p - \nu (p-2)$. This
completes the proof of the lemma because $p\ge 2$.
\end{proof}

Let
\begin{equation*}
p_0 \;=\; p_0(\gamma) \;:=\;
\max\Big\{\, p\in \bb N : p \,<\, \frac{8\pi}{\gamma^2}\,\Big\}\;.
\end{equation*}

\begin{corollary}
\label{s3}
Fix $0<\gamma^2<4\pi$ and $2\le s \le p_0-1$.  Then, there exists a
finite constant $C(\gamma,s)$ such that
\begin{equation*}
\mathbb{P} \, \big[\, |\,
\langle X_{\gamma, \varepsilon}
\,-\, X_{\gamma} \,,\, S^\delta_z f \rangle \, | \,>\, \eta \, \big]
\;\le \; \frac{C(s,\gamma)}{\eta^{s_\kappa}} \,
\, \Vert f\Vert^{s_\kappa}_\infty\,
(\varepsilon/\delta)^{2\kappa^2}\, \delta^{-(\gamma^2/4\pi) (s-1/2)^2} 
\end{equation*}
for all $0 < \varepsilon \le 1$, $0<\delta \le 1$, $0<2\kappa < 1
\wedge 2a$, $z\in \bb R \times \bb T$ and continuous function $f: \bb
R \times \bb T \to \bb R$ whose support is contained in $\bb B
(0,1/4)$, where $s_\kappa = s - \kappa (p-2)$. 
\end{corollary}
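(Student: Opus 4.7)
The plan is to deduce the corollary directly from Lemma~\ref{s26b} by a specialization of its parameters. Specifically, I would apply Lemma~\ref{s26b} with $p := s$ and $\nu := \kappa$. The hypotheses are met: since $2 \le s \le p_0 - 1 < 8\pi/\gamma^2$, the choice $p=s$ lies in $\bb N$ and satisfies $2\le p < 8\pi/\gamma^2$; and the assumption $0<2\kappa<1\wedge 2a$ in particular forces $\kappa<1$, so $\nu = \kappa \in (0,1)$ is an admissible interpolation parameter. With these substitutions one has $p_\nu = s - \kappa(s-2) = s_\kappa$ and the mixed exponent $2\kappa\nu = 2\kappa^2$, so Lemma~\ref{s26b} delivers
\begin{equation*}
\mathbb{P} \, \big[\, |\,
\langle X_{\gamma, \varepsilon}
\,-\, X_{\gamma} \,,\, S^\delta_z f \rangle \, | \,>\, \eta \, \big]
\;\le\; \frac{C(s,\gamma)}{\eta^{s_\kappa}} \,
\Vert f\Vert^{s_\kappa}_\infty\,
(\varepsilon/\delta)^{2\kappa^2}\,
\delta^{-(\gamma^2/4\pi)\, s(s-1)}
\end{equation*}
for all admissible $\eta,\varepsilon,\delta,z,f$.

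To conclude, I would weaken the $\delta$-exponent from $s(s-1)$ to $(s-\tfrac12)^2$. Since $(s-\tfrac12)^2 = s(s-1) + \tfrac14 \ge s(s-1)$ and $\delta \in (0,1]$, one has
\begin{equation*}
\delta^{-(\gamma^2/4\pi)\, s(s-1)}
\;\le\; \delta^{-(\gamma^2/4\pi)\,(s-1/2)^2},
\end{equation*}
and combining this with the previous display yields exactly the statement of the corollary. There is no real obstacle here: the corollary is essentially a repackaging of Lemma~\ref{s26b} under the specific interpolation $\nu=\kappa$ followed by a cosmetic weakening of the $\delta$-exponent. I expect the slightly inflated form $(s-\tfrac12)^2$ is preferred because, when this estimate is later fed into a Kolmogorov-type criterion (e.g.\ for convergence of $X_{\gamma,\varepsilon}$ in $C^\alpha$ along a dyadic scale $\delta=2^{-n}$), a square $(s-\tfrac12)^2$ combines more cleanly with the exponent $\alpha_\gamma$ defined via $\sqrt{1+8/b}$ in Theorem~\ref{mt1} than the factored form $s(s-1)$ would.
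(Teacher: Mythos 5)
There is a genuine gap. Your argument specializes Lemma \ref{s26b} to $p:=s$, which is only legitimate when $s$ is an integer; but the corollary fixes a \emph{real} parameter $s$ with $2\le s\le p_0-1$ (nothing in the statement restricts $s$ to $\bb N$), and this is essential for its later use: in the proof of Theorem \ref{s27} the corollary is invoked with $s_0=(1/2)[1-(\alpha/b)]$, which is generically not an integer, since $s_0$ is chosen as the critical point of $M(s)=\alpha s+(\gamma^2/4\pi)[s-(1/2)]^2$. Lemma \ref{s26b} (through Lemma \ref{s2}, whose proof expands the $p$-th moment of the chaos as a $p$-fold integral) is only available for integer moments, which is exactly the limitation recorded in Remark \ref{rm2}. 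So the step ``the choice $p=s$ lies in $\bb N$'' does not hold in the generality needed, and your proof does not cover the cases the corollary is actually used for.

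The paper's proof handles real $s$ by interpolation between consecutive integers: take $q\in\bb N$ with $q\le s<q+1$, write $s=\theta q+(1-\theta)(q+1)$, apply Lemma \ref{s26b} with $p=q$ and with $p=q+1$ (both admissible since $q+1\le p_0<8\pi/\gamma^2$), each with $\nu=\kappa$, raise the two bounds to the powers $\theta$ and $1-\theta$, and multiply, using $\bb P[A]=\bb P[A]^{\theta}\,\bb P[A]^{1-\theta}$. This produces the exponent $B_\theta=\theta q(q-1)+(1-\theta)q(q+1)=s(s-1)+\theta(1-\theta)\le s(s-1)+\tfrac14=[s-\tfrac12]^2$. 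In particular the exponent $(s-\tfrac12)^2$ is not a ``cosmetic weakening'' of $s(s-1)$ chosen for aesthetic compatibility with Theorem \ref{mt1}; it is forced by the loss $\theta(1-\theta)\le 1/4$ in the integer-to-real interpolation. (Your observation that $\delta^{-b\,s(s-1)}\le\delta^{-b(s-1/2)^2}$ for $\delta\le1$ is correct, and your argument would be fine as stated if the corollary were restricted to integer $s$, but then it could not be applied with $s_0$ as above.)
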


\begin{proof}
Fix $2\le s \le p_0-1$, and let $q\in\bb N$ such that $q\le
s<q+1$. Note that $2\le m < (8\pi/\gamma^2)$ for $m=q$, $q+1$. We
may, therefore, apply Lemma \ref{s26b} with $q$ and $q+1$. Let $0<
\theta\le 1$ be such that $s = \theta\, q + (1-\theta) \, (q+1)$.

Apply Lemma \ref{s26b} with $p=q$, $\nu = \kappa$ and take the
inequality to the power $\theta$. Repeat the same procedure with $p=q+1$,
$\nu = \kappa$ and take the power $1-\theta$. Multiply both
inequalities to conclude that
\begin{equation*}
\mathbb{P} \,
\big [\, |\, \langle X_{\gamma, \varepsilon} \,-\, X_{\gamma} \,,\, 
S^\delta_z f \rangle \, | \,>\, \eta \, \big] \;\le\;
\frac{C(\gamma)}{\eta^{s_\kappa}} \, \, \Vert f\Vert^{s_\kappa}_\infty\,
(\varepsilon/\delta)^{2\kappa^2}\, \delta^{-(\gamma^2/4\pi) \, B_\theta} \;,
\end{equation*}
where $s_\kappa = s - \kappa (s-2)$ and $B_\theta = \theta q(q-1) +
(1-\theta)q(q+1)$. As $s = \theta\, q + (1-\theta) \, (q+1)$, an
elementary computation yields that $B_\theta \le s(s-1) + (1/4) =
[s-(1/2)]^2$.
\end{proof}

\subsection{Convergence in $C^\alpha$}

We Prove in Theorem \ref{s27} below that the sequence of random
fields $X_{\gamma, \varepsilon}$ converges in $C^\alpha([-T,T]\times
\bb T)$ for all $T>0$. 

We start introducing an orthonormal basis of
$L^2(\bb R \times \bb T)$.  We refer to \cite[Chapter 3]{Mey92},
\cite[Chapter 1]{Tri08} and \cite[Section 3]{Hai14} for a proof of all
assertions made below.  Let $\varphi: \bb R \to \bb R$ be the scaling
function of a multiresolution of $\bb R$, the ``father wavelet''. This
is a function in $L^2(\bb R)$ such that
\begin{itemize}
\item[(i)] $\displaystyle \int_{\bb R} \varphi(x)\, \varphi(x+k)\, dx
  \;=\; \delta_{0,k}$ for every $k\in\bb Z$;
\item[(ii)] There exist constants $(a_k : k\in \bb Z)$ auch that
  $\displaystyle \varphi(x) = \sum_{k\in\bb Z} a_k\, \varphi(2x-k)$.
\end{itemize}

For every $r\in\bb N$, there exists a compactly supported function
$\varphi$ in $C^r(\bb R)$ satisfying (i) and (ii). Moreover, in (ii),
$a_k=0$ for all but a finite number of integers $k$.

For $p$, $n\in \bb Z$, let $\varphi^n_p (x) \,=\, 2^{n/2}\, \varphi
(2^n x - p)$,
\begin{equation*}
\psi (x) \;=\; \sum_{q\in\bb Z} (-1)^q \, a_{1-q}\, \varphi(2x-q) \;,
\quad
\psi^n_p (x) \;=\; 2^{n/2}\, \psi (2^n x - p) \;.
\end{equation*}
For each integer $0\le m\le r$,
\begin{equation}
\label{38}
\int_{\bb R} \psi(x)\, x^m\, dx \;=\; 0\;,
\end{equation}
and, for every $n\in \bb Z$, the set
\begin{equation}
\label{39}
\big\{ \varphi^n_p : p\in\bb Z \big\} \,\cup\,
\big\{ \psi^m_p : m\ge n \,,\, p\in \bb Z \big\}
\end{equation}
forms an orthonormal basis of $L^2(\bb R)$.

A multiresolution analysis is also available on the torus $\bb T$. Fix
$L$ sufficiently large for $\varphi^L_0$, $\psi^L_0$ to have a support
contained in $(-1/2, 1/2)$. Let
$\color{bblue} P_j = \{m \in \bb Z : 0\le m < 2^{j} \}$.

For $j\ge L$, $m\in P_j$, let
\begin{equation}
\label{40}
\varphi^j_{\tau, m}(x) \;=\; \sum_{\ell\in \bb Z} \varphi^{j}_{m}(x-\ell)
\;, \quad
\psi^{j}_{\tau, m}(x) \;=\; \sum_{\ell\in \bb Z} \psi^{j}_{m}(x-\ell)
\;.
\end{equation}
The functions $\varphi^j_{\tau, m}$, $\psi^{j}_{\tau, m}$ are
periodic, with period $1$. Let $\varphi^{\tau,j}_{m}$,
$\psi^{\tau, j}_{m}: \bb T \to \bb R$ be the functions defined by
\begin{equation*}
\varphi^{\tau,j}_{m}(x) \;=\; \varphi^j_{\tau, m}(x)
\;, \quad
\psi^{\tau, j}_{m}(x) \;=\; \psi^{j}_{\tau, m}(x)\;, \quad x\in \bb
T\;=\; [-1/2 , 1/2) \;.
\end{equation*}
Since the support of $\varphi^L_0$, $\psi^L_0$ are contained in
$(-1/2,1/2)$, for each fixed $x\in \bb R$, $j\ge L$ and $m\in P_j$, in
the sums \eqref{40} there is only one $\ell\in \bb Z$ such that
$\varphi^{j}_{m}(x-\ell)\not = 0$. 

Extend the operator $S^{\delta}_{z}$ introduced in \eqref{35} to
functions defined on $\bb R$: For $0<\delta \le 1$, $y\in \bb R$ and
$g:\bb R\to \bb R$, let
$(S^{\delta}_{y} g)(x) = \delta^{-1} g(\delta^{-1}(x-y))$. By
definition,
\begin{equation}
\label{42}
\varphi^{\tau,j}_{m} \;=\; 2^{-j/2}\, S^{2^{-j}}_{m/2^j}
\varphi\;, \quad \psi^{\tau,j}_{m} \;=\; 2^{-j/2}\, S^{2^{-j}}_{m/2^j}
\psi\;.
\end{equation}
There is a slight abuse of notation in this identify, as $\varphi$,
$\psi$ are functions defined on $\bb R$. For $x\in \bb T$,
$2^{-j}\, (S^{2^{-j}}_{m/2^j} \varphi)(x)$ has to be understood as
$\varphi(2^j[x - \ell - (m/2^j)])$, where $\ell$ is the unique integer
which turns the point $x - \ell - (m/2^j)$ an element of
$[-1/2, 1/2)$.

The set
$\{\varphi^{\tau,L}_{m} : m\in P_L\} \cup \{\psi^{\tau, n}_{m} : n\ge
L \,,\, m\in P_{n}\}$
forms an orthonormal basis of $L^2(\bb T)$.  Clearly, tensor products
provide an orthonormal basis of $L^2(\bb R \times \bb T)$, but we
proceed differently to have products of functions equally scaled.  Let
$\ms B \,=\, \{\phi^{0,L}_{p,q}, \phi^{\iota,n}_{p,m} : \iota\in \{1,
2, 3\} \,,\, p\in \bb Z \,,\, q \in P_L \,,\, n \ge L \,,\, m\in
P_{n}\}$, where
\begin{align*}
& \phi^{0,L}_{p,q} (t,x)\;=\; \varphi^L_p (t)\,
\varphi^{\tau,L}_q (x)\;, \quad
\phi^{1,n}_{p,m} (t,x)\;=\;
\varphi^{n}_p (t)\, \psi^{\tau, n}_m (x)\;,  \\
& \quad \phi^{2,n}_{p,m} (t,x)\;=\;
\psi^{n}_p (t) \, \varphi^{\tau,n}_m (x) \;, \quad
\phi^{3,n}_{p,m} (t,x) \;=\;
\psi^{n}_p (t)\, \psi^{\tau, n}_m (x) \;.
\end{align*}
It is not difficult to show that this family is orthogonal.  It
follows from property (ii) and \eqref{39} that it generates
$L^2(\bb R \times \bb T)$. Moreover, in view of \eqref{42}, the
elements of this basis can be represented in terms of the operator
$S^\delta_z$:
\begin{equation}
\label{41}
\phi^{0,L}_{p,q} \;=\;
2^{-L} \, S^{2^{-L}}_{(p/2^L, q/2^L)} \, \Phi_{0}\;, \quad
\phi^{\iota,n}_{p,m} \;=\;
2^{-n} \, S^{2^{-n}}_{(p/2^n, m/2^n)}
\, \Phi_{\iota}\;,
\end{equation}
with the same convention as in \eqref{42}, and where
$\Phi_{0} (t,x) = \phi(t) \, \phi(x)$,
\begin{equation*}
\Phi_{1} (t,x) = \varphi(t)\, \psi(x) \;, \quad
\Phi_{2} (t,x) = \psi(t) \, \varphi(x) \;, \quad
\Phi_{3} (t,x) = \psi(t) \, \psi(x)\;.
\end{equation*}

Let $X$ be an element in the dual of $C^r_0(\bb R \times \bb T)$ for
some $r>0$.  Fix $T_1>0$, and let $A_\iota = A_\iota(T_1)$, $A = A(T_1)$ be
given by 
\begin{equation}
\label{45}
\begin{gathered}
A_\iota \;=\; \sup_{n\ge L}
\max_{m\in P_n} \max_{p}
2^{n\alpha}\, \big|\, \< X \,,\,
S^{2^{-n}}_{(p/2^n, m/2^n)} \Phi_\iota  \>\, \big|\;,
\quad 1\,\le\, \iota \,\le\, 3\;, \\
A_0 \;=\; \max_{m\in P_L} \max_{p}
2^{L\alpha}\, \big|\, \< X \,,\,
S^{2^{-L}}_{(p/2^L, m/2^L)} \Phi_0  \>\, \big|\;, \quad
A \;=\; \max_{0\le \iota \le 3} A_\iota \;,
\end{gathered}
\end{equation}
where the maximum in the first line is carried over all $p\in \bb Z$
such that $|p/2^n|\le T_1+1$. In the second line, it is carried over all
$p\in \bb Z$ such that $|p/2^L|\le T_1+1$.

\begin{lemma}
\label{l01}
Let $\alpha<0$, $X$ be an element in the dual of
$C^r_0(\bb R \times \bb T)$, for some $r\in \bb N$, $r>-\alpha$. Fix
$T_1>0$. Then, there exists a constant $C_0$ such that 
\begin{equation*}
\big|\, \< X \,,\, S^\delta_z h \>
\,\big|\;\le\; C_0 \, A\, \delta^\alpha
\end{equation*}
for all $z\in [-T_1,T_1]\times \bb T$, $\delta \in (0,1]$ and function $h$
in $C^r_0(\bb R \times \bb T)$ whose support is contained in
$\bb B(0,1/4)$ and such that $\Vert h\Vert_{C^r} \le 1$, In
particular, if $A<\infty$, $X \in C^\alpha([-T_1,T_1]\times \bb T)$ and
$\Vert X \Vert_{C^\alpha([-T_1,T_1]\times \bb T)} \le C_0 A$.
\end{lemma}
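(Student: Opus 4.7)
\textbf{Proof plan for Lemma \ref{l01}.}

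The plan is to expand $S^\delta_z h$ in the orthonormal wavelet basis $\ms B$ constructed above, apply $X$ term by term, and use two complementary estimates on the wavelet coefficients according to whether the wavelet scale is coarser or finer than $\delta$. Concretely, write
\begin{equation*}
S^\delta_z h \;=\;
\sum_{p,q} c^{0,L}_{p,q}\, \phi^{0,L}_{p,q}
\;+\; \sum_{\iota=1}^3 \sum_{n\ge L} \sum_{p,m}
c^{\iota,n}_{p,m}\, \phi^{\iota,n}_{p,m}\;,
\end{equation*}
with $c^{\iota,n}_{p,m} = \langle S^\delta_z h\,,\, \phi^{\iota,n}_{p,m}\rangle$. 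By \eqref{41} and the definition \eqref{45} of $A$, $|X(\phi^{\iota,n}_{p,m})| \le A\, 2^{-n(1+\alpha)}$, and similarly for the finitely many father-wavelet terms at scale $L$. Because the support of $S^\delta_z h$ is contained in $\bb B(z,\delta/4)$ and $z\in [-T_1,T_1]\times \bb T$, only the wavelets indexed by $|p/2^n| \le T_1+1$ with $w_{p,m} := (p/2^n, m/2^n)$ at distance $\lesssim 2^{-n}+\delta$ from $z$ can give a non-zero coefficient, and the number of such $(p,m)$ is bounded by $C(1+ \delta 2^n)^2$.

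Let $n_0\in \bb Z$ satisfy $2^{-n_0-1} < \delta \le 2^{-n_0}$. In the \emph{coarse regime} $L\le n \le n_0$, one only needs the crude bound
\begin{equation*}
\big|\, c^{\iota,n}_{p,m}\,\big| \;\le\;
\Vert S^\delta_z h\Vert_{L^1}\, \Vert \phi^{\iota,n}_{p,m}\Vert_{L^\infty}
\;\le\; C\, \Vert h\Vert_{L^\infty}\, 2^{n}\;,
\end{equation*}
together with the fact that $O(1)$ wavelets at this scale intersect $\bb B(z,\delta/4)$. Summing over $L\le n\le n_0$ contributes at most
\begin{equation*}
C\, A\, \Vert h\Vert_{L^\infty}\,
\sum_{n\le n_0} 2^{n}\, 2^{-n(1+\alpha)}
\;\le\; C\, A\, \Vert h\Vert_{L^\infty}\, 2^{-n_0\alpha}
\;\le\; C\, A\, \delta^{\alpha}\;,
\end{equation*}
since $\alpha<0$ and the geometric sum is dominated by its last term.

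In the \emph{fine regime} $n>n_0$, the wavelet is narrower than $S^\delta_z h$, and we use the $r$ vanishing moments of $\psi$ in \eqref{38}. Taylor-expanding $(S^\delta_z h)(w)$ around $w_{p,m}$ to order $r-1$, subtracting the polynomial (which integrates to zero against $\phi^{\iota,n}_{p,m}$ for $\iota\in\{1,2,3\}$ since at least one factor has $r$ vanishing moments), and using $\Vert \partial^k(S^\delta_z h)\Vert_{L^\infty} \le C\,\Vert h\Vert_{C^r}\,\delta^{-2-k}$ for $k\le r$, one gets
\begin{equation*}
\big|\, c^{\iota,n}_{p,m}\,\big|
\;\le\; C\, \Vert h\Vert_{C^r}\, \delta^{-2-r}\,
\int |w-w_{p,m}|^r\, |\phi^{\iota,n}_{p,m}(w)|\, dw
\;\le\; C\, \Vert h\Vert_{C^r}\, \delta^{-2-r}\, 2^{-n(1+r)}\;.
\end{equation*}
There are $O((\delta 2^n)^2)$ wavelets per scale. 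Multiplying and summing,
\begin{equation*}
\sum_{n>n_0} (\delta 2^n)^2\, C\,\Vert h\Vert_{C^r}\,
\delta^{-2-r}\, 2^{-n(1+r)}\, A\, 2^{-n(1+\alpha)}
\;=\; C\, A\, \Vert h\Vert_{C^r}\, \delta^{-r}
\sum_{n>n_0} 2^{-n(r+\alpha)}\;.
\end{equation*}
Because $r>-\alpha$ by hypothesis, the geometric series converges and is controlled by $2^{-n_0(r+\alpha)}\le C\,\delta^{r+\alpha}$, so this contribution is also $\le C\, A\, \delta^{\alpha}$. Adding the two regimes and the finite father-wavelet sum, and using $\Vert h\Vert_{C^r}\le 1$, gives the stated bound; the last statement of the lemma then follows by taking the supremum in the definition \eqref{71} of $\Vert X\Vert_{C^\alpha}$. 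The main technical obstacle is verifying that the wavelet expansion actually converges when applied to a distribution of negative regularity, but this is precisely what the above quantitative bounds ensure: the partial sums form a Cauchy sequence in the dual pairing, so $X(S^\delta_z h)$ is unambiguously the sum of the series.
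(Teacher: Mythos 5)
Your proof is correct and follows essentially the same route as the paper: expand $S^\delta_z h$ in the wavelet basis, use the definition of $A$ for $|X(\phi^{\iota,n}_{p,m})|$, split at the scale $n_0$ with $2^{-n_0}\sim\delta$, bound coarse scales by the $L^\infty$--$L^1$ pairing and fine scales by a Taylor expansion against the $r$ vanishing moments, with $r>-\alpha$ giving the convergent geometric series. The only cosmetic difference is the count of active wavelets at coarse scales ($O(1)$ in your write-up versus the paper's $C_0\delta^2 2^{2n}$), and both counts lead to the same $C_0\,A\,\delta^\alpha$ bound.
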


\begin{proof}
Fix $z\in [-T_1,T_1]\times \bb T$, $\delta \in (0,1]$ and a function $h$
in $C^r_0(\bb R \times \bb T)$ whose support is contained in
$\bb B(0,1/4)$ and such that $\Vert h\Vert_{C^r} \le 1$. In this
proof, $C_0$ represents a constant which does not depend on $T_1$,
$z$, $\delta$ and $h$, and may change from line to line.

As $S^\delta_z h$ belongs to $L^2(\bb R \times \bb T)$ and
$\phi^{0,L}_{p,q}$, $\phi^{\iota,n}_{p,m} $ to $C^r_0(\bb R \times \bb
T)$,
\begin{equation}
\label{44}
\begin{aligned}
\< X \,,\, S^\delta_z h \> \; &=\;
\sum_{\iota = 1}^3 \sum_{n=L}^\infty \sum_{p\in \bb Z}  \sum_{m\in P_{n}}
\< X \,,\,  \phi^{\iota,n}_{p,m} \>\,
\< \phi^{\iota ,n}_{p,m} \,,\, S^\delta_z h \> \\
&+\;
\sum_{p\in \bb Z} \sum_{q\in P_{L}}  \< X \,,\,  \phi^{0,L}_{p,q} \>\,
\< \phi^{0,L}_{p,q} \,,\, S^\delta_z h \> \;.
\end{aligned}
\end{equation}
Clearly, $\< \phi^{\iota ,n}_{p,m} \,,\, S^\delta_z h \> = 0$ if
the supports of $ \phi^{\iota ,n}_{p,m}$ and $S^\delta_z h$ are
disjoint. Hence, in view of \eqref{41}, \eqref{45}, the absolute value
of the first sum is bounded by
\begin{equation}
\label{43}
A \sum_{\iota = 1}^3 \sum_{n=L}^\infty \sum_{p\in \bb Z} \sum_{m\in P_{n}}
2^{-n(1+\alpha)}\, \big|\, \< \phi^{\iota ,n}_{p,m} \,,\, 
S^\delta_z h \> \, \big| \;.
\end{equation}

Let $n_0$ be the integer such that $2^{-n_0}\le \delta < 2^{-n_0+1}$.
Since $h$ belongs to $C^r_0(\bb R \times \bb T)$ and
$\Vert h\Vert_{C^r} \le 1$, by a Taylor expansion, \eqref{38} and
Schwarz inequality, for $\iota\in \{1,2,3\}$, $p\in\bb Z$, $n\ge n_0$,
$m\in P_n$,
\begin{align*}
\big|\, \< \phi^{\iota,n}_{p,m} \,,\, S^\delta_z h \> \, \big| \;\le\;
C_0 \, 2^{-(n-n_0)(1+r)} \, 2^{n_0}\;.
\end{align*}
Note that the constant $C_0$ does not depend on $h$. Here and below,
one only uses the fact that the support of $h$ is contained in $\bb
B(0,1/4)$ and that $\Vert h\Vert_{C^r} \le 1$.

For a fixed $\delta$, there are less than
$(3\delta)^2 2^{2n} \le C_0 2^{2(n-n_0)}$ pairs $(p,m)$ for which the
supports of $ \phi^{\iota ,n}_{p,m}$ and $S^\delta_z h$ are not
disjoints. Hence the sum of the terms $n\ge n_0$ in \eqref{43} is
bounded by
\begin{equation*}
C_0\, \sum_{n=n_0}^\infty 2^{-n(1+\alpha)}\, 2^{2(n-n_0)}\,
2^{-(n-n_0)(1+r)} \, 2^{n_0} \;\le\;
C_0\, \sum_{n=n_0}^\infty 2^{-(n-n_0)(r + \alpha)}\, 2^{-n_0\alpha}\;.
\end{equation*}
As $r>-\alpha$, this expression is bounded by $C_0 2^{-n_0\alpha}
\,\le \, C_0 \delta^\alpha$.

We turn to the terms $n\le n_0$. Estimating $\phi^{\iota,n}_{p,m} $
by its $L^\infty$ norm yields that
\begin{align*}
\big|\, \< \phi^{\iota,n}_{p,m} \,,\, S^\delta_z h \> \, \big| \;\le\;
C_0 \, 2^{n} \;.
\end{align*}
The number of pairs for which
$\< \phi^{\iota,n}_{p,m} \,,\, S^\delta_z h \>$ does not vanish is
bounded by $C_0 \delta^2 2^{2n}$. Hence, the contribution of the terms
$n\le n_0$ to the sum in \eqref{43} is bounded by
\begin{equation*}
C_0\, \sum_{n=L}^{n_0} 2^{-n(1+\alpha)}\, 2^n\, \delta^2\, 2^{2n}\,
\;=\; C_0\, \sum_{n=L}^{n_0} 2^{(2-\alpha)n}\,  \delta^2
\;\le\; C_0\, 2^{(2-\alpha)n_0}\,  \delta^2 \;\le \;
C_0 \, \delta^\alpha\;.
\end{equation*}

It remains to estimate the second sum in \eqref{44}. We may proceed as
for the terms $n\le n_0$ to conclude that the absolute value of the
second term on the right-hand side of \eqref{44} is bounded above by
\begin{equation*}
C_0\, A\,  2^{(2-\alpha)L}\,  \delta^2 \;= \;
C_0\, A\, \delta^2
\;\le \;  C_0 \, A\, \delta^\alpha\;.
\end{equation*}
This completes the proof of the lemma.
\end{proof}

We turn to the proof of Theorem \ref{s27}.  We showed in Lemma
\ref{s1b} that the sequence of random fields $X_{\gamma, \varepsilon}$
converges in $L^2$ and that the limit does not depend on the mollifier
$\varrho$. We also derived the bounds claimed in the statement of the
theorem. It remains to show that the convergence takes also place in
$C^\alpha$.

In the proof of Theorem \ref{s27}, we need the following simple
facts. Assume that $0<\gamma^2< 2\pi$ and recall that
$\color{bblue} b= \gamma^2/4\pi \in (0,1/2)$. Clearly,
$5b \,-\, 4 \,<\, -\, 3b$. Let
$\color{bblue} s_0 = (1/2)[1-(\alpha/b)]$. Then,
\begin{equation}
\label{19}
2 \; \le \; s_0 \;<\; \frac{8\pi}{\gamma^2} \,-\, 2 \quad\text{if and only if}
\quad
5b \,-\, 4 \,<\, \alpha \,\le \, -\, 3b \;.
\end{equation}
The conditions on the left-hand side ensure that the ones which appear
in the statement of Corollary \ref{s3} are in force.

On the other hand, it is easy to check that $(4/15) (5 - \sqrt{10})
<1/2$ and that
\begin{equation}
\label{19b}
5b \,-\, 4 \,<\, -\, b \, \big(\, \sqrt{1 + (8/b)} \,-\, 1\,\big)
\end{equation}
provided $\gamma^2/4\pi = b < (4/15) (5 - \sqrt{10})$.  This inequality
explains the hypotheses of the theorem.

\begin{proof}[Proof of Theorem \ref{s27}]
Fix $0<\gamma^2 \,< \, (4/15) (5 - \sqrt{10})\, 4\pi$ so that
\eqref{19b} is in force, and
$5b \,-\, 4 < \alpha< b\, \min\{\, -3 \,,\, - \, \big(\, \sqrt{1 +
  (8/b)} \,-\, 1\,\big) \,\}$. Fix $T_1>$ and $\eta>0$.
Since convergence in $C^\alpha$ implies convergence in $C^{\alpha'}$
for $\alpha'<\alpha$, it is enough to show that
\begin{equation*}
\lim_{\varepsilon \to 0} \bb P \big[\, \Vert
X_{\gamma,\varepsilon} - X_\gamma \Vert_{C^\alpha([-T_1,T_1]\times \bb T)}
\,>\, \eta\, \big] \;=\; 0\;.
\end{equation*}

Let $A_{\iota, \varepsilon} = A(T_1, \iota, \varepsilon, \gamma)$,
$0\le \iota\le 3$, be given by equation \eqref{45} with $X$ replaced
by $X_{\gamma,\varepsilon} - X_\gamma$. By Lemma \ref{l01}, it is
enough to show that for all $\eta>0$,
\begin{equation*}
\lim_{\varepsilon \to 0} \bb P \big[\, A_{\iota, \varepsilon}
\,>\, \eta\, \big] \;=\; 0\;.
\end{equation*}
Denote by $B^L_{0, \varepsilon, \eta}$,
$B^n_{\iota, \varepsilon, \eta}$, $1\le \iota\le 3$, $n\ge L$, the
events defined by
\begin{gather*}
B^L_{0, \varepsilon, \eta} \;:=\;
\Big\{  \max_{m\in P_L} \max_{p}
2^{L\alpha}\, \big|\, \< X_{\gamma,\varepsilon} - X_\gamma \,,\,
S^{2^{-L}}_{(p/2^L, m/2^L)} \Phi_0  \>\, \big| \,>\, \eta \, \Big\}\;, \\
B^n_{\iota, \varepsilon, \eta} \;:=\;
\Big\{ \max_{m\in P_n} \max_{p}
2^{n\alpha}\, \big|\, \< X_{\gamma,\varepsilon} - X_\gamma \,,\,
S^{2^{-n}}_{(p/2^n, m/2^n)} \Phi_\iota  \>\, \big| \,>\, \eta \,
\Big\}\;,
\end{gather*}
where the maximum over $p$ is carried over the same set appearing in
\eqref{45}.

Clearly, for $1\le \iota\le 3$,
\begin{equation*}
\bb P \big[\, A_{\iota, \varepsilon}
\,>\, \eta \,\big]  \;\le\;
\bb P \Big[  \, \bigcup_{n\ge L} B^n_{\iota, \varepsilon, \eta} \,\Big]
\;\le\; \sum_{n \ge L}
\bb P \big[\, B^n_{\iota, \varepsilon, \eta} \,\big] \;.
\end{equation*}
Fix $n\ge L$. Let $s_0 = (1/2)[1-(\alpha/b)]$. By assumption,
$0<\gamma^2< 2\pi$ (cf. the observation just before \eqref{19b}).
Moreover, by \eqref{19}, $2\le s_0< 8\pi/\gamma^2 - 2$. The hypotheses
of Corollary \ref{s3} are therefore in force. Hence, by this result,
\begin{align*}
\bb P \big[\, B^n_{\iota, \varepsilon, \eta} \,\big] \; & \le\;
\sum_{m\in P_n} \sum_{p} \, \bb P  \Big[ \, 
2^{n\alpha} | \,\<  X_\gamma - X_{\gamma, \varepsilon}
\,,\, S^{2^{-n}}_{(p/2^n, m/2^n)} \Phi_\iota \> \, |  \, >\, \eta \,\Big ] \\
&\le\; \frac{C(s_0,\gamma)}{\eta^{s_\kappa}} \, \varepsilon^{2\kappa^2}
\sum_{m\in P_n} \sum_{p} 2^{2 n \kappa^2}\, 2^{ n \alpha s_\kappa}\, 2^{n
  (\gamma^2/4\pi) [s_0-(1/2)]^2}\;,
\end{align*}
where $s_\kappa = s_0 - \kappa(s_0-2)$.

Summing over $n\ge L$ we get that
\begin{equation}
\label{21}
\bb P \big[\, A_{\iota, \varepsilon}
\,>\, \eta \,\big]  \;\le\;  
\frac{C(s_0,\gamma, T_1)}{\eta^{s_\kappa}} \, \varepsilon^{2\kappa^2} \,
\sum_{n \ge L} \, 2^{n \kappa [2 \kappa  - \alpha (s_0-2)]}\, 
2^{[ \,2+ M(s_0)\, ] \, n} \;,
\end{equation}
where the factor $2^{2n}$ appeared to take care of the volume, and
$M(s) = \alpha \, s + (\gamma^2/4\pi)\, [s-(1/2)]^2$.

A simple computation shows that $s_0$ is the critical value of
$M(s)$. This explains our choice. On the other hand, as $\alpha < - \,
b \, (\sqrt{1 + (8/b)} -1)$, we have that $2+M(s_0) <0$.  Choose
$\kappa$ sufficiently small for $\kappa [2 \kappa - \alpha (s_0-2)] +
2 + M(s_0)$ to be negative. In this case, the sum appearing in
\eqref{21} is finite. Hence, for all $\eta>0$, $1\le \iota\le 3$,
\begin{equation*}
\bb P \big[\, A_{\iota, \varepsilon}
\,>\, \eta \,\big]  \;\le\;  C(s_0, \kappa, \eta, \gamma, T_1) \,
\varepsilon^{2\kappa^2}\;.
\end{equation*}
The probability of the event $\{A_{0, \varepsilon}
\,>\, \eta \}$ is easier to estimate as it does not involve the sum over
$n$. This completes the proof of the theorem.
\end{proof}

\begin{remark}
\label{rm3}
Let $\ms L_b = 5b-4$, $\ms R_b = b\, \min\{\, -3 \,,\, - \, \big(\,
\sqrt{1 + (8/b)} \,-\, 1\,\big) \,\}$, where $b= \gamma^2/4\pi$. With
this notation, the hypothesis on $\alpha$ of Theorem \ref{s27} becomes
$\alpha \in (\ms L_b , \ms R_b)$.  A straightforward computation shows
that $(\ms L_b , \ms R_b) \cap (-1/2,0) \not = \varnothing$ if
$0<\gamma^2< \pi/7$.
\end{remark}

\begin{remark}
\label{rm2}
By applying H\"older's inequality in the proofs of Lemma \ref{s26b}
and Corollary \ref{s3}, we lost some factors. If we could prove
Lemmata \ref{s2} or \ref{s26b} for real-valued $p$'s (instead of
integer-valued), we could improve Theorem \ref{s27} by extending the
ranges of $\gamma$ and $\alpha$.
\end{remark}

\smallskip

We conclude this section with the proof of a bound needed in Lemma
\ref{s2}.  Fix $0<a<2$ and denote by $Z(p,r)$, $p\ge 1$, $r>0$,
the truncated partition function given by
\begin{equation*}
Z(p,r) \;:=\; \int_{\bb B(0,r)^p} dz_1 \cdots  dz_p\,
\prod_{1\le i<j\le  p} \frac 1{ \Vert z_j-z_i \Vert^{a} }\;\cdot
\end{equation*}
Here and below, $\bb B(0,r)^p$ is considered as a subset of
$\bb R^{2p}$ (and not of $(\bb R \times \bb T)^{p}$). The integral on
the right-hand side can be rewritten as
\begin{align*}
& \int_{\bb B(0,r)} dz_1 \int_{\bb B(0,r)^{p-1}} dz_2 \cdots  dz_p\,
\prod_{1\le i<j\le  p} \frac 1{ \Vert z_j-z_i \Vert^{a} } \\
&\quad \le\; 
\int_{\bb B(0,r)} dz_1 \int_{\bb B(z_1,2r)^{p-1}} dz_2 \cdots  dz_p\,
\prod_{1\le i<j\le  p} \frac 1{ \Vert z_j-z_i \Vert^{a} }
\;\cdot
\end{align*}
After a change of variables, this last expression becomes
\begin{equation*}
\int_{\bb B(0,r)} dz_1 \int_{\bb B(0,2r)^{p-1}} dz_2 \cdots  dz_p\,
f(z_2, \dots, z_p) 
\end{equation*}
where
\begin{equation*}
f(w_1, \dots, w_q) \;=\; \prod_{j=1}^q \frac 1{|w_j|^a}
\, \prod_{1\le j<k \le q} \frac 1{|w_j-w_k|^a}\; \cdot
\end{equation*}
Hence, for all $p\ge 2$,
\begin{equation}
\label{51}
Z(p,r) \;\le\; C(r)
\int_{\bb B(0,2r)^{p-1}} dz_2 \cdots  dz_p\,
f(z_2, \dots, z_p)
\end{equation}
for some finite constant $C(r)$. The previous integral is estimated
below.

The proof of the next result is similar to the one of
\cite[Lemma A3]{Garban18}.

\begin{lemma}
\label{s05}
Fix $a<2$. For every $p\in \bb N$ such that $p<(4/a)-1$ and
$0<r<\infty$,
\begin{equation*}
W(p,r) \;:=\; 
\int_{\bb B(0,r)^p} f(z_1, \dots, z_p) \; dz_1 \cdots  dz_p\,
\;< \; \infty \;.
\end{equation*}
\end{lemma}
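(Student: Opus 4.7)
The plan is to prove Lemma \ref{s05} by induction on $p$. The base case $p=1$ is a direct computation in polar coordinates: $W(1,r) = 2\pi \int_0^r s^{1-a}\, ds$, which is finite precisely when $a<2$, matching the hypothesis $p = 1 < (4/a)-1$.

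For the inductive step, I first exploit the full symmetry of the integrand to restrict to the ordered region $\{|z_1|\le |z_2| \le \cdots \le |z_p|\}$, paying a factor $p!$, and then split this region into the well-separated piece $\mc D_{<} = \{|z_1| \le |z_2|/2\}$ and the comparable piece $\mc D_{\sim} = \{|z_2|/2 < |z_1| \le |z_2|\}$. On $\mc D_<$, for every $k\ge 2$ one has $|z_k| \ge 2|z_1|$, hence $|z_1 - z_k| \ge |z_k|/2$, and the inner integral over $z_1$ is bounded by
$$\int_{|z_1| \le |z_2|/2} \frac{dz_1}{|z_1|^a \prod_{k\ge 2}|z_1-z_k|^a} \;\le\; C\, |z_2|^{2-a} \prod_{k\ge 2}|z_k|^{-a}.$$
Using $|z_2| \le \bigl(\prod_{k\ge 2}|z_k|\bigr)^{1/(p-1)}$ and $2-a>0$, this reduces the contribution of $\mc D_<$ to an integral over $(z_2,\ldots,z_p)$ of the same shape as $W(p-1,r)$, but with a modified diagonal exponent $a' = 2a - (2-a)/(p-1)$. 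A short computation using $(p-1)(p-2)\ge 0$ shows $a' < 2$ whenever $a < 4/(p+1)$, so radial integrability at the origin is preserved, and this piece can be absorbed by a slightly strengthened inductive hypothesis in which the diagonal exponent is allowed to differ from the off-diagonal one.

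On the comparable piece $\mc D_\sim$, where $|z_1|$ and $|z_2|$ sit at the same radial scale, I further subdivide according to which $k \ge 2$ minimizes $|z_1-z_k|$. Using polar coordinates centered at the nearest partner $z_k$, the two-dimensional integration over $z_1$ again produces a factor proportional to the separation scale raised to the power $2-a$, while the factor $|z_1|^{-a}$ is bounded by $C|z_2|^{-a}$ on this annulus. The remaining integrand then admits the same form of bound as in the $\mc D_<$ analysis, up to a relabeling consistent with the symmetry of $f$, and summing over the $p-1$ possible choices of nearest partner closes the iteration.

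The main obstacle is formulating the correct strengthened inductive statement so that the exponents produced at each step remain within the admissible range for the next step. The threshold $p < (4/a)-1$ reflects the Coulomb gas stability condition: for every subset $S \subseteq \{1,\ldots,p\}$, the local scaling $2|S| - a\binom{|S|+1}{2}$ at the origin-collapse singularity (counting the ghost point $0$) must be strictly positive, which reduces to $|S|+1 < 4/a$ and is saturated precisely at the worst case $|S|=p$.
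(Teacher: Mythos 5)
Your base case and the $\mc D_<$ estimate are fine, and the stability heuristic in your last paragraph is the right one, but the inductive scheme does not close as written, and what is missing is precisely the content of the lemma. First, the ``slightly strengthened inductive hypothesis'' is never formulated, and the only admissibility check you carry out, $a'=2a-(2-a)/(p-1)<2$, is merely single-point integrability at the origin; what the induction actually needs is the full collapse condition for the new $(p-1)$-point integrand with exponent pattern $(a',a)$, namely $2(p-1)-a'(p-1)-a\binom{p-1}{2}>0$ together with its analogues for every sub-configuration. That condition happens to reduce exactly to $a<4/(p+1)$, but this computation -- the heart of the matter -- is not done, and it would have to be redone for every exponent pattern your recursion generates. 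Second, the $\mc D_\sim$ step fails as described: once you fix the nearest partner $z_k$ and bound $|z_1-z_j|^{-a}\le 2^a|z_j-z_k|^{-a}$ for $j\ne k$, the surviving $(p-1)$-point integrand carries pair exponent $2a$ on every pair containing $k$, plus a leftover positive power of $|z_2|$; this is not ``the same form of bound as in the $\mc D_<$ analysis up to relabeling'', so it cannot be absorbed by a hypothesis in which only the diagonal exponent differs from the off-diagonal one. Moreover the promised factor (separation scale)$^{2-a}$ from the polar integration is unjustified: being the nearest partner gives no upper bound on $|z_1-z_k|$ beyond $O(|z_k|)$, so what you gain is at best $|z_2|^{2-a}$, and the heterogeneous bookkeeping this forces (in effect, the general Coulomb-gas integrability criterion with pair-dependent exponents) is never carried out.

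For comparison, the paper's proof avoids this bookkeeping altogether: it truncates the singularities at scale $\epsilon$, writes $I_{p,r,\epsilon}=I_{p,r/2,\epsilon}+J_{p,r,\epsilon}$, and uses exact scale invariance to get $I_{p,r/2,\epsilon/2}=2^{p[a(p+1)-4]/2}\,I_{p,r,\epsilon}$, a strict contraction precisely when $p<(4/a)-1$; the complementary region, where some point lies at distance at least $r/2p$ from the origin and from the remaining cluster, factorizes (after discarding the bounded interaction factors) into $W(|A|,r)\,Z(|B|,r)$, which the plain induction hypothesis with uniform exponents controls via \eqref{51}. If you wish to keep your ``peel off the innermost point'' route, you must state and prove the strengthened lemma with general (including unequal pair) exponents and verify the subset-stability conditions for every pattern produced by both branches; as it stands, the argument is incomplete at exactly the $\mc D_\sim$ step and at the formulation of the induction.
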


\begin{proof}
We proceed by induction. Since $a<2$, for every $r>0$, there
exists a finite constant $C(r)$ such that
\begin{equation*}
W(1,r) \;=\; \int_{\bb B(0,r)} f(z_1)  \, dz_1 \;\le \; C(r) \;.
\end{equation*}

Fix $p\in \bb N$ such that $p<(4/a)-1$ and assume that
$W(q,r) < \infty$ for all $1\le q<p$, $r>0$. For $\epsilon >0$, let
\begin{equation*}
I_{p, r, \epsilon} \;=\; 
\int_{N_{r, \epsilon}} f(z_1, \dots, z_p) \,  dz_1\cdots dz_p  \;,
\end{equation*}
where
$N_{r, \epsilon} = \{(z_1, \dots, z_p): \epsilon \le |z_j| \le r \,,\,
|z_j-z_k|\ge \epsilon\}$. Write $I_{p, r, \epsilon}$ as
\begin{equation*}
I_{p, r, \epsilon} \;=\; I_{p, r/2, \epsilon} \;+\; J_{p, r, \epsilon} \;.
\end{equation*}
This equation defines $J_{p, r, \epsilon}$. We estimate both terms on the
right hand side separately.

On the one hand, $I_{p, r/2, \epsilon} \le I_{p, r/2, \epsilon/2}$. A change
of variables yields that
\begin{equation*}
I_{p, r/2, \epsilon/2} \;=\; 2^{p[a (p+1)-4]/2} \, I_{p, r, \epsilon} \;.
\end{equation*}
As $p+1< 4/a$, the previous expression is bounded by
$(1-b)I_{p, r, \epsilon}$ for some $b>0$, $b=b(a, p)$.

On the other hand, the integral defining $J_{p, r, \epsilon}$ is
performed over the set
$M_{r, \epsilon} = N_{r, \epsilon} \cap \{(z_1, \dots, z_p) : r/2 \le
\max_j |z_j|\le r \}$.
In particular, there is a non-empty subset $B$ of $\{1, \dots p\}$
such that $|z_k|\ge r/2p$ and $|z_k - z_j|\ge r/2p$ for all $k\in B$,
$j\in A= B^c$. Note that $B$ might be equal to $\{1, \dots p\}$. In
this case, we only have that $|z_k|\ge r/2p$ for all $1\le k\le p$.

Estimating $|z_k|^{-a}$ and $|z_k-z_j|^{-a}$ by
$(2p/r)^a$, $j\in A$, $k\in B$, we obtain that
\begin{equation*}
J_{p, r, \epsilon} \; \le \; \sum_B  \Big( \frac{2p}{r}\Big)^{(A+1)B}
\int_{\bb B(0,r)^A}  f(z_A) \,  dz_A \int_{\bb B(0,r)^B}  
\prod_{i\not = j\in B} \frac 1{ \Vert z_j-z_i \Vert^{a} }
\,  dz_B  \;,
\end{equation*}
where the sum is performed over all non-empty subsets $B$ of
$\{1, \dots p\}$ and $z_A = (z_j : j\in A)$, $z_B = (z_k : k\in B)$.
The first integral is equal to $W(|A|,r)$ which is finite by the
induction hypothesis because $B$ is non-empty. The second integral is
equal to $Z(|B|,r)$ which, by \eqref{51}, is bounded by
$C(r) W(|B|-1,2r)$. Hence, by the induction assumption,
\begin{equation*}
J_{p, r, \epsilon} \; \le \; C(r) \sum_B  \Big( \frac{2p}{r}\Big)^{(A+1)B}
W(|A|,r) \, W(|B|-1,2r) \;\le\; C(r,p) 
\end{equation*}
for some finite constant $C(r,p)$.

Putting together the previous estimates yields that
\begin{equation*}
I_{p, r, \epsilon} \;\le\; (1-b)I_{p, r, \epsilon} \;+\; C(r,p) 
\end{equation*}
for some finite constant $C(r,p)$ and $b>0$, $b=b(a, p)$. Thus,
$I_{p, r, \epsilon} \le C(r,p)$ for all $\epsilon>0$. Letting
$\epsilon\to 0$, we conclude that $W(p,r)<\infty$, as claimed.
\end{proof}

Next result follows from \eqref{51} and the previous lemma.

\begin{corollary}
\label{s12}
Fix $a<2$. For every $p\in \bb N$ such that $p<4/a$ and
$0<r<\infty$,
\begin{equation*}
Z(p,r) \;<\; \infty \;.
\end{equation*}
\end{corollary}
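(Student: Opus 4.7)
The plan is to derive Corollary \ref{s12} as a direct consequence of the bound \eqref{51} displayed just before Lemma \ref{s05}, together with Lemma \ref{s05} itself. All the real work has already been done inside the lemma; the corollary is essentially a bookkeeping statement converting the bound on $W$ into a bound on $Z$.

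First I would handle the trivial case $p=1$ separately: in this case the product $\prod_{1\le i<j\le 1}$ is empty, so $Z(1,r)$ is simply the Lebesgue measure of the ball $\bb B(0,r)\subset \bb R^2$, which is obviously finite (and the condition $1<4/a$ is automatic since $a<2$).

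For $p\ge 2$, I would quote the inequality \eqref{51}, which says
\begin{equation*}
Z(p,r) \;\le\; C(r)\, \int_{\bb B(0,2r)^{p-1}} f(z_2,\dots,z_p)\; dz_2\cdots dz_p \;=\; C(r)\, W(p-1,2r)\;.
\end{equation*}
The hypothesis $p<4/a$ is equivalent to $p-1 < (4/a)-1$, which is exactly the range in which Lemma \ref{s05} applies to $W(p-1, 2r)$. Hence $W(p-1,2r)<\infty$, and so $Z(p,r)<\infty$, completing the proof.

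There is no real obstacle here: the only delicate point would be to double-check that the index shift in \eqref{51} matches the constraint in Lemma \ref{s05}, namely that the condition $p<4/a$ on the original $Z$-integral exactly matches the condition $p-1<(4/a)-1$ needed to apply the lemma to $W(p-1,\cdot)$, and to remember to treat $p=1$ by hand since \eqref{51} was stated for $p\ge 2$.
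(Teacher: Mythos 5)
Your proposal is correct and is exactly the argument the paper intends: Corollary \ref{s12} is obtained by combining the bound \eqref{51}, which gives $Z(p,r)\le C(r)\,W(p-1,2r)$, with Lemma \ref{s05} applied with $p-1<(4/a)-1$, which is precisely the hypothesis $p<4/a$. The separate remark on $p=1$ is a harmless (and sensible) addition, since \eqref{51} was derived for $p\ge 2$.
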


\section{Proof of Theorem \ref{mt2}}
\label{sec06}

We prove in this section Theorem \ref{mt2}.  To avoid an additional
term, we prove Theorem \ref{mt2} for the equation \eqref{09} with the
hyperbolic sinus replaced by the exponential. The arguments presented
below apply without modifications to the original equation.

Fix $T_0\ge 1$ as in Section \ref{sec04}, and recall the definition of
the Gaussian random fields $\mf v_\varepsilon$, $\mf v$ introduced in
\eqref{08}. Fix $\beta_0>0$, $u_0$ in $C^{\beta_0}(\bb T)$,
$\gamma\in \bb R$, and denote by $\mf u_\varepsilon$,
$0< \varepsilon <1$, the solution of
\begin{equation}
\label{63}
\begin{cases}
\partial_t \mf u_\varepsilon \;=\; - \,  (-\Delta)^{1/2} \mf u_\varepsilon
\;-\; A(\varrho) \, \varepsilon^{\gamma^2/4\pi} \,
e^{\gamma \mf u_\varepsilon} \,+\, \xi_\varepsilon \\
\mf  u_\varepsilon(0) \,=\, u_0 \,+\, \mf
v_\varepsilon (0)\;.
\end{cases}
\end{equation}
Let $\color{bblue} \mf w_\varepsilon = \mf
u_\varepsilon - \mf v_\varepsilon$.  An elementary computation yields
that
\begin{equation}
\label{64}
\left\{
\begin{aligned}
& \partial_t \mf w_\varepsilon \;=\;
- \,  (-\Delta)^{1/2} \mf w_\varepsilon
\,-\,  X_{\gamma, \varepsilon} \, e^{\gamma \mf w_\varepsilon}
\;-\; R_\varepsilon \;, \\
& \mf w_\varepsilon(0) \,=\, u_0  \;,
\end{aligned}
\right.
\end{equation}
where $X_{\gamma, \varepsilon}$ is the random field
$A(\varrho) \, \varepsilon^{\gamma^2/4\pi} e^{\gamma \mf
  v_\varepsilon}$
examined in the previous section, and $R_\varepsilon$ the one given by
\begin{equation}
\label{84}
R_\varepsilon \;=\; \partial_t \mf v_\varepsilon
\;+\; (-\Delta)^{1/2} \mf v _\varepsilon \;-\; \xi_\varepsilon\;.
\end{equation}
We denote by $\mf w$ the solution of the same equation with
$X_{\gamma, \varepsilon}$, $R_\varepsilon$ replaced by $X_\gamma$,
$R=\partial_t \mf v \;+\; (-\Delta)^{1/2} \mf v \;-\; \xi$,
respectively.

The solutions $\mf w_\varepsilon$ can be represented as
\begin{equation}
\label{79}
\mf w_\varepsilon (t) \;=\; -\, \int_0^t 
P_{t-s} \Big\{ X_{\gamma, \varepsilon} \, e^{\gamma \mf w_\varepsilon (S)}
\,+\, R_\varepsilon (s) \Big\} \; ds \;+\; P_t u_0\;,
\end{equation}
where $(P_t : t\ge 0)$ represents the semigroup of the generator
$-\, (-\Delta)^{1/2}$. 

Theorem \ref{t02} below, a fixed point theorem, establishes that this
equation, for $0\le \varepsilon \le 1$, has a unique solution in
$C^{\beta_0}([0,T] \times \bb R)$ for $T$ sufficiently small.

We first recall Theorem 2.52 in \cite{bcd11}, which permits to define
the product of two distributions provided they are not too irregular.


\begin{proposition}
\label{s07}
Fix $\alpha_0$, $\beta_0\in \bb R$ such that $\alpha_0 + \beta_0 >0$,
and $S<T$. Then, there exists a bilinear form
$B:C^{\alpha_0}([S,T] \times \mathbb{T} )\times C^{\beta_0}([S,T] \times
\mathbb{T} ) \rightarrow C^{\alpha_0 \wedge \beta_0}([S,T] \times \mathbb{T}
)$ such that $B(f,g) = f\, g$ if $f$ and $g$ belong to
$C^\infty([S,T] \times \bb T)$. Moreover, there is a finite constant
$C_0=C_0(S,T)$ such that
\begin{equation*}
\|B(f,g) \|_{C^{\alpha_0\wedge \beta_0}([S,T]\times \mathbb{T} )}
\; \le \; C_0 \, \|f \|_{C^{\alpha_0}([S,T]\times \mathbb{T} )}
\, \|g \|_{C^{\beta_0}([S,T]\times \mathbb{T} )} 
\end{equation*}
for all $f\in C^{\alpha_0}(\bb R \times \mathbb{T})$,
$g \in C^{\beta_0}(\bb R \times \mathbb{T} )$.
\end{proposition}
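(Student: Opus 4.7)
The plan is to adapt Bony's paraproduct construction to the anisotropic Besov setting introduced in Section \ref{sec01}. For any mollifier $\varrho$ satisfying \eqref{s23}, set $f_\varepsilon = f * \varrho_\varepsilon$ and $g_\varepsilon = g * \varrho_\varepsilon$; these are smooth, and the bilinear form will be defined as $B(f,g) = \lim_{\varepsilon \to 0} f_\varepsilon\, g_\varepsilon$, the limit being taken in $C^{\alpha_0 \wedge \beta_0}([S,T]\times \bb T)$. The bound stated in the proposition, together with bilinearity and consistency with ordinary products on smooth arguments, will all follow from the construction, while independence of the limit from $\varrho$ will follow by comparing two different mollifications through the same estimate.

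To control $f_\varepsilon g_\varepsilon$ I would expand both factors in the wavelet basis $\ms B$ built in Section \ref{sec05}, obtaining
\begin{equation*}
f_\varepsilon\, g_\varepsilon \;=\; \sum_{\mb i,\mb j}
\< f_\varepsilon \,,\, \phi_{\mb i}\>\,
\< g_\varepsilon \,,\, \phi_{\mb j}\>\,
\phi_{\mb i}\, \phi_{\mb j}\;,
\end{equation*}
and splitting the right-hand side according to the relative scales $n,n'$ of the two wavelets into a low--high paraproduct $\Pi_{<}$ ($n \ll n'$), a high--low paraproduct $\Pi_{>}$ ($n \gg n'$), and a resonant part $\Pi_{\circ}$ (comparable scales). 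Using the wavelet coefficient bound $|\< f , \phi_{\mb i}\>| \le C_0\, 2^{-n(1+\alpha_0)} \, \Vert f\Vert_{C^{\alpha_0}}$, which follows from the definition \eqref{71} together with the representation \eqref{41}, and an analogous bound for $g$, the two paraproduct pieces can be estimated term by term: in each summand the finer wavelet fixes both the effective scale and the localization of $\phi_{\mb i}\phi_{\mb j}$, so testing $\Pi_{<}(f_\varepsilon,g_\varepsilon)$ and $\Pi_{>}(f_\varepsilon,g_\varepsilon)$ against a scaled bump $S^\delta_z h$ yields the bound $C_0\, \Vert f\Vert_{C^{\alpha_0}}\Vert g\Vert_{C^{\beta_0}}\, \delta^{\alpha_0 \wedge \beta_0}$, with no sign constraint on $\alpha_0+\beta_0$.

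The main obstacle is the resonant piece $\Pi_{\circ}$, where the two wavelets share the same dyadic scale $2^{-n}$ and overlapping supports, so that $\phi_{\mb i}\phi_{\mb j}$ concentrates at scale $2^{-n}$ with $L^\infty$-norm of order $2^{2n}$. Collecting the contribution of a given scale $n$ against $S^\delta_z h$, and summing over the finitely many pairs $(\mb i,\mb j)$ which hit a neighbourhood of $z$, the resulting bound is $C_0 \, 2^{-n(\alpha_0+\beta_0)}\, \delta^{\alpha_0 \wedge \beta_0}\, \Vert f\Vert_{C^{\alpha_0}}\Vert g\Vert_{C^{\beta_0}}$; the geometric series over $n \ge n_\delta \sim \log_2\delta^{-1}$ converges precisely because $\alpha_0+\beta_0>0$, and this is the single step that forces the hypothesis. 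Applying the same decomposition to $f_\varepsilon g_\varepsilon - f_{\varepsilon'} g_{\varepsilon'}$, combined with the fact that $f_\varepsilon \to f$ in a slightly weaker Hölder norm via standard mollification estimates, yields a Cauchy property in $C^{\alpha_0 \wedge \beta_0}([S,T]\times \bb T)$; the limit $B(f,g)$ satisfies the bound by lower-semicontinuity of the seminorm, and agrees with $fg$ on smooth pairs since both sides then equal the pointwise product.
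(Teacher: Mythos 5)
First, note that the paper does not actually prove Proposition \ref{s07}: it is quoted as Theorem 2.52 of \cite{bcd11} (Bony's paraproduct estimates), so you are attempting a self-contained proof of a cited result. Your overall architecture (mollify, expand in the wavelet basis of Section \ref{sec05}, split into two paraproducts and a resonant term, pass to the limit) is the standard route, and your treatment of the resonant part is essentially right: there the crude bound gives a per-scale contribution of order $2^{-n(\alpha_0+\beta_0)}$, the geometric series over fine scales converges exactly when $\alpha_0+\beta_0>0$, and the resulting $\delta^{\alpha_0+\beta_0}\le\delta^{\alpha_0\wedge\beta_0}$ is what you want.

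The genuine gap is in the claim that the two paraproduct pieces ``can be estimated term by term.'' Take the case relevant to the paper, $\alpha_0<0<\beta_0$, and the high--low piece $\Pi_>$, where the negative-regularity factor $f$ sits at the fine scale $2^{-n}$ and $g$ at a coarse scale $2^{-n'}$, $n'\le n$. With $|\<f,\phi_{\mb i}\>|\le 2^{-n(1+\alpha_0)}\Vert f\Vert$, $|\<g,\phi_{\mb j}\>|\le 2^{-n'(1+\beta_0)}\Vert g\Vert$, the crude bound $|\<\phi_{\mb i}\phi_{\mb j},S^\delta_z h\>|\le \Vert\phi_{\mb j}\Vert_\infty\Vert\phi_{\mb i}\Vert_{L^1}\Vert S^\delta_z h\Vert_\infty\sim 2^{n'}2^{-n}\delta^{-2}$, and the count $(2^n\delta)^2$ of fine wavelets meeting the bump, each scale pair contributes $\sim 2^{-n\alpha_0}2^{-n'\beta_0}\Vert f\Vert\Vert g\Vert$; summing over $n'\le n$ is harmless, but the sum over fine scales $\sum_{2^{-n}\le\delta}2^{-n\alpha_0}=\sum 2^{n|\alpha_0|}$ diverges. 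This is not an artifact of sloppy constants: with absolute values everywhere the series genuinely diverges, because the fine-scale blocks of $f$ have sup norm of order $2^{n|\alpha_0|}$ and multiplication by a smooth $g$ does not damp them. What saves this term is cancellation: the fine wavelet has vanishing moments \eqref{38}, so one must freeze (Taylor-expand) the coarse factor $\phi_{\mb j}$ and the bump $S^\delta_z h$ on the support of $\phi_{\mb i}$ to gain the factor $(2^{-n}/\delta)^r$ that makes the fine sum converge --- exactly the mechanism the paper uses in the proof of Lemma \ref{l01}, or equivalently one must work with Littlewood--Paley blocks $\Delta_n f\, S_{n-1}g$ and the block characterization of $C^{\alpha_0}$ rather than term-by-term duality. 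Since $\Pi_>$ is the dominant piece (it is the one that carries the $C^{\alpha_0}$ regularity of the product in the application), this missing step is the heart of the proof, and your parenthetical claim that the paraproduct bounds hold ``with no sign constraint on $\alpha_0+\beta_0$'' is likewise incorrect as stated. A smaller point: when $\alpha_0\wedge\beta_0>0$ the target norm is the Hölder seminorm \eqref{13}, which is not controlled by testing against scaled bumps alone, so that (classical) case needs a separate sentence.
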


\begin{remark}
\label{rm5}
We apply below this proposition to a distribution $X$ in $C^\alpha$
and to a function $\mf w$ in $C^\beta$. This explains the hypothesis
below that $\alpha >-1/2$ which yields that $\alpha + \beta>0$.
\end{remark}

Recall the definition of the function $q = q_{T_0}$ introduced in
\eqref{06}. Fix $\alpha_0<0$, $\beta_0>0$, such that $\alpha_0 +
\beta_0 >0$, $R$ in $C^1(\bb R \times \mathbb{T})$, $u$ in
$C^{\beta_0}(\mathbb{T})$, $X$ in $C^{\alpha_0}(\bb R \times \mathbb{T})$,
$\gamma\in\bb R$ and $0<T_1< \min\{T_0, 1/4\}$.  For $\mf w$ in
$C^{\beta_0}([0,T_1] \times \mathbb{T})$ such that $\mf w(0, \cdot) =
u(\cdot)$, let $\Psi_{T_1,\gamma,X,R,u} (\mf w) = \Psi(\mf w)$ be
given by
\begin{equation*}
\Psi (\mf w) (t) \;:=\;
\int_{0}^t q_{t-s}  \big\{\, X(s) \, e^{\gamma \mf w(s)} \,\big\} \; ds \,+\,
\int_{0}^t P_{t-s} \, R(s)\; ds \;+\; P_t u\;, \quad 0\le t\le T_1\;.
\end{equation*}
Note that $\Psi(\mf w)(0, \cdot) = u(\cdot)$. Sometimes we write
$\Psi(\mf w)$ as $\Psi_{T_1}(\mf w)$ to stress its dependence on
$T_1$.

Denote by $\mf x$ the first term on the right-hand side. It has to be
understood as follows. Extend the definition of $\mf w$ to
$\bb R\times \bb T$ by setting $\mf w(t,x)=\mf w(T_1,x)$ for
$t\ge T_1$ and $\mf w(t,x)=\mf u(x)$ for $t\le 0$. Denote the extended
function by $\widehat {\mf w}$. It is clear that $\widehat {\mf w}$
belongs to $C^{\beta_0}(\bb R \times \mathbb{T})$ and that
\begin{equation}
\label{81}
\|\, \widehat {\mf w} \, \|_{L^{\infty}([S,T]\times \mathbb{T} )} \;=\;
\|\, \mf w\, \|_{L^{\infty}([S',T']\times \mathbb{T} )}\;, \quad
\|\, \widehat {\mf w} \, \|_{C^{\beta_0}([S,T]\times \mathbb{T} )} \;=\;
\|\, \mf w\, \|_{C^{\beta_0}([S',T']\times \mathbb{T} )}
\end{equation}
for all $S<T \wedge T_1$, $T>0$, where $S' = S \vee 0$, $T' = T \wedge
T_1$.  We may also replace $\mf w$ by $\widehat {\mf w}$ in the
formula for $\Psi$ because they coincide on $[0,T_1]\times \bb T$.

By Assertion \ref{a01} below, $\exp\{\gamma \widehat {\mf w} \}$
belongs to $C^{\beta_0}(\bb R \times \mathbb{T})$. Hence, by Proposition
\ref{s07},
$\color{bblue} X_{\mf w} := X\, \exp\{\gamma \widehat {\mf w} \}$
belongs to $C^{\alpha_0}(\bb R \times \mathbb{T})$. 

As $q$ vanishes for $(-\infty, 0) \times \bb T$, we may include in the
domain of integration the time-interval $[t,\infty)$.  As $X$ belongs
to $C^{\alpha_0}(\bb R \times \bb T)$,
\begin{equation*}
\int_{0}^\infty q_{t-s}  \, X_{\mf w}(s) \; ds \;=\;
X_{\mf w} \big(\, q_{T_0, z}\, \chi_{\bb R_+ \times \bb T}\,\big)
\;=\; X^+_{\mf w} \big(\, q_{T_0, z}\,\big)\;, 
\end{equation*}
where, recall, $\chi_A$ represents the indicator function of the set
$A$, $q_{T_0,z}$ has been introduced just before the statement of
Theorem \ref{t01}, and the distribution $ X^+_{\mf w}$ at the end of
Section \ref{sec04}.

From now on, $\alpha$, $\beta$ and $\kappa$ are fixed.  We first pick
$-1/2<\alpha<0$ and then choose $\kappa$ small enough for
$0<2\kappa<1+2\alpha$. Let $\beta= \alpha + 1 - 2\kappa$.  Note that
$0< \beta <1$ and $\alpha + \beta>0$: On the one hand,
$\beta > \beta+\alpha = 1 + 2\alpha - 2\kappa >0$. On the other,
$\beta = 1 + \alpha - 2\kappa <1$. In Theorem \ref{t03} we further
require $\alpha < \alpha_\gamma$.

All constants below may depend on these parameters without any
reference. In contrast, any dependence on other variables will be
explicitly mentioned.

\begin{theorem}
\label{t02}
Fix $T_0>0$, $\gamma\in\bb R$, $R$ in $C^1(\bb R \times \mathbb{T})$,
$u$ in $C^{\beta}(\mathbb{T})$, $X$ in $C^\alpha(\bb R \times
\mathbb{T})$. There exists $0<\tau< \min\{T_0, 1/4\}$ such that the
equation
\begin{equation}
\label{83}
\Psi_T (\mf w) \;=\; \mf w
\end{equation}
has a unique solution in $C^\beta([0,T] \times \mathbb{T})$ for all
$0<T\le \tau$.
\end{theorem}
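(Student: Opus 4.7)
The plan is a standard Banach fixed-point argument, with the small parameter $T^\kappa$ provided by Corollary \ref{s04}. Fix $M>0$ to be chosen and, for $0<T\le \min\{T_0,1/4\}$, introduce the closed set
\begin{equation*}
\ms B_{T,M} \;=\; \big\{\, \mf w \in C^\beta([0,T]\times \bb T) \,:\, \mf w(0,\cdot)=u(\cdot),\ \|\mf w\|_{C^\beta([0,T]\times \bb T)} \le M\, \big\}\,,
\end{equation*}
which is complete for the metric inherited from $C^\beta([0,T]\times\bb T)$. The two things to check are: (i) $\Psi_T$ maps $\ms B_{T,M}$ into itself for a suitable choice of $M$ and $T$; (ii) $\Psi_T$ is a strict contraction on $\ms B_{T,M}$ for $T$ small.

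For (i), split $\Psi_T(\mf w) = \mf x + \mf y + \mf z$, where $\mf x(t) = X^+_{\mf w}(q_{T_0,(t,\cdot)})$ with $X_{\mf w} = X\, e^{\gamma \widehat{\mf w}}$, $\mf y(t)= \int_0^t P_{t-s} R(s)\,ds$ and $\mf z(t)= P_t u$. Lemma \ref{s03} gives $\|\mf z\|_{C^\beta([0,T]\times\bb T)} \le C_0\|u\|_{C^\beta(\bb T)}$. Since $R\in C^1(\bb R\times\bb T)$, a direct computation (using that $\mf y(0)=0$) gives $\|\mf y\|_{C^\beta([0,T]\times\bb T)} \le C_0\, T^{1-\beta}\|R\|_{C^1}$. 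For $\mf x$, first extend $\mf w$ to $\widehat{\mf w}$ as in the text; by \eqref{81}, $\|\widehat{\mf w}\|_{C^\beta(\bb R\times\bb T)}=\|\mf w\|_{C^\beta([0,T]\times\bb T)} \le M$. Assertion \ref{a01} gives $\|e^{\gamma\widehat{\mf w}}\|_{C^\beta(\bb R\times\bb T)} \le C(\gamma,M)$, and Proposition \ref{s07} (applicable since $\alpha+\beta>0$, cf.\ Remark \ref{rm5}) then yields
\begin{equation*}
\|X_{\mf w}\|_{C^\alpha(\bb R\times\bb T)} \;\le\; C_0\,\|X\|_{C^\alpha}\,\|e^{\gamma\widehat{\mf w}}\|_{C^\beta} \;\le\; C(\gamma,M,X).
\end{equation*}
Corollary \ref{s04}, applied with $1+\alpha-2\kappa=\beta$, gives
\begin{equation*}
\|\mf x\|_{C^\beta([0,T]\times\bb T)} \;\le\; C(\kappa,T_0)\, T^{\kappa}\, \|X_{\mf w}\|_{C^\alpha([-T_0-3,5]\times\bb T)} \;\le\; C(\gamma,M,X)\, T^{\kappa}.
\end{equation*}
Choose $M = 2C_0 \|u\|_{C^\beta} + 1$. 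Then, for $T$ sufficiently small (depending on $\|u\|, \|R\|_{C^1}, \|X\|_{C^\alpha}, M$), the three terms sum to at most $M$, and clearly $\Psi_T(\mf w)(0,\cdot)=u(\cdot)$, so $\Psi_T\colon\ms B_{T,M}\to\ms B_{T,M}$.

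For (ii), given $\mf w_1,\mf w_2\in\ms B_{T,M}$, the smooth $\mf y$ and $\mf z$ terms cancel, so
\begin{equation*}
\Psi_T(\mf w_1) - \Psi_T(\mf w_2) \;=\; (X\,[\, e^{\gamma\widehat{\mf w_1}} - e^{\gamma\widehat{\mf w_2}}\,])^{+}(q_{T_0,(\cdot,\cdot)}).
\end{equation*}
Corollary \ref{s04} and Proposition \ref{s07} again reduce the $C^\beta$ estimate to a $C^\beta$ estimate for $e^{\gamma\widehat{\mf w_1}} - e^{\gamma\widehat{\mf w_2}}$. A Lipschitz estimate for the composition with $\exp$ on a ball of radius $M$ in $C^\beta$ (again a consequence of the paper's exponential estimate, cf.\ Assertion \ref{a01}, combined with $e^a-e^b = (a-b)\int_0^1 e^{b+r(a-b)}dr$ and Proposition \ref{s07}) yields $\|e^{\gamma\widehat{\mf w_1}} - e^{\gamma\widehat{\mf w_2}}\|_{C^\beta} \le C(\gamma,M)\,\|\mf w_1-\mf w_2\|_{C^\beta([0,T]\times\bb T)}$. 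Combining gives
\begin{equation*}
\|\Psi_T(\mf w_1) - \Psi_T(\mf w_2)\|_{C^\beta([0,T]\times\bb T)} \;\le\; C(\gamma,M,X)\,T^\kappa\,\|\mf w_1-\mf w_2\|_{C^\beta([0,T]\times\bb T)}.
\end{equation*}
Choosing $T=\tau$ small enough to make the constant strictly less than $1$, Banach's theorem produces the unique fixed point in $\ms B_{\tau,M}$, and the same $\tau$ works for any $0<T\le\tau$ by restriction (with uniqueness in $C^\beta([0,T]\times\bb T)$ obtained by enlarging $M$ if necessary to accommodate any candidate solution and invoking the same contraction argument).

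The main obstacle is the nonlinear Lipschitz estimate on the composition with $\exp$ in $C^\beta$: it requires an explicit $C^\beta$ bound on $e^{\gamma\widehat{\mf w}}$ depending only on $\|\widehat{\mf w}\|_{C^\beta}$ (through Assertion \ref{a01}), together with the bilinear control of Proposition \ref{s07}, so that a genuine Lipschitz constant in the \emph{same} norm that carries the contraction can be extracted. Everything else is routine once the Schauder estimate of Corollary \ref{s04}, with its $T^\kappa$ gain, is available.
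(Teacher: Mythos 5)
Your proposal is correct and follows essentially the same route as the paper, which packages your steps into Proposition \ref{s06} (invariance of a ball, via Lemma \ref{s03}, the bound on the $R$-term, Assertion \ref{a01}, Proposition \ref{s07} and Corollary \ref{s04} with its $T^\kappa$ gain), Lemma \ref{s08} (the contraction estimate, whose exponential Lipschitz ingredient is Assertion \ref{a02}) and Lemma \ref{s16}, before invoking Banach's fixed point theorem exactly as you do. The one caveat, shared with the paper (whose appeal to the fixed point theorem likewise yields uniqueness only within the invariant ball), is that your ``enlarge $M$'' remark quietly shrinks the time on which the contraction constant is below one, so uniqueness in all of $C^\beta([0,T]\times \bb T)$ for the fixed $\tau$ strictly requires a short continuation argument rather than a single application of the contraction.
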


Recall the formula for $\alpha_\gamma$ introduced just above Theorem
\ref{mt2}. By definition, $\alpha_\gamma<0$ and, according to Remark
\ref{rm3}, $\alpha_\gamma>-1/2$ if $0<\gamma^2 < \pi/7$.

\begin{theorem}
\label{t03}
Fix $0<\gamma^2 < \pi/7$, $\alpha \in (-1/2, \alpha_\gamma)$,
$T_0>0$, and $u$ in $C^\beta(\bb T)$. There exists a strictly positive
random variable $\tau$, $\bb P[\tau>0]=1$, satisfying the next
statement.

Denote by $\mf w_\varepsilon$, $0\le \varepsilon\le 1$, the solution
of the fixed point problem \eqref{83} in
$C^\beta([0,\tau] \times \bb T)$, with $R_\varepsilon$ given by
\eqref{84} and
$X_{\gamma, \varepsilon} = A(\varrho) \, \varepsilon^{\gamma^2/4\pi}
e^{\gamma \mf v_\varepsilon}$.  Then, $\mf w_\varepsilon$ converges in
probability to $\mf w_0=\mf w$, as $\varepsilon\to 0$, in
$C^\beta([0,\tau] \times \bb T)$.
\end{theorem}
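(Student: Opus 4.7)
The plan is to deduce Theorem \ref{t03} from Theorem \ref{t02} by a standard stability argument for the Banach contraction underlying the fixed point \eqref{83}, using that the random inputs $X_{\gamma,\varepsilon}$ and $R_\varepsilon$ converge to their limits in norms that are compatible with the solution map $(X,R,u_0)\mapsto \mf w$. Note that the initial condition $u_0 \in C^\beta(\bb T)$ is deterministic and $\varepsilon$-independent because the Gaussian piece $\mf v_\varepsilon(0,\cdot)$ has been absorbed into the splitting $\mf u_\varepsilon = \mf v_\varepsilon + \mf w_\varepsilon$, so only the data $(X_{\gamma,\varepsilon}, R_\varepsilon)$ vary with $\varepsilon$.

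\smallskip\noindent\emph{Step 1 (data convergence).} Theorem \ref{mt1} already gives $X_{\gamma,\varepsilon} \to X_\gamma$ in probability in $C^\alpha$ on any compact time-window. For $R_\varepsilon$, I would invoke the identity \eqref{78} and write $R_\varepsilon = \partial_t \mf G_\varepsilon + (-\Delta)^{1/2} \mf G_\varepsilon$, where $\mf G_\varepsilon = r*\xi_\varepsilon$ with $r=q-p$ smooth and compactly supported. Since $\xi_\varepsilon\to\xi$ in $C^{\alpha'}$ for every $\alpha'<-1$ and $r$ is smooth, convolution with $r$ gives $\mf G_\varepsilon \to \mf G$ almost surely in $C^m$ for every $m$, hence $R_\varepsilon \to R$ in probability in $C^1([0,T_0+1]\times \bb T)$. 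The random variable
\begin{equation*}
M_\varepsilon \;:=\; \Vert X_{\gamma,\varepsilon}\Vert_{C^\alpha([-T_0-3,T_0+4]\times \bb T)} \,+\, \Vert R_\varepsilon\Vert_{C^1([0,T_0+1]\times \bb T)}
\end{equation*}
is therefore bounded in probability uniformly in $\varepsilon\in[0,1]$, with $M_\varepsilon \to M_0$ in probability.

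\smallskip\noindent\emph{Step 2 (common, random existence time).} A quantitative reading of the proof of Theorem \ref{t02} shows that its existence time $\tau$ is a \emph{deterministic} decreasing function $\tau(M, \Vert u_0\Vert_{C^\beta(\bb T)})$ of the bound on the data; indeed, the contraction scheme only uses Proposition \ref{s07} to give meaning to $X\,e^{\gamma\mf w}$ in $C^\alpha$ and Corollary \ref{s04} which contributes a smallness factor $T^\kappa$ via the fractional Schauder estimate. I then set
\begin{equation*}
\tau \;:=\; \tau\big(\, \sup\nolimits_{\varepsilon\in [0,1]} M_\varepsilon \,,\, \Vert u_0\Vert_{C^\beta(\bb T)} \,\big)\;,
\end{equation*}
which is almost surely strictly positive by Step 1 and which simultaneously makes $\Psi^\varepsilon := \Psi_{\tau,\gamma, X_{\gamma,\varepsilon},R_\varepsilon,u_0}$ a contraction on a common ball $B_\rho \subset C^\beta([0,\tau]\times\bb T)$ containing $\mf w_\varepsilon$ for every $\varepsilon\in[0,1]$.

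\smallskip\noindent\emph{Step 3 (stability and conclusion).} With $\Psi^\varepsilon$ as above, decompose
\begin{equation*}
\mf w_\varepsilon \,-\, \mf w_0 \;=\; \big[\Psi^\varepsilon(\mf w_\varepsilon) - \Psi^\varepsilon(\mf w_0)\big] \;+\; \big[\Psi^\varepsilon(\mf w_0) - \Psi^0(\mf w_0)\big] \;.
\end{equation*}
The first bracket is bounded by $\lambda\, \Vert \mf w_\varepsilon - \mf w_0\Vert_{C^\beta([0,\tau]\times\bb T)}$ with $\lambda<1$ by the contraction property. The second bracket equals
\begin{equation*}
\int_0^{\cdot} q_{\cdot-s}\, [X_{\gamma,\varepsilon}(s)-X_\gamma(s)]\, e^{\gamma\mf w_0(s)}\, ds \;+\; \int_0^{\cdot} P_{\cdot-s}\, [R_\varepsilon(s)-R(s)]\, ds \;.
\end{equation*}
Using $\alpha+\beta>0$ and Proposition \ref{s07} to control the product $(X_{\gamma,\varepsilon} - X_\gamma)\, e^{\gamma\mf w_0}$ in $C^\alpha$, then Corollary \ref{s04} to pass from $C^\alpha$ to $C^\beta$ on $[0,\tau]\times \bb T$, together with Lemma \ref{s03} for the second term, one obtains
\begin{equation*}
\Vert \Psi^\varepsilon(\mf w_0) \,-\, \Psi^0(\mf w_0)\Vert_{C^\beta([0,\tau]\times\bb T)} \;\le\; C(M,\rho)\, \big\{\Vert X_{\gamma,\varepsilon} - X_\gamma\Vert_{C^\alpha} \,+\, \Vert R_\varepsilon - R\Vert_{C^1}\big\}\;,
\end{equation*}
and absorbing the first bracket gives $\Vert \mf w_\varepsilon - \mf w_0\Vert_{C^\beta} \le (1-\lambda)^{-1}$ times the right-hand side, which tends to zero in probability by Step 1.

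\smallskip\noindent\emph{Main obstacle.} The delicate point is ensuring that the Lipschitz constant $\lambda$ and the stability constant $C(M,\rho)$ truly depend on $\varepsilon$ only through $M_\varepsilon$, so that the estimate above is uniform over $\varepsilon \in [0,1]$. This forces an explicit Lipschitz estimate for the composition map $\mf w \mapsto e^{\gamma \mf w}$ from $B_\rho\subset C^\beta$ into itself (handled by power series expansion, using $\beta\in(0,1)$), a uniform bound on $\Vert X_{\gamma,\varepsilon}\,e^{\gamma\mf w}\Vert_{C^\alpha}$ via Proposition \ref{s07}, and the crucial observation that the Schauder gain $T^\kappa$ in Corollary \ref{s04} can be used to make both $\lambda$ small and the product estimate favorable on a time interval $[0,\tau]$ determined solely by $M_\varepsilon$ and $\Vert u_0\Vert_{C^\beta}$.
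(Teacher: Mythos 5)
Your overall strategy is the same as the paper's: write $\mf w_\varepsilon-\mf w_0$ as a contraction part plus a data-perturbation part, bound the first by the uniform contraction (the paper's Lemma \ref{s16}) and the second by continuity of $\Psi$ in $(X,R)$ (the paper's Lemma \ref{s09}, which packages exactly your use of Proposition \ref{s07}, Corollary \ref{s04} and Lemma \ref{s03}), and then absorb. Step 1 and Step 3 are in substance the paper's Proposition \ref{s13} and the displayed estimate \eqref{82}.

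The genuine gap is in Step 2, in the construction of the common existence time. You set $\tau=\tau\big(\sup_{\varepsilon\in[0,1]}M_\varepsilon,\Vert u_0\Vert_{C^\beta}\big)$ and claim this is a.s. positive "by Step 1". But Step 1 only gives convergence of $M_\varepsilon$ to $M_0$ \emph{in probability} as $\varepsilon\to 0$ (equivalently, tightness of the family), and this does not imply that $\sup_{\varepsilon\in(0,1]}M_\varepsilon$ is almost surely finite: convergence in probability controls each fixed $\varepsilon$ on a high-probability event, but it does not exclude that, for almost every $\omega$, there are arbitrarily small $\varepsilon$ along which $M_\varepsilon(\omega)$ blows up. Consequently your $\tau$ could vanish on a set of positive probability, and the "common ball $B_\rho$" and the uniform Lipschitz constant $\lambda<1$ in Step 3 are not justified as stated. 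The repair is the route the paper takes: fix $\eta,\zeta>0$, choose a deterministic $K$ so that the \emph{limit} data satisfy $\Vert R\Vert_{C^1}\le K$ and $\Vert X_\gamma\Vert_{C^\alpha}\le K$ outside a set of probability $\eta$, and choose $\varepsilon_0$ so that $\Vert R_\varepsilon-R\Vert_{C^1}\le\zeta$ and $\Vert X_{\gamma,\varepsilon}-X_\gamma\Vert_{C^\alpha}\le\zeta$ outside sets of probability $\eta$ for all $\varepsilon<\varepsilon_0$; on the intersection of the complements the data norms are uniformly bounded by $K+1$, so the deterministic time $\tau_{K+1,K+1,B}$ from Theorem \ref{t02} works simultaneously for all these $\varepsilon$, your Step 3 estimate holds there with constants depending only on $(K,B)$, and one concludes $\bb P[\Vert\mf w_\varepsilon-\mf w_0\Vert_{C^\beta([0,\tau]\times\bb T)}>A\zeta e^{AB}]\le 4\eta$, which yields convergence in probability. (A secondary, minor point: your derivation of $R_\varepsilon\to R$ in $C^1$ from $\xi_\varepsilon\to\xi$ in $C^{\alpha'}$ is not established in the paper; the paper proves it directly through Gaussian estimates in Proposition \ref{s13}, so you should either cite that or supply the missing argument.)
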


\begin{proof}[Proof of Theorem \ref{mt2}]
Let $\tau$ be the a. s. strictly positive random time given by Theorem
\ref{t03}.  The solution $\mf u_\varepsilon$ of \eqref{09} can be
represented as $\mf v_\varepsilon + \mf w_\varepsilon$. According to
Theorem \ref{t03}, $\mf w_\varepsilon$ converges in probability to
$\mf w$, as $\varepsilon\to 0$, in $C^\beta([0,\tau] \times \bb
T)$. On the other hand, it is not difficult to show that
$\mf v_\varepsilon$ converges in probability to $\mf v$, as
$\varepsilon\to 0$, in $C^\beta([0,\tau] \times \bb T)$.  Since
neither $\mf w$ nor $\mf v$ depend on the mollifier $\varrho$, the
theorem is proved.
\end{proof}

\begin{proposition}
\label{s06} 
Fix $0<T_1< \min\{T_0, 1/4\}$.  For all $\mf w \in C^{\beta}([0,T_1]
\times \mathbb{T})$, the function $\Psi_{T_1}(\mf w)$ belongs to
$C^{\beta}([0,T_1] \times \mathbb{T})$. Moreover, there exist finite
constants $A_1 = A_1(T_0, \| \, u\,\|_{ C^{\beta}(\mathbb{T})})$, $A_2
= A_2(\gamma, T_0, \| \, u\,\|_{ L^{\infty}(\mathbb{T})})$, $A_3 =
A_3(\gamma, T_0)$ such that
\begin{equation*}
\begin{aligned}
\Vert \Psi(\mf w) \Vert_{C^{\beta}([0,T_1]\times \mathbb{T})}
\; & \le\;  A_1 \, \big(\, 1 \,+\,  \Vert R \Vert_{C^1([0,T_0]\times \bb
  T)} \, \big) \\
\qquad 
\; &+\; A_2\, T^\kappa_1\, \Vert X \Vert_{C^\alpha([-T_0-3,5]\times
  \bb T)}\, \exp \Big\{\, A_3\,
\Vert\, \mf w \, \Vert_{C^\beta
([0,T_1]\times \bb T)}\, \Big\}  \;.
\end{aligned}
\end{equation*}
\end{proposition}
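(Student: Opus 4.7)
The plan is to split $\Psi_{T_1}(\mf w)(t)$ into its three natural pieces,
\begin{equation*}
I(t)\;=\;\int_0^t q_{t-s}\,\big(X(s)\,e^{\gamma\mf w(s)}\big)\,ds\;,\quad
J(t)\;=\;\int_0^t P_{t-s}R(s)\,ds\;,\quad
K(t)\;=\;P_t u\;,
\end{equation*}
and to estimate each separately. For the initial datum term $K$, Lemma \ref{s03} gives $\|K\|_{C^\beta([0,T_1]\times\bb T)}\le C_0\,\|u\|_{C^\beta(\bb T)}$. For the drift term $J$, since $R\in C^1(\bb R\times\bb T)$ and $P_t$ is an $L^\infty$-contraction whose kernel is smooth on $(0,\infty)\times\bb T$, a short computation (changing variables $u=t-s$, splitting $J(t')-J(t)$ into a boundary layer and an interior part) together with $\beta<1$ yields $\|J\|_{C^\beta([0,T_1]\times\bb T)}\le C(T_0)\,\|R\|_{C^1([0,T_0+1]\times\bb T)}$. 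Both contributions go into the $A_1\,(1+\|R\|_{C^1})$ term, with $A_1$ depending on $T_0$ and $\|u\|_{C^\beta(\bb T)}$ only.

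The main work is the estimate on $I$, which I would first give precise meaning. Extend $\mf w$ to $\widehat{\mf w}$ on $\bb R\times\bb T$ by $\widehat{\mf w}(t,\cdot)=u$ for $t\le 0$ and $\widehat{\mf w}(t,\cdot)=\mf w(T_1,\cdot)$ for $t\ge T_1$, so that \eqref{81} holds. Set $X_{\mf w}:=X\cdot e^{\gamma\widehat{\mf w}}$, defined via Proposition \ref{s07}; this is legitimate because $\alpha+\beta=1-2\kappa>0$, and the proposition furnishes
\begin{equation*}
\|X_{\mf w}\|_{C^\alpha([-T_0-3,5]\times\bb T)}\;\le\;C_0\,\|X\|_{C^\alpha([-T_0-3,5]\times\bb T)}\,\|e^{\gamma\widehat{\mf w}}\|_{C^\beta(\bb R\times\bb T)}\;.
\end{equation*}
By construction $I(t,x)=X^+_{\mf w}\bigl(q_{T_0,(t,x)}\bigr)$ with $X^+$ as at the end of Section \ref{sec04}, so Corollary \ref{s04} produces the decisive factor $T_1^\kappa$,
\begin{equation*}
\|I\|_{C^\beta([0,T_1]\times\bb T)}\;\le\;C(\kappa,T_0)\,T_1^\kappa\,\|X_{\mf w}\|_{C^\alpha([-T_0-3,5]\times\bb T)}\;.
\end{equation*}

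It remains to control $\|e^{\gamma\widehat{\mf w}}\|_{C^\beta}$. Since $\mf w(0,\cdot)=u$, the $C^\beta$ seminorm gives $\|\widehat{\mf w}\|_{L^\infty(\bb R\times\bb T)}\le\|u\|_{L^\infty(\bb T)}+T_1^\beta\,\|\mf w\|_{C^\beta([0,T_1]\times\bb T)}$. The mean value theorem then yields
\begin{equation*}
\|e^{\gamma\widehat{\mf w}}\|_{C^\beta(\bb R\times\bb T)}\;\le\;|\gamma|\,e^{|\gamma|\,\|u\|_{L^\infty}}\,e^{|\gamma|\,T_1^\beta\,\|\mf w\|_{C^\beta}}\,\|\mf w\|_{C^\beta([0,T_1]\times\bb T)}\;,
\end{equation*}
after which the elementary inequality $x\,e^{ax}\le e^{(a+1)x}$ for $a,x\ge 0$ absorbs the prefactor $|\gamma|\,\|\mf w\|_{C^\beta}$ into the exponential, producing a new exponent of the form $A_3\,\|\mf w\|_{C^\beta}$ with $A_3$ depending only on $\gamma$ (one also uses $T_1^\beta\le 1$). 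Combining the three bounds yields the proposition with $A_2=A_2(\gamma,T_0,\|u\|_{L^\infty})$ and $A_3=A_3(\gamma)$.

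The only genuinely delicate step is the use of Proposition \ref{s07} to define the product $X\cdot e^{\gamma\widehat{\mf w}}$: this requires $\alpha+\beta>0$, which is exactly why the parameters are chosen with $\beta=1+\alpha-2\kappa$ and $0<2\kappa<1+2\alpha$ just before the statement of the theorem. Once this product is in hand, all remaining estimates are routine compositions of Lemma \ref{s03}, Corollary \ref{s04}, and the mean value theorem.
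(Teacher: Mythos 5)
Your proposal is correct and follows essentially the same route as the paper: the same three-term decomposition, Lemma \ref{s03} for $P_t u$, a direct kernel computation for the $R$-term, and for the main term the identification $I(z)=X^+_{\mf w}(q_{T_0,z})$ combined with Proposition \ref{s07}, Corollary \ref{s04}, and the mean-value/exponential absorption bounds (the paper's Assertion \ref{a01} and inequality \eqref{70}). The only cosmetic difference is that you use $T_1^\beta\le 1$ to make $A_3$ depend on $\gamma$ alone, which is a harmless sharpening of the paper's $A_3(\gamma,T_0)$.
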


\begin{proof}
We examine separately each term appearing in the
definition of $\Psi (\mf w)$. We start with $P_t u$. By Lemma
\ref{s03}, $P_t \, u$ belongs to $C^{\beta}(\bb R \times \mathbb{T})$,
and there exists a finite constant $C_0$, depending only on $\beta$,
such that
\begin{equation*}
\| \, P_t \, u \, \|_{ C^{\beta}([0,T_1] \times
\mathbb{T})} \le C_0 \, \| \, u\, \|_{ C^{\beta}(\mathbb{T})} \;.
\end{equation*}

We turn to the term involving $R(s)$. Let
$\mf x(t) = \int_0^t P_{t-s} R(s)\, ds$.  Let
$M_0 = \Vert \, (\partial_x R)\, \Vert_{L^\infty([0,T_0]\times \bb
  T)}$,
\begin{equation*}
M_1 \;=\; M_0 \; T_0 \, \big\{\, 1 \,+\,
E\big[\, |Z_1|^\beta\,\big] \,\big\} \;+\; T_0^{1-\beta}\, \Vert
R \Vert_{L^\infty([0,T_0]\times \bb T)} \;.
\end{equation*}
A computation, similar to the one presented in the proof of Lemma
\ref{s03}, yields that
\begin{equation*}
\big | \,  \mf x(t,x) \,-\, \mf x(t',x') \, \big | \;\le\;
M_1 \, \big \{\, |x'-x|^\beta \,+\, |t'-t|^\beta \, \big\}
\end{equation*}
for all $t$, $t'\in [0,T_1]$, $x$, $x'\in \bb T$. We used here the
fact that $T_1\le T_0$.  This proves that $\mf x$ belongs to
$C^{\beta}([0,T_1] \times \mathbb{T})$ and that
$\Vert \mf x \Vert_{C^{\beta}([0,T_1]\times \mathbb{T})} \,\le\,
M_1$.

Finally, let
$\mf x (z) = \int_{0}^t q_{t-s} [ \, X(s) \, e^{\gamma \mf w(s)} \, ]
\, ds = X^+_{\mf w}(q_{T_0,z})$.  Since $X_{\mf w}$ belongs to
$C^{\alpha}(\bb R \times \mathbb{T})$, as
$\beta = 1+ \alpha - 2\kappa$, by Corollary \ref{s04}, $\mf x$,
belongs to $C^{\beta}([0,T_1] \times \mathbb{T})$ and there exists a
finite constant $M_3=M_3(T_0)$, whose value may change from line to
line, such that
\begin{equation*}
\Vert \mf x \Vert_{C^{\beta}([0,T_1]\times \mathbb{T})} \;\le\; M_3 \,
T^\kappa_1 \, \Vert  X_{\mf w} \Vert_{C^{\alpha}([-T_0-3,5]\times
  \mathbb{T})}\;.
\end{equation*}
By Proposition \ref{s07} and \eqref{66}, $\Vert  X_{\mf w}
\Vert_{C^{\alpha}([-T_0-3,5]\times \mathbb{T})}$ is less than or equal to
\begin{equation*}
M_3\, \Vert  X \Vert_{C^{\alpha}([-T_0-3,5]\times \mathbb{T})}
\, \exp \Big\{\, |\gamma| \, \Vert\, \widehat {\mf w} \, \Vert_{L^\infty
  ([-T^*_0,T^*_0]\times \bb T)}\, \Big\} \; |\gamma| \,
\Vert\, \widehat {\mf w} \, \Vert_{C^\beta
  ([-T^*_0,T^*_0]\times \bb T)} \;,
\end{equation*}
where $T^*_0 = \max\{T_0+3, 5\}$. By definition of $\widehat {\mf w}$
and \eqref{81}, we may replace $\widehat {\mf w}$ by $\mf w$ and the
interval $[-T^*_0,T^*_0]$ by $[0,T_1]$.

Let $M_4 = M_3 \, \Vert  X \Vert_{C^{\alpha}([-T_0-3,5]\times
  \mathbb{T})}$, use the bound $a\le e^a$, $a>0$, and apply the
inequality \eqref{70} below to bound the previous expression by
\begin{equation*}
M_5\, \exp \Big\{\, |\gamma| \, (1+T^\beta_0)\,
\Vert\, \mf w \, \Vert_{C^\beta
([0,T_1]\times \bb T)}\, \Big\} \;,
\end{equation*}
where
$M_5 = M_4 \, \exp \{\, |\gamma| \, \Vert\, u \, \Vert_{L^\infty (\bb
  T)}\, \}$.

To complete the proof of the proposition, it remains to recollect the
previous estimates.
\end{proof}

Next result asserts that the function $\Psi_{T,\gamma,X,R,u}$ depends
continuously on the parameters $X$ and $R$.

\begin{lemma}
\label{s09}
Fix $0<T_1< \min\{T_0, 1/4\}$, $\gamma\in \bb R$, $u\in
C^\beta(\bb T)$. There exists a finite constant $A_4 = A_4(T_0,
\gamma, u)$ such that
\begin{align*}
& \big \Vert\, \Psi_{T_1, \gamma, X, R, u}(\mf w) \,-\, \Psi_{T_1, \gamma,
  X', R', u}(\mf w)
\,\big\Vert_{C^\beta([0,T_1]\times \bb  T)} 
\;\le\; A_4 \, 
\big \Vert\, R \,-\, R' \,\big\Vert_{C^1([0,T_0]\times \bb  T)} \\
&\quad \;+\; A_4\, T^\kappa_1\, \big \Vert\, X \,-\, X' \,\big\Vert_{C^\alpha([-T_0-3,5]\times \bb  T)}
\, \exp \Big\{\, A_4\, \Vert\, \mf w \, \Vert_{C^\beta ([0,T_1]\times \bb T)}\, \Big\}
\end{align*}
for all $X$, $X'$ in $C^\alpha(\bb R\times \bb T)$, $R$, $R'$ in
$C^1(\bb R\times \bb T)$, and $\mf w$ in $C^\beta([0,T_1] \times \bb
T)$.
\end{lemma}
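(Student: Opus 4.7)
The plan is to reduce Lemma \ref{s09} to the a priori estimate already proved in Proposition \ref{s06}. The crucial observation is that, for $\mf w$ and $u$ fixed, the map $(X,R)\mapsto \Psi_{T_1,\gamma,X,R,u}(\mf w)$ is affine: the nonlinearity $e^{\gamma \mf w}$ does not change when we vary $X$ and $R$, and $P_t u$ depends only on $u$. Taking the difference, the $P_t u$ term cancels and we are left with
\begin{equation*}
\Psi_{T_1,\gamma,X,R,u}(\mf w) \,-\, \Psi_{T_1,\gamma,X',R',u}(\mf w) \;=\; \mf X \,+\, \mf R\;,
\end{equation*}
where $\mf R(t) = \int_0^t P_{t-s}[R(s)-R'(s)]\, ds$ and $\mf X(t) = (X-X')^+_{\mf w}(q_{T_0,(t,\cdot)})$, interpreted as in the discussion preceding Theorem \ref{t02}.

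For $\mf R$, I would copy verbatim the corresponding estimate in the proof of Proposition \ref{s06} with $R$ replaced by $R-R'$; this is a linear statement and immediately yields
\begin{equation*}
\Vert \mf R \Vert_{C^\beta([0,T_1]\times \bb T)} \;\le\; C(T_0) \, \Vert R - R'\Vert_{C^1([0,T_0]\times \bb T)}\;,
\end{equation*}
which accounts for the first term on the right-hand side of the statement. For $\mf X$, I would apply Corollary \ref{s04} to $(X-X')\, e^{\gamma \widehat{\mf w}} \in C^\alpha(\bb R\times \bb T)$ (the product being admissible by Proposition \ref{s07} since $\alpha+\beta>0$), obtaining
\begin{equation*}
\Vert \mf X \Vert_{C^\beta([0,T_1]\times \bb T)} \;\le\; C(T_0) \, T_1^\kappa \, \Vert (X-X')\, e^{\gamma \widehat{\mf w}}\Vert_{C^\alpha([-T_0-3,5]\times \bb T)}\;,
\end{equation*}
and then bounding the right-hand side by $C(T_0)\, T_1^\kappa\, \Vert X-X'\Vert_{C^\alpha([-T_0-3,5]\times \bb T)}\, \Vert e^{\gamma \widehat{\mf w}}\Vert_{C^\beta(\bb R\times \bb T)}$ via a second application of Proposition \ref{s07}.

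To conclude, I would control $\Vert e^{\gamma \widehat{\mf w}}\Vert_{C^\beta}$ exactly as at the end of the proof of Proposition \ref{s06}, using \eqref{70}, Assertion \ref{a01}, and \eqref{81} to identify the relevant seminorms of $\widehat{\mf w}$ with those of $\mf w$ on $[0,T_1]\times \bb T$, up to an additive contribution coming from the initial datum $u$. After absorbing the factors depending on $\gamma$, $T_0$, and $\Vert u\Vert_{L^\infty(\bb T)}$ into a single constant $A_4 = A_4(T_0,\gamma,u)$, this produces the exponential factor $\exp\{A_4 \, \Vert \mf w\Vert_{C^\beta([0,T_1]\times \bb T)}\}$ in the second summand of the statement. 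I do not foresee a real obstacle: the argument is a linearity-plus-bookkeeping variant of Proposition \ref{s06}, and the only subtlety is to carry along the correct reference intervals $[0,T_0]\times \bb T$ for $R$ and $[-T_0-3,5]\times \bb T$ for $X$ (as dictated by Corollary \ref{s04}).
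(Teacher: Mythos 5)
Your proposal is correct and follows essentially the same route as the paper: the paper also exploits the affine dependence of $\Psi_{T_1,\gamma,X,R,u}(\mf w)$ on $(X,R)$, estimates the $R-R'$ term by re-running the semigroup bound from the proof of Proposition \ref{s06}, and bounds the $X-X'$ term via Corollary \ref{s04} together with Proposition \ref{s07} applied to $(X-X')\,e^{\gamma \widehat{\mf w}}$, with the exponential factor controlled exactly as you describe through \eqref{70}, Assertion \ref{a01} and \eqref{81}.
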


\begin{proof}
We estimate the difference term by term. We start with
the one involving $R$.  For a function $U$ in $C^1(\bb R \times \bb
T)$, let $\mf x_U(t) = \int_0^t P_{t-s} U(s)\, ds$.  The  proof of
Proposition \ref{s06} yields that there 
exists a finite constant $M_0=M_0(T_0)$ such that $\Vert \mf x_R - \mf
x_{R'} \Vert_{C^{\beta}([0,T]\times \mathbb{T})} \,\le\, M_0 \, \big
\Vert\, R \,-\, R' \,\big\Vert_{C^1([0,T_0]\times \bb T)}$.

For $Y$ in $C^\alpha(\bb R \times \bb T)$, let
$\mf x_Y (z) = \int_{0}^\infty q_{t-s} [ \, Y(s) \, e^{\gamma \mf
  w(s)} \, ] \, ds = Y^+_{\mf w}(q_{T_0,z})$. By the proof of
Proposition \ref{s06}, there exists a constant
$M_1=M_1(T_0, \gamma, u)$ such that
$\Vert \, \mf x_X - \mf x_{X'}\, \Vert_{C^{\beta}([0,T_1]\times
  \mathbb{T})}$ is bounded above by
\begin{equation*}
M_1\, T^\kappa_1\, \big \Vert\, X \,-\, X' \,\big\Vert_{C^\alpha([-T_0-3,5]\times \bb  T)}
\, \exp \Big\{\, M_1\, \Vert\, \mf w \, \Vert_{C^\beta ([0,T_1]\times \bb T)}\, \Big\}\;.
\end{equation*}
The assertion of the lemma follows from the two previous estimates.
\end{proof}

The next result asserts that $\Psi$ is a contraction provided the
time-interval is small enough.  It follows from the third part of the
proof of Proposition \ref{s06} and from Assertion \ref{a02} below.

\begin{lemma}
\label{s08}
Fix $0<T_1< \min\{T_0, 1/4\}$, $\gamma\in \bb R$, $u\in
C^\beta(\bb T)$. There exists a finite constant $A_5 = A_5(T_0,
\gamma, u)$ such that
\begin{equation*}
\begin{aligned}
& \big \Vert\, \Psi(\mf w_1) \,-\, \Psi(\mf w_2)
\,\big\Vert_{C^\beta([0,T_1]\times \bb  T)} \;\le\;
A_5 \, T^\kappa_1\, 
\Vert  \,X \,\Vert_{C^{\alpha}([-T_0-3,5]\times  \mathbb{T})} \; \times\\
&\quad \times \; \, 
\exp A_5\, \big\{\, \Vert\, \mf w_1 \,\big\Vert_{C^\beta([0,T_1]\times \bb T)} 
+ \Vert\, \mf w_2 \,\big\Vert_{C^\beta([0,T_1]\times \bb  T)} \big\}
\, \big \Vert\, \mf w_1 \,-\, \mf w_2
\,\big\Vert_{C^\beta([0,T_1]\times \bb  T)}
\end{aligned}
\end{equation*}
for all $X$ in $C^\alpha(\bb R\times \bb T)$, $R$ in $C^1(\bb
R\times \bb T)$, and $\mf w_1$, $\mf w_2$ in $C^\beta([0,T_1] \times
\bb T)$ such that $\mf w_k(0, \cdot) = u(\cdot)$, $k=1$, $2$.
\end{lemma}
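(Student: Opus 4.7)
The two terms $\int_0^t P_{t-s} R(s)\, ds$ and $P_t u$ in the definition of $\Psi$ do not depend on $\mf w$, so they cancel in the difference and
\[
\Psi(\mf w_1)(t) \,-\, \Psi(\mf w_2)(t) \;=\;
\int_0^t q_{t-s}\, \big\{\, X(s) \, \big[\, e^{\gamma \mf w_1(s)} \,-\, e^{\gamma \mf w_2(s)}\,\big] \,\big\}\; ds\;.
\]
Denoting $h = e^{\gamma \widehat{\mf w}_1} - e^{\gamma \widehat{\mf w}_2}$, where $\widehat{\mf w}_k$ is the extension of $\mf w_k$ to $\bb R \times \bb T$ defined before \eqref{81}, the right-hand side equals $[X \cdot h]^+(q_{T_0,z})$, in the sense described just after the statement of Theorem~\ref{t02}.

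The plan is therefore to chain three estimates already in hand. First, by Corollary~\ref{s04} applied to the distribution $X\cdot h$ (which lies in $C^\alpha$ by Proposition~\ref{s07}, since $\alpha + \beta > 0$), one obtains
\[
\big\Vert\, [X\cdot h]^+(q_{T_0,\cdot}) \,\big\Vert_{C^{\beta}([0,T_1]\times \bb T)}
\;\le\; C(T_0)\, T_1^{\kappa}\, \Vert X\cdot h\Vert_{C^\alpha([-T_0-3,5]\times \bb T)}\;,
\]
using $\beta = 1+\alpha-2\kappa$. Second, Proposition~\ref{s07} gives
\[
\Vert X\cdot h\Vert_{C^\alpha([-T_0-3,5]\times \bb T)} \;\le\; C_0\, \Vert X\Vert_{C^\alpha([-T_0-3,5]\times \bb T)}\, \Vert h\Vert_{C^\beta([-T_0-3,5]\times \bb T)}\;,
\]
and by \eqref{81} the norm of $h$ on $[-T_0-3,5]\times \bb T$ coincides with its norm on $[0,T_1]\times \bb T$.

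The remaining step is to control $\Vert h\Vert_{C^\beta([0,T_1]\times \bb T)}$ by $\Vert \mf w_1 - \mf w_2\Vert_{C^\beta([0,T_1]\times \bb T)}$ times an exponential of the $C^\beta$ norms of $\mf w_1$ and $\mf w_2$. This is precisely the content of Assertion~\ref{a02} referred to in the statement: writing
\[
e^{\gamma \mf w_1} - e^{\gamma \mf w_2} \;=\; \gamma\, (\mf w_1 - \mf w_2)\, \int_0^1 e^{\gamma[\theta \mf w_1 + (1-\theta) \mf w_2]}\, d\theta\;,
\]
one bounds the $L^\infty$ part of the $C^\beta$ norm trivially by $|\gamma|\,\Vert \mf w_1 - \mf w_2\Vert_\infty\, \exp\{|\gamma|(\Vert \mf w_1\Vert_\infty + \Vert \mf w_2\Vert_\infty)\}$, while the H\"older seminorm is obtained by splitting the increment of $h$ into an increment of $\mf w_1 - \mf w_2$ times the bounded integral and an increment of the integral times $\Vert \mf w_1 - \mf w_2\Vert_\infty$; the latter is handled by Assertion~\ref{a01} applied to the composition with the exponential, together with the inequality $a\le e^a$ used at the end of the proof of Proposition~\ref{s06}. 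Combining these three bounds and absorbing $|\gamma|$ and a polynomial dependence on $T_0$ into the exponential factor, with $A_5 = A_5(T_0, \gamma, u)$ sufficiently large, produces the claimed inequality.

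The only non-routine step is the $C^\beta$-estimate for $h = e^{\gamma \mf w_1} - e^{\gamma \mf w_2}$: one must carefully balance the mean-value factorisation so that the difference $\mf w_1 - \mf w_2$ appears linearly in $\Vert\cdot\Vert_{C^\beta}$ while the dependence on $\Vert \mf w_k\Vert_{C^\beta}$ enters only inside an exponential. This is standard but delicate, and is precisely what Assertion~\ref{a02} is designed to encapsulate; the remainder of the argument is a direct bookkeeping of the estimates already proved.
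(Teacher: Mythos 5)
Your proposal is correct and follows essentially the same route as the paper, which proves the lemma by combining the third part of the proof of Proposition \ref{s06} (the chain Corollary \ref{s04} for $X^+_{\mf w}(q_{T_0,z})$ plus the product estimate of Proposition \ref{s07}) with the Lipschitz bound for differences of exponentials in Assertion \ref{a02}. Your write-up in fact spells out these steps in more detail than the paper's one-line proof does.
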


Let $B_{K_1}=B(\gamma, T_0, u, K_1)$, and
$\tau_{K_1, K_2,B} = \tau (\gamma, T_0, u, K_1, K_2, B)$, $K_1$,
$K_2$, $B>0$ be given by
\begin{equation*}
B_{K_1} \;=\; 1\;+\; A_1 \, \big(\, 1 \,+\,
K_1 \, \big)\;, \quad \tau_{K_1, K_2,B}
\,=\, \tau_1 \wedge \,\tau_2\;, 
\end{equation*}
where $A_1$, $A_2$, $A_3$, $A_5$ are the constants appearing in the
statement of Proposition \ref{s06} and Lemma \ref{s08} and
\begin{equation*}
\tau_1^{-\kappa} \;=\; A_2\, K_2 \, e^{A_3 \, B} \;,\;\;
\tau_2^{-\kappa} \;=\; 2\, A_5\, K_2 \, e^{2\, A_5 \, B} \;.
\end{equation*}

Next result is a straightforward consequence of Proposition \ref{s06}
and Lemma \ref{s08}. It asserts that $\Psi$ is a contraction from the
ball of radius $B$ in the $C^\beta([0,T]\times \bb T)$-topology to
itself provided $T\le \tau$.

\begin{lemma}
\label{s16}
Fix $T_0>0$, $\gamma\in \bb R$, $u\in C^\beta(\bb T)$, $K_1>0$,
$K_2>0$ and $B\ge B_{K_1}$.  Then, $\big \Vert\, \Psi(\mf w)
\,\big\Vert_{C^\beta([0,T]\times \bb T)} \le B$ if $\big \Vert\, \mf w
\,\big\Vert_{C^\beta([0,T]\times \bb T)} \le B$ and
\begin{equation*}
\big \Vert\, \Psi(\mf w_1) \,-\, \Psi(\mf w_2)
\,\big\Vert_{C^\beta([0,T]\times \bb  T)} \;\le\; (1/2)
\, \big \Vert\, \mf w_1 \,-\, \mf w_2
\,\big\Vert_{C^\beta([0,T]\times \bb  T)}
\end{equation*}
for all  $0<T\le \tau_{K_1, K_2,B}$, $R \in C^1(\bb R\times \bb T)$ such that
$\Vert R \Vert_{C^1([0,T_0]\times \bb T)} \le K_1$,
$X \in C^\alpha(\bb R\times \bb T)$ such that
$\Vert \, X_\gamma \, \Vert_{C^\alpha([-T_0-3,5] \times \bb T)} \le
K_2$, $\mf w_1$, $\mf w_2$ in $C^\beta([0,T]\times \bb T)$ such that
$\mf w_k(0, \cdot) = u(\cdot)$, $\big \Vert\, \mf w_k
\,\big\Vert_{C^\beta([0,T]\times \bb T)} \le B$, $k=1$, $2$.
\end{lemma}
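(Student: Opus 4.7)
The plan is to obtain both conclusions by direct substitution of the hypotheses into the estimates of Proposition \ref{s06} and Lemma \ref{s08}; no new ideas beyond bookkeeping of constants are required.

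For the invariance statement, I would start from Proposition \ref{s06} applied with $T_1 = T$. Using $\Vert R\Vert_{C^1([0,T_0]\times \bb T)} \le K_1$, $\Vert X\Vert_{C^\alpha([-T_0-3,5]\times \bb T)} \le K_2$ and $\Vert \mf w\Vert_{C^\beta([0,T]\times \bb T)} \le B$, the inequality in Proposition \ref{s06} yields
\begin{equation*}
\Vert \Psi(\mf w)\Vert_{C^\beta([0,T]\times \bb T)}
\;\le\; A_1\,(1+K_1) \;+\; A_2\, T^\kappa\, K_2\, e^{A_3 B}\;.
\end{equation*}
Since $B\ge B_{K_1} = 1 + A_1(1+K_1)$, the first term is bounded by $B - 1$. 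By the definition of $\tau_1$, the condition $T \le \tau_1$ is equivalent to $A_2\, T^\kappa\, K_2\, e^{A_3 B} \le 1$. Combining these two observations gives $\Vert \Psi(\mf w)\Vert_{C^\beta([0,T]\times \bb T)} \le B$ whenever $T\le \tau_1$, in particular whenever $T\le \tau_{K_1,K_2,B}$.

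For the contraction estimate I would apply Lemma \ref{s08}. Using $\Vert X\Vert_{C^\alpha([-T_0-3,5]\times \bb T)} \le K_2$ and $\Vert \mf w_k\Vert_{C^\beta([0,T]\times \bb T)} \le B$ for $k=1,2$, Lemma \ref{s08} gives
\begin{equation*}
\big\Vert \Psi(\mf w_1)-\Psi(\mf w_2)\big\Vert_{C^\beta([0,T]\times \bb T)}
\;\le\; A_5\, T^\kappa\, K_2\, e^{2 A_5 B}\,
\big\Vert \mf w_1 - \mf w_2\big\Vert_{C^\beta([0,T]\times \bb T)}\;.
\end{equation*}
By the definition of $\tau_2$, the constraint $T\le \tau_2$ is equivalent to $A_5\, T^\kappa\, K_2\, e^{2 A_5 B}\le 1/2$, which yields the contraction factor $1/2$ required by the statement. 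Since $\tau_{K_1,K_2,B} = \tau_1\wedge \tau_2$, both conclusions hold simultaneously for $T\le \tau_{K_1,K_2,B}$.

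There is no genuine obstacle here: the non-trivial work has already been done in Proposition \ref{s06} (the $C^\beta$ bound on $\Psi$, whose main ingredients are Schauder estimates via Corollary \ref{s04} and the product estimate of Proposition \ref{s07}) and Lemma \ref{s08} (the Lipschitz dependence on $\mf w$, which uses Assertion \ref{a02} to linearise the exponential non-linearity). The only points to check are the two arithmetic inequalities exhibited above, both of which hold by construction of $\tau_1$ and $\tau_2$.
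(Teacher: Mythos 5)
Your proposal is correct and is exactly the argument the paper intends: the paper itself dismisses the lemma as "a straightforward consequence of Proposition \ref{s06} and Lemma \ref{s08}", and your substitution of the bounds $K_1$, $K_2$, $B$ together with the defining relations $\tau_1^{-\kappa}=A_2 K_2 e^{A_3 B}$ and $\tau_2^{-\kappa}=2A_5 K_2 e^{2A_5 B}$ is precisely that verification.
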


\begin{proof}[Proof of Theorem \ref{t02}]  The result follows from
  the previous lemma and a fixed point theorem in Banach spaces.
\end{proof}

\begin{proof}[Proof of Theorem \ref{t03}]
Fix $0<\gamma^2 < \pi/7$ and
$\alpha \in (-1/2, \alpha_\gamma)$. By Theorem
\ref{s27}, $X_{\gamma, \varepsilon}$ converges in probability to
$X_{\gamma}$ in $C^\alpha$, so that
$\Vert \, X_\gamma \, \Vert_{C^\alpha([-T_0-3,5] \times \bb T)}$ is
almost surely finite.  On the other hand, by Proposition \ref{s13}
below, $\Vert R \Vert_{C^1([0,T_0]\times \bb T)}$ is almost surely
finite and
$\Vert\, R \,-\, R_\varepsilon \, \Vert_{C^1([0,T_0]\times \bb T)}$
converges to $0$ in probability.

Fix $0<\zeta\le 1$, $\eta>0$. It follows from the previous
observations that there exists $K>0$ and $\varepsilon_0>0$ such that
\begin{equation*}
\begin{aligned}
& \bb P \big[ \, \Vert R \Vert_{C^1([0,T_0]\times \bb T)} >
K\, \big] \;\le\; \eta \;, \quad
\bb P \big[ \, \Vert \, X_{\gamma} \,
\Vert_{C^\alpha([-T_0-3,5] \times \bb T)} > K\, \big]
\;\le\; \eta \; , \\
&\quad
\bb P \big[ \, \Vert R_\varepsilon \,-\, R
\Vert_{C^1([0,T_0]\times \bb T)} >
\zeta \, \big] \;\le\; \eta \;, \quad
\bb P \big[ \, \Vert \,
X_{\gamma, \varepsilon} \,-\, X_{\gamma} \,
\Vert_{C^\alpha([-T_0-3,5] \times \bb T)} > \zeta \, \big]
\;\le\; \eta \; ,
\end{aligned}
\end{equation*}
for all $0\le \varepsilon < \varepsilon_0$. Note that we included
$\varepsilon =0$.

Denote by $\Omega_{K,\zeta}$ the union of the four sets appearing in
the previous displayed formula. Let $B=B_{K+1} = 1 + A_1(2+K)$,
$\tau = \tau_{K+1,K+1,B}$. On the set $\Omega^c_{K,\zeta}$, by Theorem
\ref{t02},
$\Vert \, \mf w_\varepsilon\, \Vert_{C^\beta([0,\tau] \times \bb T)}
\le B$ for all $0\le \varepsilon \le \varepsilon_0$.

We claim that on the set $\Omega^c_{K,\zeta}$
\begin{equation}
\label{82}
\Vert \, \mf w \,-\, \mf w_\varepsilon\, \Vert_{C^\beta([0,\tau] \times \bb  T)} 
\;\le \; A\, \zeta\, e^{AB}
\end{equation}
for some constant $A = A(\gamma, T_0, u)$. 

To prove this claim, let $\Psi = \Psi_{X,R,u}$, $\Psi_\varepsilon =
\Psi_{X_\varepsilon,R_\varepsilon,u}$.  Since $\mf w$, $\mf
w_\varepsilon$ are solutions of the fixed point problem stated in
Theorem \ref{t02},
\begin{equation*}
\begin{aligned}
& \Vert \, \mf w \,-\, \mf w_\varepsilon\, \Vert_{C^\beta([0,\tau] \times \bb  T)} 
\;=\; \Vert \, \Psi(\mf w) \,-\, \Psi_\varepsilon(\mf w_\varepsilon)
\, \Vert_{C^\beta([0,\tau] \times \bb T)} \\ 
&\quad \;\le \; \Vert \, \Psi(\mf w) \,-\, \Psi_\varepsilon(\mf w)
\, \Vert_{C^\beta([0,\tau] \times \bb T)} \;+\;
\Vert \, \Psi_\varepsilon(\mf w) \,-\, \Psi_\varepsilon(\mf w_\varepsilon)
\, \Vert_{C^\beta([0,\tau] \times \bb T)} \;.
\end{aligned}
\end{equation*}
By Lemma \ref{s09}, on the set $\Omega^c_{K,\zeta}$, the first term is
bounded above by
\begin{equation*}
 A \,  \Big\{
\big \Vert\, R \,-\, R_\varepsilon \,\big\Vert_{C^1([0,T_0]\times \bb  T)}  \;+\;
\big \Vert\, X \,-\, X_\varepsilon \,\big\Vert_{C^\alpha([-T_0-3,5]\times \bb  T)}
\, e^{A\, B} \,\Big\} \;\le\; A\, \zeta\, e^{AB}
\end{equation*}
for some constant $A = A(\gamma, T_0, u)$. By Lemma \ref{s16} with
$K_1= K_2=K+1$, $B=B_{K+1}$, on the set $\Omega^c_{K,\zeta}$, the
second term is bounded by
$(1/2)\, \Vert\, \mf w_1 \,-\, \mf w_2
\,\big\Vert_{C^\beta([0,\tau]\times \bb T)}$.
This proves \eqref{82}.

Hence, there exists a finite constant $A = A(\gamma, T_0, u)$ with the
following property. For all $\zeta>0$, $\eta>0$, there exists
$\varepsilon_0>0$ such that for all $0\le \varepsilon < \varepsilon_0$
\begin{equation*}
  \bb P \Big[ \, \Vert \, \mf w \,-\, \mf w_\varepsilon\,
  \Vert_{C^\beta([0,\tau] \times \bb  T)} 
\;> \; A\, \zeta\, e^{AB} \, \Big] \;\le\; 4\, \eta\;.
\end{equation*}
This proves the theorem.
\end{proof}

\smallskip
We conclude this section with some elementary estimates used above.
Recall from \eqref{13} that we denote by $C^b_+(\bb R \times \bb T)$,
$0<b<1$, the elements $f$ of $C^b$ such $f(t) =0$ for $t\le 0$.

\begin{asser}
\label{a03}
Fix $0<b<1$. For all $f$ in $C^b_+(\bb R \times \bb T)$ and $T>0$,
\begin{equation*}
\big \Vert\, f \,\big\Vert_{L^\infty([0,T]\times \bb T)}
\;\le\; T^b \,
\big \Vert\, f \,\big\Vert_{C^b([0,T]\times \bb T)}\;.
\end{equation*}
\end{asser}

\begin{proof}
Fix $T>0$ and $(t,x) \in [0,T]\times \bb T$. Since $ f(0)=0$, $|\,
f(t,x)\,| \,=\, |\, f(t,x) - f(0,x) \,|$. Hence, by definition of the
norm $\Vert\, \cdot \,\Vert_{C^b([0,T]\times \bb T)}$, $|\, f(t,x)\,|$
is bounded by $T^b \, \Vert\, f \,\Vert_{C^b([0,T]\times \bb T)}$,
which proves the assertion.
\end{proof}

Fix $0<b<1$. It follows from the previous assertion that for all $f$
in $C^b(\bb R \times \bb T)$ and $T>0$,
\begin{equation}
\label{70}
\big \Vert\, f \,\big\Vert_{L^\infty([0,T]\times \bb T)}
\;\le\; \big \Vert\, f(0,\cdot) \,\big\Vert_{L^\infty(\bb T)}
\;+\; T^b \, \big \Vert\, f \,\big\Vert_{C^b([0,T]\times \bb T)}\;,
\end{equation}
where $f(0,\cdot)$ represents the restriction of the function $f$ to
$\{0\} \times \bb T$.

\begin{asser}
\label{a01}
Fix $\gamma\in\bb R$, $0<b<1$ and an element $f $ in $C^b(\bb R
\times \bb T)$. Then, $\exp\{ \gamma \, f\}$ belongs to $C^b(\bb R
\times \bb T)$ and for all $T>0$, $-T\le T_1<T_2\le T$,
\begin{equation}
\label{66}
\big \Vert\, e^{\gamma\, f}\,\big\Vert_{C^b([T_1,T_2]\times \bb T)}
\;\le\; C_3(T)\, |\gamma|\,
\big \Vert\, f \,\big\Vert_{C^b([T_1,T_2]\times \bb T)}\;,
\end{equation}
where $C_3 (T) = \exp\{\, |\gamma| \, \Vert\, f\, \Vert_{L^\infty
  ([-T,T]\times \bb T)}\, \}$. Moreover, if $f$ belongs to $C^b_+(\bb
R \times \bb T)$,
\begin{equation}
\label{68}
\big \Vert\, e^{\gamma\, f}\,\big\Vert_{C^b([0,T]\times \bb T)}
\;\le\; \exp\Big\{\, (1+T^b)\, |\gamma| \,
\big \Vert\, f \,\big\Vert_{C^b([0,T]\times \bb T)} \, \Big\}\;.
\end{equation}
\end{asser}

\begin{proof}
The first claim follows from the bound $|e^y - e^x| \le \max\{ e^{|x|}
, e^{|y|} \}\, |y-x|$, $x$, $y\in \bb R$. Now, suppose that $f$
belongs to $C^b_+(\bb R \times \bb T)$. By Assertion \ref{a03},
$\Vert\, f\, \Vert_{L^\infty ([0,T]\times \bb T)} \le \Vert\, f\,
\Vert_{C^b ([0,T]\times \bb T)} T^b$. To complete the proof of the
second assertion, it remains to recall that $a\le e^a$ for $a\ge 0$.
\end{proof}

\begin{asser}
\label{a02}
Fix $\gamma\in\bb R$, $0<b<1$ and $f$, $g$ in $C^b(\bb R \times \bb
T)$. Assume that $f(0,x) = g(0,x) = J(x)$. Then, for all $T>0$,
\begin{equation*}
\big \Vert\, e^{\gamma\, f}\, -\, e^{\gamma\, g}\,
\big\Vert_{C^b([0,T]\times \bb T)}
\;\le\; A_0 \, e^{4 \gamma A_1}
\, \big \Vert\, f \, -\, g\, \big\Vert_{C^b([0,T]\times \bb T)}\;,
\end{equation*}
where $A_0 \,=\, (1 + 2 T^b)\, \gamma$ and
\begin{equation*}
A_1 = \Vert\, J \, \Vert_{L^\infty(\bb T)} \;+\;
T^b\, \Big\{\, \Vert\, f \, \Vert_{C^b([0,T]\times \bb T)} \;+\;
\Vert\, g \, \Vert_{C^b([0,T]\times \bb T)} \,\Big\} \;.
\end{equation*}
\end{asser}

\begin{proof}
Fix $T>0$, $z$, $w\in [0,T] \times \bb T$. Let $h=f-g$, and write
$[\, \exp\{\gamma\, f (z)\}\, -\, \exp\{\gamma\, g (z)\}\,] \,-\,
[\, \exp\{\gamma\, f (w)\}\, -\, \exp\{\gamma\, g (w)\}\,]$ as
\begin{align*}
& \frac{e^{\gamma\, f (z)} \,-\, e^{\gamma\, g (z)}}{f(z) \,-\, g(z)}\,
\big[\, h(z) - h(w)\,\big]
\;+\; h(w)\,
\Big(\, e^{\gamma\, g(z)} \,-\, e^{\gamma\, g (w)} \,\Big)
\frac{e^{\gamma h(z)}-1}{h(z)} \\
&\quad
+\; h(w)\, e^{\gamma\, g (w)}
\Big(\, \frac{e^{\gamma h(z)}-1}{h(z)}
\,-\, \frac{e^{\gamma h(w)}-1}{h(w)} \,\Big) \;.
\end{align*}

Denote by $f_\infty$, $g_\infty$, $h_\infty$ the
$L^\infty([0,T] \times \bb T)$ norms of $f$, $g$ and $h$,
respectively, and by $f_b$, $g_b$, $h_b$, the
$C^b ([0,T] \times \bb T)$ norms of these three functions. Clearly,
$h$ belongs to $C^b_+ ([0,T] \times \bb T)$. Hence, by Assertion
\ref{a03}, $h_\infty \le T^b h_b$, while, by \eqref{70},
$f_\infty \le J_\infty + T^b f_b$, with a similar inequality for
$g$. Here, $J_\infty$ stands for the $L^\infty(\bb T)$ norm of $J$.

Consider separately the three terms of the previous displayed
equation. It is not difficult to show that the first one is bounded by
$\gamma\, \exp\{ \gamma\, [g_\infty + f_\infty]\} \, h_b \, \Vert z -
z'\Vert^b$, and the second one by $\gamma^2\, \exp\{ \gamma\,
[g_\infty + h_\infty]\} \, h_\infty \, g_b \, \Vert z - z'\Vert^b$.
Let $f: \bb R \to \bb R$ be given by $f(\theta) = (e^\theta
-1)/\theta$. Since $f'(\theta) \le e^{2|\theta|}$, the third term is
bounded by $\gamma^2\, \exp\{ \gamma\, [g_\infty + 2h_\infty]\} \, h_b
\, h_\infty \Vert z - z'\Vert^b$.

To complete the proof of the assertion it remains to add the bounds
and to recall the estimates of $f_\infty$, $g_\infty$, $h_\infty$ in
terms of $J_\infty$, $f_b$, $g_b$ and $h_b$.
\end{proof}

Let $f$ be a function in $C^1(\bb R_+ \times \mathbb{T})$. Denote by
$\widehat f: \bb R_+ \times \bb R \to \bb R$ the function which is
$1$-periodic in space and which coincides with $f$ on $\bb R_+\times
[0,1)$.

\begin{asser}
\label{a04}
For all $T>0$, $0 \le s\le T$, $x$, $y\in \bb R$,
\begin{equation}
\label{65}
\big|\, \widehat f(s,x) \,-\,
\widehat f(s,y)\,\big| \;\le\; M_0 \,
\big|\, x \,-\,  y\,\big|^\beta \;,
\end{equation}
where
\begin{equation*}
M_0 \;=\; M_0(f,T) \;:=\; \big\Vert \, (\partial_x f)\,
\big\Vert_{L^\infty([0,T]\times \bb T)} \;.
\end{equation*}
\end{asser}

\begin{proof}
As $\widehat f$ is $1$-periodic, we may replace $y$ by $y'$ such that
$|y'-x|\le 1$. Then, use that $f$ is uniformly Lipschitz on
$[0,T]\times \bb T$, and finally that $|y'-x|\le |y'-x|^\beta \le
|y-x|^\beta$ because $|y'-x|\le 1$, $\beta<1$.
\end{proof}

We conclude this section proving that the sequence of random fields
$R_\varepsilon$ introduced in \eqref{64} converges in probability to
$R=R_0$.

\begin{proposition}
\label{s13}
We have that $\bb P\big[\, \Vert R \Vert_{C^1([0,T_0]\times \bb T)}
\,<\, \infty\,\big] \,=\, 1$. Moreover, for every $\eta>0$,
\begin{equation*}
\lim_{\varepsilon \to 0}
\bb P\big[\, \Vert\, R \,-\, R_\varepsilon \, \Vert_{C^1([0,T_0]\times \bb T)}
\,>\, \eta \,\big] \,=\, 0\;.
\end{equation*}
\end{proposition}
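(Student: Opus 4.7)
The plan is to identify $R_\varepsilon$ as the convolution of $\xi_\varepsilon$ against a smooth, compactly supported kernel $\tilde r$; once this is done, both parts of the proposition reduce to standard Gaussian/Kolmogorov estimates. Recall from the paragraph containing \eqref{78} that on $(0,T_0)\times \bb T$, $R_\varepsilon = (\partial_t + (-\Delta)^{1/2})\,\mf G_\varepsilon$ with $\mf G_\varepsilon = r * \xi_\varepsilon$ and $r := q_{T_0}-p$, and that the analogous identity with $\xi$ and $\mf G$ holds for $R$. Since $r$ is smooth, $\mf G_\varepsilon$ and $\mf G$ are smooth random fields, so one may commute the differential operator past the convolution, giving $R_\varepsilon = \tilde r * \xi_\varepsilon$ and $R = \tilde r * \xi$ with $\tilde r := (\partial_t + (-\Delta)^{1/2})\,r$.

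I first check that $\tilde r$ is smooth and supported in $[T_0,T_0+1]\times \bb T$. For $t\le T_0$, $r=0$, so $\tilde r = 0$; for $t\ge T_0+1$, $r=-p$ and $\tilde r = -(\partial_t + (-\Delta)^{1/2})p = 0$ by the fractional heat equation; on $(T_0,T_0+1)$, writing $r(t,x)=p(t,x)[\ms H(t-T_0)-1]$ and using Leibniz with the same cancellation yields $\tilde r(t,x)=\ms H'(t-T_0)\,p(t,x)$, smooth because $p$ is smooth for $t>0$ and every derivative of $\ms H$ vanishes at $0$ and $1$.

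For the first assertion, the smoothness of $\tilde r$ allows differentiation under the integral: for any multi-index $\alpha = (i,j)$, $D^\alpha R(z) = \int (D^\alpha \tilde r)(z-w)\,\xi(dw)$, a Wiener integral against an $L^2$ function. This gives
\[
\bb E\big[\,|D^\alpha R(z) - D^\alpha R(z')|^2\,\big] \;=\; \|D^\alpha \tilde r(z-\cdot) - D^\alpha \tilde r(z'-\cdot)\|_{L^2}^2 \;\le\; C\, \|z-z'\|^2
\]
for $|\alpha| \le 1$ and $z,z'\in[0,T_0]\times\bb T$, with $C$ depending on the compact support and one extra derivative of $\tilde r$. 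Gaussian hypercontractivity upgrades this to arbitrary $L^p$ moments, and Kolmogorov's continuity criterion produces a version of $R$ in $C^{1,\beta}([0,T_0]\times\bb T)$ for every $\beta<1$, hence with finite $C^1$ norm almost surely.

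For the convergence, associativity of convolution gives $R - R_\varepsilon = h_\varepsilon * \xi$ with $h_\varepsilon := \tilde r - \tilde r * \varrho_\varepsilon$. Since $\varrho$ is symmetric with unit integral, a second-order Taylor expansion of $\tilde r$ (the first-order term being killed by symmetry) together with smoothness and compact support of $\tilde r$ yields $\|D^\alpha h_\varepsilon\|_\infty \le C_\alpha\, \varepsilon^2$ for every $\alpha$. Repeating the variance computation with $h_\varepsilon$ in place of $\tilde r$ gives
\[
\bb E\big[\,|D^\alpha (R-R_\varepsilon)(z) - D^\alpha (R-R_\varepsilon)(z')|^2\,\big] \;\le\; C\, \varepsilon^4\, \|z-z'\|^2
\]
and an $\varepsilon^4$ bound on the variance itself. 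Kolmogorov in $L^p$ for $p$ large then yields $\bb E\big[\,\|R-R_\varepsilon\|_{C^1([0,T_0]\times\bb T)}^p\,\big] \le C\,\varepsilon^{2p}$, and Markov's inequality delivers convergence in probability. The main obstacle is the initial algebraic verification that $\tilde r$ is smooth and compactly supported despite $r$ itself having unbounded support; once the cancellation through the fractional heat equation is recognized, the remainder is routine Gaussian analysis.
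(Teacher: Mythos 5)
Your proof is correct, and its crux coincides with the paper's: both hinge on recognizing that $h=[\partial_t+(-\Delta)^{1/2}](q_{T_0}-p)$ is smooth with compact support (your explicit computation $h=\ms H'(t-T_0)\,p(t,x)$, supported in $[T_0,T_0+1]\times\bb T$, makes precise what the paper only asserts), so that $R_\varepsilon=h*\xi_\varepsilon$ and $R=h*\xi$ are smooth Gaussian fields on $[0,T_0]\times\bb T$; note that the truncation of the integral to $s\in[-(T_0+1),T_0]$ in the paper's formula is immaterial for $t\in[0,T_0]$ precisely because of this support property. Where you diverge is in the probabilistic machinery: the paper simply cites Adler--Taylor (expected suprema of Gaussian fields and a.s.\ smoothness) to get $\bb E[\sup|D^\alpha R|]<\infty$ and $\bb E[\sup|D^\alpha(R_\varepsilon-R)|]\le C\varepsilon$, whereas you run a self-contained argument via the Wiener isometry, Gaussian hypercontractivity and Kolmogorov's criterion, and you exploit the symmetry and unit mass of $\varrho$ through a second-order Taylor expansion to get $\Vert D^\alpha(h-h*\varrho_\varepsilon)\Vert_\infty\le C\varepsilon^2$, hence the stronger rate $\varepsilon^2$ in place of the paper's $\varepsilon$. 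The only point you gloss over is the passage from the pointwise identity $D^\alpha R(z)=\int (D^\alpha h)(z-w)\,\xi(dw)$ (which holds in $L^2$ for each $z$) to the statement that the continuous versions produced by Kolmogorov are genuinely the derivatives of a $C^1$ modification of $R$; this needs a short Fubini/fundamental-theorem-of-calculus argument or an appeal to the same Adler--Taylor smoothness result the paper uses, but it is routine and at the same level of rigor as the paper's own treatment.
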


\begin{proof}
By \eqref{78}, 
\begin{equation*}
R_\varepsilon \;=\; \partial_t \mf G_\varepsilon \,+\, (-\Delta)^{1/2}\, 
\mf G_\varepsilon \;=\;
\int_{-(T_0+1)}^{T_0} h(t-s)\, \xi_{\varepsilon}(s) \; ds 
\end{equation*}
where $h = [\,\partial_t \,+\, (-\Delta)^{1/2}\,] \, r$ and $r=
q_{T_0}-p$ is a smooth function.  A similar identity holds
with $R_\varepsilon$, $\xi_\varepsilon$ replaced by $R$, $\xi$,
respectively. 

By \cite[Proposition 1.3.3]{AdlTay07}, $\sup_{z\in [0,T_0]\times \bb
T} R(z)$ has finite expectation as well as $- \inf_{z\in [0,T_0]\times
\bb T} R(z)$. The same bound holds for $\partial_x R$, $\partial_t
R$. This proves that $\Vert R \Vert_{C^1([0,T_0]\times \bb T)}$ is
almost-surely finite.

As $r$ is smooth, the same theorem guarantees that there exists a
finite constant $C_0$ such that $\bb E \big[\, \sup_{z\in
  [0,T_0]\times \bb T} \{\, R_\varepsilon (z) \,-\, R (z) \,\} \,
\big] \,<\, C_0\, \varepsilon$ for all $0<\varepsilon \le 1$. The same
result holds for $R (z) \,-\, R_\varepsilon (z)$ and for the first
partial derivatives. It follows from these estimates that $\Vert\, R
\,-\, R_\varepsilon \, \Vert_{C^1([0,T_0]\times \bb T)}$ converges to
$0$ in probability as $\varepsilon \to 0$.
\end{proof}

\bibliographystyle{abbrv}
\bibliography{referencias}

\end{document}